\documentclass[11pt,a4paper,reqno]{amsart}  
\usepackage{ifthen,amssymb,mathrsfs,a4wide,color,mathtools}
\mathtoolsset{showonlyrefs}
\hyphenation{pro-jec-tive}
\let\spacecal=\mathscr
\usepackage[all]{xypic}
\usepackage[backref=page]{hyperref}
\DeclareMathAlphabet{\mathpzc}{OT1}{pzc}{m}{it}
\hypersetup{
 colorlinks,
 citecolor=green,
 linkcolor=blue,
 urlcolor=Blue}
\makeatletter
\@namedef{subjclassname@2020}{%
  \textup{2020} Mathematics Subject Classification}
\makeatother
\newcommand{\red}[1]{\textcolor{red}{#1}}
%
%
\textheight=240truemm
\hoffset=-1mm
\footnotesep=11.5pt\parskip=3pt
\newcommand{\marginnote}[1]{\ifthenelse{\isodd{\thepage}}{\normalmarginpar}
{\reversemarginpar}\marginpar{\fbox{\parbox{15mm}{\sloppy\footnotesize \red{#1}}}}}
%
\let\tempone\itemize
\let\temptwo\enditemize
\let\temponenum\enumerate
\let\temptwonum\endenumerate
\renewenvironment{itemize}{\tempone\addtolength{\itemsep}{0.2\baselineskip}}{\temptwo}
\renewenvironment{enumerate}{\temponenum\addtolength{\itemsep}{0.2\baselineskip}}{\temptwonum}
\let\tempsec\section
\renewcommand{\section}{\par\medskip\tempsec}

\newcommand{\qee}{\parbox{5pt}{\hfill}\hfill $\triangle$}

\newenvironment{rem}{\begin{remqee}}{\qee\end{remqee}}
\newtheorem{thm}{Theorem}[section] 
\newtheorem{corol}[thm]{Corollary}
\newtheorem{lemma}[thm]{Lemma} 
\newtheorem{prop}[thm]{Proposition}
\newtheorem{defin}[thm]{Definition}
\theoremstyle{definition}

\theoremstyle{remark}
\newtheorem{remqee}[thm]{Remark}

\newtheorem{example}[thm]{Example}
\newenvironment{remark}{\begin{remqee}}{\qee\end{remqee}}
\numberwithin{equation}{section}



\newcommand\Id{\operatorname{Id}}

\newcommand\Spec{\operatorname{Spec}}

\newcommand\End{\operatorname{End}}

\newcommand\Hom{\operatorname{Hom}}

\newcommand\iso{\kern.35em{\raise3pt\hbox{$\sim$}\kern-1.1em\to}\kern.3em}

\newcommand\F{{\mathcal F}}
\newcommand\Oc{{\mathcal O}}

\newcommand\Z{{\mathbb Z}}

\newcommand\Nc{{\mathcal N}}

\newcommand\Lcl{{\mathcal L}}
\newcommand\Ec{{\mathcal E}}
\newcommand\Fc{{\mathcal F}}
\newcommand\Ac{{\mathcal A}}

\newcommand\Gc{{\mathcal G}}
\newcommand\Xcal{{\spacecal X}}
\newcommand\Ycal{{\spacecal Y}}
\newcommand\Sc{{\spacecal S}}
\newcommand\Tc{{\spacecal T}}
\newcommand\Zc{{\spacecal Z}}
\newcommand\Vc{{\spacecal V}}
\newcommand\Wc{{\spacecal W}}

\newcommand\Gsc{{\spacecal G}}

\newcommand\Ucal{{\spacecal U}}
\newcommand\Rcal{{\spacecal R}}

\newcommand\Psc{{\spacecal P}}

\newcommand\Jc{{\mathcal J}}

\newcommand\Der{{{\mathcal D}er}}

\newcommand\Xf{{\mathfrak X}}
\newcommand\Yf{{\mathfrak Y}}
\newcommand\Sf{{\mathfrak S}}

\newcommand\Uf{{\mathfrak U}}

\newcommand\Zf{{\mathfrak Z}}

\newcommand\As{{\mathbb A}}
\newcommand\Bs{{\mathbb B}}
\newcommand\Vs{{\mathbb V}}

\newcommand\Ff{{\mathfrak F}}

\newcommand\Homst{{{\mathfrak H}om}}

\newcommand\SSpec{\mathbb{S}\mathrm{pec}\,}

\newcommand\SSym{\mathbb{S}\mathrm{ym}\,}


\renewcommand{\det}{\operatorname{det}}

\newcommand{\Tf}{\mathfrak T}
\newcommand{\Xb}{\mathbf X}
\newcommand{\Yb}{\mathbf Y}
\newcommand{\Ub}{\mathbf U}

\newcommand{\Bf}{\mathfrak{B}}

\newcommand\GL{\operatorname{GL}}
\newcommand\SGLf{\mathbb{S}\mathcal{G}{L}}
\newcommand\SGL{\mathbb{G}\mathrm{L}}
\newcommand\Gs{\mathbb G}

\title{Foundations of superstack theory}
\author{Ugo Bruzzo and Daniel Hern\'andez Ruip\'erez}
\address{SISSA (International School for Advanced Studies), Via Bonomea 265, 34136 Trieste, Italy; Instituto de Matem\'atica e Estat\'istica, Universidade de Sa\~o Paulo,
S\~ao Paulo, Brazil; INFN (Istituto Nazionale di Fisica Nucleare), Sezione di Trieste, Trieste, Italy; IGAP (Institute for Geometry and Physics), Trieste, Italy.}
\address{Departamento de Matem\'aticas and IUFFYM (Instituto Universitario de F\'{\i}sica Fundamental   y Matem\'a\-ticas),  Universidad de Salamanca, Plaza
de la Merced 1-4, 37008 Salamanca, Spain; Real Academia de Ciencias Exactas, F\'{i}sicas y Naturales, Spain.}

 \begin{document}
 
\begin{abstract}
In view of applications to the construction of moduli spaces  of objects in algebraic supergeometry, we
start a systematic study of stacks in  that context.  After defining a superstack as a stack over the \'etale site of superschemes, we define quotient superstacks, and, based on previous literature, we see that, in analogy with superschemes, every superstack has an underlying ordinary stack, which we call its bosonic reduction. Then we progressively introduce more structure, considering  algebraic superspaces, Deligne-Mumford superstacks and algebraic superstacks. We study the topology of algebraic superstacks  and several properties of morphisms between them. We introduce quasi-coherent sheaves, and the sheaves of relative differentials. An important issue is how to check that an algebraic superstack is Deligne-Mumford, and we generalize to this setting the usual criteria in terms of the unramifiedness of the diagonal of the stack. 
Two appendices are devoted to collecting the basic definitions of group superschemes and principal superbundles, and to stating and analyzing some properties of morphisms of superschemes, that are at the basis of the study of morphisms of superstacks in the main text. \end{abstract}

\maketitle

  \vfill
\noindent\parbox{\textwidth}{\small \baselineskip=14pt
\noindent\parbox{.8\textwidth}{\hrulefill} \\
\noindent \today  \\
\noindent AMS Subject classification  (2020)  Primary: 14D23; Secondary:    14A20, 14M30, 18F20  \\
\noindent Keywords: superstacks, algebraic superspaces, algebraic superstacks, Deligne-Mumford  superstacks, bosonic stack, superschemes \\
\noindent U.B.: Research partly supported by PRIN 2022BTA242 ``Geometry of algebraic structures: moduli, invariants, deformations'' and INdAM-GNSAGA.  \\
 D.H.R: Research partly supported by the  research projects ``Espacios finitos y functores integrales'', MTM2017-86042-P (Ministerio de Econom\'{\i}a, Industria y Competitividad) and
 GIR 4139 of the University of Salamanca.
 }

\newpage

 \tableofcontents

\section{Introduction}
\label{sec:intro}

The notion of  stack naturally arises in connection with moduli spaces, when one tries to parameterize geometric objects that have automorphisms. Important examples include the moduli stack of algebraic curves and the moduli stack of stable maps. These notions, in turn, form the foundation of major developments and applications, such as Gromov–Witten invariants and, more generally, enumerative geometry. They are also closely related to string theory in theoretical physics. 

Largely motivated by supersymmetry and superstring theory, the classical notions of Riemann surface and algebraic curve have been extended to those of super Riemann surface and supersymmetric (SUSY) curve. Various constructions of the moduli superspaces of these objects have been proposed, beginning with the analytic construction of the moduli space of super Riemann surfaces of genus $g\ge 2$ as an orbifold by Crane and Rabin \cite{CraRab88}, and in the seminal paper by LeBrun and Rothstein \cite{LeRoth88}. In \cite{DoHeSa97} the second author, together with Domínguez Pérez and C. Sancho de Salas, constructed a supermoduli space for SUSY curves with Neveu–Schwarz punctures and level structures as an (Artin) algebraic superspace --- that is, as the quotient of an étale equivalence relation of superschemes. More recently, Codogni and Viviani provided a construction of the supermoduli of SUSY curves as a Deligne-Mumford  superstack \cite{CodViv17}, without incorporating level structures.\footnote{This paper uses the representability of the functor of isomorphisms of SUSY curves, which was then proved  in \cite{BrHRPo20} (actually, not only for SUSY curves, but more generally for isomorphisms of projective superschemes).}
The moduli superspace constructed in \cite{DoHeSa97} is, in fact, an explicit coarse moduli space for that  Deligne-Mumford moduli superstack.

In the  paper \cite{BHRSM25}, the present authors constructed a moduli space of SUSY curves with both Neveu–Schwarz and Ramond-Ramond  punctures as an algebraic superspace. The work \cite{FKPpubl} by Felder, Kazhdan, and Polishchuk constructs a moduli space of stable SUSY curves as a Deligne-Mumford superstack. Stable supermaps were introduced by the present authors and Manin in \cite{BrHRMa23}, and their moduli space was constructed as a superstack. The paper \cite{BHRSM25} will show that this moduli space is an algebraic superstack, and will include further results.

The lack of a systematic literature on the subject, and the importance of a basic theory of superstacks for the construction of moduli spaces of objects in algebraic supergeometry, and other foreseeable applications, have  prompted us to write these notes on the basic aspects of the theory of superstacks. We provide a brief introduction to stacks in the context of algebraic supergeometry and define algebraic and Deligne-Mumford stacks in that setting. Superstacks were already formally introduced in \cite{CodViv17}, but  in a more restricted framework than ours --- specifically, over the site of smooth superschemes with the étale topology. However, one often needs to work in more general  settings, which call for broader definitions. Some of these generalizations are straightforward, since the notion of a stack over a site is already quite general and, in particular, applies to the site of superschemes with the étale topology.

The question of the algebraicity of superstacks is more delicate. As in the classical case of stacks over the site of schemes with the étale topology, there are various possible definitions of what an algebraic or Deligne-Mumford superstack might be. In what follows, we aim to adopt the definitions that most naturally and efficiently generalize the standard ones for stacks.

The first instance of the straightforward  generalizations mentioned above is, in fact, the notion of a superstack: it is simply a stack over the étale site of superschemes. Superstacks are introduced in Section \ref{subsec:sstacks}. Section \ref{subsec:quot} is devoted to defining and studying quotient superstacks, while in Section \ref{sec:bosred} --- largely following \cite{CodViv17} --- we characterize the bosonic stack underlying a superstack.

From this point onward, the main thread of the paper is to define algebraic superspaces (Section \ref{subsec:algsstacks}), then algebraic superstacks, and finally Deligne-Mumford  superstacks (Section \ref{subsec:dmsstacks}). Sections \ref{subsec:properties} to \ref{subsec:separprop} develop the basic theory of these objects, addressing such topics as the main properties of morphisms of algebraic superstacks --- including separateness, properness, and quasi-finiteness --- as well as their dimension and topological properties.

In Sections \ref{subsec:quasi-coh} and \ref{subsec:quasi-coh2}, we introduce quasi-coherent sheaves on 
Deligne-Mumford and algebraic superstacks, respectively. In particular, Section \ref{subsec:quasi-coh} develops a basic theory of relative differentials for morphisms of algebraic superspaces.

An important issue is determining whether an algebraic superstack is Deligne-Mumford. This question is addressed in Section \ref{subsec:dmdiag}, where we characterize Deligne-Mumford superstacks in terms of properties of their diagonal; in particular, an algebraic superstack is Deligne-Mumford  if and only if its diagonal morphism is unramified. Section \ref{subsec:algdiag} provides a corresponding characterization for algebraic superspaces. 
It may be interesting to note that the stabilizers of a Deligne-Mumford superstacks are reduced ordinary (non-super) group schemes.

Two appendices are devoted to presenting the basic definitions concerning group superschemes, and to stating and analyzing some properties of morphisms of superschemes.

As a reference for stacks    we follow mainly Alper's notes \cite{Alp24},  the paper by Casalaina-Martin  and Wise
\cite{CasWis17}   and the books by Laumon and Moret-Bailly  \cite{LauMor-Bai00} and Olsson \cite{Ol16}. See also \cite{CodViv17} for some facts and results about superstacks. For superscheme theory, see \cite{BrHR21} and \cite{BrHRPo20}. 

\smallskip \noindent {\bf Acknowledgements.} D.H.R.~thanks GNSAGA-INdAM for support, and SISSA for support and hospitality.

\section{Superstacks}
\label{sec:sstacks}

\subsection{Pre-superstacks and superstacks}
\label{subsec:sstacks}
We will denote by $\Sf$ the category of superschemes. 
Moreover, we denote by $\Sf_{et}$
the \'etale site of superschemes for the \'etale topology, as  defined  in \cite[Def.\! 2.14]{BrHRMa23}. Notice that $\Sf_{et}$ is \emph{subcanonical}, that is, every  functor  $\Sf^\circ\to(Sets)$ representable by a superscheme is a sheaf with respect to the \'etale topology \cite[Def. 3.12]{CasWis17}.

\begin{defin}\label{def:sstack} A pre-superstack (resp.\! a superstack) is a pre-stack (resp.\! a stack) over $\Sf_{et}$ \cite[Def.\, 3.31]{CasWis17}. This can be spelled out as follows: a pre-superstack (resp.\! a superstack) is a category fibered in groupoids
$
p_\Xf\colon \Xf \to \Sf_{et}
$
over the site $\Sf_{et}$ of superschemes with the \'etale topology,  such that the isomorphisms between two objects in $\Xf(\Sc):=p_\Xf^{-1}(\Sc)$ form a sheaf for every superscheme $\Sc$ (resp.\! ditto and that every descent data is effective). A morphism of pre-superstacks or superstacks is a morphism of categories  fibered in grupoids over $\Sf$, that is, a functor commuting with the projections to $\Sf$. If $\Tc$ is a superscheme, changing $\Sf$ for the category $\Sf/\Tc$ of superschemes over $\Tc$ and $\Sf_{et}$ for the coresponding \'etale site $(\Sf/\Tc)_{et}$, we obtain the notions of pre-superstack and superstack over $\Tc$. 
\end{defin}

A first example of a superstack is the one associated to a sheaf on $\Sf_{et}$: consider a presheaf    $\F\colon \Sf^\circ\to(Sets)$;  one defines a category  fibered in grupoids $p_\Ff\colon \Ff \to \Sf$ by setting $\Ff(\Sc)$ as the set $\F(\Sc)$ considered as a category; morphisms in $\Ff$ are defined in the obvious way. Notice that $p_\Xf\colon\Ff\to\Sf$ is a category fibered in setoids, namely, groupoids whose objects have no other morphisms than the identity. 
By \cite[Prop. 3.33]{CasWis17} gives:
\begin{prop} $\Ff\to\Sf_{et}$ is a pre-superstack (resp.\! a superstack) if and only if $\F$ is a separated presheaf (resp.\! a sheaf).\qed
\end{prop}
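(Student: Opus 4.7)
The plan is to unpack the stack/pre-stack axioms in the special case where the category fibered in groupoids $\Ff\to\Sf_{et}$ comes from a presheaf, using the fact that $\Ff$ is fibered in setoids (the fiber categories $\Ff(\Sc)=\F(\Sc)$ have only identity morphisms). The whole statement reduces to matching the isomorphism-sheaf condition with separatedness, and the effective descent condition with the gluing axiom.

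First I would verify the pre-superstack half. Given $a,b\in\Ff(\Sc)=\F(\Sc)$, I compute the presheaf $\sIsom_\Sc(a,b)$ on $(\Sf/\Sc)_{et}$: because $\Ff$ is fibered in setoids, any map in $\Ff$ over an identity of a base superscheme is itself the identity, hence for $u\colon\Tc\to\Sc$ one has
\[
\sIsom_\Sc(a,b)(\Tc)=\begin{cases}\{*\} & \text{if } u^*a=u^*b \text{ in }\F(\Tc),\\ \emptyset & \text{otherwise.}\end{cases}
\]
Given an \'etale cover $\{\Tc_i\to\Tc\}$, a matching family in $\sIsom_\Sc(a,b)$ is nothing but the assertion that $a$ and $b$ become equal after restriction to each $\Tc_i$ (the cocycle condition is automatic since there is at most one section). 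Thus the sheaf condition for $\sIsom_\Sc(a,b)$, for every $\Sc$ and every pair $a,b$, amounts exactly to: restrictions of elements of $\F$ agreeing on an \'etale cover force them to be equal on the base. This is the definition of a separated presheaf on $\Sf_{et}$.

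Next I would deal with effective descent. A descent datum on an \'etale cover $\{\Sc_i\to\Sc\}$ for $\Ff$ consists of objects $a_i\in\Ff(\Sc_i)=\F(\Sc_i)$ together with isomorphisms $\varphi_{ij}\colon \op{pr}_2^*a_j\iso \op{pr}_1^*a_i$ on $\Sc_i\times_\Sc\Sc_j$ satisfying the cocycle identity. Since $\Ff$ is fibered in setoids, each $\varphi_{ij}$ is forced to be the identity, so its mere existence is the equality $a_i|_{\Sc_i\times_\Sc\Sc_j}=a_j|_{\Sc_i\times_\Sc\Sc_j}$ in $\F(\Sc_i\times_\Sc\Sc_j)$, and the cocycle condition is automatic. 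Thus descent data for $\Ff$ on the cover are in bijection with matching families for $\F$ on the cover. Effectivity of the datum --- the existence of $a\in\Ff(\Sc)$ with $a|_{\Sc_i}\cong a_i$ --- becomes, once again because we are in setoids, the existence of a (necessarily unique, if $\F$ is already separated) $a\in\F(\Sc)$ restricting to each $a_i$. This is precisely the gluing axiom. Putting the two axioms together, $\Ff$ is a superstack if and only if $\F$ is both separated and satisfies gluing, i.e.\ $\F$ is an \'etale sheaf.

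The argument is essentially a translation exercise, so there is no serious obstacle; the only point requiring mild care is the observation that in a category fibered in setoids the transition isomorphisms in a descent datum are necessarily identities, which collapses the usual $2$-categorical descent into the ordinary sheaf axioms. I would simply state and use this observation at the start and then run the two equivalences in parallel.
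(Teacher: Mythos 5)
Your proof is correct and is exactly the standard unpacking that the paper delegates to the citation of \cite[Prop.\ 3.33]{CasWis17}: since $\Ff$ is fibered in setoids, the $\sIsom$-sheaf condition collapses to separatedness and effective descent collapses to the gluing axiom. The key observation you isolate --- that transition isomorphisms in a descent datum for a category fibered in setoids are forced to be identities --- is precisely what makes the translation work, so there is nothing to add.
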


Since for every superscheme $\Xcal$ the functor of   points $\Xcal\colon \Sf^\circ\to(Sets)$ is an \'etale sheaf, $\Xcal$ gives rise to
 a superstack.  If $\Xf$ is a superstack equivalent to the superstack  associated with a superscheme $\Xcal$, we say that \emph{$\Xf$ is representable by the superscheme $\Xcal$}; we will confuse notationally $\Xf$ and $\Xcal$. As a fibered category, the superstack $\Xcal$ is equivalent to the category  fibered in grupoids $\Sf/\Xcal\to \Sf$.

There is a version of the Yoneda Lemma for superstacks. To formulate it, let $p_\Xf\colon \Xf\to\Sf$, $p_\Yf\colon \Yf\to\Sf$ be superstacks. We denote by $\Homst_{SSt}(\Xf,\Yf)$ the fibered category whose objects are the morphisms $\Xf \to \Yf$ of superstacks and whose morphisms are the natural isomorphisms. By \cite[Lemma 2.16]{CasWis17} one has:
\begin{lemma}[2-Yoneda]\label{lem:2yoneda} Let $p_\Xf\colon\Xf\to\Sf$ be a superstack. For every superscheme $\Sc$ the natural transformation
$$ 
\Homst_{SSt}(\Sf/\Sc,\Xf)  \to \Xf(\Sc)\,,
$$
defined on objects by $F \mapsto F(\Sc\xrightarrow{\Id_\Sc} \Sc)$ and on morphisms by mapping a natural transformation of functors $\phi\colon F \to F'$  to the morphism $F(\phi)\colon F(\Sc\xrightarrow{\Id_\Sc} \Sc)\to F'(\Sc\xrightarrow{\Id_\Sc} \Sc)$ in $\Xf(\Sc)$,  is an equivalence of categories.
\qed
\end{lemma}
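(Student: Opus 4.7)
The plan is to observe that this is a direct application of the standard $2$-Yoneda Lemma for categories fibered in groupoids: the stack condition on $\Xf$ plays no role, and indeed it suffices that $p_\Xf\colon\Xf\to\Sf$ be a category fibered in groupoids (so the same statement and proof would be valid for pre-superstacks, or even for mere fibered categories). The strategy is therefore to construct an explicit quasi-inverse functor and verify the two natural isomorphisms.

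Concretely, after fixing once and for all a cleavage of $\Xf\to\Sf$, I would define a functor
$$
G\colon \Xf(\Sc) \to \Homst_{SSt}(\Sf/\Sc,\Xf)
$$
as follows. Given $x\in\Xf(\Sc)$, the morphism $G(x)\colon \Sf/\Sc\to\Xf$ sends an object $(f\colon\Tc\to\Sc)$ of $\Sf/\Sc$ to the chosen pullback $f^\ast x\in\Xf(\Tc)$, and sends a morphism $h\colon (f\colon\Tc\to\Sc)\to(g\colon\Tc'\to\Sc)$ in $\Sf/\Sc$ (that is, $g\circ h = f$) to the unique morphism $f^\ast x\to g^\ast x$ over $h$ provided by the cartesian property of the chosen pullback arrow $g^\ast x\to x$. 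Functoriality of $G(x)$ and the fact that it is a morphism of fibered categories (compatible with projections and preserving cartesian arrows) follow from the uniqueness part of the cartesian property. A morphism $x\to y$ in $\Xf(\Sc)$ likewise induces, via the universal property, a natural transformation $G(x)\to G(y)$, making $G$ a functor.

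It remains to check that $G$ is quasi-inverse to the evaluation functor $F\mapsto F(\Id_\Sc)$. In one direction, $G(x)(\Id_\Sc)=\Id_\Sc^\ast x$ is canonically isomorphic to $x$ via the cartesian arrow over $\Id_\Sc$, and these isomorphisms are natural in $x$. In the other direction, given $F\colon\Sf/\Sc\to\Xf$, note that for every $(f\colon\Tc\to\Sc)$ the arrow $f\colon (f\colon\Tc\to\Sc)\to(\Id_\Sc\colon\Sc\to\Sc)$ is cartesian in $\Sf/\Sc$ over $f\colon\Tc\to\Sc$; since $F$ is a morphism of fibered categories it preserves cartesian arrows, so $F(f)\colon F(f\colon\Tc\to\Sc)\to F(\Id_\Sc)$ is cartesian over $f$, and therefore there is a canonical isomorphism $F(f\colon\Tc\to\Sc)\cong f^\ast F(\Id_\Sc)=G(F(\Id_\Sc))(f\colon\Tc\to\Sc)$. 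One then verifies that these canonical isomorphisms assemble into a natural isomorphism $F\cong G(F(\Id_\Sc))$.

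The only real obstacle is bookkeeping: keeping track of the chosen cleavage and checking that all the canonical isomorphisms just produced are simultaneously natural in both the object of $\Sf/\Sc$ and the object of $\Xf(\Sc)$, and are compatible with composition of morphisms in $\Homst_{SSt}(\Sf/\Sc,\Xf)$. Since no descent or sheaf property enters, the argument is purely formal and coincides with the classical proof of $2$-Yoneda recalled, for instance, in \cite[Lemma 2.16]{CasWis17}, which it therefore suffices to invoke.
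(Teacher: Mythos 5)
Your proposal is correct and matches the paper, which gives no argument of its own but simply invokes the classical 2-Yoneda Lemma via \cite[Lemma 2.16]{CasWis17}; your explicit construction of the quasi-inverse through a cleavage is exactly the standard proof of that cited result, and your observation that the stack condition plays no role is also accurate.
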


By virtue of the 2-Yoneda Lemma, 
specifying a superstack morphism $\Xcal\colon (\Sf/\Sc) \to \Xf$ is the same as specifying an object $\Xcal\in \Xf(\Sc)$. We will   often confuse the two notions  in the notation.

The fiber product $\Xf\times\Yf$ of two superstacks is their fiber product as fibered categories; it is also a superstack. This is a particular case of \cite[Def. 4.2 and Lemma 4.4]{CasWis17}. We recall these results for convenience.

\begin{defin}\label{def:sstackprod} Let $f\colon \Xf \to \Zf$, $g\colon \Yf \to \Zf$ morphisms of  categories fibered in grupoids over $\Sf$. Their  fiber product    $\Xf\times_{f,\Zf,g}\Yf$  is the category fibered over $\Sf$ whose objects are quadruples $(\Sc,\Xcal,\Ycal,\alpha)$, where $\Sc$ is a superscheme, $\Xcal\in\Xf(\Sc)$, $\Ycal\in\Xf(\Sc)$ and $\alpha\colon f(\Xcal)\iso g(\Ycal)$ is an isomorphism in $\Zf(\Sc)$. A morphism $(\Sc,\Xcal,\Ycal,\alpha)\to(\Sc',\Xcal',\Ycal',\alpha')$ is a triple $(\varphi, u, v)$, where $\varphi\colon \Sc\to \Sc'$ is a morphism of superschemes, and $u\colon \Xcal\to\Xcal'$ and $v\colon \Ycal\to\Ycal'$ are morphisms in $\Xf$ lying over $\varphi$ such that $\alpha'f(u)=g(v)\alpha$.
\end{defin}
\begin{prop}\label{prop:sstackprod}  If $f\colon \Xf \to \Zf$, $g\colon \Yf \to \Zf$ are morphisms of superstacks, the fiber product $\Xf\times_{f,\Zf,g}\Yf$ is a superstack.
\qed
\end{prop}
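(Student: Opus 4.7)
The plan is to verify the three conditions defining a superstack in Definition \ref{def:sstack} for the category $\Xf\times_{f,\Zf,g}\Yf$ introduced in Definition \ref{def:sstackprod}: namely, (i) that it is fibered in groupoids over $\Sf_{et}$, (ii) that the presheaf of isomorphisms between two fixed objects is an \'etale sheaf, and (iii) that every descent datum is effective. Step (i) is largely formal, while (ii) and (iii) are reduced to the analogous properties for $\Xf$, $\Yf$, and $\Zf$ by projecting the data componentwise.

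For (i), given a morphism $\varphi\colon\Tc\to\Sc$ and an object $(\Sc,\Xcal,\Ycal,\alpha)$ of the fiber product, I would choose cartesian pullbacks $\varphi^\ast\Xcal\to\Xcal$ in $\Xf$ and $\varphi^\ast\Ycal\to\Ycal$ in $\Yf$ lying over $\varphi$. Composing the canonical isomorphisms $f(\varphi^\ast\Xcal)\iso\varphi^\ast f(\Xcal)$ and $\varphi^\ast g(\Ycal)\iso g(\varphi^\ast\Ycal)$ with $\varphi^\ast\alpha$ produces an isomorphism $\varphi^\ast\alpha\colon f(\varphi^\ast\Xcal)\iso g(\varphi^\ast\Ycal)$ in $\Zf(\Tc)$, and the resulting triple $(\varphi,\varphi^\ast\Xcal\to\Xcal,\varphi^\ast\Ycal\to\Ycal)$ is easily checked to be cartesian over $\varphi$. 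That every morphism is an isomorphism on fibers follows from the same property in $\Xf$ and $\Yf$.

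For (ii), fix two objects $\xi=(\Sc,\Xcal,\Ycal,\alpha)$ and $\xi'=(\Sc,\Xcal',\Ycal',\alpha')$. The presheaf of isomorphisms $\sIsom(\xi,\xi')$ on $\Sf/\Sc$ is naturally identified with the equalizer of the two maps
\[
\sIsom_\Xf(\Xcal,\Xcal')\times\sIsom_\Yf(\Ycal,\Ycal')\rra \sIsom_\Zf(f(\Xcal),g(\Ycal'))
\]
given respectively by $(u,v)\mapsto\alpha'\circ f(u)$ and $(u,v)\mapsto g(v)\circ\alpha$. Since $\Xf$, $\Yf$, $\Zf$ are superstacks, each factor is an \'etale sheaf, and an equalizer of sheaves is a sheaf; hence $\sIsom(\xi,\xi')$ is an \'etale sheaf.

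For (iii), let $\{\Sc_i\to\Sc\}$ be an \'etale covering and let $\bigl(\xi_i=(\Sc_i,\Xcal_i,\Ycal_i,\alpha_i),\,\theta_{ij}\bigr)$ be a descent datum in $\Xf\times_\Zf\Yf$, where $\theta_{ij}=(u_{ij},v_{ij})$ satisfies the cocycle condition on the triple overlaps. Projecting gives descent data $\{(\Xcal_i,u_{ij})\}$ in $\Xf$ and $\{(\Ycal_i,v_{ij})\}$ in $\Yf$, which, by the superstack axiom for $\Xf$ and $\Yf$, are effective: they yield objects $\Xcal\in\Xf(\Sc)$ and $\Ycal\in\Yf(\Sc)$ together with coherent isomorphisms $\Xcal_{|\Sc_i}\iso\Xcal_i$, $\Ycal_{|\Sc_i}\iso\Ycal_i$. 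The images $f(\Xcal)$ and $g(\Ycal)$ restrict on the cover to $f(\Xcal_i)$ and $g(\Ycal_i)$, and the compatibility $\alpha'f(u_{ij})=g(v_{ij})\alpha$ in Definition \ref{def:sstackprod} is exactly the condition asserting that the local isomorphisms $\alpha_i\in\sIsom_\Zf(f(\Xcal)_{|\Sc_i},g(\Ycal)_{|\Sc_i})$ agree on double overlaps; since $\Zf$ is a superstack and isomorphisms in it form a sheaf by (ii) applied to $\Zf$, they glue to a unique $\alpha\colon f(\Xcal)\iso g(\Ycal)$ in $\Zf(\Sc)$. The quadruple $(\Sc,\Xcal,\Ycal,\alpha)$ restricts to $\xi_i$ with the prescribed gluings, proving effectiveness.

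The only mildly delicate step is (iii), where one must be careful that the cocycle condition on the pairs $(u_{ij},v_{ij})$ decouples into the separate cocycle conditions in $\Xf$ and $\Yf$ \emph{and} into compatibility data for the $\alpha_i$; this is immediate from the morphism relation $\alpha'f(u)=g(v)\alpha$ built into Definition \ref{def:sstackprod}. With this observation, the descent in the fiber product is entirely controlled by descent in the three factors, and no new effectivity statement is needed beyond the superstack property of $\Xf$, $\Yf$, $\Zf$.
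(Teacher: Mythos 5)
Your proof is correct, and it is exactly the standard argument behind the result the paper invokes: the paper gives no proof of its own but simply cites the general fact that fiber products of stacks over a site are stacks (\cite[Lemma 4.4]{CasWis17}), whose proof is the one you have written out --- the equalizer description of the $\sIsom$ presheaf of the fiber product and the componentwise descent followed by gluing the $\alpha_i$ via the prestack axiom for $\Zf$. The two delicate points (that the two comparison maps into $\sIsom_\Zf(f(\Xcal),g(\Ycal'))$ are maps of sheaves, and that the morphism relation $\alpha'f(u)=g(v)\alpha$ is precisely what makes the local $\alpha_i$ agree on overlaps) are both handled correctly.
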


\begin{remark}\label{rem:prod}
For every superstack $\Xf$ there is a natural superstack morphism $q_\Xf\colon \Xf \to \Spec \Z$. Given superstacks $\Xf$, $\Yf$,  we denote by  $\Xf\times\Yf$ the fiber  product $\Xf\times_{q_\Xf,\Spec\Z,q_\Yf}\Yf$.
\end{remark}

\begin{defin}\label{def:sspc} A superspace is a superstack $\Xf$ that is equivalent to the superstack $\Ff$ associated with a sheaf on $\Sf_{et}$, or, equivalently, that it is fibered in setoids. We also say that $\Xf$ is representable by the superspace $\F$, where we confuse $\Xf$ and $\F$ notationally (see \cite[Prop. 2.4.1.1]{LauMor-Bai00}).\end{defin} 

\begin{prop} The fiber product of morphisms of superspaces is a superspace.
\qed
\end{prop}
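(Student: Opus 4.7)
The plan is to combine Proposition 2.5 (which already gives that the fiber product of superstacks is a superstack) with the characterization in Definition 2.6 of a superspace as a superstack that is fibered in setoids, and simply verify the setoid condition on the fiber product.

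More precisely, let $f\colon \Xf\to\Zf$ and $g\colon \Yf\to\Zf$ be morphisms of superspaces and let $\Xf\times_{f,\Zf,g}\Yf$ be their fiber product as described in Definition~\ref{def:sstackprod}. By Proposition~\ref{prop:sstackprod} it is a superstack, so the only thing to check is that for every superscheme $\Sc$, the fiber category $(\Xf\times_{f,\Zf,g}\Yf)(\Sc)$ is a setoid, i.e.\ that between any two of its objects there is at most one morphism over $\Id_\Sc$.

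Take two objects $(\Sc,\Xcal,\Ycal,\alpha)$ and $(\Sc,\Xcal',\Ycal',\alpha')$ of the fiber product, and suppose $(\Id_\Sc,u,v)$ and $(\Id_\Sc,u',v')$ are two morphisms between them in the sense of Definition~\ref{def:sstackprod}. Since $\Xf$ and $\Yf$ are superspaces, the fibers $\Xf(\Sc)$ and $\Yf(\Sc)$ are setoids, so $u=u'\colon \Xcal\to\Xcal'$ and $v=v'\colon \Ycal\to\Ycal'$ (each pair agrees because only at most one morphism exists between the two objects in the respective fiber). Therefore $(\Id_\Sc,u,v)=(\Id_\Sc,u',v')$, proving that there is at most one morphism between the two chosen objects.

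Having established that the fiber product is a superstack fibered in setoids, Definition~\ref{def:sspc} allows us to conclude that it is a superspace. The argument is entirely formal; there is no genuine obstacle, the only subtlety being to spell out the morphisms of the fiber product according to Definition~\ref{def:sstackprod} and to use the setoid property of $\Xf$ and $\Yf$ componentwise. (Alternatively, one could observe that if $F_\Xf,F_\Yf,F_\Zf$ are the associated étale sheaves, then the sheaf-theoretic fiber product $F_\Xf\times_{F_\Zf}F_\Yf$ is a sheaf, and is equivalent as a fibered category to $\Xf\times_{f,\Zf,g}\Yf$; but this reduces to the same verification.)
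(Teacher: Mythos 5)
Your argument is correct and is exactly the verification the paper has in mind (the paper states this proposition with no proof, regarding it as immediate): one checks that the fiber of the fiber product over each superscheme $\Sc$ is a setoid, which follows componentwise from the setoid property of $\Xf(\Sc)$ and $\Yf(\Sc)$. As a minor aside, your argument only uses that $\Xf$ and $\Yf$ are superspaces, so it in fact proves the slightly stronger statement that the fiber product of two superspaces over an arbitrary superstack is a superspace.
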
 

This motivates the following:
\begin{defin}\label{def:repmor} A morphism $f\colon \Xf \to \Yf$ of superstacks is representable by superspaces if for every superspace $\F$ and every superstack morphism $g\colon\F\to \Yf$, the fiber product superstack $\F\times_{g,\Yf ,f}\Xf$ is (representable by) a superspace. Analogously, $f$ is said to be representable by superschemes, or superschematic, if for every superscheme $\Sc$ and every superstack morphism $g\colon\Sc\to \Yf$ the fiber product superstack $\Sc\times_{g,\Yf ,f} \Xf$ is (representable by) a superscheme.
\end{defin}

\begin{prop}\label{prop:sch-spc} A superschematic morphism $f\colon \Xf \to \Yf$ of superstacks is representable by superspaces.
\end{prop}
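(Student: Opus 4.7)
The strategy is to unpack what it means for $\F\times_{\Yf}\Xf$ to be a superspace, in terms of Definition \ref{def:sspc}. By Proposition \ref{prop:sstackprod}, $\F\times_{g,\Yf,f}\Xf$ is already a superstack, so it only remains to check that it is fibered in setoids. Concretely, given a superscheme $\Tc$ and an object $(a,b,\alpha)\in (\F\times_\Yf\Xf)(\Tc)$, we must show that every automorphism of $(a,b,\alpha)$ lying over $\Id_\Tc$ is the identity.

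By Definition \ref{def:sstackprod}, such an automorphism is a pair $(u,v)$, where $u\colon a\to a$ in $\F(\Tc)$ and $v\colon b\to b$ in $\Xf(\Tc)$ satisfy $\alpha\circ g(u)=f(v)\circ \alpha$. Since $\F$ is a superspace, $\F(\Tc)$ is a setoid, so $u=\Id_a$; hence the compatibility reduces to $f(v)=\Id_{f(b)}$. The whole argument then boils down to showing that any $v\in\Aut_{\Xf(\Tc)}(b)$ with $f(v)=\Id_{f(b)}$ must be $\Id_b$.

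For this, I would apply the hypothesis with $\Sc=\Tc$ and the morphism $\Tc\to\Yf$ given, via the 2-Yoneda lemma (Lemma \ref{lem:2yoneda}), by the object $f(b)\in\Yf(\Tc)$. By assumption, the fiber product $\Tc\times_{f(b),\Yf,f}\Xf$ is a superscheme, hence fibered in setoids. It carries a tautological object $(\Id_\Tc,b,\Id_{f(b)})\in (\Tc\times_\Yf\Xf)(\Tc)$, and the pair $(\Id_\Tc,v)$ is, by the compatibility $f(v)=\Id_{f(b)}$, an automorphism of this object over $\Id_\Tc$. Triviality of automorphisms in a setoid forces $v=\Id_b$, and therefore $(u,v)=(\Id_a,\Id_b)$.

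This establishes that $\F\times_\Yf\Xf$ is fibered in setoids, and hence is a superspace by Definition \ref{def:sspc}. The argument is essentially bookkeeping in the fibered-category formalism; the only substantive point is the reduction from an automorphism in $\F\times_\Yf\Xf$ to an automorphism of the tautological section of the superschematic fiber product $\Tc\times_\Yf\Xf$, which is where the superschematic hypothesis on $f$ is used. No deeper obstacle is expected.
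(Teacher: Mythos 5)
Your proof is correct and rests on the same key mechanism as the paper's: an automorphism $v$ of $b$ with $f(v)=\operatorname{Id}$ is realized as an automorphism of a tautological object of the superscheme $\Tc\times_{f(b),\Yf,f}\Xf$, which is a setoid, forcing $v=\operatorname{Id}$. The only (harmless) organizational difference is that the paper first reduces to the case where $\Yf$ itself is a superspace, whereas you argue directly over a general $\Yf$ by producing the morphism $\Tc\to\Yf$ from the object $f(b)$ via 2-Yoneda.
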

\begin{proof} One can assume that $\Yf$ is a superspace and prove that if $f\colon \Xf \to \Yf$ is superschematic, then $\Xf$ is a superspace. For every superscheme $\Sc$ and every morphism $g\colon \Sc\to \Yf$, the objects of $(\Xf\times_{f,\Yf,g}\Sc)(\Sc)$ are pairs $(x,t)$ where $x\in\Xf(\Sc)$, $t\in \Sc(\Sc)=\Hom(\Sc,\Sc)$ and $f(x)=g(t)$, because $\Yf(\Sc)$ is a setoid so that it has no other isomorphisms than the identities. This proves that $\Xf(\Sc)$  is a subcategory of $(\Xf\times_{f,\Yf,g}\Sc)(\Sc)$; since the latter is a setoid, $\Xf(\Sc)$ is also a setoid and $\Xf$ is a superspace.
\end{proof}

Since the concept of representable morphism differs in the literature, we shall  specify whether representability is understood with respect to superspaces or superschemes.

\begin{lemma}\label{lem:faith} Let $f\colon \Xf\to\Yf$ be a morphism of superstacks such that for every superscheme $\Sc$, the functor $f(\Sc)\colon \Xf(\Sc) \to \Yf(\Sc)$ is faithful. Then $f$ is representable by superspaces.
\end{lemma}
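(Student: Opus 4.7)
The plan is to check directly that the fiber product $\F \times_{g, \Yf, f} \Xf$ is a superspace for every superspace $\F$ and every morphism $g\colon \F \to \Yf$. By Proposition~\ref{prop:sstackprod} it is already a superstack, so by Definition~\ref{def:sspc} it suffices to show that it is fibered in setoids --- equivalently, that for each superscheme $\Sc$ the fiber category over $\Sc$ has at most one morphism between any two objects.

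First I would unwind Definition~\ref{def:sstackprod}: the objects of $(\F \times_{g, \Yf, f} \Xf)(\Sc)$ are triples $(s, x, \alpha)$ with $s \in \F(\Sc)$, $x \in \Xf(\Sc)$ and $\alpha\colon g(s) \iso f(x)$ an isomorphism in $\Yf(\Sc)$, while a morphism $(s, x, \alpha) \to (s', x', \alpha')$ lying over $\Id_\Sc$ is a pair $(u, v)$ with $u\colon s \to s'$ in $\F(\Sc)$ and $v\colon x \to x'$ in $\Xf(\Sc)$ satisfying the compatibility $\alpha' \circ g(u) = f(v) \circ \alpha$.

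The key step is then to compare two parallel morphisms $(u_1, v_1)$ and $(u_2, v_2)$ between the same pair of objects. Since $\F$ is a superspace, $\F(\Sc)$ is a setoid, and hence $u_1 = u_2$ automatically. Inserting this equality into the compatibility relation gives $f(v_1) \circ \alpha = f(v_2) \circ \alpha$, and because $\alpha$ is an isomorphism we may cancel it to obtain $f(v_1) = f(v_2)$. The faithfulness hypothesis on $f(\Sc)$ then forces $v_1 = v_2$, so $(u_1, v_1) = (u_2, v_2)$, and the fiber category is indeed a setoid.

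I do not anticipate any real obstacle: the argument is pure unwinding of the $2$-fiber product definition combined with two elementary cancellations --- a setoid cancellation on the $\F$-side and a faithfulness cancellation on the $\Xf$-side. The only mild point to watch is the convention for \emph{fibered in setoids}, used here in the equivalent form that every hom-set in the fiber groupoid has cardinality at most one, so that uniqueness of parallel morphisms does characterize the fiber product as a superspace in the sense of Definition~\ref{def:sspc}.
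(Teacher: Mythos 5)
Your proof is correct and follows essentially the same route as the paper's: the paper observes that the induced functor $(\Xf\times_\Yf\F)(\Sc)\to\F(\Sc)$ is faithful (which is exactly your two-cancellation argument packaged as one statement) and that a groupoid admitting a faithful functor to a setoid is itself a setoid. Your explicit unwinding of the $2$-fiber product and the compatibility relation $\alpha'\circ g(u)=f(v)\circ\alpha$ is just the same argument written out in full detail.
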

\begin{proof} Let $\Zf$ be a superspace and $\Zf \to \Yf$ a stack morphism. For every superscheme $\Sc$ the induced functor $(\Xf\times_\Yf\Zf)(\Sc) \to \Zf(\Sc)$ is faithful. Since the category $\Zf(\Sc)$ is (equivalent to) a setoid because $\Zf$ is a superspace, the category $(\Xf\times_\Yf\Zf)(\Sc)$ is equivalent to a setoid as well. Thus, $\Xf\times_\Yf\Zf$ is a superspace.
\end{proof}

The diagonal morphism $\Delta_\Xf \colon \Xf\to \Xf\times\Xf$ of a superstack $\Xf$ has the important property that it encodes the isomorphisms between the objects of the category $\Xf$; namely, for every superscheme $\Sc$ and objects $x_1$, $x_2$ in $\Xf(\Sc)$, if we consider the latter as morphisms $x_1\colon \Sc\to \Xf$, $x_2\colon \Sc \to \Xf$ by the 2-Yoneda Lemma, there is a 2-cartesian diagram
\begin{equation}\label{eq:diagonal}
\xymatrix{ 
\mathfrak{I}som_{\Xf(\Sc)}(x_1,x_2) \ar[r]\ar[d] & \Sc \ar[d]^{(x_1,x_2)} \\
\Xf\ar[r]^{\Delta_\Xf}& \Xf\times\Xf
}
\end{equation}

More generally, if $f\colon \Xf\to \Yf$ is a superstack morphism, and $(x_1,x_2,\alpha)$ (where $\alpha\colon f(x_1) \iso \allowbreak f(x_2)$ is an isomorphism in $\Yf(\Sc)$) is an object of $(\Xf\times_\Yf \Xf)(\Sc)$,  we have a 2-cartesian diagram
\begin{equation}\label{eq:diagonal2}
\xymatrix{ 
\mathfrak{I}som_{\Xf(\Sc),\alpha}(x_1,x_2) \ar[r]\ar[d] & \Sc \ar[d]^{(x_1,x_2,\alpha)} \\
\Xf\ar[r]^{\Delta_f}& \Xf\times_\Yf\Xf
}
\end{equation}
where $\mathfrak{I}som_{\Xf(\Sc),\alpha}(x_1,x_2)$ is the pre-image of $\alpha$ under the natural map $f\colon \mathfrak{I}som_{\Xf(\Sc)}(x_1,x_2)\to \mathfrak{I}som_{\Yf(\Sc)}(f(x_1),f(x_2))$.

\begin{defin}\label{def:injective}
A morphism $f\colon \Xf\to\Yf$ of superstacks is a monomorphism or injective (resp.\! an isomorphism) if for each superscheme $\Sc$ the functor $f(\Sc)\colon \Xf(\Sc)\to\Yf(\Sc)$ is fully faithful (resp.\! an equivalence of categories). We also say that $\Xf$ is a sub-superstack of $\Yf$ (via $f$) and write $\Xf \subseteq \Yf$.
\end{defin}

\begin{remark}\label{rem:inj}
For a morphism $f\colon\Xcal\to\Ycal$ of superschemes, there is already a notion of injectivity, namely, that the induced map between the underlying topological spaces $f_{bos}\colon X\to Y$ is injective. For morphisms of superschemes that are injective in the sense of Definition \ref{def:injective}, we will always use the term monomorphism.
\end{remark}

To define the notion of open and closed sub-superstack it is more convenient to include a requirement of representability.

\begin{defin}\label{def:openclosed} A sub-superstack $\Xf\subseteq \Yf$ is open (resp.\! closed), if for every superscheme $\Vc$ and every superstack morphism $\Vc\to \Yf$, the superstack $\Vc\times_\Yf\Xf$ is representable by an open (resp.\! closed) superscheme of $\Vc$.
\end{defin}

\begin{lemma}\label{lem:injbasechange} Let $f\colon \Xf\to\Yf$ be a morphism of superstacks. 
\begin{enumerate}
\item If $f$ is injective, for every superstack morphism $g\colon \Zf\to\Yf$, the base change morphism $f_\Zf\colon \Xf_\Zf:=\Xf\times_{f,\Yf,g}\Zf \to \Zf$ is injective.
\item If for every superscheme $\Sc$ and every morphism $g\colon\Sc\to \Yf$, the base change morphism $f_\Sc\colon \Xf\times_\Yf \Sc\to \Sc$ is injective, then $f$ is injective.
\end{enumerate}
\end{lemma}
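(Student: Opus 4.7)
For part (1), I would unwind Definition \ref{def:sstackprod} and verify full faithfulness of $f_\Zf(\Sc)$ directly on each superscheme $\Sc$. An object of $(\Xf\times_\Yf\Zf)(\Sc)$ is a triple $(x,z,\alpha)$ with $\alpha\colon f(x)\iso g(z)$ in $\Yf(\Sc)$, and a morphism $(x,z,\alpha)\to(x',z',\alpha')$ over $\Id_\Sc$ is a pair $(u,v)$ satisfying $g(v)\circ\alpha=\alpha'\circ f(u)$. The projection $f_\Zf(\Sc)$ sends $(u,v)$ to $v$. Faithfulness is immediate: two lifts $(u,v),(u',v)$ of the same $v$ both force $f(u)=(\alpha')^{-1}\circ g(v)\circ\alpha=f(u')$, whence $u=u'$ by faithfulness of $f(\Sc)$. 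For fullness, given $v\colon z\to z'$, apply fullness of $f(\Sc)$ to the morphism $(\alpha')^{-1}\circ g(v)\circ\alpha\colon f(x)\to f(x')$ to extract a $u$ with $f(u)=(\alpha')^{-1}\circ g(v)\circ\alpha$; the pair $(u,v)$ is then the required lift.

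For part (2), the plan is to extract full faithfulness of $f(\Sc)$ on a pair $x_1,x_2\in\Xf(\Sc)$ from a single instance of the hypothesis. The key choice is to take $g\colon\Sc\to\Yf$ corresponding via the 2-Yoneda Lemma \ref{lem:2yoneda} to $f(x_2)\in\Yf(\Sc)$, so that $g(\Id_\Sc)=f(x_2)$. For each candidate isomorphism $\alpha\colon f(x_1)\iso f(x_2)$ in $\Yf(\Sc)$, the triple $e_\alpha:=(x_1,\Id_\Sc,\alpha)$ and the tautological triple $e_2:=(x_2,\Id_\Sc,\Id_{f(x_2)})$ are objects of $(\Xf\times_\Yf\Sc)(\Sc)$ lying above $\Id_\Sc\in\Sc(\Sc)$. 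Applying the assumed full faithfulness of $f_\Sc(\Sc)$ to these two objects yields a bijection between $\Hom(e_\alpha,e_2)$ and $\Hom_{\Sc(\Sc)}(\Id_\Sc,\Id_\Sc)$, and the latter is a singleton because $\Sc$ is a superspace.

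Translating the resulting unique element back through Definition \ref{def:sstackprod}, it is a pair $(u,\Id_{\Id_\Sc})$, and the compatibility relation collapses to $f(u)=\alpha$. Letting $\alpha$ range over $\Isom_{\Yf(\Sc)}(f(x_1),f(x_2))=\Hom_{\Yf(\Sc)}(f(x_1),f(x_2))$ (equal since $\Yf(\Sc)$ is a groupoid) then produces the desired bijection with $\Hom_{\Xf(\Sc)}(x_1,x_2)$, which is exactly full faithfulness of $f(\Sc)$. The main obstacle is this second part: one has to pick a test morphism $g$ for which the fiber product at $\Sc$ captures \emph{every} candidate $\alpha$ simultaneously as a distinct object, while the rigidity of $\Sc(\Sc)$ (its trivial automorphism group on $\Id_\Sc$) is what forces the uniqueness of the preimage $u$. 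Part (1) is, by contrast, a direct bookkeeping exercise with no conceptual obstruction once the explicit description of the fiber product is in hand.
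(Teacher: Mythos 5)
Your proposal is correct and follows essentially the same route as the paper: for part (2) you make the same key choice of test morphism $g\colon\Sc\to\Yf$ corresponding to $f(x_2)$ via 2-Yoneda, compare the same tautological objects $(x_1,\Id_\Sc,\alpha)$ and $(x_2,\Id_\Sc,\Id)$ in the fiber product, and extract the unique $u$ with $f(u)=\alpha$ from the full faithfulness of $f_\Sc(\Sc)$ together with the rigidity of $\Sc(\Sc)$. Part (1), which the paper dismisses as straightforward, is carried out correctly in your explicit bookkeeping.
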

\begin{proof} (1) is straightforward. For (2), take a superscheme $\Sc$ and fix an object $X$ of $\Xf(\Sc)$. Then $f(X)$ is an object of $\Yf(\Sc)$, and by the 2-Yoneda Lemma \ref{lem:2yoneda}, it corresponds to a morphism $g\colon \Sc\to \Yf$. We now work with the base change by $g$, so that we have a morphism $\Xf_\Sc:=\Xf\times_{f,\Yf,g}\Sc\to \Sc$ that is injective by hypothesis. Consider another object $X'\in\Xf(\Sc)$ and a morphism $\alpha\colon f(X') \to f(X)=g$. Then, we have objects  $(X,\Id,\Id)$ and $(X',\Id,\alpha)$ of $\Xf_\Sc(\Sc)$ and a morphism (the identity) in $\Sc(\Sc)$ between $f_\Sc(X',\Id,\alpha)$ and $f_\Sc(X,\Id,Id)$. This morphism has to be of the form $(\phi,\Id)$, where $\phi\colon X'\to X$ is a morphism such that $f(\phi)=\alpha$. Since $f_\Sc\colon \Xf_\Sc(\Sc)\to \Sc(\Sc)$ is fully faithful, $\phi$ is uniquely determined by $\alpha$, and  this   means that $\Xf(\Sc) \to \Yf(\Sc)$ is fully faithful.
\end{proof}

\begin{lemma}\label{lem:injdiag}
If $f\colon \Xf\to\Yf$ is a superstack morphism,  the natural morphism $\iota\colon\Xf\times_\Yf \Xf \to \Xf\times\Xf$ is representable by superspaces. If $\Yf$ is a superspace, then $\iota$ is injective.
\end{lemma}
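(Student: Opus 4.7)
The plan is to reduce both claims to a direct inspection of the functor $\iota(\Sc)\colon (\Xf\times_\Yf\Xf)(\Sc) \to (\Xf\times\Xf)(\Sc)$ for a test superscheme $\Sc$, and then invoke Lemma~\ref{lem:faith} for representability. By Definition~\ref{def:sstackprod}, an object of $(\Xf\times_\Yf\Xf)(\Sc)$ is a triple $(x_1,x_2,\alpha)$ with $x_i\in\Xf(\Sc)$ and $\alpha\colon f(x_1)\iso f(x_2)$ an isomorphism in $\Yf(\Sc)$, while a morphism $(x_1,x_2,\alpha)\to(x_1',x_2',\alpha')$ over $\Id_\Sc$ is a pair $(u,v)$ with $u\colon x_1\to x_1'$, $v\colon x_2\to x_2'$ in $\Xf(\Sc)$ such that $\alpha'\circ f(u)=f(v)\circ\alpha$. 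The functor $\iota(\Sc)$ forgets $\alpha$, sending $(x_1,x_2,\alpha)\mapsto(x_1,x_2)$ and $(u,v)\mapsto(u,v)$.

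For the first assertion, I would simply observe that $\iota(\Sc)$ is faithful: two morphisms $(u,v)$ and $(u',v')$ in $(\Xf\times_\Yf\Xf)(\Sc)$ with the same image in $(\Xf\times\Xf)(\Sc)$ have $u=u'$ and $v=v'$ by definition. Since this holds for every superscheme $\Sc$, Lemma~\ref{lem:faith} yields that $\iota$ is representable by superspaces.

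For the second assertion, assume $\Yf$ is a superspace, so that each $\Yf(\Sc)$ is (equivalent to) a setoid; in particular any two parallel morphisms in $\Yf(\Sc)$ are equal. It remains to check that $\iota(\Sc)$ is full (it is already faithful). Given $(x_1,x_2,\alpha)$ and $(x_1',x_2',\alpha')$ in $(\Xf\times_\Yf\Xf)(\Sc)$ and an arbitrary morphism $(u,v)\colon(x_1,x_2)\to(x_1',x_2')$ in $(\Xf\times\Xf)(\Sc)$, the two compositions $\alpha'\circ f(u)$ and $f(v)\circ\alpha$ are both morphisms $f(x_1)\to f(x_2')$ in $\Yf(\Sc)$, hence coincide. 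Therefore $(u,v)$ defines a morphism $(x_1,x_2,\alpha)\to(x_1',x_2',\alpha')$ in $(\Xf\times_\Yf\Xf)(\Sc)$, proving fullness. Hence $\iota(\Sc)$ is fully faithful for every $\Sc$, i.e.\ $\iota$ is injective in the sense of Definition~\ref{def:injective}.

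There is no substantial obstacle here: the main point is the unpacking of the $2$-fiber product as in Definition~\ref{def:sstackprod}, after which faithfulness is automatic and fullness is forced by the setoid hypothesis on $\Yf(\Sc)$. If anything, the only delicate point is keeping track of the compatibility condition $\alpha'\circ f(u)=f(v)\circ\alpha$ and noticing that in a setoid it is imposed for free, which is exactly the role played by the superspace hypothesis.
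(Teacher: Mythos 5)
Your proof is correct and follows essentially the same route as the paper's: unpack the $2$-fiber product, observe that $\iota(\Sc)$ is faithful on morphisms to get representability via Lemma~\ref{lem:faith}, and then use the setoid property of $\Yf(\Sc)$ to upgrade faithfulness to full faithfulness. If anything, your phrasing of the fullness step (the compatibility condition is imposed for free because parallel morphisms in a setoid coincide) is slightly more careful than the paper's, which simply asserts that $\alpha$ and $\alpha'$ must be identities.
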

\begin{proof} For every superscheme $\Sc$, the functor $\iota(\Sc)\colon(\Xf\times_\Yf \Xf)(\Sc)\to (\Xf\times\Xf)(\Sc)$ maps $(x_1,x_2,\alpha)$ to $(x_1,x_2)$ where $x_1$, $x_2$ are objects of $\Xf(\Sc)$ and $\alpha\colon f(x_1)\iso f(x_2)$ is an isomorphism in $\Yf(\Sc)$. A morphism $(x_1,x_2,\alpha)\to (x'_1,x'_2,\alpha')$ is a pair of isomorphisms $\phi_i\colon x_i\to x'_i$ such that  $\alpha' f(\phi_1)=f(\phi_2) \alpha$. Since the map 
$$
\Hom_{\Xf(\Sc)}((x_1,x_2,\alpha), (x'_1,x'_2,\alpha'))\to \Hom_{\Yf(\Sc)}((x_1,x_2), (x'_1,x'_2))
$$
 sends $(\phi_1,\phi_2)$ to $(\phi_1,\phi_2)$, we see that $\iota(\Sc)$ is faithful, and then $\iota\colon\Xf\times_\Yf \Xf \to \Xf\times\Xf$ is representable by superpaces by Lemma \ref{lem:faith}. If $\Yf$ is a superspace, $\Yf(\Sc)$ is a setoid. Then $\alpha$ and $\alpha'$ have to be the identity, so that $\iota(\Sc)$ is fully faithful;   now  oneapplies Definition \ref{def:injective}.
\end{proof}

\begin{prop}\label{prop:injdiag} \
\begin{enumerate}
\item
A superstack $\Xf$ is a superspace if and only if the diagonal $\Delta_\Xf\colon \Xf\to\Xf\times \Xf$ is injective.
\item A superstack morphism $f\colon \Xf\to\Yf$ is representable by superspaces if and only if the diagonal $\Delta_f\colon \Xf \to \Xf\times_\Yf \Xf$ is injective.
\end{enumerate}
\end{prop}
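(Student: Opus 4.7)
The plan is to unpack both parts via the 2-Yoneda lemma and Definition \ref{def:sstackprod}, reducing each equivalence to a direct hom-set calculation. By Definition \ref{def:injective}, injectivity of the diagonal amounts in each case to the functor $\Xf(\Sc)\to(\Xf\times\Xf)(\Sc)$, respectively $\Xf(\Sc)\to(\Xf\times_\Yf\Xf)(\Sc)$, being fully faithful for every superscheme $\Sc$.

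For (1), a morphism $(x,x)\to(x',x')$ in $(\Xf\times\Xf)(\Sc)$ is a pair $(\phi_1,\phi_2)$ of morphisms $x\to x'$ in $\Xf(\Sc)$, and $\Delta_\Xf(\Sc)$ sends $\phi$ to $(\phi,\phi)$. Hence full faithfulness is equivalent to the diagonal map $\Hom_{\Xf(\Sc)}(x,x')\to\Hom_{\Xf(\Sc)}(x,x')\times\Hom_{\Xf(\Sc)}(x,x')$ being bijective, which is equivalent to $|\Hom_{\Xf(\Sc)}(x,x')|\le 1$. Since $\Xf(\Sc)$ is a groupoid, this bound is equivalent to $\Xf(\Sc)$ being equivalent to a setoid, i.e.~to $\Xf$ being a superspace by Definition \ref{def:sspc}.

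For (2), an analogous reading of Definition \ref{def:sstackprod} shows that a morphism $(x,x,\Id_{f(x)})\to(x',x',\Id_{f(x')})$ in $(\Xf\times_\Yf\Xf)(\Sc)$ is a pair $(\phi_1,\phi_2)$ of morphisms $x\to x'$ in $\Xf(\Sc)$ satisfying $f(\phi_1)=f(\phi_2)$. The same cardinality argument then yields that $\Delta_f$ is injective if and only if $f(\Sc)\colon\Xf(\Sc)\to\Yf(\Sc)$ is faithful for every $\Sc$, and the $\Leftarrow$ direction of (2) is immediate from Lemma \ref{lem:faith}.

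For the converse in (2), suppose $f$ is representable by superspaces and take $\phi_1,\phi_2\colon x\to x'$ in $\Xf(\Sc)$ with $f(\phi_1)=f(\phi_2)=:\beta$. Setting $y:=f(x)\colon\Sc\to\Yf$, the base change $\Xf\times_{f,\Yf,y}\Sc$ is a superspace by hypothesis. A direct check against Definition \ref{def:sstackprod} shows that $(x,\Id_\Sc,\Id_y)$ and $(x',\Id_\Sc,\beta^{-1})$ are objects of $(\Xf\times_\Yf\Sc)(\Sc)$, and that each $\phi_i$ furnishes a morphism between them, precisely because $\beta^{-1}\circ f(\phi_i)=\Id_y$. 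The setoid property of $\Xf\times_\Yf\Sc$ then forces $\phi_1=\phi_2$. The only step requiring care is this last one: one must exhibit $\phi_1$ and $\phi_2$ as two morphisms between a single fixed pair of objects in the base-changed superspace, for which the third component $\beta^{-1}$ (the $\alpha$ in Definition \ref{def:sstackprod}) is essential.
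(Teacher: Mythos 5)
Your proof is correct. Part (1) is essentially the paper's own argument: both reduce injectivity of $\Delta_\Xf$ to the condition that there is at most one (iso)morphism between any two objects of $\Xf(\Sc)$, you by a direct hom-set computation, the paper by reading diagram \eqref{eq:diagonal} as saying that $\mathfrak{I}som_{\Xf(\Sc)}(x,y)$ is a subobject of $\Sc$. Part (2) is where you genuinely diverge. You first establish the clean intermediate characterization that $\Delta_f$ is injective if and only if $f(\Sc)$ is faithful for every superscheme $\Sc$ (a fact the paper never states explicitly, though it is implicit in Lemma \ref{lem:faith}), and then get the ``injective diagonal implies representable'' direction for free from Lemma \ref{lem:faith}, and the converse by a hands-on argument: base-changing along $y=f(x)$ and packaging two morphisms $\phi_1,\phi_2$ with $f(\phi_1)=f(\phi_2)=\beta$ as parallel arrows between the fixed objects $(x,\Id_\Sc,\Id)$ and $(x',\Id_\Sc,\beta^{-1})$ of the setoid $(\Xf\times_\Yf\Sc)(\Sc)$ --- and you are right that the twist by $\beta^{-1}$ in the third component is the one point that needs care. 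The paper instead reduces both directions of (2) to part (1) via the base-change lemmas: for the forward direction it assumes $\Yf$ is a superspace, factors $\Delta_\Xf=\iota\circ\Delta_f$ using Lemma \ref{lem:injdiag}, and applies (1); for the converse it applies (1) to $\Xf_\Sc$ and factors $\Delta_{\Xf_\Sc}$ through the relative diagonal, concluding with Lemma \ref{lem:injbasechange}. Your route is more elementary and self-contained (it only needs Lemma \ref{lem:faith} and the explicit description of the fiber product), and it yields the reusable faithfulness criterion as a by-product; the paper's route is shorter on the page because it leans on machinery (Lemmas \ref{lem:injbasechange} and \ref{lem:injdiag}) already set up for other purposes.
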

\begin{proof}
(1) This is \cite[Lemma 4.8]{CasWis17}:  the injectivity of the diagonal means, according to equation \eqref{eq:diagonal}, that for every  objects $x$, $y$ in $\Xf(\Sc)$, $ \mathfrak{I}som_{\Xf(\Sc)}(x,y)$ is a subobject of $\Sc$, that is,   there is at most one isomorphism between $x$ and $y$. Thus, $\Xf(\Sc)$ is equivalent to a setoid.

(2) Suppose that $\Delta_f$ is injective. By Lemma \ref{lem:injbasechange}, to prove that $f\colon \Xf\to\Yf$ is representable by superspaces we can assume that $\Yf$ is a superspace.  We need to prove that  $\Xf$ is a superspace as well. Actually $\iota\colon \Xf\times_\Yf \Xf \to \Xf\times \Xf$ is injective by Lemma \ref{lem:injdiag}, so that  $\Delta_\Xf=\iota \circ\Delta_f$ is injective as well and we apply (1) to conclude. Conversely, if $f\colon \Xf\to\Yf$ is representable by superspaces, then for  every morphism $g\colon\Sc \to \Yf$ from a  superscheme $\Sc$, the superstack $\Xf_\Sc:=\Xf\times_{f,\Yf,g} \Sc$ is an algebraic superspace. By (1), its diagonal  morphism $\Delta_{\Xf_\Sc}$ is injective. Since $\Delta_{\Xf_\Sc}$ is the composition of  the relative diagonal $\Delta_{f_\Sc}\colon\Xf_\Sc \to \Xf_\Sc\times_\Sc \Xf_\Sc$ and the natural morphism $\Xf_\Sc\times_\Sc \Xf_\Sc\to \Xf_\Sc\times \Xf_\Sc$, we see that $\Delta_{f_\Sc}$ is also injective.  Then, $\Delta_f$ is injective  by Lemma \ref{lem:injbasechange}. 
\end{proof}

\begin{prop}\label{prop:isodiag} A morphism $f\colon \Xf \to \Yf$ of superstacks is injective if and only if  the diagonal $\Delta_f\colon \Xf \to \Xf\times_\Yf \Xf$ is an isomorphism.
\end{prop}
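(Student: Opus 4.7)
The plan is to reduce both sides of the equivalence to concrete statements about the functors $f(\Sc)$ and $\Delta_f(\Sc)$ on sections, and to match them directly. By Definition \ref{def:injective}, a morphism of superstacks is an isomorphism (respectively, injective) exactly when it induces an equivalence (respectively, a fully faithful functor) of categories on sections over every superscheme $\Sc$; together with the 2-Yoneda Lemma \ref{lem:2yoneda}, this lets us argue entirely in terms of the functor
$$\Delta_f(\Sc)\colon \Xf(\Sc)\to(\Xf\times_\Yf\Xf)(\Sc),\qquad x\mapsto(x,x,\Id_{f(x)}),\quad \phi\mapsto(\phi,\phi),$$
where the description of source and target, and of morphisms between them, is supplied by Definition \ref{def:sstackprod}.

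To analyze fullness and faithfulness of $\Delta_f(\Sc)$, I would unfold a morphism $(x,x,\Id_{f(x)})\to(y,y,\Id_{f(y)})$ in $(\Xf\times_\Yf\Xf)(\Sc)$: by Definition \ref{def:sstackprod} this is exactly a pair $(\phi,\psi)$ of morphisms $x\to y$ in $\Xf(\Sc)$ satisfying the compatibility $f(\psi)=f(\phi)$. Such a pair is in the image of $\Delta_f(\Sc)$ iff $\phi=\psi$, in which case the preimage $\chi=\phi$ is unique. Hence $\Delta_f(\Sc)$ is automatically faithful, and it is full precisely when $f(\phi)=f(\psi)$ forces $\phi=\psi$, i.e.\! when $f(\Sc)$ is faithful.

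For essential surjectivity, given an object $(x_1,x_2,\alpha)$ of $(\Xf\times_\Yf\Xf)(\Sc)$ with $\alpha\colon f(x_1)\iso f(x_2)$, an isomorphism to some $\Delta_f(z)=(z,z,\Id_{f(z)})$ is a pair of isomorphisms $\phi_i\colon z\iso x_i$ in $\Xf(\Sc)$ satisfying $\alpha\,f(\phi_1)=f(\phi_2)$. Taking $z=x_1$ and $\phi_1=\Id_{x_1}$, the problem collapses to lifting $\alpha$ to an isomorphism $\phi_2\colon x_1\iso x_2$ with $f(\phi_2)=\alpha$; since $\Xf(\Sc)$ is a groupoid (all morphisms there are isomorphisms), the existence of such a lift for every such $\alpha$ and every pair $x_1,x_2$ is equivalent to the fullness of $f(\Sc)$.

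Putting these two analyses together, $\Delta_f(\Sc)$ is an equivalence of categories if and only if $f(\Sc)$ is both faithful and full, which by Definition \ref{def:injective} is precisely injectivity of $f$; quantifying over all superschemes $\Sc$ yields the claimed equivalence. I do not expect any essential obstacle: the only delicate point is the careful bookkeeping of the compatibility condition in Definition \ref{def:sstackprod}, after which the verification is purely formal and groupoid-theoretic.
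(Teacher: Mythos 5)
Your proposal is correct and follows essentially the same route as the paper: unwind Definition \ref{def:sstackprod} to describe $(\Xf\times_\Yf\Xf)(\Sc)$ explicitly and check fiberwise that $\Delta_f(\Sc)$ is an equivalence exactly when $f(\Sc)$ is fully faithful. Your version is in fact a bit more complete than the paper's (which leaves the fully-faithfulness check and the converse as ``analogous''/``similar''), since your iff-characterizations of fullness and essential surjectivity of $\Delta_f(\Sc)$ deliver both implications at once.
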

\begin{proof} Assume that $f$ is injective and consider an object $(x_1,x_2,\alpha\colon f(\Sc)(x_1)\iso f(\Sc)(x_2))$ of $(\Xf\times_\Yf \Xf)(\Sc)$ for a superscheme $\Sc$. Since $f(\Sc)$ is fully faithful, there is an isomorphism $\beta\colon x_1\iso x_2$ such that $\alpha=f(\Sc)(\beta)$. This proves that $\Delta_f(\Sc)\colon \Xf(\Sc) \to (\Xf\times_\Yf \Xf)(\Sc)$ is essentially surjective. Analogously, we see that it is fully faithful, and then it is an equivalence of categories. The converse is similar.
\end{proof}
 
\begin{corol}\label{cor:injrepres} Every injective morphism of superstacks is representable by superspaces.
\end{corol}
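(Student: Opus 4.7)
The plan is to chain together the two immediately preceding propositions. Suppose $f\colon \Xf\to\Yf$ is an injective morphism of superstacks. By Proposition \ref{prop:isodiag}, the relative diagonal $\Delta_f\colon \Xf \to \Xf\times_\Yf \Xf$ is an isomorphism of superstacks.

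Next, I would observe that any isomorphism of superstacks is in particular injective in the sense of Definition \ref{def:injective}: an equivalence of categories $\Xf(\Sc)\to(\Xf\times_\Yf\Xf)(\Sc)$ is fully faithful at every superscheme $\Sc$. Hence $\Delta_f$ is injective.

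Finally, I would invoke part (2) of Proposition \ref{prop:injdiag}, which says precisely that injectivity of the diagonal $\Delta_f$ is equivalent to $f$ being representable by superspaces. This yields the conclusion.

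There is no real obstacle here; the corollary is essentially a formal consequence of combining Propositions \ref{prop:injdiag} and \ref{prop:isodiag}. The only tiny point to check is the trivial remark that an isomorphism of superstacks is a monomorphism, which follows immediately from the fact that equivalences of categories are fully faithful.
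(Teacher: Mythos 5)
Your argument is correct and is exactly the paper's proof: the authors simply cite Propositions \ref{prop:injdiag} and \ref{prop:isodiag}, and you have filled in the (routine) intermediate step that an isomorphism of superstacks is in particular injective. Nothing further is needed.
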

\begin{proof} It follows from Propositions \ref{prop:injdiag} and \ref{prop:isodiag}.
\end{proof}

\subsection{Quotient superstacks}
\label{subsec:quot}

Quotient superstacks are defined in the same way as quotient stacks.
Let $\Gsc\to\Sc$ be an $\Sc$-group superscheme acting on an $\Sc$-superscheme $\Xcal$ on the right via an action $\mu\colon \Xcal\times\Gsc \to \Xf$ (See Appendix \ref{s:supergr}).
As in the classical case, we can define:
\begin{defin}\label{def:quotstack}
 The quotient category  fibered in grupoids 
$p_{[\Xcal/\Gsc]}\colon [\Xcal/\Gsc] \to \Sf_{et}/\Sc$
is given by:
\begin{enumerate}
\item objects of $[\Xcal/\Gsc](\Tc)$ over a $\Sc$-superscheme $\Tc\to\Sc$ are diagrams
$$
\xymatrix{ \Psc \ar[d]^\pi\ar[r]^f & \Xcal \\
\Tc &
}
$$
where $\pi\colon\Psc\to\Tc$ is a principal $\Gsc$-bundle and $f\colon \Psc\to\Xcal$ is a $\Gsc$-equivariant morphism of $\Sc$-superschemes,
\item  morphisms are pairs of $\Sc$-superscheme morphisms $\phi\colon \Psc'\to\Psc$, $\psi\colon \Tc'\to\Tc$ such that the diagram
$$
\xymatrix{
\Psc' \ar[d]^{\pi'}\ar[r]_\phi \ar@/^1.5pc/[rr]^{f'} \ar@{}[rd] |{\square} & \Psc\ar[d]^\pi\ar[r]^f & \Xcal \\
\Tc'\ar[r]_\psi& \Tc
}
$$
commutes and the   square is cartesian.
\end{enumerate}
One easily sees that   $[\Xcal/\Gsc]\to\Sf_{et}/\Sc$ is a pre-superstack.
\end{defin}

\begin{defin}
The classifying pre-superstack of group superscheme $\Gsc$ is the quotient pre-superstack $\Bf\Gsc:=[\Sc/\Gsc]\to\Sf_{et}/\Sc$, where $\Gsc$ acts on $\Sc$   trivially.\end{defin}

Using Proposition \ref{prop:pbdescent} on descent for principal superbundles we also have:
\begin{prop}\label{prop:quotstack} If $\Gc\to\Sc$ is an affine $\Sc$-group superscheme,
 the quotient pre-superstack $[\Xcal/\Gsc]$ is a superstack. In particular, the classifying pre-superstack $\Bf\Gsc$ is a superstack.
\qed
\end{prop}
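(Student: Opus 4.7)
My plan is to verify effective descent for \([\Xcal/\Gsc]\), since the pre-superstack condition (isomorphisms form a sheaf) is already recorded in Definition \ref{def:quotstack}. The overall strategy is to glue the two pieces of data packaged in an object of \([\Xcal/\Gsc]\) separately: the principal \(\Gsc\)-bundle via Proposition \ref{prop:pbdescent} --- the single place where the affineness of \(\Gsc\) enters --- and the equivariant morphism to \(\Xcal\) via the subcanonicity of \(\Sf_{et}\) noted in Section \ref{subsec:sstacks}.

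Concretely, I would fix a \(\Sc\)-superscheme \(\Tc\), an \'etale cover \(\{U_i\to\Tc\}\), and a descent datum: objects \((\pi_i\colon\Psc_i\to U_i,\, f_i\colon\Psc_i\to\Xcal)\) of \([\Xcal/\Gsc](U_i)\) together with, for each pair \((i,j)\), an isomorphism \(\varphi_{ij}\) over \(U_{ij}:=U_i\times_\Tc U_j\) satisfying the cocycle condition on triple overlaps. Unwinding Definition \ref{def:quotstack}, each \(\varphi_{ij}\) amounts to a \(\Gsc\)-equivariant \(U_{ij}\)-isomorphism \((\Psc_i)_{U_{ij}}\xrightarrow{\sim}(\Psc_j)_{U_{ij}}\) that intertwines \(f_i\) and \(f_j\).

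I would then apply Proposition \ref{prop:pbdescent} to the descent datum of principal bundles \((\Psc_i,\varphi_{ij})\), obtaining a principal \(\Gsc\)-bundle \(\pi\colon\Psc\to\Tc\) equipped with canonical \(\Gsc\)-equivariant isomorphisms \(\Psc\times_\Tc U_i\cong\Psc_i\) compatible with the \(\varphi_{ij}\). Under these identifications, the \(f_i\) become morphisms \(\Psc\times_\Tc U_i\to\Xcal\) agreeing on overlaps; since \(\{\Psc\times_\Tc U_i\to\Psc\}\) is an \'etale cover of \(\Psc\) and \(\Sf_{et}\) is subcanonical, the functor of points of the superscheme \(\Xcal\) is an \'etale sheaf, so these morphisms glue uniquely to a morphism \(f\colon\Psc\to\Xcal\). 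The \(\Gsc\)-equivariance of \(f\) is itself an \'etale-local property and hence descends from equivariance of the \(f_i\).

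The remaining bookkeeping --- that the pair \((\Psc,f)\) is indeed an object of \([\Xcal/\Gsc](\Tc)\) restricting to the given descent datum, and is unique up to a unique isomorphism, for which one invokes the pre-superstack condition --- is routine. The case of \(\Bf\Gsc=[\Sc/\Gsc]\) with trivial \(\Gsc\)-action is a direct specialization. The main and essentially only nontrivial step is the bundle descent encoded in Proposition \ref{prop:pbdescent}, which is precisely why affineness of \(\Gsc\) is required; once that input is taken for granted, everything else is a formal consequence of subcanonicity of \(\Sf_{et}\).
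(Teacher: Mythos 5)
Your proposal is correct and follows exactly the route the paper takes: the paper's entire proof is the remark that the result follows ``using Proposition \ref{prop:pbdescent} on descent for principal superbundles,'' and your write-up simply fills in the details of that argument --- descending the bundle via Proposition \ref{prop:pbdescent} (where affineness of $\Gsc$ is used) and gluing the equivariant morphism to $\Xcal$ by subcanonicity of $\Sf_{et}$.
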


\begin{remark} If we extend the definition of the quotient pre-superstack to the category of \emph{algebraic superspaces} then, for every algebraic superspace $\Xf$ with a $\Gsc$-action, the quotient  $[\Xf/\Gsc]\to\Sf_{et}/\Sc$ is also a superstack. This is due to the fact 
that \'etale descent for principal bundles in the category of algebraic superspaces still holds true. We shall not  need this result in these notes.
\end{remark}

There is an object in $[\Xcal/\Gsc](\Xcal)$ given by the action
$$
\xymatrix{
\Xcal\times_\Sc\Gsc \ar[r]^\mu \ar[d]^{p_1} & \Xcal \\
\Xcal
}
$$
By the 2-Yoneda Lemma \ref{lem:2yoneda}, it corresponds to a superstack morphism
$$
q \colon \Xcal \to [\Xcal/\Gsc]\,,
$$
which is interpreted as taking the quotient by the action $\mu$.

\begin{prop}\label{prop:quotmorphism} Let $\psi\colon \Tc \to [\Xcal/\Gsc]$ be a superstack morphism from a $\Sc$-superscheme  $\Tc$. Then, the fiber product $\Xcal\times_{q, [\Xcal/\Gsc],\psi}\Tc$ is (representable by) the superscheme $\Gsc\times_\Sc\Tc$. In particular,
the quotient morphism $q \colon \Xcal \to [\Xcal/\Gsc]$ is superschematic (Definition \ref{def:repmor}). 
\end{prop}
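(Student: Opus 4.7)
The plan is to unfold Definition~\ref{def:sstackprod} for the fiber product after translating $\psi$ via 2-Yoneda into principal bundle data. By Lemma~\ref{lem:2yoneda}, $\psi\colon\Tc\to[\Xcal/\Gsc]$ corresponds to an object $(\pi\colon\Psc\to\Tc,\,f\colon\Psc\to\Xcal)$ of $[\Xcal/\Gsc](\Tc)$ --- a principal $\Gsc$-bundle together with a $\Gsc$-equivariant morphism --- while $q$ is encoded, by construction, by the action datum $(p_1\colon\Xcal\times_\Sc\Gsc\to\Xcal,\,\mu)$ in $[\Xcal/\Gsc](\Xcal)$.

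For any test superscheme $\Ucal$, an object of $(\Xcal\times_{q,[\Xcal/\Gsc],\psi}\Tc)(\Ucal)$ is then a triple $(x,t,\alpha)$ with $x\colon\Ucal\to\Xcal$, $t\colon\Ucal\to\Tc$, and $\alpha\colon q(x)\iso\psi(t)$ an isomorphism in $[\Xcal/\Gsc](\Ucal)$. Pulling back the universal data, $q(x)$ is the \emph{trivial} $\Gsc$-bundle $\Ucal\times_\Sc\Gsc\to\Ucal$ with equivariant map $(u,g)\mapsto x(u)\cdot g$, while $\psi(t)=(t^*\pi,\,t^*f)$. The central observation is that a $\Gsc$-equivariant isomorphism from a trivial principal bundle to $t^*\Psc$ is determined by its value on the identity section, hence is in bijection with sections $\Ucal\to t^*\Psc$, equivalently with lifts $\sigma\colon\Ucal\to\Psc$ of $t$ along $\pi$. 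Compatibility of $\alpha$ with the equivariant maps to $\Xcal$ then forces $x=f\circ\sigma$, so the triple $(x,t,\alpha)$ is equivalent to a single morphism $\sigma\in\Hom(\Ucal,\Psc)$.

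Consequently the fiber product represents the functor of points of $\Psc$, and is therefore (representable by) the superscheme $\Psc$; this is the principal $\Gsc$-bundle over $\Tc$ classified by $\psi$, which is exactly $\Gsc\times_\Sc\Tc$ when $\psi$ factors through $q$, and in any case is representable by a superscheme because $\Gsc$ is assumed affine (cf.\ the appendix on principal superbundles). The second clause of the proposition then follows immediately from Definition~\ref{def:repmor}, since for any superscheme $\Tc$ with a morphism $\psi$ to $[\Xcal/\Gsc]$ the fiber product $\Xcal\times_{[\Xcal/\Gsc]}\Tc$ is a superscheme. The main potential obstacle is the categorical bookkeeping in the key identification of isomorphisms $\alpha$ in $[\Xcal/\Gsc](\Ucal)$ with sections of the pulled-back bundle $t^*\Psc\to\Ucal$; once that bijection is made explicit, the rest is direct unwinding of the definitions.
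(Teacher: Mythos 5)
Your proposal follows the same overall route as the paper --- translate $\psi$ via the 2-Yoneda Lemma into a pair $(\pi\colon\Psc\to\Tc,\ f_\psi\colon\Psc\to\Xcal)$ and unwind Definition \ref{def:sstackprod} for the fiber product --- but it lands on a different, and in fact more accurate, identification of that fiber product. You conclude that $\Xcal\times_{q,[\Xcal/\Gsc],\psi}\Tc$ represents the functor $\Ucal\mapsto\Hom(\Ucal,\Psc)$, i.e.\ it is the total space $\Psc$ of the torsor classified by $\psi$; this is the correct general statement (it is exactly how the analogous fact reads for ordinary quotient stacks: $q$ is the universal $\Gsc$-torsor). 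The paper instead asserts the answer is $\Gsc\times_\Sc\Tc$, and its proof reaches this by fixing one object $(\gamma,\phi,\alpha)$ over a given $\phi$ and showing that any other object over the same $\phi$ differs from it by a morphism $h\colon\Vc\to\Gsc$. That step tacitly assumes such a base object exists, i.e.\ that $\phi^\ast\Psc$ admits a trivialization, and the resulting identification with $\Gsc\times_\Sc\Tc$ is non-canonical and fails whenever $\Psc$ is a nontrivial torsor (already for $\Xcal=\Sc$, where the paper's claim would force every $\Gsc$-torsor to be trivial). Your computation via sections of $t^\ast\Psc$ avoids this, and your parenthetical caveat --- that one gets $\Gsc\times_\Sc\Tc$ precisely when $\psi$ lifts through $q$, since such a lift is a global section of $\Psc$ --- is the right one. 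Either way, the conclusion that actually matters, namely that $q$ is superschematic, survives intact, because $\Psc$ is a superscheme by the very definition of the objects of $[\Xcal/\Gsc](\Tc)$; no appeal to affineness of $\Gsc$ is needed for that (affineness enters only in Proposition \ref{prop:quotstack}, to guarantee the stack condition).

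Two minor points to tighten in your write-up. First, state explicitly that without a lift of $\psi$ the proposition holds only in the corrected form ``representable by the superscheme $\Psc$'', not in the form $\Gsc\times_\Sc\Tc$ as printed. Second, to upgrade your bijection on objects to an equivalence of the fiber product category with the setoid $\Hom(-,\Psc)$, add one sentence observing that a morphism $(x,t,\alpha)\to(x',t',\alpha')$ lying over $\Id_\Ucal$ consists of morphisms in $\Xcal(\Ucal)$ and $\Tc(\Ucal)$, which are setoids, so the fiber product has no nonidentity automorphisms.
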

\begin{proof} By the 2-Yoneda Lemma \ref{lem:2yoneda}, $\psi$ is given by a principal $\Gsc$-superbundle $\Psc\to\Tc$ together with a $\Gsc$-equivariant morphism $f_\psi\colon \Psc\to\Xcal$. Then, the objects of $(\Xcal\times_{q, [\Xcal/\Gsc],\psi}\Tc)(\Vc)$ over a $\Sc$-superscheme $\Vc$ are triples $(\gamma,\phi,\alpha)$ where $\gamma\colon \Vc\to\Xcal$, $\phi\colon \Vc \to \Tc$  are $\Sc$-morphisms, and $\alpha\colon \Vc\times_\Sc\Gsc \iso \phi^\ast \Psc$ is an isomorphism of principal bundles over $\Vc$ such that $\mu\circ(\gamma\times 1)= \phi^\ast(f_\psi) \circ\alpha$. If $(\gamma,\phi,\alpha)$ is another object of $(\Xcal\times_{q, [\Xcal/\Gsc],\psi}\Tc)(\Vc)$ over $\Vc$, one has   $\alpha'=\alpha\circ \beta$ for an isomorphism $\beta\colon \Vc\times_\Sc\Gsc \iso \Vc\times_\Sc\Gsc$ of principal bundles. Then, $\beta=(\Id,h)$ for a morphism $h\colon\Vc\to \Gsc$ of $\Sc$-superschemes, and, moreover, one has 
$$
\mu\circ(\gamma'\times 1)= \phi'^{\ast}(f_\psi) \circ\alpha' =\phi^{\ast}(f_\psi) \circ\alpha\circ\beta=\mu\circ(\gamma\times 1)\circ\beta =\mu\circ(\gamma\times 1)\circ (\Id,h)\,,
$$
so that $\gamma'=\mu\circ (\gamma,h)$. Thus, an object of $(\Xcal\times_{q, [\Xcal/\Gsc],\psi}\Tc)(\Vc)$ is given by two morphisms  of $\Sc$-superschemes $\phi\colon \Vc \to \Tc$ and  $h\colon\Vc\to \Gsc$, which  completes the proof. 
\end{proof}

\subsection{Bosonic reduction of a superstack}
\label{sec:bosred}

Here we will see that every superstack has a natural underlying ordinary stack, which  we   call its \emph{bosonic reduction}. Moreover, every stack can be understood as a superstack. This section is mostly based on   \cite{CodViv17}.

As before,  we denote by $\Sf_{et}$ the \'etale site of superschemes. The  \'etale site $(Sch)_{et}$ of ordinary schemes (i.e., even superschemes) is a subsite of $\Sf_{et}$. There is a functor $\iota\colon (Sch)_{et} \hookrightarrow \Sf_{et}$ defined  by associating with every scheme $S$ the scheme itself $\iota(S)$ considered as a superscheme. Usually, we will simply write $S$ instead of $\iota(S)$.

If $p_\Xf\colon \Xf \to \Sf$ is a superstack, we can consider the fiber product category  fibered in grupoids $\Xf\times_{p_\Xf,\Sf,\iota}(Sch)_{et}$ (Definition \ref{def:sstackprod}). Taking the second projection
$p_2\colon \Xf\times_{p_\Xf,\Sf,\iota}(Sch)_{et}\to (Sch)_{et}$, we have a category  fibered in grupoids over the \'etale site $(Sch)_{et}$ of schemes. By \cite[Lemma 3.4]{CodViv17}, it is actually a \emph{stack}.
\begin{defin}\label{def:bosred} The bosonic reduction of a superstack  $p_\Xf\colon \Xf \to \Sf$ is
the stack
$$
\Xf_{bos} := \Xf\times_{p_\Xf,\Sf,\iota}(Sch)_{et} \xrightarrow{p_2}(Sch)_{et}\,.
$$
\end{defin} 
 
For every scheme $S$, there is an equivalence of categories $\Xf_{bos}(S)\simeq \Xf(S)$.
If $f\colon \Xf \to \Yf$ is a morphism of superstacks, for every scheme $S$ we have a functor 
$$
f(S)\colon \Xf(S)\to \Yf(S)\,.
$$
 Using the equivalences of categories $\Xf_{bos}(S)\simeq \Xf(S)$ and $\Yf_{bos}(S)\simeq \Yf(S)$, we get a functor $f_{bos}(S)\simeq f(S)\colon\Xf_{bos}(S)\to \Yf_{bos}(S)$. 

\begin{defin}\label{def:bosredmor} The bosonic reduction of $f\colon \Xf \to \Yf$ is the stack morphism $f_{bos}\colon \Xf_{bos}\to\Yf_{bos}$ defined by the functors $f_{bos}(S)$ for every scheme $S$.
\end{defin}

In the same way that a scheme can be considered as a (even) superscheme, every stack can be considered as a superstack. We can formalize this by first considering the functor $p_+\colon\Sf_{et}\to (Sch)_{et}$ that maps a superscheme $\Sc$ to the even superscheme $\Sc_{ev}=\Sc/\tau$, where $\tau$ is the natural involution that acts as the identity on the even sections and as multiplication by $-1$ on the odd ones. Then, $\Sc_{ev}=(S,\Oc_{\Sc,+})$, where $\Oc_{\Sc,+}$ is the sheaf of the even sections of $\Oc_\Sc$, and there is a projection $p_+\colon\Sc\to\Sc_{ev}$ that is topologically the identity.

Now, for every stack $p_{\Xb}\colon\Xb \to (Sch)_{et}$  we can consider the fiber product  $\Xb\times_{p_{\Xb},(Sch)_{et},p_+} \Sf_{et}$. Taking the second projection we get a category  fibered in grupoids $\Xb\times_{p_{\Xb},(Sch)_{et},p_+} \Sf_{et}\to \Sf_{et}$ over the \'etale site of superschemes. As above,  by \cite[Lemma 3.4]{CodViv17}
this category  is  a superstack.

\begin{defin} The superstack associated to a stack $p_{\Xb}\colon\Xb \to (Sch)_{et}$  is the superstack
$$
\iota(\Xb):=\Xb\times_{p_{\Xb},(Sch)_{et},p_+} \Sf_{et}\xrightarrow{p_2} \Sf_{et}\,.
$$
\end{defin}
For every superscheme $\Sc$, we have an equivalence of categories $\iota(\Xb)(\Sc)\simeq \Xb(\Sc_{ev})$. Moreover, if $f\colon \Xb\to\Yb$ is a morphism of stacks, for every superscheme $\Sc$ we have a functor
$$
f(\Sc_{ev})\colon \Xb(\Sc_{ev})\to \Yb(\Sc_{ev})\,.
$$
Using the equivalences $\iota(\Xb)(\Sc)\simeq \Xb(\Sc_{ev})$ and $\iota(\Yb)(\Sc)\simeq \Yb(\Sc_{ev})$ we have a functor
$$
\iota(f)(\Sc)\simeq f(\Sc_{ev})\colon \iota(\Xb)(\Sc)\to \iota(\Yb)(\Sc)\,.
$$
and then a superstack morphism
$$
\iota(f)\colon \iota(\Xb)\to\iota(\Yb)\,.
$$

As for schemes, we usually write also $\Xb$ for the superstack $i(\Xb)$.

If $f\colon \iota(\Xb) \to \Yf$ is a morphism of superstacks, for every scheme $S$ one has  a functor $f(S)\colon\iota(\Xb)(S)\to \Yf(S)$. Since $\iota(\Xb)(S)\simeq \Xb(S)$ and $\Yf(S)\simeq \Yf_{bos}(S)$, we get a stack morphism $\bar f\colon \Xb\to\Yf_{bos}$. Conversely, given a stack morphism $\bar f\colon \Xb\to\Yf_{bos}$, for every superscheme $\Sc$ we have a functor $\bar f(\Sc_{ev})\colon \Xb(\Sc_{ev})\to\Yf_{bos}(\Sc_{ev})$, that is, a functor $\iota(\Xb)(\Sc) \to \Yf(\Sc)$, and then a superstack morphism $f\colon\iota(\Xb)\to \Yf$. We then have an equivalence of categories
\begin{equation}\label{eq:adjoint}
\Homst_{SSt}(\iota(\Xb), \Yf)\simeq \Homst_{St}(\Xb, \Yf_{bos})\,.
\end{equation}
where  $\Homst_{SSt}(-,-)$ and $\Homst_{St}(-,-)$ are, respectively, the fibered categories whose objects are  stack or  superstack  morphisms, and whose morphisms are the natural isomorphisms between them. One could  say that equation \eqref{eq:adjoint} establishes a sort of adjunction between $(-)_{bos}$ and $\iota$.

\begin{remark} Actually, to formalize this adjunction, we should consider the 2-categories of superstacks and stacks, and view $(-)_{bos}$ and $\iota$ as 2-functors. We are not going to  follow this approach as we only need equation \eqref{eq:adjoint}.
\end{remark}
In particular, for every superstack $\Xf$, the identity morphism $\Xf_{bos}\iso\Xf_{bos}$ gives rise to a natural superstack morphism $i_\Xf\colon \iota(\Xf_{bos}) \to \Xf$.

\begin{defin}
We say that a superstack $\Xf$ is bosonic when  $i_\Xf\colon \iota(\Xf_{bos}) \to \Xf$ is an isomorphism.
\end{defin}
One also sees that the bosonic reduction $f_{bos}\colon \Xf_{bos}\to\Yf_{bos}$ of a superstack morphism $f\colon \Xf\to\Yf$ is the stack morphism  that corresponds to the composition $f\circ \iota_\Xf \colon \Xf_{bos}\to\Yf$ via equation \eqref{eq:adjoint}.

For every superscheme $\Sc$, the category $\Homst_{SSt}(\Sc, \iota(\Xf_{bos}))$ is equivalent to the subcategory of $\Homst_{SSt}(\Sc, \Xf)$  of the morphisms that factors through $p_+\colon \Sc\to\Sc_{ev}$. By the 2-Yoneda Lemma \ref{lem:2yoneda}, we get:

\begin{prop}\label{prop:prop:bosred} The superstack morphism $i_\Xf\colon i(\Xf_{bos}) \to \Xf$ is a monomorphism, so that it is representable by superspaces.
\qed\end{prop}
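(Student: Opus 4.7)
The plan is to appeal directly to Definition \ref{def:injective} and Corollary \ref{cor:injrepres}: once we establish that $i_\Xf$ is a monomorphism (injective), representability by superspaces comes for free. So the whole proof reduces to checking that for every superscheme $\Sc$, the functor
\[
i_\Xf(\Sc)\colon \iota(\Xf_{bos})(\Sc) \to \Xf(\Sc)
\]
is fully faithful.

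First, I would unpack the source. By the very definition of $\iota(-)$, there is a natural equivalence $\iota(\Xf_{bos})(\Sc)\simeq \Xf_{bos}(\Sc_{ev})$, and from Definition \ref{def:bosred} we also have $\Xf_{bos}(\Sc_{ev})\simeq \Xf(\Sc_{ev})$. Under these identifications, the functor $i_\Xf(\Sc)$ is the one that sends an object $x\in \Xf(\Sc_{ev})$ to its pullback $p_+^*(x)\in \Xf(\Sc)$ along the natural projection $p_+\colon\Sc\to \Sc_{ev}$, and similarly for morphisms. So the question is whether, for $x,x'\in \Xf(\Sc_{ev})$, the pullback map
\[
p_+^*\colon \Hom_{\Xf(\Sc_{ev})}(x,x')\to \Hom_{\Xf(\Sc)}\bigl(p_+^*(x),p_+^*(x')\bigr)
\]
is a bijection. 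This is precisely the content of the paragraph immediately preceding the proposition: by the adjunction equation \eqref{eq:adjoint} combined with the 2-Yoneda Lemma \ref{lem:2yoneda}, the category $\Homst_{SSt}(\Sc,\iota(\Xf_{bos}))$ is equivalent to $\Homst_{St}(\Sc_{ev},\Xf_{bos})\simeq \Xf(\Sc_{ev})$, and this equivalence sits inside $\Homst_{SSt}(\Sc,\Xf)\simeq \Xf(\Sc)$ as the full subcategory of morphisms that factor through $p_+$.

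Hence $i_\Xf(\Sc)$ is fully faithful, and by Definition \ref{def:injective} the morphism $i_\Xf$ is a monomorphism. Corollary \ref{cor:injrepres} then yields that $i_\Xf$ is representable by superspaces.

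The main (mild) obstacle is really just the bookkeeping to make sure that the identification $\iota(\Xf_{bos})(\Sc) \simeq$ ``the full subcategory of $\Xf(\Sc)$ of objects pulled back from $\Sc_{ev}$'' is indeed \emph{full} and not merely essentially injective on objects; this is automatic from the adjunction in \eqref{eq:adjoint}, since both the object map and the hom-set map on the right-hand side are specified by the same 2-Yoneda correspondence. Once this is in place the proof is a one-line invocation of Corollary \ref{cor:injrepres}.
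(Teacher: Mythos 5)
Your proof is correct and follows essentially the same route as the paper: the paper's argument is precisely the observation, stated just before the proposition, that by the adjunction \eqref{eq:adjoint} and the 2-Yoneda Lemma \ref{lem:2yoneda} the category $\Homst_{SSt}(\Sc,\iota(\Xf_{bos}))$ sits inside $\Homst_{SSt}(\Sc,\Xf)$ as the full subcategory of morphisms factoring through $p_+$, which gives full faithfulness of $i_\Xf(\Sc)$ and hence the monomorphism property in the sense of Definition \ref{def:injective}. The final step, deducing representability by superspaces from Corollary \ref{cor:injrepres}, is also exactly what the paper intends.
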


\section{Algebraic Superstacks}
\label{sec:algsstacks}

\subsection{Algebraic Superspaces}
\label{subsec:algsstacks}
To introduce algebraic superstacks, one must first define and study algebraic superspaces, even though the latter are a special case of the former.
We start with a definition of certain properties of superschematic superstack morphisms (Definition \ref{def:repmor}).

\begin{defin}\label{def:morphismsprops} Let $\mathbf P$ be a property of morphisms of superschemes stable under base change (e.g., surjective, proper, smooth, \'etale, unramified). A superschematic superstack morphism $\Xf \to \Yf$ has property $\mathbf P$ if for every morphism $\Tc\to\Yf$ from a superscheme, the base-change morphism $\Xf\times_\Yf\Tc \to \Tc$ of superschemes has property $\mathbf P$.
\end{defin}

\begin{defin}\label{def:algsupersps} 
An algebraic superspace is a superspace (i.e., an \'etale sheaf) $\Xf$ such that there exist a superscheme $\Ucal$ and a superschematic surjective \'etale  morphism $p\colon \Ucal\to\Xf$ (Definition \ref{def:morphismsprops}). A morphism of algebraic superspaces is a morphism of superspaces.
We say that $p\colon \Ucal\to\Xf$ is an \emph{\'etale presentation} of $\Xf$.
\end{defin}

There is a useful characterization of the algebraic spaces that are superschemes.
 \begin{prop}\label{prop:sschmchar} Let $\mathbf P$ be  one of the following properties of morphisms of superschemes: open immersion, closed immersion, affine, quasi-affine (Definition \ref{def:quasiaffine}), or locally quasi-finite and separated. 
Let $\Xcal\to \Ycal$ be a surjective \'etale morphism of superschemes and $\Zf \to \Ycal$ a sheaf morphism from an algebraic superspace. If the fiber product $\Xcal\times_\Ycal \Zf$ is a superscheme and the induced morphism $\Xcal\times_\Ycal \Zf \to \Xcal$ has property $\mathbf P$, then $\Zf$ is a superscheme and $\Zf \to \Ycal$ has property $\mathbf P$.
\end{prop}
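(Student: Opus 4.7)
The plan is to use effective étale descent along the étale cover $\Xcal\to\Ycal$. Set $\Wcal := \Xcal\times_\Ycal \Zf$, which by hypothesis is a superscheme, and the structure morphism $\pi\colon\Wcal\to\Xcal$ has property $\mathbf P$. I want to descend $\Wcal$ to a superscheme $\Zcal$ over $\Ycal$ having property $\mathbf P$, and then verify $\Zcal$ represents $\Zf$.

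First I would assemble the descent datum. Since $\Xcal\to\Ycal$ is a morphism of superschemes with one factor étale, the fiber products $\Xcal\times_\Ycal\Xcal$ and $\Xcal\times_\Ycal\Xcal\times_\Ycal\Xcal$ exist as superschemes, with two (resp.~three) étale projections to $\Xcal$. Because $\Zf$ is an étale sheaf (being an algebraic superspace), the canonical identifications $p_1^{\ast}(\Xcal\times_\Ycal\Zf)\simeq p_2^{\ast}(\Xcal\times_\Ycal\Zf)$ translate into an isomorphism $\varphi\colon p_1^{\ast}\Wcal\iso p_2^{\ast}\Wcal$ of superschemes over $\Xcal\times_\Ycal\Xcal$ satisfying the usual cocycle condition on the triple product.

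Next I would invoke effective étale descent for superscheme morphisms with property $\mathbf P$. For each of the listed properties this reduces to the affine/quasi-affine case, where descent follows from the fact that quasi-coherent superscheme sheaves satisfy étale descent (so the relative structure superalgebra $\pi_\ast\Oc_\Wcal$ descends to a quasi-coherent superalgebra on $\Ycal$, whose relative $\SSpec$ is the desired $\Zcal$). The cases of open and closed immersions follow from the affine case together with the étale-local nature of these properties on the target; the locally quasi-finite and separated case is handled by a super version of Zariski's main theorem, reducing locally to a quasi-affine situation. In each case one obtains a superscheme $\Zcal\to\Ycal$ with property $\mathbf P$ whose pullback to $\Xcal$ is identified with $\Wcal$, compatibly with $\varphi$.

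Finally, since $\Zcal$ and $\Zf$ are both sheaves on $\Sf_{et}$ and their pullbacks along the étale cover $\Xcal\to\Ycal$ are identified (via $\Wcal$) in a way compatible with the descent data over $\Xcal\times_\Ycal\Xcal$, the sheaf property forces the natural morphism $\Zcal\to\Zf$ (obtained from the identification $\Zcal\times_\Ycal\Xcal\simeq\Wcal=\Xcal\times_\Ycal\Zf$) to be an isomorphism, so $\Zf$ is representable by $\Zcal$ and $\Zf\to\Ycal$ inherits property $\mathbf P$. The main obstacle is the middle step: one must be sure that the classical étale-descent theorems for each property genuinely extend to the super setting, keeping track of the odd part of the structure sheaf throughout the gluing (rather than only descending the bosonic reduction). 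Once that is in hand, the rest is a formal application of the sheaf property of $\Zf$ together with Yoneda.
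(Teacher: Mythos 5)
Your proposal is correct and follows essentially the same route as the paper, which simply states that the proof is that of \cite[Prop.\ 2.3.17]{Alp24} carried over verbatim using fppf/\'etale descent for superschemes; the super descent results you flag as the ``main obstacle'' are exactly the ones supplied by the paper's appendix (quasi-affine descent, Proposition \ref{prop:qadescent}, and locally quasi-finite separated descent, Proposition \ref{prop:qfsepdescent}) and by \cite[Props.\ A.28--A.30]{BrHRPo20}. In effect you have written out the argument that the paper leaves as a citation.
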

\begin{proof} The proof is the same as  that  of \cite[Prop.\! 2.3.17]{Alp24} using descent for fppf morphisms of superschemes \cite[Props.\! A.28, A.29, A.30]{BrHRPo20}. 
\end{proof}

We have already defined when a superstack morphism is representable by sheaves (or superspaces) and by superschemes (Definition \ref{def:repmor}). For the theory of algebraic superstacks, that we will consider in Section \ref{subsec:dmsstacks}, we need representability by algebraic superspaces. This will be the default definition of representability. More precisely:
\begin{defin}\label{def:repres} A morphism $f\colon\Xf\to\Yf$ of superstacks is representable if, for every morphism $\Tc\to\Yf$ from a superscheme $\Tc$, the fiber product superstack $\Xf\times_\Yf \Tc$ is an algebraic superspace. 
\end{defin}

One easily proves the following:
\begin{prop}\label{prop:sch-spc2}
Let $f\colon\Xf\to\Yf$ be a morphism of superstacks.
\begin{enumerate}
\item If  $f$  is superschematic, then it is representable.
\item If $f$ 
 is representable, then  for every morphism $\Zf\to\Yf$ from an algebraic superspace $\Zf$, the fiber product $\Xf\times_\Yf \Zf$ is an algebraic superspace. 
\end{enumerate}
\qed
\end{prop}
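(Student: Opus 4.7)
Part (1) is essentially tautological: every superscheme is itself an algebraic superspace (the identity serves as a superschematic surjective étale presentation), so if $f$ is superschematic then every base change $\Xf\times_\Yf\Tc$ with $\Tc$ a superscheme is a superscheme, and hence an algebraic superspace.

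For part (2) the plan is to split the argument into two steps: first, show that $\Xf\times_\Yf\Zf$ is a superspace; second, construct a superschematic surjective étale morphism from a superscheme onto it. The main obstacle lies in the first step, because the mere existence of a superschematic étale atlas does not by itself force a superstack to be a superspace.

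To prove that $\Xf\times_\Yf\Zf$ is a superspace, I would upgrade the hypothesis to representability by superspaces and invoke Lemma \ref{lem:faith} via a faithfulness check. Given a superscheme $\Tc$, objects $x,x'\in\Xf(\Tc)$, and morphisms $\phi_1,\phi_2\colon x\to x'$ in the groupoid $\Xf(\Tc)$ with $f(\phi_1)=f(\phi_2)$, set $y=f(x)\colon\Tc\to\Yf$. By hypothesis the fiber product $\Xf_y:=\Xf\times_{f,\Yf,y}\Tc$ is an algebraic superspace, hence a superspace, so the groupoid $\Xf_y(\Tc)$ is a setoid. Both $\phi_1$ and $\phi_2$ define morphisms $(x,\Id_\Tc,\Id)\to(x',\Id_\Tc,f(\phi_1)^{-1})$ in $\Xf_y(\Tc)$, which the setoid property forces to agree; therefore $\phi_1=\phi_2$. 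Lemma \ref{lem:faith} then gives that $f$ is representable by superspaces, and since $\Zf$ is a superspace the fiber product $\Xf\times_\Yf\Zf$ is also a superspace.

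For the presentation, pick an étale presentation $p\colon\Ucal\to\Zf$ by a superscheme $\Ucal$, so that $\Xf\times_\Yf\Ucal$ is an algebraic superspace by the representability hypothesis applied to $\Ucal\to\Zf\to\Yf$, and then pick an étale presentation $q\colon\Vc\to\Xf\times_\Yf\Ucal$ by a superscheme $\Vc$. The composition $\Vc\to\Xf\times_\Yf\Ucal\to\Xf\times_\Yf\Zf$ is the sought presentation: the middle arrow is the base change of $p$ and so inherits the three properties of being superschematic, surjective and étale (Definition \ref{def:morphismsprops}), and these combine with $q$ under composition via the canonical identification
\begin{equation*}
\Vc\times_{\Xf\times_\Yf\Zf}\Tc\;\simeq\;\Vc\times_{\Xf\times_\Yf\Ucal}(\Ucal\times_\Zf\Tc),
\end{equation*}
which keeps every intermediate fiber product a superscheme.
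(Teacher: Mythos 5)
Your proof is correct. The paper offers no argument for this proposition (it is stated with ``One easily proves the following'' and a \textit{qed}), and your write-up supplies exactly the details being left to the reader: part (1) is indeed immediate, and in part (2) you correctly identify the only non-trivial point --- that $\Xf\times_\Yf\Zf$ is a \emph{superspace} --- and settle it by the faithfulness criterion of Lemma \ref{lem:faith}, before assembling the presentation by the standard base-change-and-composition argument.
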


 Notice   that \cite[Lemma B.12]{CasWis17} is essentially categorical so that it   also holds  true for the \'etale site $\Sf_{et}$ of superschemes.  Then, using Proposition \ref{prop:sch-spc2}, we have:

\begin{prop}\label{prop:diag} Let $\Xf$ a superstack. The following conditions are equivalent:
\begin{enumerate}
\item The diagonal morphism $\Delta_\Xf\colon \Xf \to \Xf\times \Xf$ is representable (resp.\! superschematic);
\item for every superscheme $\Sc$ and objects  $x,y\in\Xf(\Sc)$,  the sheaf $\mathfrak{I}som_{\Xf(\Sc)}(x,y)$ is (representable by) an algebraic superspace (resp.\! a superscheme);
\item for every superscheme $\Sc$ and object  $x\in\Xf(\Sc)$, the morphism $x\colon \Sc\to \Xf$ given by the 2-Yoneda Lemma \ref{lem:2yoneda} is representable (resp.\! superschematic).
\end{enumerate}
\qed
\end{prop}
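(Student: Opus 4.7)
The plan is to exploit the 2-cartesian diagram \eqref{eq:diagonal}, which identifies $\mathfrak{I}som_{\Xf(\Sc)}(x,y)$ with the base change of $\Delta_\Xf$ along $(x,y)\colon \Sc \to \Xf\times\Xf$, together with the 2-Yoneda Lemma \ref{lem:2yoneda} to pass freely between objects $x\in\Xf(\Sc)$ and morphisms $x\colon \Sc\to\Xf$. I handle the representable and superschematic cases uniformly throughout, since both the category of algebraic superspaces and the category of superschemes are stable under fiber products, and both properties are stable under base change. The equivalence (1) $\Leftrightarrow$ (2) is then essentially tautological: diagram \eqref{eq:diagonal} shows that (1) specializes to (2) for the test morphism $(x,y)\colon \Sc \to \Xf\times\Xf$, and conversely any morphism $\Tc \to \Xf\times\Xf$ from a superscheme is precisely a pair $(x,y)\in\Xf(\Tc)\times\Xf(\Tc)$, so (2) applied to $\Tc$ yields the representability of $\Delta_\Xf$ after arbitrary base change.

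For (2) $\Rightarrow$ (3), given $x\colon \Sc \to \Xf$ and a test morphism $g\colon \Tc\to\Xf$ corresponding to $y\in\Xf(\Tc)$, I identify the $\Vc$-points of the fiber product $\Sc\times_{x,\Xf,g}\Tc$ with triples $(a\colon \Vc\to \Sc,\, b\colon\Vc\to\Tc,\, \alpha\colon a^*x\iso b^*y)$. Pulling $x$ and $y$ back along the two projections from $\Sc\times\Tc$ identifies $\Sc\times_\Xf\Tc$ with $\mathfrak{I}som_{\Xf(\Sc\times\Tc)}(p_1^*x,p_2^*y)$, which is an algebraic superspace (resp.\ superscheme) by (2). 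For (3) $\Rightarrow$ (2), given $x,y\in\Xf(\Sc)$, I apply (3) to the morphism $y\colon \Sc\to\Xf$, obtaining that $\Sc\times_{x,\Xf,y}\Sc$ is an algebraic superspace (resp.\ superscheme); then $\mathfrak{I}som_{\Xf(\Sc)}(x,y)$ is cut out from this as the fiber product with $\Sc$ along the diagonal $\Delta_\Sc\colon \Sc\to\Sc\times\Sc$, since its $\Vc$-points are exactly those $(a,b,\alpha)$ with $a=b$. Closure under fiber products keeps us within the appropriate category.

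The main subtlety lies in correctly matching the 2-categorical data (the isomorphisms $\alpha$ between pullbacks in $\Xf(\Vc)$) through these successive fiber products, rather than in any substantive geometric input. Since all the identifications are essentially categorical --- as already observed in the appeal to \cite[Lemma B.12]{CasWis17} --- the argument transfers verbatim from the classical site $(Sch)_{et}$ to $\Sf_{et}$. The only external ingredients required are Proposition \ref{prop:sch-spc2}, which ensures that representability can be tested against algebraic superspaces as well as superschemes, and the closure of algebraic superspaces (and of superschemes) under fiber products.
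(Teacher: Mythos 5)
Your argument is correct and is essentially the paper's own route: the paper does not write out a proof but simply invokes the categorical Lemma B.12 of \cite{CasWis17} (noting it transfers verbatim to $\Sf_{et}$) together with Proposition \ref{prop:sch-spc2}, and your proposal is precisely the standard proof of that lemma spelled out --- the identification of $\Sc\times_{x,\Xf,g}\Tc$ with $\mathfrak{I}som_{\Xf(\Sc\times\Tc)}(p_1^\ast x,p_2^\ast y)$ and the recovery of $\mathfrak{I}som_{\Xf(\Sc)}(x,y)$ from $\Sc\times_\Xf\Sc$ via $\Delta_\Sc$, using closure under fiber products. No gaps.
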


\begin{prop}\label{prop:represbos} The bosonic reduction of an algebraic superspace is an algebraic space.
\end{prop}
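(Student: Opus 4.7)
The plan is to take a given étale presentation $p\colon\Ucal\to\Xf$ of the algebraic superspace $\Xf$ and show that its bosonic reduction $p_{bos}\colon\Ucal_{bos}\to\Xf_{bos}$ is a surjective, étale, representable morphism from an ordinary scheme, thereby witnessing $\Xf_{bos}$ as an algebraic space.

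First I verify that $\Xf_{bos}$ is a sheaf on the étale site $(Sch)_{et}$ of ordinary schemes: by Definition \ref{def:bosred} it is already a stack, and since its fibres $\Xf_{bos}(S)\simeq \Xf(S)$ are setoids (as $\Xf$ is fibered in setoids), $\Xf_{bos}$ is in fact an étale sheaf. Next I observe that for the superscheme $\Ucal$, the associated $\Ucal_{bos}$ is representable by an ordinary scheme (the usual bosonic truncation of $\Ucal$), since any morphism $T\to\Ucal$ from an ordinary scheme $T$ necessarily kills the odd sections of $\Oc_\Ucal$.

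Now for any ordinary scheme $S$ equipped with a morphism $S\to \Xf_{bos}$, equivalently, by the adjunction \eqref{eq:adjoint}, an object of $\Xf(S)$, representability of $p$ provides a superscheme $\Vcal:=\Ucal\times_\Xf S$ together with a superschematic, surjective, étale morphism $\Vcal\to S$. A direct inspection of $T$-points for ordinary $T$ yields a canonical identification
\[
\Ucal_{bos}\times_{\Xf_{bos}} S \;\cong\; \Vcal_{bos},
\]
so it suffices to check that $\Vcal_{bos}\to S$ is a surjective étale morphism of ordinary schemes. Surjectivity is immediate because bosonic reduction is the identity on the underlying topological space. For étaleness, since $S$ is purely even and $\Vcal\to S$ is étale, the source $\Vcal$ must itself be purely even (a non-zero odd local section of $\Oc_\Vcal$ would contribute a non-vanishing relative differential, contradicting unramifiedness); hence $\Vcal=\Vcal_{bos}$ and the morphism $\Vcal_{bos}\to S$ is classically étale. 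This exhibits $p_{bos}$ as a schematic, surjective, étale presentation of $\Xf_{bos}$ by an ordinary scheme, as desired.

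The only step that is not a matter of bookkeeping is the compatibility of bosonic reduction with the étale topology, namely that the bosonic reduction of an étale morphism of superschemes over a purely even base is étale in the classical sense. This is a standard fact of supergeometry which I would invoke from the appendix on properties of morphisms of superschemes; the remaining steps are routine manipulations of the adjunction between $(-)_{bos}$ and $\iota$ together with the $T$-point description of fibre products of sheaves.
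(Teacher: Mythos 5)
Your proof is correct and follows essentially the same route as the paper's: check that $\Xf_{bos}$ is an \'etale sheaf via $\Xf_{bos}(S)\simeq\Xf(S)$, then show that the bosonic reduction of an \'etale presentation $p\colon\Ucal\to\Xf$ is a schematic, surjective, \'etale presentation of $\Xf_{bos}$. The only (harmless) difference is that you justify \'etaleness of $\Vcal_{bos}\to S$ by observing that $\Vcal$ is already purely even over the even base $S$ (Proposition \ref{prop:unram}), whereas the paper invokes Corollary \ref{cor:localsmooth} directly; both facts are in the appendix and give the same conclusion.
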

\begin{proof} Let $\Xf$ be an algebraic superspace. Since for every scheme $S$ there is an equivalence of categories $\Xf_{bos}(S)\simeq\Xf(S)$, one sees that $\Xf_{bos}$ is a sheaf on the \'etale site of schemes. Take an \'etale presentation $p\colon\Ucal\to \Xf$ (Definition \ref{def:algsupersps}). Then $p_{bos}\colon U \to \Xf_{bos}$ is schematic and surjective. Moreover, it is \'etale by Corollary \ref{cor:localsmooth}, so that $\Xf_{bos}$ is an algebraic space.
\end{proof}

\begin{corol}\label{cor:represbos} Let $f\colon \Xf\to\Yf$ be a representable morphism of superstacks. Then, the bosonic reduction $f_{bos}\colon \Xf_{bos}\to\Yf_{bos}$ is a  representable  (by algebraic spaces) morphism of stacks.
\end{corol}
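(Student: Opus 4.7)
The plan is to reduce the claim to Proposition \ref{prop:represbos} by identifying the fiber product $\Xf_{bos}\times_{\Yf_{bos}}T$ (for a scheme $T$) with the bosonic reduction of a suitable fiber product of superstacks.

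Let $T$ be a scheme and $g\colon T\to\Yf_{bos}$ a stack morphism. Via the adjunction of equation \eqref{eq:adjoint}, $g$ corresponds to a superstack morphism $\iota(g)\colon \iota(T)\to\Yf$, where we view $T$ as an even superscheme. Since $f$ is representable, the fiber product superstack $\Xf\times_{f,\Yf,\iota(g)}\iota(T)$ is an algebraic superspace. Proposition \ref{prop:represbos} then implies that its bosonic reduction is an algebraic space.

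The remaining task is to establish a natural equivalence of stacks
\begin{equation}
\bigl(\Xf\times_{f,\Yf,\iota(g)}\iota(T)\bigr)_{bos}\;\simeq\;\Xf_{bos}\times_{f_{bos},\Yf_{bos},g}T.
\end{equation}
This is a pointwise check on schemes. For any scheme $S$, Definition \ref{def:bosred} gives $(\Xf\times_\Yf\iota(T))_{bos}(S)\simeq (\Xf\times_\Yf\iota(T))(S)$, which by Definition \ref{def:sstackprod} is $\Xf(S)\times_{\Yf(S)}\iota(T)(S)$; since $S=S_{ev}$, we have $\iota(T)(S)\simeq T(S_{ev})=T(S)$. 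On the other hand, $(\Xf_{bos}\times_{\Yf_{bos}}T)(S)=\Xf_{bos}(S)\times_{\Yf_{bos}(S)}T(S)\simeq \Xf(S)\times_{\Yf(S)}T(S)$. Both sides therefore agree, naturally in $S$, which yields the desired equivalence.

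The argument is essentially formal and presents no genuine obstacle: the only thing to verify carefully is the naturality of the above equivalences in $S$, which follows directly from the constructions of the bosonic reduction, of the superstack fiber product, and of the adjunction \eqref{eq:adjoint}. Once this identification is in place, the corollary follows immediately from Proposition \ref{prop:represbos}.
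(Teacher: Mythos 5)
Your proof is correct and follows essentially the same route as the paper: pull back a scheme $T\to\Yf_{bos}$ along the natural morphism $\Yf_{bos}\to\Yf$, use representability of $f$ plus Proposition \ref{prop:represbos}, and identify $(\Xf\times_\Yf T)_{bos}$ with $\Xf_{bos}\times_{\Yf_{bos}}T$. The paper merely asserts this last identification, whereas you verify it pointwise on schemes, which is a welcome bit of extra detail.
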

\begin{proof} Let $g\colon U\to\Yf_{bos}$ a stack morphism from a scheme $U$, and $i\colon  \Yf_{bos}\to \Yf$ the natural morphism. Then, the fiber product $\Xf\times_{f,\Yf,i\circ g}U$ is an algebraic superspace. By Proposition \ref{prop:represbos}, $(\Xf\times_{f,\Yf,i\circ g}U)_{bos}$ is an algebraic space. Since 
$(\Xf\times_{f,\Yf,i\circ g}U)_{bos}\simeq \Xf_{bos}\times_{f_{bos},\Yf_{bos},g} U$, we finish.
\end{proof}

We finish this Subsection by seeing that, as in the bosonic case, there is an alternative definition of algebraic superspace.
The notion of \'etale equivalence relation of superschemes is a verbatim generalization  of that of \'etale equivalence relation of schemes, see e.g.~\cite[5.2.1]{Ol16}.
\begin{prop}\label{prop:quot}
An algebraic superspace is a superspace that can be expressed as the quotient of an \'etale equivalence relation of superschemes.
\end{prop}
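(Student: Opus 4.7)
The plan is to prove both implications of the equivalence, adapting the classical argument for algebraic spaces (cf.\ \cite[\S5.2]{Ol16}) to the super setting by invoking the descent results already used in the proof of Proposition \ref{prop:sschmchar}.

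\emph{Algebraic superspace $\Rightarrow$ quotient of \'etale equivalence relation.} Given an \'etale presentation $p\colon \Ucal \to \Xf$, I set $\Rcal := \Ucal\times_\Xf \Ucal$; since $p$ is superschematic, $\Rcal$ is a superscheme, and the two projections $s,t\colon \Rcal\to\Ucal$ are \'etale as base changes of $p$. The key structural point is that the morphism $(s,t)\colon \Rcal \to \Ucal\times \Ucal$ is a monomorphism, so that $\Rcal$ is an honest relation and not merely a groupoid. Indeed, a $\Tc$-point of $\Rcal$ is a triple $(u_1,u_2,\alpha)$ with $u_i\in \Ucal(\Tc)$ and $\alpha\colon p(u_1)\iso p(u_2)$ in $\Xf(\Tc)$; since $\Xf$ is a superspace, $\Xf(\Tc)$ is a setoid, so $\alpha$ is uniquely determined when it exists, and the triple is determined by $(u_1,u_2)$. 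The reflexivity, symmetry and transitivity follow from the identity, inverse and composition of the trivial groupoid $\Rcal \rightrightarrows \Ucal$. Finally, $\Xf$ is the \'etale sheaf quotient $\Ucal/\Rcal$: since $p$ is surjective and \'etale, every $\Tc$-point of $\Xf$ lifts to $\Ucal$ after an \'etale refinement of $\Tc$, and two such lifts correspond precisely to a point of $\Rcal$.

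\emph{Quotient of \'etale equivalence relation $\Rightarrow$ algebraic superspace.} Conversely, suppose $\Xf = \Ucal/\Rcal$ for an \'etale equivalence relation $\Rcal \rightrightarrows \Ucal$ of superschemes, and that $\Xf$ is a superspace. The quotient morphism $\pi\colon \Ucal \to \Xf$ is tautologically surjective as an \'etale sheaf map. To see that it is superschematic and \'etale, take any morphism $\varphi\colon \Tc \to \Xf$ from a superscheme. After pulling back by a suitable \'etale cover $\Tc'\to \Tc$, the map $\varphi$ lifts to a morphism $\tilde\varphi\colon \Tc'\to\Ucal$, and then
\[
\Ucal\times_\Xf \Tc' \;\simeq\; \Rcal\times_{s,\Ucal,\tilde\varphi} \Tc'
\]
is a superscheme with projection to $\Tc'$ \'etale, as a base change of $t\colon \Rcal\to\Ucal$. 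Thus $\Ucal\times_\Xf \Tc \to \Tc$ is a morphism of sheaves that becomes a superscheme morphism, and an \'etale one, after the \'etale base change $\Tc'\to\Tc$. By \'etale (indeed fppf) descent for superschemes \cite[Props.\,A.28--A.30]{BrHRPo20}, the sheaf $\Ucal\times_\Xf \Tc$ is representable by a superscheme and the projection is \'etale. Hence $\pi$ is an \'etale presentation.

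\emph{Main obstacle.} The decisive step is the descent argument in the converse direction: one needs both the property of being a superscheme and the property of being \'etale to descend along \'etale covers of superschemes. Without these descent statements one would a priori only obtain that $\Ucal\times_\Xf \Tc$ is an algebraic superspace, which would prevent the conclusion that $\pi$ is superschematic. The availability of \cite[Props.\,A.28--A.30]{BrHRPo20} is what makes the super version of the classical proof go through verbatim.
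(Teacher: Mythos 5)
Your argument follows essentially the same route as the paper. The forward direction is identical (take an \'etale presentation $p$, set $\Rcal=\Ucal\times_\Xf\Ucal$, note it is a superscheme because $p$ is superschematic, and identify $\Xf$ with the quotient sheaf); your explicit observation that $(s,t)\colon\Rcal\to\Ucal\times\Ucal$ is a monomorphism because $\Xf(\Tc)$ is a setoid is a worthwhile detail that the paper leaves implicit. For the converse the paper simply defers to \cite[Thm.\ 3.4.13]{Alp24} together with Propositions \ref{prop:sschmchar} and \ref{prop:diag}, and your sketch (lift $\varphi$ \'etale-locally, identify $\Ucal\times_\Xf\Tc'$ with a base change of the equivalence relation, then descend) is an unfolding of that same argument.

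The one step you state too loosely is the descent claim. Effective descent of \emph{superschemes} (as opposed to morphisms or quasi-coherent sheaves) along an \'etale or fppf cover is \emph{not} automatic for an arbitrary \'etale morphism: it holds for morphisms that are quasi-affine, or locally quasi-finite and separated (Propositions \ref{prop:qadescent} and \ref{prop:qfsepdescent}), and this restriction is exactly what Proposition \ref{prop:sschmchar} packages. The morphism $\Ucal\times_\Xf\Tc'\to\Tc'$ is \'etale, hence locally quasi-finite, but its separatedness is not free: it requires using that $\Ucal\times_\Xf\Tc\to\Ucal\times\Tc$ is a monomorphism (again because $\Xf$ is a sheaf) together with a reduction to the case where $\Ucal$ is affine, as in Alper's proof. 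So either carry out that reduction, or replace your appeal to ``fppf descent for superschemes'' by an appeal to Proposition \ref{prop:sschmchar} with $\mathbf P$ the property of being locally quasi-finite and separated, which is how the paper closes this step.
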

\begin{proof} Let $\Xf$ be an algebraic superspace and $p\colon \Ucal\to\Xf$ a superschematic \'etale presentation. Then $\Rcal:=\Ucal\times_\Xf\Ucal$ is a superscheme and $\Xf$ is the quotient sheaf $\Ucal/\Rcal$ of the \'etale equivalence relation of superschemes $\Rcal\rightrightarrows \Ucal$ defined by the two projections of $\Rcal$ onto $\Ucal$.  For the converse, we proceed  as in \cite[Thm.\! 3.4.13]{Alp24} using Propositions \ref{prop:sschmchar} and \ref{prop:diag}.
\end{proof} 

\begin{prop}\label{prop:affinequot}
Let $(s,t)\colon \Rcal \rightrightarrows \Ucal$ be an \'etale equivalence relation of affine superschemes. If the morphisms $s$ and $t$ are finite, then the quotient \'etale sheaf (or algebraic superspace) $\Ucal/\Rcal$ is a superscheme.
\end{prop}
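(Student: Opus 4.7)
The plan is to adapt the classical scheme-theoretic proof (as in \cite[Thm.\! 3.4.13]{Alp24}, referenced above) to the super setting. Since $s\colon \Rcal\to\Ucal$ is finite and $\Ucal=\Spec A$ is affine, $\Rcal$ is also affine; write $\Rcal = \Spec B$, so that both $s^\ast, t^\ast\colon A\to B$ make $B$ a finite $A$-module. The candidate for the quotient is $\Xcal := \Spec A'$, where
$$
A' := \{\,a\in A : s^\ast(a) = t^\ast(a)\,\} \subseteq A
$$
is the equalizer of $s^\ast$ and $t^\ast$. Because these two homomorphisms are parity-preserving, $A'$ is a sub-supercommutative ring of $A$, and the quotient morphism $\Ucal\to\Xcal$ obviously satisfies $s\circ(\Ucal\to\Xcal)=t\circ(\Ucal\to\Xcal)$, so it descends to a sheaf map $\Ucal/\Rcal\to\Xcal$ that one must show is an isomorphism.

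The first main step is to prove that $A$ is a finite $A'$-module. For each $a\in A$ one constructs a monic polynomial $P_a(T)\in A'[T]$ with $P_a(a) = 0$, by taking the characteristic polynomial of multiplication by $t^\ast(a)$ on $B$, viewed as a finite locally free $A$-module via $s^\ast$, and then using the reflexivity, symmetry and transitivity morphisms of the equivalence relation---in particular the composition map $\Rcal\times_{s,\Ucal,t}\Rcal\to\Rcal$---to check that the coefficients of $P_a$ actually lie in $A'$. Combined with the fact that $A$ is generated as an $A'$-algebra by finitely many such integral elements, one deduces that $A$ is a finite $A'$-module. In particular, $\Ucal\to\Xcal$ will be finite, and faithful flatness of $\Ucal\to\Xcal$ will follow from the finiteness together with the étaleness of $s$, once the isomorphism of the next step is established.

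The second main step is to show that the canonical map
$$
\varphi\colon A\otimes_{A'} A \to B, \qquad a_1\otimes a_2 \mapsto s^\ast(a_1)\,t^\ast(a_2),
$$
is an isomorphism---equivalently, that the natural morphism $\Rcal \to \Ucal\times_\Xcal \Ucal$ is an isomorphism of superschemes. Surjectivity follows from the axioms of an equivalence relation combined with faithful flatness of $\Ucal\to\Xcal$, while injectivity is a formal consequence of the transitivity (cocycle) condition. Once this isomorphism is in hand, $\Ucal\to\Xcal$ is finite, faithfully flat, and satisfies $\Rcal\cong\Ucal\times_\Xcal\Ucal$; it is moreover étale by descent, since the projection $s\colon \Rcal\to \Ucal$ is étale. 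Faithfully flat descent of morphisms then gives that $\Xcal=\Spec A'$ represents the étale sheaf quotient $\Ucal/\Rcal$, and so $\Ucal/\Rcal$ is a superscheme.

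I expect the main obstacle to be establishing $A\otimes_{A'} A \iso B$, since this is where the full force of the equivalence-relation axioms enters and where one must argue with the cocycle condition most carefully. Fortunately, every map involved is even (parity-preserving), so no new sign phenomena arise, and the classical arguments from \cite[Ch.\! 5]{Ol16} or \cite[Thm.\! 3.4.13]{Alp24} transfer to the supercommutative setting essentially unchanged, provided one works consistently with supercommutative rings and $\Z_2$-graded tensor products.
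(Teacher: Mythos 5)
Your route is genuinely different from the paper's. The paper never manipulates the rings directly: it forms the candidate quotient $\Xcal=(X,\Oc_\Xcal)$ as a $\Z_2$-graded ringed space (the topological quotient of $U$, with structure sheaf the equalizer of $s^\sharp$ and $t^\sharp$ inside $p_\ast\Oc_\Ucal$), observes that this is the cokernel of the equivalence relation in the category of $\Z_2$-graded ringed spaces, and then reduces everything to the bosonic case: $(s_{bos},t_{bos})\colon R\rightrightarrows U$ is an \'etale equivalence relation of affine schemes with finite legs, so its quotient is an affine scheme by \cite[V, Thm.\ 4.1]{SGA3-1}, and from this the paper concludes that $\Xcal$ is an affine superscheme representing $\Ucal/\Rcal$. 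You instead propose to re-run the classical affine-quotient argument (integrality of $A$ over the equalizer ring $A'$, then $A\otimes_{A'}A\iso B$) directly for superrings; note that your $\Spec A'$ and the paper's $\Xcal$ are the same object, so the difference is entirely in the method of proof. If completed, your argument is more self-contained and yields the descent data $\Rcal\simeq\Ucal\times_\Xcal\Ucal$ explicitly; the paper's is shorter because all the hard algebra is delegated to the classical theorem.

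There is, however, a concrete gap, and it hides in your closing claim that ``every map involved is even\dots\ so the classical arguments transfer essentially unchanged.'' The maps are even, but the elements are not: to get $A$ finite over $A'$ you must also control $A_{\bar 1}$, and for odd $a$ multiplication by $t^\ast(a)$ on $B$ is a parity-reversing $A$-linear endomorphism, so the characteristic-polynomial/Cayley--Hamilton device does not produce a monic $P_a\in A'[T]$ with $P_a(a)=0$ as stated. This is patchable --- odd elements satisfy $2a^2=0$, hence are integral over any subring, or one can run the classical argument on $A_{\bar 0}$ and treat $A_{\bar 1}$ as a module --- but it must be addressed, since it is precisely the point where the super setting differs from the classical one. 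Second, you deduce module-finiteness of $A$ over $A'$ from ``$A$ is generated as an $A'$-algebra by finitely many such integral elements,'' yet no finite-type or noetherian hypothesis on $\Ucal$ is in force, so that finite generation is unjustified; in the classical references the passage from integrality to finiteness is the delicate part of the proof and is not obtained this way. If you want to avoid both issues, the paper's reduction to the bosonic quotient theorem is the cleaner path.
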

\begin{proof} Let $X$ be the set obtained by identifying in $U$ the points $s(y)$ and $t(y)$ for points $y\in R$, and endow it with the quotient topology. If $p\colon U\to X$ is the quotient map, we have two morphisms of $\Z_2$-graded sheaves $(s^\sharp,t^\sharp)\pi_\ast \Oc_\Ucal \rightrightarrows p_\ast (s_\ast \Oc_\Rcal)\simeq p_\ast (t_\ast \Oc_\Rcal)$. If we denote by $\Oc_\Xcal$ the subsheaf of $\Oc_\Ucal$ defined by the sections $a$ such that $s^\sharp(a)=t^\sharp(a)$, one   sees that $\Xcal:=(X,\Oc_\Xcal)$ is the cokernel of the equivalence relation $(s,t)\colon \Rcal \rightrightarrows \Ucal$ \emph{in the category of $\Z_2$-graded ringed spaces} \cite[V.\! 1]{SGA3-1}.
We have only to prove that $\Xcal$ is   an affine superscheme.
Let $\Oc_X=\Oc_\Xcal/\Jc_\Xcal$, where $\Jc_\Xcal$ is the ideal generated by the odd sections, the bosonic reduction of $\Oc_\Xcal$. Now, the bosonic reduction $(s_{bos},t_{bos})\colon R \rightrightarrows U$ is an \'etale equivalence relation of affine schemes and $s_{bos}$, $t_{bos}$ are finite, so that the bosonic reduction $(X,\Oc_X)$ is an affine scheme and   is the quotient of $R \rightrightarrows U$ in the category of schemes by \cite[V. Thm.\! 4.1]{SGA3-1} (see also \cite[Tag 03BM]{Stacks}). 
It follows that $\Xcal$ is an affine superscheme and that it is the quotient of $(s,t)\colon \Rcal \rightrightarrows \Ucal$ in the category of superschemes. Moreover, $\Xcal$ is also isomorphic with the quotient \'etale sheaf $\Ucal/\Rcal$.
\end{proof}

\subsubsection*{The Zariski topology of algebraic superspaces}

\begin{defin}\label{def:Zariski} The   Zariski topology  of an algebraic superspace $\Xf$  is defined as follows:
\begin{enumerate}
\item objects are open immersions $\iota\colon\Uf\hookrightarrow \Xf$ of algebraic superspaces (Definition \ref{def:openclosed});
\item coverings are families $\{\Uf_i\hookrightarrow \Xf\}$ of open immersions such that $\coprod_i\Uf_i\to\Xf$ is surjective.
\end{enumerate}
\end{defin}

A standard argument of superscheme glueing gives:
\begin{prop}\label{prop:recollement} An algebraic superspace $\Xf$ is a superscheme if and only it has   a Zariski covering by superschemes, that is, if there exists a Zariski covering  $\{\Ucal_i\hookrightarrow \Xf\}$ where all the $\Ucal_i$'s are superschemes.
\qed
\end{prop}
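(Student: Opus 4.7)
The plan is to prove both implications, noting that the ``only if'' direction is immediate (take the trivial covering $\{\Xf\xrightarrow{\Id}\Xf\}$), so the real content is the ``if'' direction: a Zariski covering by superschemes allows us to glue the covering superschemes into a single superscheme representing $\Xf$.

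Assume a Zariski covering $\{\Ucal_i\hookrightarrow \Xf\}_{i\in I}$ with each $\Ucal_i$ a superscheme. First I would set $\Ucal_{ij}:=\Ucal_i\times_\Xf\Ucal_j$ and $\Ucal_{ijk}:=\Ucal_i\times_\Xf\Ucal_j\times_\Xf\Ucal_k$. Since each $\Ucal_j\hookrightarrow\Xf$ is an open immersion (in the sense of Definition \ref{def:openclosed}) and open immersions are stable under base change, each $\Ucal_{ij}$ is representable by an open sub-superscheme of $\Ucal_i$, and symmetrically of $\Ucal_j$; the two projections yield an isomorphism $\varphi_{ij}\colon\Ucal_{ij}\iso\Ucal_{ji}$ of open sub-superschemes. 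The universal property of the triple fiber product gives, for every triple $(i,j,k)$, the cocycle identity $\varphi_{ik}=\varphi_{jk}\circ\varphi_{ij}$ on $\Ucal_{ijk}$, because the relevant morphisms into $\Xf$ agree tautologically.

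Next I would apply the standard superscheme gluing construction (the verbatim generalization of the gluing of schemes along open subschemes) to the data $(\Ucal_i,\Ucal_{ij},\varphi_{ij})$ to obtain a superscheme $\Xcal$ equipped with open immersions $j_i\colon \Ucal_i\hookrightarrow\Xcal$ such that $j_i|_{\Ucal_{ij}}=j_j\circ\varphi_{ij}$ and $\{j_i(\Ucal_i)\}$ is an open covering of $\Xcal$. By the universal property of the gluing in the category of $\Z_2$-graded ringed spaces (and hence of superschemes), the morphisms $\Ucal_i\hookrightarrow\Xf$ factor through a canonical morphism $f\colon\Xcal\to\Xf$ of \'etale sheaves.

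Finally I would show that $f$ is an isomorphism of algebraic superspaces. Since $\coprod_i\Ucal_i\to\Xcal$ and $\coprod_i\Ucal_i\to\Xf$ are surjective \'etale (indeed, coverings by open immersions in either target), both $\Xcal\to\Xf$ can be checked to be an isomorphism after pulling back along a jointly surjective family of \'etale morphisms: on $\Ucal_i\times_\Xf\Ucal_j$ the two presentations coincide by construction, i.e.\ $\Xcal\times_\Xf\Ucal_j\simeq\Ucal_j$ via $j_j$. Equivalently, $\Xcal$ and $\Xf$ are both the colimit (as \'etale sheaves) of the diagram $\coprod_{i,j}\Ucal_{ij}\rightrightarrows\coprod_i\Ucal_i$, and this colimit coincides whether computed in superschemes or in algebraic superspaces, because the formation of the quotient by an \'etale equivalence relation (Proposition \ref{prop:quot}) is compatible with the inclusion of superschemes into algebraic superspaces. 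The main point requiring care is precisely this last verification: namely, that the superscheme gluing yields a superspace isomorphic to $\Xf$, which boils down to checking the sheaf condition on $\Xf$ against the covering $\{\Ucal_i\to\Xf\}$ and using that the same gluing data produce the same sheaf.
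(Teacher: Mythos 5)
Your proposal is correct and is essentially the ``standard argument of superscheme glueing'' that the paper invokes without spelling out: form the pairwise intersections as open sub-superschemes, glue along the tautological cocycle, and identify the result with $\Xf$ as the quotient \'etale sheaf of $\coprod_{i,j}\Ucal_{ij}\rightrightarrows\coprod_i\Ucal_i$. No gaps; the final identification step you flag is indeed the only point requiring care, and your treatment of it is adequate.
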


\subsection{Deligne-Mumford  and algebraic superstacks}
\label{subsec:dmsstacks}

\begin{defin}\label{def:morphismsprops2} Let  $\mathbf P$ be a property of morphisms of superschemes that is stable under base change  and composition  and  local on the source for the \'etale topology (Definition \ref{def:localproperties}) --- for instance, being surjective, \'etale, smooth or unramified (Proposition \ref{prop:localproperties}). A representable  morphism $\Xf\to\Yf$ of superstacks (Definition \ref{def:repres}) has $\mathbf P$ if for  every morphism $\Tc\to\Yf$ from a superscheme $\Tc$ and \'etale presentation $\Ucal \to \Xf\times_\Yf \Tc$ from a superscheme, the composition morphism of superschemes  $\Ucal \to \Xf\times_\Yf \Tc \to \Tc$ has property $\mathbf P$.
\end{defin}

\begin{defin}\label{def:DMalgsstack} A Deligne-Mumford  (resp.\! algebraic) superstack is a superstack $\Xf$ such that there exists a superscheme $\Ucal$ and a representable surjective \'etale (resp.\! smooth) morphism $p\colon\Ucal\to \Xf$ (Definitions \ref{def:repres}, \ref{def:morphismsprops2}). A morphism of Deligne-Mumford  or algebraic superstacks is a morphism of superstacks.
The morphism $p\colon\Ucal\to \Xf$  is called an \'etale (resp.\! smooth) presentation. 
\end{defin}

\begin{example}   Let $\Gsc\to\Sc$ be an affine and smooth $\Sc$-group superscheme acting on a $\Sc$-superscheme $\Xcal$. By  Propositions 
\ref {prop:quotstack}  and \ref{prop:quotmorphism} the quotient   $[\Xcal/\Gsc]$ is an algebraic superstack and the quotient morphism $q\colon \Xcal \to [\Xcal/\Gsc]$ is a smooth presentation.
\end{example}

One  has the following results, whose  proofs are  straightforward.
\begin{prop}\label{prop.firstprop}
Superschemes and  algebraic superspaces are Deligne-Mumford  superstacks.  Deligne-Mumford  superstacks are algebraic superstacks.
Open and closed substacks (Definition \ref{def:openclosed}) of an algebraic superspace (resp.\! Deligne-Mumford  superstack, resp.\! algebraic superstack) are algebraic superspaces (resp.\!  Deligne-Mumford  superstacks, resp.\! algebraic superstacks). Fiber products exist for algebraic superspaces, Deligne-Mumford  superstacks and algebraic superstacks.\qed
\end{prop}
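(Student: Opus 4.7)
I would prove the four bundled claims in order, each reducing to the definitions supplemented by results already recorded in the excerpt.

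For the inclusions between classes, a superscheme $\Xcal$ is an \'etale sheaf by subcanonicity of $\Sf_{et}$, hence a superspace, and its identity is a superschematic --- thus representable by Proposition \ref{prop:sch-spc2}(1) --- surjective \'etale presentation, making $\Xcal$ a Deligne--Mumford superstack. For an algebraic superspace $\Xf$, the superschematic \'etale surjection $\Ucal\to\Xf$ supplied by Definition \ref{def:algsupersps} is representable by the same proposition, and hence is an \'etale presentation in the sense of Definition \ref{def:DMalgsstack}. Since every \'etale morphism of superschemes is smooth (of relative dimension zero), every \'etale presentation is a smooth presentation, and Deligne--Mumford superstacks are algebraic.

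For the open/closed substack claim, I would fix a presentation $p\colon \Ucal\to\Yf$ of the ambient object, of the type appropriate to the class under consideration. Definition \ref{def:openclosed} then asserts that $\Vc:=\Ucal\times_\Yf\Xf$ is representable by an open (resp.\! closed) sub-superscheme of $\Ucal$. The projection $\Vc\to\Xf$ inherits surjectivity, \'etaleness or smoothness, and representability from $p$ by base-change stability (Definitions \ref{def:morphismsprops} and \ref{def:morphismsprops2}), producing the required presentation.

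The fiber product claim is the one that needs the most care. Given $f\colon\Xf\to\Zf$ and $g\colon\Yf\to\Zf$, the superstack $\Xf\times_\Zf\Yf$ already exists by Proposition \ref{prop:sstackprod}, and it is a superspace whenever all three inputs are (proposition following Definition \ref{def:sspc}). Choosing presentations $p\colon U\to\Xf$ and $q\colon V\to\Yf$ of the appropriate type, the natural morphism $U\times_\Zf V\to\Xf\times_\Zf\Yf$ --- composed with an \'etale presentation of $U\times_\Zf V$ by a superscheme --- is the candidate presentation; its surjectivity and \'etaleness or smoothness follow from base-change stability.

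The principal obstacle is verifying that $U\times_\Zf V$ is itself an algebraic superspace so that it admits such a presentation. My strategy would be to write
\[
U\times_\Zf V \;\simeq\; (U\times V)\times_{\Zf\times\Zf}\Zf
\]
as the pullback along the diagonal $\Delta_\Zf$, and then apply Propositions \ref{prop:injdiag}, \ref{prop:diag} and \ref{prop:sch-spc2}: the representability properties of $\Delta_\Zf$, available because $\Zf$ is algebraic, show that this fiber product is an algebraic superspace, after which composition with its \'etale presentation by a superscheme completes the construction in each of the three ambient classes.
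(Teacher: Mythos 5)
The paper offers no proof of this proposition (it is asserted with ``proofs are straightforward''), and your argument is the standard one the authors evidently have in mind: identity/\'etale presentations for the class inclusions, base change of a presentation for open and closed substacks, and the pullback-along-the-diagonal decomposition $U\times_\Zf V\simeq (U\times V)\times_{\Zf\times\Zf}\Zf$ for fiber products; this is correct. The one point worth flagging is that the fiber-product step really rests on Proposition \ref{prop:represdiag} (representability of the diagonal of an algebraic superstack), which the paper only states later but whose proof is independent of this proposition, so there is no circularity --- your cited Propositions \ref{prop:injdiag} and \ref{prop:diag} by themselves give only reformulations, not the representability of $\Delta_\Zf$ itself.
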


We can now extend Definition \ref{def:repres}:
\begin{defin} A superstack morphism $f\colon \Xf\to \Yf$ is representable by algebraic superstacks (resp.\! Deligne-Mumford superstacks)  if for every algebraic superstack (resp.\! DM superstack) $\Zf$ and every stack morphism $f\colon \Zf\to\Yf$, the fiber product $\Zf\times_\Yf \Xf$ is an algebraic (resp.\! a Deligne-Mumford) superstack.
\end{defin}

\begin{prop}\label{prop:stackfiber}	
If $\Yf$ is an algebraic (resp.\! a Deligne-Mumford) superstack and $f\colon \Xf\to\Yf$ is a morphism of superstacks representable by algebraic (resp.\! Deligne-Mumford) superstacks, then $\Xf$ is an algebraic (resp.\! a Deligne-Mumford) superstack.
\end{prop}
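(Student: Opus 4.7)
The plan is to construct a representable surjective smooth (resp.\ \'etale) morphism to $\Xf$ from a superscheme by composing the base change of a presentation of $\Yf$ with a presentation of the resulting algebraic (resp.\ Deligne-Mumford) superstack.

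First, I would use Definition \ref{def:DMalgsstack} applied to $\Yf$ to pick a representable surjective smooth (resp.\ \'etale) presentation $p\colon \Ucal \to \Yf$ from a superscheme $\Ucal$. Since $f$ is representable by algebraic (resp.\ Deligne-Mumford) superstacks, the fiber product $\Zf := \Xf \times_\Yf \Ucal$ is itself an algebraic (resp.\ Deligne-Mumford) superstack, so it admits in turn a representable surjective smooth (resp.\ \'etale) presentation $q\colon \Wc \to \Zf$ from a superscheme $\Wc$. Writing $\pi\colon \Zf \to \Xf$ for the first projection, I claim that $\pi \circ q\colon \Wc \to \Xf$ is a presentation of $\Xf$ in the required sense.

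Next I would verify this claim in two steps. The morphism $\pi$ is the base change of $p$ along $f$, so for every superscheme $\Tc \to \Xf$ the canonical equivalence $\Zf \times_\Xf \Tc \simeq \Ucal \times_\Yf \Tc$ shows that $\pi$ is representable (the right-hand side is an algebraic superspace because $p$ is), and the base-change clauses of Definitions \ref{def:morphismsprops} and \ref{def:morphismsprops2} transfer surjectivity and smoothness (resp.\ \'etaleness) from $p$ to $\pi$. For the composite, given $\Tc \to \Xf$ from a superscheme, I would decompose $\Wc \times_\Xf \Tc \simeq \Wc \times_\Zf (\Zf \times_\Xf \Tc)$; the inner factor is an algebraic superspace by the previous step, so the outer fiber product is an algebraic superspace by Proposition \ref{prop:sch-spc2}(2) applied to the representable morphism $q$. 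Hence $\pi \circ q$ is representable, and its surjectivity and smoothness (resp.\ \'etaleness) follow from closure under composition in Definition \ref{def:morphismsprops2}.

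The main obstacle, insofar as there is one, is essentially bookkeeping: one must confirm that the class of representable morphisms of superstacks carrying property $\mathbf{P}$ (for $\mathbf{P}$ one of surjective, smooth, \'etale) is closed under both base change and composition. But these closure properties reduce, via Definition \ref{def:morphismsprops2}, to the corresponding facts for morphisms of superschemes, which are implicit in the hypotheses on $\mathbf{P}$. Hence $\pi \circ q$ realizes $\Xf$ as an algebraic (resp.\ Deligne-Mumford) superstack.
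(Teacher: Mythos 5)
Your argument is correct. The paper gives no argument of its own here --- it simply cites \cite[Corol.\! 6.32]{CasWis17} --- and your proof is precisely the standard one underlying that citation: base-change a presentation of $\Yf$ to obtain an algebraic (resp.\! Deligne-Mumford) superstack over $\Xf$, present that, and compose; the only point requiring care is the closure of representability and of property $\mathbf P$ under composition, which goes through because $\mathbf P$ is assumed stable under composition and \'etale-local on the source in Definition \ref{def:morphismsprops2}.
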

\begin{proof} This is \cite[Corol.\! 6.32]{CasWis17}.
\end{proof}

\begin{remark}\label{rem:algssp} Clearly, algebraic superspaces are algebraic stacks  and superspaces.  We will see later (Proposition \ref{prop:algsp}) that an algebraic stack that is a superspace is actually an algebraic superspace.
\end{remark}

\begin{prop} The bosonic reduction of a Deligne-Mumford (resp.\! algebraic) superstack is a Deligne-Mumford (resp.\! algebraic) stack.
\end{prop}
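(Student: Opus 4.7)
The plan is to take an \'etale (resp.\! smooth) presentation $p\colon \Ucal\to\Xf$ of the Deligne-Mumford (resp.\! algebraic) superstack $\Xf$ guaranteed by Definition \ref{def:DMalgsstack} and to show that its bosonic reduction $p_{bos}\colon \Ucal_{bos}\to\Xf_{bos}$ (Definition \ref{def:bosredmor}) is an \'etale (resp.\! smooth) presentation of the stack $\Xf_{bos}$. First I would observe that if $\Ucal=(U,\Oc_\Ucal)$ is a superscheme, then the stack $\Ucal_{bos}$ is representable by the classical scheme $(U,\Oc_\Ucal/\Jc_\Ucal)$ obtained by quotienting by the ideal generated by odd sections: indeed, any morphism of $\Z_2$-graded ringed spaces from a scheme $S$ to $\Ucal$ must kill that ideal, so $\Ucal_{bos}(S)\simeq\Ucal(S)$ is naturally identified with the set of morphisms from $S$ to this classical reduction. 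Hence $\Ucal_{bos}$ qualifies as a scheme target for the candidate presentation of $\Xf_{bos}$.

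Next, Corollary \ref{cor:represbos} directly gives that $p_{bos}\colon \Ucal_{bos}\to\Xf_{bos}$ is representable by algebraic spaces, so it only remains to check surjectivity and \'etaleness (resp.\! smoothness). Starting from a morphism $T\to\Xf_{bos}$ from a scheme $T$, via the equivalence $\Xf_{bos}(T)\simeq \Xf(T)$ it corresponds to a superstack morphism $T\to\Xf$. The key compatibility, deduced from the universal property of fiber products and the equivalences $(-)_{bos}(S)\simeq (-)(S)$ for every scheme $S$, together with $T=T_{bos}$, is the natural isomorphism
\begin{equation*}
(\Ucal\times_\Xf T)_{bos}\simeq \Ucal_{bos}\times_{\Xf_{bos}} T.
\end{equation*}
Since $p$ is representable, $\Ucal\times_\Xf T$ is an algebraic superspace, so Definition \ref{def:morphismsprops2} applied to $p$ yields an \'etale presentation $\Vc\to\Ucal\times_\Xf T$ from a superscheme such that the composition $\Vc\to T$ is a surjective \'etale (resp.\! smooth) morphism of superschemes.

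Finally I would pass to bosonic reductions: the morphism $\Vc_{bos}\to T$ of schemes is again surjective and \'etale (resp.\! smooth) by Corollary \ref{cor:localsmooth} (already invoked in the proof of Proposition \ref{prop:represbos}), and by the displayed isomorphism it factors through $\Ucal_{bos}\times_{\Xf_{bos}} T$ as an \'etale presentation by an algebraic space. This establishes that $p_{bos}$ is representable, surjective, and \'etale (resp.\! smooth) as a morphism of stacks in the sense of Definition \ref{def:morphismsprops2}, so that $\Xf_{bos}$ is a Deligne-Mumford (resp.\! algebraic) stack. The main delicate point is verifying that bosonic reduction preserves surjectivity and local \'etaleness or smoothness for morphisms of superschemes; this is exactly the content of Corollary \ref{cor:localsmooth}, while the compatibility of $(-)_{bos}$ with representable fiber products is straightforward from the explicit description of the bosonic reduction on scheme-valued points.
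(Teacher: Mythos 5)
Your proposal is correct and follows essentially the same route as the paper: the paper's proof simply asserts that $p_{bos}\colon U\to\Xf_{bos}$ is representable (via Proposition \ref{prop:represbos}/Corollary \ref{cor:represbos}), surjective, and \'etale resp.\ smooth (via Corollary \ref{cor:localsmooth}), which is exactly what you verify by unwinding Definition \ref{def:morphismsprops2} and the compatibility $(\Ucal\times_\Xf T)_{bos}\simeq \Ucal_{bos}\times_{\Xf_{bos}}T$. Your version just makes explicit the bookkeeping that the paper leaves implicit.
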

\begin{proof} Let $p\colon \Ucal\to \Xf$ be a smooth (resp.\! \'etale) presentation, that is a representable, surjective and smooth (resp.\! \'etale) superscheme morphism. Then $p_{bos}\colon U \to \Xf_{bos}$ is representable (Proposition \ref{prop:represbos}), surjective and smooth (resp.\! \'etale) (Corollary \ref{cor:localsmooth}).
\end{proof}

 Let $\mathbf P$ be  a property
of superschemes which is \'etale (resp.\! smooth) local, that is,  if $\Xcal \to \Ycal$ is an \'etale (resp.\! 
smooth) surjection of superschemes, then $\Xcal$ has $\mathbf P$ if and only if $\Ycal$ has $\mathbf P$. 

 Mimicking \cite[Def.\! 3.3.7]{Alp24} we have:
\begin{defin}\label{def:locnoeth}  An algebraic (resp.\! A Deligne-Mumford) superstack  $\Xf$ 
has property $\mathbf P$ if for a smooth (resp.\! \'etale) presentation $\Ucal\to \Xcal$ (equivalently for all presentations) the superscheme
$\Ucal$ has $\mathbf P$. This allows us to define locally noetherian algebraic superstacks.
\end{defin}

\subsection{Properties of algebraic stack morphisms}\label{subsec:properties}

 We can generalize Definition \ref{def:morphismsprops2} to define properties of morphisms of algebraic superstacks. 
 
\begin{defin}\label{def:morphismsprops3} Let $\mathbf P$ be a property of superscheme morphisms stable by composition and base change and \'etale (resp.\! smooth) local on the target and on the source (Definition \ref{def:localproperties}). A morphism $f\colon \Xf \to \Yf$ of Deligne-Mumford (resp.\! algebraic) superstacks has the property $\mathbf P$ if there is an \'etale (resp.\! smooth) presentation $p\colon \Vc \to \Yf$ and an \'etale presentation $\Ucal \to \Xf\times_{f,\Yf,p}\Vc$ such that the composition $\Ucal \to \Vc$ has $\mathbf P$. This is equivalent to claim that  $\Ucal \to \Vc$ has $\mathbf P$ for any such presentations.
Using Proposition \ref{prop:localproperties} one can then define the following properties:
\begin{enumerate}
\item For a morphism of algebraic superstacks: being surjective, flat, locally of finite type, locally of finite presentation, and smooth.
\item For a morphism of Deligne-Mumford superstacks: being locally quasi-finite, \'etale, and unramified..
\end{enumerate}
\end{defin}

\begin{prop}\label{prop:represdiag} (Representability of the diagonal)
\begin{enumerate}
\item
The diagonal of an algebraic superspace is superschematic;
\item
the diagonal of an algebraic superstack is representable.
\end{enumerate}
\end{prop}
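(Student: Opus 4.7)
The plan is to use Proposition \ref{prop:diag}, which reduces both claims to showing that for every superscheme $\Tc$ and pair $(x_1,x_2)\in\Xf(\Tc)\times \Xf(\Tc)$ the isomorphism sheaf $\mathfrak{I}som(x_1,x_2):= \Tc\times_{\Xf\times\Xf}\Xf$ is a superscheme (for~(1)) or an algebraic superspace (for~(2)). In both cases I fix a presentation $p\colon \Ucal\to \Xf$, form the groupoid object $\Rcal:=\Ucal\times_\Xf \Ucal$, and exploit that the natural map $\Rcal\to \Ucal\times\Ucal$ is a monomorphism (because the diagonal of a superspace, resp.\ of the superstack in the fiber, is injective by Proposition \ref{prop:injdiag}).

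For~(1), since the presentation is superschematic and \'etale, $\Rcal$ is a superscheme and both projections $\Rcal\to\Ucal$ are \'etale. The composition of $\Rcal\to \Ucal\times\Ucal$ with each projection $\Ucal\times\Ucal\to\Ucal$ is then \'etale, so $\Rcal\to\Ucal\times\Ucal$ is locally of finite type; being a monomorphism it has singleton fibers and is separated, hence locally quasi-finite and separated. Setting $V_i:=\Tc\times_{x_i,\Xf,p}\Ucal$ (superschemes \'etale surjective over $\Tc$) and $V:=V_1\times_\Tc V_2$, the standard computation gives $\mathfrak{I}som(x_1,x_2)\times_\Tc V \simeq \Rcal\times_{\Ucal\times\Ucal}V$, a superscheme locally quasi-finite and separated over $V$. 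This map is an \'etale superschematic presentation of $\mathfrak{I}som(x_1,x_2)$, so the latter is an algebraic superspace, and Proposition \ref{prop:sschmchar} with $\mathbf P=$ ``locally quasi-finite and separated'' promotes it to a superscheme.

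For~(2), with $p$ only representable and smooth, $\Rcal$ is an algebraic superspace. Put $V:=\Tc\times_{\Xf\times\Xf}(\Ucal\times \Ucal)$, an algebraic superspace with $V\to\Tc$ representable, smooth and surjective (base change of the representable smooth surjection $p\times p$). Then $\mathfrak{I}som(x_1,x_2)\times_\Tc V \simeq \Rcal\times_{\Ucal\times\Ucal}V$ is a fiber product of algebraic superspaces over the superscheme $\Ucal\times\Ucal$, hence an algebraic superspace. To descend from $V$ to $\Tc$, I would pick an \'etale superscheme presentation $\widetilde V\to V$, observing that $\widetilde V\to\Tc$ is then a smooth surjection of superschemes, and use that smooth morphisms of superschemes admit \'etale-local sections; choosing an \'etale cover $\Tc'\to\Tc$ factoring through $\widetilde V$, the pullback of an \'etale presentation of $\mathfrak{I}som(x_1,x_2)\times_\Tc \widetilde V$ yields an \'etale superscheme cover of $\mathfrak{I}som(x_1,x_2)\times_\Tc\Tc'$, whose composition with $\Tc'\to\Tc$ is the required \'etale superschematic presentation of $\mathfrak{I}som(x_1,x_2)$.

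The main obstacle is the descent step in~(2): namely, proving that smooth morphisms of superschemes admit \'etale-local sections (this should reduce to the classical fact via Corollary \ref{cor:localsmooth} applied to the bosonic reduction together with a lifting argument for the nilpotent odd directions), and then assembling the resulting \'etale-local presentations into a single global \'etale presentation of $\mathfrak{I}som(x_1,x_2)$. Part~(1) is technically cleaner because there the base-changed monomorphism is handled by the descent Proposition \ref{prop:sschmchar}, which avoids any such slicing.
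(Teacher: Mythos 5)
Your argument is essentially the paper's proof: the paper simply cites the standard bootstrap of \cite[Theorem 3.2.1]{Alp24} and names exactly the two super-specific ingredients you use, namely Proposition \ref{prop:sschmchar} to promote the Isom superspace to a superscheme in part (1), and the existence of \'etale-local sections of smooth morphisms for the slicing in part (2). The step you flag as the ``main obstacle'' is already available as Corollary \ref{cor:localsectsmooth} in the appendix (and your aside that $\Rcal\to\Ucal\times\Ucal$ is a monomorphism is only true in case (1), but you do not use it in case (2), so nothing is affected).
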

\begin{proof} The proof is similar to that of \cite[Theorem 3.2.1]{Alp24} taking into account first that smooth morphisms of superschemes  \'etale locally  have sections (Proposition \ref{cor:localsectsmooth}),  and secondly, using  Proposition \ref{prop:sschmchar},which  characterizes when an algebraic superspace is a superscheme.
\end{proof}

By the same token,  one has:

\begin{corol}\label{cor:represdiag} The diagonal $\Delta_f\colon \Xf \to \Xf\times_\Yf \Xf$ of a representable morphism $f\colon\Xf \to\Yf$ of algebraic superstacks is superschematic.
\qed
\end{corol}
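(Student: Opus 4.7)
The plan is to reduce the claim to part~(1) of Proposition~\ref{prop:represdiag} via the representability hypothesis on $f$. Fix a test superscheme $T$ with a morphism $T\to\Xf\times_\Yf\Xf$; by the 2-Yoneda Lemma~\ref{lem:2yoneda} this amounts to a triple $(x_1,x_2,\alpha)$ with $x_1,x_2\in\Xf(T)$ and $\alpha\colon f(x_1)\iso f(x_2)$ in $\Yf(T)$. The goal is to show that the fiber product $\Xf\times_{\Delta_f,\Xf\times_\Yf\Xf}T$ is (representable by) a superscheme.

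First I would form $\Xf_T:=\Xf\times_{f,\Yf,f\circ x_1}T$, which is an algebraic superspace by the representability of $f$ (Definition~\ref{def:repres}). The triple $(x_1,x_2,\alpha)$ then determines two canonical sections of the projection $\Xf_T\to T$, namely $s_1=(x_1,\mathrm{id})$ and $s_2=(x_2,\alpha^{-1})$.

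The crucial step --- which I expect to be the main bookkeeping obstacle --- is to verify the identification
\[
\Xf\times_{\Delta_f,\Xf\times_\Yf\Xf}T\;\simeq\;T\times_{(s_1,s_2),\,\Xf_T\times_T\Xf_T,\,\Delta_{\Xf_T/T}}\Xf_T,
\]
obtained by unwinding both sides as groupoid-valued functors. Each side classifies, over a test $S\to T$, an isomorphism $\phi\colon x_1|_S\iso x_2|_S$ in $\Xf(S)$ satisfying $f(\phi)=\alpha|_S$, in direct analogy with diagram~\eqref{eq:diagonal2}; the delicate point is propagating the 2-isomorphism $\alpha$ through both sections consistently.

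Finally, by Proposition~\ref{prop:represdiag}(1) the absolute diagonal $\Delta_{\Xf_T}\colon\Xf_T\to\Xf_T\times\Xf_T$ is superschematic. Since the inclusion $\iota\colon\Xf_T\times_T\Xf_T\hookrightarrow\Xf_T\times\Xf_T$ is a base change of $\Delta_T\colon T\to T\times T$ along $\pi\times\pi$, a standard pullback argument using the factorization $\Delta_{\Xf_T}=\iota\circ\Delta_{\Xf_T/T}$ shows that the relative diagonal $\Delta_{\Xf_T/T}$ is superschematic as well. Consequently the right-hand side of the displayed identification is a superscheme, so $\Xf\times_{\Delta_f,\Xf\times_\Yf\Xf}T$ is too, and since $T$ was arbitrary this proves that $\Delta_f$ is superschematic.
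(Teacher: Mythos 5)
Your proof is correct. The paper itself gives no argument for this corollary beyond the phrase ``by the same token,'' which points back to the proof of Proposition \ref{prop:represdiag}, i.e.\ to re-running the Alper-style argument (presentations, local sections of smooth morphisms, and the superscheme-recognition Proposition \ref{prop:sschmchar}) in the relative setting. You instead give a purely formal bootstrap from part (1) of Proposition \ref{prop:represdiag}: for a test object $(x_1,x_2,\alpha)\colon T\to\Xf\times_\Yf\Xf$ you pass to the algebraic superspace $\Xf_T=\Xf\times_{f,\Yf,f\circ x_1}T$, read off the two sections $s_1=(x_1,\mathrm{id})$ and $s_2=(x_2,\alpha^{-1})$, and identify $\Xf\times_{\Delta_f,\Xf\times_\Yf\Xf}T$ with the pullback of $\Delta_{\Xf_T/T}$ along $(s_1,s_2)$. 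That identification does check out: both sides classify, over $S\to T$, isomorphisms $\phi\colon x_1|_S\iso x_2|_S$ with $f(\phi)=\alpha|_S$, and the setoid property of $\Xf_T$ guarantees there is at most one such $\phi$, so both are honest subsheaves of $T$. Your deduction that $\Delta_{\Xf_T/T}$ is superschematic from $\Delta_{\Xf_T}$ being so is also fine, since $\iota\colon\Xf_T\times_T\Xf_T\to\Xf_T\times\Xf_T$ is a monomorphism (being a base change of $\Delta_T$), so the two fiber products over a test superscheme agree. What your route buys is that all the geometric input is concentrated in the absolute statement for algebraic superspaces, and the corollary becomes pure 2-categorical bookkeeping; the cost is precisely that bookkeeping, which you have carried out correctly.
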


\begin{defin}\label{def:stabilizer} Let $k$ be a field and  $x\colon \Spec k \to \Xf$ a $k$-valued point of an algebraic superstack $\Xf$. The stabilizer of $x$ is the sheaf $\mathfrak{A}ut_{\Xf(k)}(x):=\mathfrak{I}som_{\Xf(\Spec(k))}(x,x)$.\end{defin}
By equation \eqref{eq:diagonal}, there is a 2-cartesian diagram
\begin{equation}\label{eq:stabilizer2}
\xymatrix{ 
\mathfrak{A}ut_{\Xf(k)}(x) \ar[r]\ar[d] & \Spec(k) \ar[d]^{(x,x)} \\
\Xf\ar[r]^{\Delta_\Xf}& \Xf\times\Xf
}
\end{equation}
so that $\mathfrak{A}ut_{\Xf(k)}(x)$ is a \emph{group algebraic superspace} by the representability of the diagonal (Proposition \ref{prop:represdiag}). By the same reference, the stabilizers of the field-valued points of an algebraic superspace are group superschemes or \emph{supergroups}.

\subsection{Topological properties}

The topological space $\Xf$ of a superstack is defined as in \cite[Def.\! 9.4]{CasWis17}, namely, it is the topological space
$$
|\Xf | =\varinjlim_{\Sc\to\Xf} |\Sc |
$$
where the limit is taken over all morphisms from superschemes $\Sc$ and $|\Sc |$ denotes the underlying topological space to $\Sc$.

A morphism $f\colon \Xf \to \Yf$ of superstacks induces a map $|f|\colon |\Xf|\to|\Yf|$.

 As it happens for superschemes,  the topology of a superstack is all contained in the underlying bosonic object.
\begin{prop}\label{prop:points} One has
$|\Xf | =\varinjlim_{S\to\Xf} |S|$, where the limit is taken over all morphisms from schemes $S$, so that
$$
|\Xf | = | \Xf_{bos} |\,,
$$
by \cite[Def.\! 9.4]{CasWis17}.
\qed
\end{prop}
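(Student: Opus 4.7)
The plan is to verify the two displayed equalities in turn. Both reduce to the elementary fact that the bosonic reduction functor $(-)_{bos}$ sends superschemes to schemes without altering the underlying topological space, together with the adjunction-like equivalence $\Xf_{bos}(S)\simeq \Xf(S)$ for schemes $S$ that was already recorded immediately after Definition \ref{def:bosred}.

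For the first equality I would carry out a cofinality argument. For every superscheme $\Sc$ there is a natural closed immersion $j_\Sc\colon \Sc_{bos}\hookrightarrow \Sc$ (where $\Sc_{bos}$ is the scheme obtained by killing the ideal of odd sections, as used in Proposition \ref{prop:affinequot}), and this is topologically a homeomorphism, i.e.\ $|j_\Sc|\colon |\Sc_{bos}|\iso|\Sc|$. Moreover $(-)_{bos}$ is functorial on superschemes, so any morphism $f\colon \Sc'\to\Sc$ induces $f_{bos}\colon \Sc'_{bos}\to\Sc_{bos}$ compatibly with the immersions $j_{\Sc'}$ and $j_{\Sc}$. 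Given $x\in\Xf(\Sc)$, composing with $j_\Sc$ produces an object $x\!\mid_{\Sc_{bos}}\in\Xf(\Sc_{bos})$ and an index-category morphism $(\Sc_{bos},x\!\mid_{\Sc_{bos}})\to(\Sc,x)$ whose topological component is a homeomorphism. The resulting comparison map
$$
\varinjlim_{S\to\Xf}|S| \;\longrightarrow\; \varinjlim_{\Sc\to\Xf}|\Sc|
$$
is then surjective, because any point of any $|\Sc|$ lies in the image of $|\Sc_{bos}|$, and injective, because any zig-zag of identifications realized through superschemes may be pulled back via $(-)_{bos}$ to a zig-zag through their bosonic reductions. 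This gives the desired bijection (and homeomorphism, since the topology on both sides is defined as the final topology along the respective cones).

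For the second equality I would invoke the natural equivalence $\Xf_{bos}(S)\simeq\Xf(S)$ available for every scheme $S$. Being functorial in $S$, it identifies the diagram $S\mapsto |S|$ indexed by $\Hom(S,\Xf)$ with the analogous diagram indexed by $\Hom(S,\Xf_{bos})$, and applying the stacky analogue of \cite[Def.\! 9.4]{CasWis17} to the ordinary stack $\Xf_{bos}$ yields
$$
\varinjlim_{S\to\Xf}|S|\;=\;\varinjlim_{S\to\Xf_{bos}}|S|\;=\;|\Xf_{bos}|.
$$

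The main obstacle is making the cofinality step precise: one needs to check not only that schemes surject onto all superscheme points of $\Xf$, but also that every identification imposed by a morphism in the larger diagram is already imposed by a morphism in the restricted one. This is exactly what the functoriality of $(-)_{bos}$ and the naturality of the immersions $j_\Sc$ deliver, so the verification is essentially bookkeeping once these two ingredients are laid out.
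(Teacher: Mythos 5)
Your argument is correct and is exactly the verification the paper leaves implicit (the proposition is stated with no proof beyond the \qed, the intended point being that $|\Sc_{bos}|\to|\Sc|$ is a homeomorphism and that $\Xf_{bos}(S)\simeq\Xf(S)$ for schemes $S$). One small caveat: the inclusion of the scheme-indexed diagram is not cofinal in the strict categorical sense (your comparison morphisms $(\Sc_{bos},x\!\mid_{\Sc_{bos}})\to(\Sc,x)$ point the wrong way for that), but your direct check of surjectivity and injectivity of the comparison map via zig-zags makes this irrelevant and the proof stands.
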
 
 Let us write $\Xb=\Xf_{bos}$. By \cite[Rem.\! 9.5]{CasWis17}, the topological space $|\Xb |$ can be also described as follows:\footnote{This is the definition of the topological space underlying a stack as in 
\cite[Def.~3.3.17]{Alp24} or in the Stacks Project \cite[Tags  
  04XE, 04XG, 04XL]{Stacks}.}
the points of $|\Xb |$ are field-valued morphisms $x\colon \Spec k\to \Xb$, where two morphisms $x_1\colon \Spec k_1\to \Xb$ and $x_2\colon \Spec k_2\to \Xb$ are identified if there exist field extensions $k_1\to k_3$, $k_2\to k_3$ such that $x_{1\vert \Spec k_3}$ and $x_{2\vert \Spec k_3}$ are isomorphic in $\Xb(\Spec k_3)$. A subset $U\subseteq |\Xb |$ is open if there exists an open substack $\Ub\hookrightarrow\Xb$ such that $U=|\Ub |$.

 Since the field-valued points of a superstack $\Xf$ are the same as the field-valued points of its bosonic reduction $\Xf_{bos}$ (see equation \eqref{eq:adjoint}), we have the same characterization of the topological space associated to a superstack, namely, 
\begin{prop} The points of $|\Xf |$ are the classes of field-valued morphisms $x\colon \Spec k\to \Xf$ where two morphisms $x_1\colon \Spec k_1\to \Xf$ and $x_2\colon \Spec k_2\to \Xf$ are identified if there exist field extensions $k_1\to k_3$, $k_2\to k_3$ such that $x_{1\vert \Spec k_3}$ and $x_{2\vert \Spec k_3}$ are isomorphic in $\Xf(\Spec k_3)$. A subset $U\subseteq |\Xf |$ is open if  there exists an open substack $\Uf\hookrightarrow\Xf$ such that $U=|\Uf |$
\qed
\end{prop}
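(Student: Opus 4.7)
The plan is to deduce this from the analogous description of $|\Xb|$ for the bosonic stack $\Xb=\Xf_{bos}$, recalled in the paragraph preceding the statement from \cite[Rem.~9.5]{CasWis17}, using the identification $|\Xf|=|\Xb|$ of Proposition \ref{prop:points} together with the adjunction \eqref{eq:adjoint}.

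For the description of points, first observe that $\iota(\Spec k)\simeq\Spec k$ as a superstack, because any morphism $\Sc\to\Spec k$ of superschemes factors through the even part $\Sc_{ev}$ (the image of $k$ lies in the even part of the structure sheaf). The adjunction \eqref{eq:adjoint} therefore specializes to an equivalence $\Homst_{SSt}(\Spec k,\Xf)\simeq\Homst_{St}(\Spec k,\Xb)$, and similarly $\Xf(\Spec k_3)\simeq\Xb(\Spec k_3)$ for any field $k_3$. These equivalences convert the equivalence relation in the statement verbatim into the corresponding one on field-valued points of $\Xb$, whose quotient is $|\Xb|$ by \cite[Rem.~9.5]{CasWis17}, and hence equals $|\Xf|$ by Proposition \ref{prop:points}.

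For the topology, the key step is to show that $\Uf\mapsto\Uf_{bos}$ gives a bijection between open sub-superstacks of $\Xf$ and open substacks of $\Xb$, compatible with underlying topological spaces. Given $\Uf\hookrightarrow\Xf$ open, for any scheme $V\to\Xb$ one checks that $V\times_\Xb\Uf_{bos}\simeq V\times_\Xf\Uf$ is an open sub-superscheme of $V$, so $\Uf_{bos}$ is open in $\Xb$. Conversely, given $\Ub\hookrightarrow\Xb$ open, one defines $\Uf\subseteq\Xf$ to consist of those $x\in\Xf(\Sc)$ whose bosonic restriction lies in $\Ub$; the openness of $\Uf$ follows because for any $\Vc\to\Xf$ the fiber product $\Vc\times_\Xf\Uf$ is the open sub-superscheme of $\Vc$ whose underlying open agrees with $|\Vc_{bos}\times_\Xb\Ub|$, using that $|\Vc|=|\Vc_{bos}|$. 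The two constructions are mutually inverse, and $|\Uf|=|\Uf_{bos}|$ by Proposition \ref{prop:points}, so the characterization of open subsets of $|\Xf|$ reduces to the known one for $|\Xb|$.

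The main obstacle is the bookkeeping needed to verify that bosonic reduction commutes with fiber products and with the formation of open sub-superschemes, and that the two assignments above are mutually inverse. These checks are routine, using Definition \ref{def:sstackprod} together with the fact that a superscheme has the same underlying topological space as its bosonic reduction, and no essentially new idea beyond the adjunction \eqref{eq:adjoint} is required, which is presumably why the authors close the statement with $\qed$.
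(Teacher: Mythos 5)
Your proposal is correct and follows essentially the same route as the paper, which justifies the statement in one line by noting that field-valued points of $\Xf$ coincide with those of $\Xf_{bos}$ (via the adjunction \eqref{eq:adjoint}) and then invoking $|\Xf|=|\Xf_{bos}|$ from Proposition \ref{prop:points} together with the known description of $|\Xf_{bos}|$. Your additional verification that open sub-superstacks of $\Xf$ correspond bijectively to open substacks of $\Xf_{bos}$ is exactly the routine check the authors leave implicit behind the \qed.
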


\begin{defin}\label{def:locfinitepts} A point of a superstack $\Xf$ is of finite type if it has a representative morphism $x\colon \Spec k \to \Xf$ of finite type. Equivalently, if it is of finite type as a point of the bosonic reduction $\Xf_{bos}$. We denote by $|\Xf |_{ft}=|\Xf_{bos}|_{ft}$ (Proposition \ref{prop:points}) the set of its finite type points.
\end{defin}

\begin{prop}\label{prop:ftdense} Let $\Xf$ be an algebraic stack and  $Z$ be a  closed subset of $|\Xf|$. Then $Z\cap |X_f|_{ft}$ is dense in $Z$.
\end{prop}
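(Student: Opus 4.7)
The plan is to reduce the assertion, via the bosonic reduction, to the classical density statement for algebraic stacks, and then further, via a smooth presentation, to the familiar density of finite-type points in closed subsets of a scheme.

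First, by Proposition \ref{prop:points} the topological spaces $|\Xf|$ and $|\Xf_{bos}|$ coincide, and by Definition \ref{def:locfinitepts} so do the sets $|\Xf|_{ft}$ and $|\Xf_{bos}|_{ft}$. Thus, replacing $\Xf$ by its bosonic reduction $\Xf_{bos}$, it suffices to prove the statement for the (classical) algebraic stack $\Xf_{bos}$ and a closed subset $Z \subseteq |\Xf_{bos}|$.

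Next, I would pick a smooth presentation $p\colon U \to \Xf_{bos}$ from a scheme $U$, which exists because $\Xf_{bos}$ is algebraic. The induced map $|p|\colon |U| \to |\Xf_{bos}|$ is continuous and surjective, and, since smooth morphisms of schemes are open, $|p|$ is also an open map. Set $Z' := |p|^{-1}(Z)$, a closed subset of $|U|$, and let $W \subseteq Z$ be a nonempty relatively open subset, written as $W = V \cap Z$ for some open $V \subseteq |\Xf_{bos}|$. Then $|p|^{-1}(V) \cap Z'$ is open in $Z'$ and is nonempty because $|p|$ is surjective onto $Z$. By the classical fact that finite-type points of a scheme are dense in every closed subset, one can find a finite-type point $u \in |p|^{-1}(V) \cap Z' \cap |U|_{ft}$. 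Since $p$ is smooth, it is locally of finite type, so the composition $\Spec \kappa(u) \to U \to \Xf_{bos}$ is locally of finite type; hence $|p|(u) \in W$ is a finite-type point of $\Xf_{bos}$ lying in $Z$. As $W$ was arbitrary, this proves density.

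The main thing to check, rather than a genuine obstacle, is that the density of finite-type points passes correctly through the smooth presentation; this rests on the openness and surjectivity of $|p|$ together with the fact that finite-type points of $U$ push forward to finite-type points of $\Xf_{bos}$ under a locally-of-finite-type map. No input is required beyond the classical scheme statement and the equivalence $|\Xf| = |\Xf_{bos}|$ already established.
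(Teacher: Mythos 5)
Your proof is correct and follows essentially the same route as the paper: the paper's entire argument is the reduction $|\Xf| = |\Xf_{bos}|$ followed by a direct citation of the classical density statement for algebraic stacks (Stacks Project, Tag 06G2). You perform the identical reduction and then simply unpack the proof of the cited classical result via a smooth presentation, which is fine.
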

\begin{proof} Since $|\Xf | = | \Xf_{bos} |$, we can apply directly \cite[Tag 06G2]{Stacks}.
\end{proof}

Using \cite[Tag 04XL]{Stacks}, one sees the following property.
\begin{prop}\label{prop:continuous} For every morphism $f\colon \Xf \to \Yf$ of superstacks one has 
$|f|=|f_{bos}|$ as  a  map from $ | \Xf_{bos} |$ to $ | \Yf_{bos} |$, so that  $|f|$ is continuous.
\qed
\end{prop}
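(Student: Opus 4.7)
The plan is to reduce the statement to the already-known fact for ordinary stacks. First, I recall that by Proposition \ref{prop:points} one has the identifications $|\Xf|=|\Xf_{bos}|$ and $|\Yf|=|\Yf_{bos}|$, so that both $|f|$ and $|f_{bos}|$ can be viewed as maps from $|\Xf_{bos}|$ to $|\Yf_{bos}|$. Hence the content of the proposition is to identify these two maps and to deduce continuity from the classical case.

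Next, I would compute both maps on a representative of a point. A point of $|\Xf|$ is represented by a field-valued morphism $x\colon \Spec k \to \Xf$, and, by the construction of $|f|$ in \cite[Def.\! 9.4]{CasWis17} invoked after Proposition \ref{prop:points}, $|f|$ sends its class to the class of $f\circ x\colon \Spec k \to \Yf$. On the other hand, since $\Spec k$ is an ordinary scheme, the equivalence of categories $\Xf_{bos}(\Spec k)\simeq \Xf(\Spec k)$ recalled just after Definition \ref{def:bosred} associates to $x$ a morphism $\bar x\colon \Spec k \to \Xf_{bos}$, and by Definition \ref{def:bosredmor} the functor $f_{bos}(\Spec k)$ is identified with $f(\Spec k)$ under these equivalences. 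Therefore $f_{bos}\circ \bar x$ corresponds to $f\circ x$ under the equivalence $\Yf_{bos}(\Spec k)\simeq \Yf(\Spec k)$, which gives the pointwise identification $|f|=|f_{bos}|$ as maps from $|\Xf_{bos}|$ to $|\Yf_{bos}|$.

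The continuity of $|f|$ is then automatic, since $|f_{bos}|$ is the map on topological spaces associated to a morphism of ordinary stacks, which is continuous by \cite[Tag 04XL]{Stacks}. I do not expect any real obstacle: the only point requiring a small check is that the pointwise identification is compatible with the equivalence relation defining $|\Xf|$ and $|\Yf|$, but this is immediate because two field-valued points $x_1,x_2$ of $\Xf$ becoming isomorphic after a common field extension $k_3$ force $f\circ x_1,f\circ x_2$ to become isomorphic over $k_3$ as well, as $f$ is a morphism of fibered categories.
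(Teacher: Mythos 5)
Your proposal is correct and follows essentially the same route as the paper, which simply cites \cite[Tag 04XL]{Stacks} and leaves the identification $|f|=|f_{bos}|$ to the reader; you supply exactly the expected details, namely the reduction to field-valued points via Proposition \ref{prop:points}, the compatibility of $f$ with the bosonic equivalences $\Xf_{bos}(\Spec k)\simeq\Xf(\Spec k)$, and the check that the identification respects the equivalence relation defining the points.
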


\begin{defin}\label{def:univclosed}
A morphism $f\colon \Xf \to \Yf$ of algebraic superstacks is closed (resp.\! open) if $|f|$ is a closed (resp.\! open) map. 
Also, a morphism $f\colon \Xf \to \Yf$ of algebraic superstacks is \emph{universally closed} if for every stack morphism $\Zf \to \Yf$, the morphism $\Xf\times_\Yf \Zf \to \Zf$ is closed.
\end{defin} 
 
 \begin{defin}\label{def:quasi-compact}
An algebraic superstack $\Xf$ is quasi-compact, connected,
or irreducible if the topological space $|\Xf |$ is so. 
\end{defin}

 \begin{defin}\label{def:finitetype} Let $f\colon \Xf \to \Yf$ be a morphism of algebraic superstacks.
\begin{enumerate}
\item $f$ is quasi-compact if for every affine superscheme $\SSpec\As$ and every morphism $\SSpec \As \to \Yf$, the fiber product $\Xf\times_\Yf\SSpec \As$ is quasi-compact (Definition \ref{def:quasi-compact}).
\item $f$ is  of finite type if it is locally of finite type  (Definition \ref{def:morphismsprops3}) and quasi-compact.
\end{enumerate}
\end{defin}

 We have seen that the topological properties of a superstack are the same as those of its bosonic reduction. Moreover, one has that a morphism of superschemes is locally of finite type if and only if the bosonic reduction has the same property. Then we have:    
\begin{prop}\label{prop:univclosed}  A morphism $f\colon \Xf \to \Yf$ of algebraic superstacks is closed, open, universally closed, quasi-compact or of finite type if and only if the bosonic reduction $f_{bos}\colon \Xf_{bos}\to\Yf_{bos}$ has the same property.
\qed
\end{prop}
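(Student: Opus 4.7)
The plan is to reduce each of the five listed properties to the corresponding property of the bosonic reduction, using the topological identifications $|\Xf|=|\Xf_{bos}|$ and $|f|=|f_{bos}|$ of Propositions \ref{prop:points} and \ref{prop:continuous} combined with the compatibility of bosonic reduction with fibre products of superstacks. The key lemma I would establish first is the natural equivalence
\[
(\Xf\times_{\Yf}\Zf)_{bos}\simeq \Xf_{bos}\times_{\Yf_{bos}}\Zf_{bos}
\]
for any superstack morphisms $\Xf\to\Yf\leftarrow\Zf$. This is immediate from Definition \ref{def:bosred}, since for every scheme $S$ both sides have $S$-valued points naturally isomorphic to $\Xf(S)\times_{\Yf(S)}\Zf(S)$, using the equivalence $\Xf_{bos}(S)\simeq \Xf(S)$.

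For \emph{closed} and \emph{open}, Definition \ref{def:univclosed} refers only to the continuous map $|f|$, and Proposition \ref{prop:continuous} gives $|f|=|f_{bos}|$, so these cases are immediate. For \emph{universally closed}, given any superstack morphism $\Zf\to\Yf$, the lemma above identifies the base-change map $|\Xf\times_\Yf\Zf|\to|\Zf|$ with the base change $|\Xf_{bos}\times_{\Yf_{bos}}\Zf_{bos}|\to|\Zf_{bos}|$ of $|f_{bos}|$ along $\Zf_{bos}\to\Yf_{bos}$; conversely, every stack morphism $\mathbf{Z}\to\Yf_{bos}$ arises as the bosonic reduction of the superstack morphism $\iota(\mathbf{Z})\to\iota(\Yf_{bos})\xrightarrow{i_{\Yf}}\Yf$ via the adjunction \eqref{eq:adjoint}. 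Thus $f$ is universally closed if and only if $f_{bos}$ is.

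For \emph{quasi-compact}, the bosonic reduction of an affine superscheme is an affine scheme (the spectrum of its even part), and conversely every affine scheme $\Spec A$ equals its own bosonic reduction when regarded as a superscheme via $\iota$; combining the fibre-product lemma with $|\cdot|=|\cdot_{bos}|$ gives the equivalence. For \emph{locally of finite type}, Definition \ref{def:morphismsprops3} asks for a smooth presentation $\Vc\to\Yf$ and an étale presentation $\Ucal\to\Xf\times_\Yf\Vc$; their bosonic reductions provide a smooth presentation $V\to\Yf_{bos}$ and an étale presentation $U\to\Xf_{bos}\times_{\Yf_{bos}}V$ by Proposition \ref{prop:represbos} and Corollary \ref{cor:represbos} together with the fibre-product lemma, and $U\to V$ is the bosonic reduction of $\Ucal\to\Vc$. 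Since a morphism of superschemes is locally of finite type if and only if its bosonic reduction is, this case follows, and \emph{finite type} is then locally of finite type plus quasi-compact. The main bookkeeping point throughout is the fibre-product lemma, and, in the universally closed case, matching arbitrary superstack base changes with arbitrary stack base changes on the bosonic side via \eqref{eq:adjoint}; everything else is a direct transcription.
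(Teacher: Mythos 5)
Your proof is correct and follows essentially the same route the paper intends: the proposition is stated with no written argument beyond the preceding remarks that $|\Xf|=|\Xf_{bos}|$, $|f|=|f_{bos}|$ (Propositions \ref{prop:points} and \ref{prop:continuous}) and that being locally of finite type for superscheme morphisms is detected on bosonic reductions, and your write-up is a careful fleshing-out of exactly these facts together with the compatibility $(\Xf\times_\Yf\Zf)_{bos}\simeq\Xf_{bos}\times_{\Yf_{bos}}\Zf_{bos}$ already used implicitly in Corollary \ref{cor:represbos}. Nothing needs to change.
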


Moreover, one has:
\begin{prop}\label{prop:open}
Every flat and locally of finite presentation superstack morphism $f\colon \Xf \to \Yf$, where $\Xf$ is an algebraic superspace, is open.
\end{prop}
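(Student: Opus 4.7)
The plan is to reduce the statement to the classical fact that a flat, locally of finite presentation morphism from an algebraic space to an algebraic stack is open, by passing to bosonic reductions.

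First, by Propositions \ref{prop:points} and \ref{prop:continuous}, the underlying continuous map $|f|\colon |\Xf|\to |\Yf|$ coincides with $|f_{bos}|$ under the canonical identifications $|\Xf|=|\Xf_{bos}|$ and $|\Yf|=|\Yf_{bos}|$. Hence $f$ is open in the sense of Definition \ref{def:univclosed} if and only if the bosonic reduction $f_{bos}\colon \Xf_{bos}\to\Yf_{bos}$ is open as a morphism of ordinary algebraic stacks. This turns the problem into one in classical stack theory, provided the flatness and local finite presentation of $f$ are preserved by taking bosonic reductions.

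Second, I would verify this preservation. According to Definition \ref{def:morphismsprops3}, one can choose a smooth presentation $\Vc\to\Yf$ and an \'etale presentation $\Ucal\to\Xf\times_\Yf\Vc$ such that the composition $g\colon \Ucal\to\Vc$ is flat and locally of finite presentation as a morphism of superschemes. Taking bosonic reductions yields a smooth presentation $V:=\Vc_{bos}\to\Yf_{bos}$ (by Corollary \ref{cor:localsmooth}), and, using the natural equivalence $(\Xf\times_\Yf\Vc)_{bos}\simeq \Xf_{bos}\times_{\Yf_{bos}} V$ (which comes from the description of fiber products in Definition \ref{def:sstackprod} together with the equivalence $\Xf_{bos}(S)\simeq\Xf(S)$ for schemes $S$), an \'etale presentation $U:=\Ucal_{bos}\to\Xf_{bos}\times_{\Yf_{bos}}V$ whose composition to $V$ is $g_{bos}$. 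Since flatness and local finite presentation of superscheme morphisms descend to the bosonic reduction --- a fact belonging to the basic properties of morphisms of superschemes collected in the appendices --- the morphism $g_{bos}$ retains both properties, so $f_{bos}$ is flat and locally of finite presentation in the classical sense.

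Third, since $\Xf$ is an algebraic superspace, its bosonic reduction $\Xf_{bos}$ is an algebraic space by Proposition \ref{prop:represbos}. At this point I would invoke the standard result that a flat, locally of finite presentation morphism from an algebraic space to an algebraic stack induces an open map on the underlying topological spaces (essentially a consequence of the corresponding fact for schemes together with openness of smooth morphisms used to descend through the smooth presentation of the target). It follows that $|f_{bos}|$ is open, and therefore $|f|=|f_{bos}|$ is open. The main obstacle I anticipate is not the reduction itself, but carefully ensuring that flatness and local finite presentation do descend to bosonic reductions, and that the bosonic reduction commutes with the formation of presentations; both statements are plausible but must be extracted from the infrastructure developed in Appendix B.
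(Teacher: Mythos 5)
Your overall strategy --- reducing to the classical statement via the identification $|f|=|f_{bos}|$ together with Proposition \ref{prop:represbos} --- is the same as the paper's, whose proof is precisely this one-line reduction. However, your second step rests on a claim that is false: flatness of a morphism of superschemes does \emph{not} descend to its bosonic reduction, and no such statement appears in the appendices (local finite type, smoothness, \'etaleness and formal unramifiedness are treated there, but not flatness). A counterexample: let $\Bs$ be the $k[t]$-superalgebra which is free of rank $2|2$ as a $k[t]$-module on an even basis $1,b$ and an odd basis $\theta,\eta$, with multiplication determined by $b^2=b\theta=b\eta=\theta^2=\eta^2=0$ and $\theta\eta=tb$; one checks this is associative and supercommutative. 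Then $\Bs$ is free, hence flat (and finite, hence finitely presented) over $k[t]$, but the ideal $\Jc$ generated by the odd elements has even part $k[t]\cdot tb$, so the bosonic reduction is $k[t][b]/(b^2,tb)$, in which $b$ is a nonzero $t$-torsion element; this is not flat over $k[t]$. Consequently you cannot conclude that $g_{bos}$, hence $f_{bos}$, is flat, and the appeal to the classical openness theorem for flat, locally finitely presented morphisms of stacks is not justified as written. (Local finite presentation, by contrast, does descend, as you assume.)

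The gap is repairable, but not by making $f_{bos}$ flat. One should instead prove directly that a flat, locally finitely presented morphism $g\colon\Ucal\to\Vc$ of superschemes is open: its set-theoretic image coincides with that of $g_{bos}$, which is locally constructible by Chevalley's theorem because $g_{bos}$ is locally of finite presentation, and it is stable under generization because each local homomorphism $\Oc_{\Vc,g(u)}\to\Oc_{\Ucal,u}$ is flat and local, hence faithfully flat, hence surjective on spectra (and the spectrum of a superring agrees topologically with that of its bosonic reduction, since odd elements are nilpotent). A locally constructible subset stable under generization is open. With this in hand, one descends through a smooth presentation $\Vc\to\Yf$ and an \'etale presentation $\Ucal\to\Xf\times_\Yf\Vc$ exactly as you indicate, using that presentations induce open surjections on topological spaces. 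The essential point is that the flatness used to lift generizations must be that of the supermorphism itself, not of its bosonic reduction.
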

\begin{proof} The result follows from \cite[Tag  04XL]{Stacks}, using Proposition \ref{prop:represbos}.
\end{proof}

\subsection{Dimension of a superstack}
\label{subsec:dimsstack}  

In this section we define the notion of dimension of an algebraic superspace and of superstack. We shall use the notation $m|n$ (even and odd dimension), where $m,n\in \Z\cup\infty$.
All algebraic superspaces, superstacks and superschemes will be locally Noetherian.

Let $\Xf$ be an algebraic superspace over a superscheme $\Sc$.
The definition of dimension of $\Xf$ is the straightforward adaptation of the one for algebraic spaces (see e.g.\! \cite[Defs.\! 66.9.1, 66.9.2]{Stacks}).
\begin{defin} \label{dimalgsp}\
 \begin{enumerate}\item  The dimension of $\Xf$ at a point $x\in\vert\Xf\vert$
is the pair $\dim_x\Xf= \dim_u \Ucal$ where
$p\colon\Ucal\to\Xf$ is an \'etale presentation of $\Xcal$,  with $\Ucal$ a superscheme, and $p(u) = x$.
\item  The dimension of $\Xf$ is the pair
$$ \sup_{x\in\vert\Xf\vert} \operatorname{even} 
\dim_x\Xf  \, \bigm\vert   \sup_{x\in\vert\Xf\vert} \operatorname{odd} 
\dim_x\Xf\,.$$
\end{enumerate}
 \end{defin} 
This definition is well-posed as different choices of $\Ucal$ and $p$ in part 1 produce the same dimension. 
 Indeed, if $p'\colon \Ucal\to \Xf$ is another \'etale presentation, the fiber product $\tilde p\colon\Ucal\times_\Xf \Ucal' \to \Xf $ is an \'etale presentation of $\Xf$. This follows from the cartesian diagram:
$$
\xymatrix{ \Ucal\times_\Xf \Ucal' \ar[r]^(.6){p'_\Ucal}\ar[d]^{p_{\Ucal'}} \ar[rd]^{\tilde p} & \Ucal\ar[d]^p \\
\Ucal' \ar[r]^{p'} & \Xf
}
$$
Here $\Ucal\times_\Xf \Ucal'$ is a superscheme because $p$ is superschematic and $\Ucal'$ is a superscheme, $p'_{\Ucal}$ is \'etale and surjective because it is obtained from $p'$ by base change and finally $\tilde p$ is superschematic, surjective and \'etale because is the composition of two morphisms with these properties.

Now, if $u'\in \vert \Ucal'\vert$ is a point with $p'(u')=x$, we have 
$$
\dim_{u'}\Ucal'=\dim_{u\times u'}\Ucal\times_\Xf \Ucal' = \dim_{u}\Ucal\,,
$$ because $p_{\Ucal'}$ and $p'_{\Ucal}$ are \'etale morphisms of superschemes.

\begin{defin} Let $f\colon\mathfrak U \to \Xf$ be a representable morphism of algebraic stacks, locally of finite type. 
Fix a morphism $\Spec k \to\Xf$ representing $f(u)$.  The relative dimension of $f$ at $u$ is
$$ \dim_u f = \dim_{u'} (\mathfrak U \times_\Xf \Spec k)$$
where $u'$ is any point  in $\vert \mathfrak U \times_\Xf \Spec k\vert$ that projects to $u$.
\end{defin}
This definition is well posed for the following reasons:
\begin{itemize} \item The point $u'$ exists because the natural set-theoretic morphism
$$  \vert \mathfrak U \times_\Xf \Spec k\vert \to \vert \mathfrak U \vert \times_{\vert\Xf\vert}\Spec k$$
is surjective \cite[Lemma 100.4.3]{Stacks};
\item $ \mathfrak U \times_\Xf \Spec k $ is an algebraic superspace because $f$ is representable, so that its dimension is defined as in Definition \ref{dimalgsp};
\item the definition does not depend on the choices made because the relative dimension of a morphism of superschemes  which is locally of finite type   is invariant under base change.
\end{itemize}
The following Lemma follows quite straightforwardly from the previous definitions.
\begin{lemma} \label{lem:itwillbeok} Let $f\colon\mathfrak U\to \mathfrak V$ be a morphism of algebraic superspaces, locally of finite type.
Then for every $u \in \vert \mathfrak U \vert $
$$ \dim_u \mathfrak U = \dim_{f(u) }\mathfrak  V + \dim_u f.$$
\end{lemma}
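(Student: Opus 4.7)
The plan is to reduce the statement to the analogous dimension formula for morphisms of superschemes locally of finite type, by producing étale presentations of $\mathfrak U$ and $\mathfrak V$ compatible with $f$.

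First, I would choose an étale presentation $q\colon \mathcal V \to \mathfrak V$ from a superscheme. Since $\mathfrak U \times_{\mathfrak V} \mathcal V$ is an algebraic superspace by Proposition~\ref{prop.firstprop}, it admits an étale presentation $\pi\colon \mathcal U \to \mathfrak U \times_{\mathfrak V} \mathcal V$ from a superscheme $\mathcal U$. Composing $\pi$ with the two projections gives superscheme morphisms $p\colon \mathcal U \to \mathfrak U$ (étale and surjective, hence an étale presentation of $\mathfrak U$) and $g\colon \mathcal U \to \mathcal V$ (locally of finite type, since $f$ is). I would then pick a lift $\tilde u \in |\mathcal U|$ of $u$ and set $\tilde v := g(\tilde u) \in |\mathcal V|$, so that $q(\tilde v) = f(u)$. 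By Definition~\ref{dimalgsp} this gives $\dim_u \mathfrak U = \dim_{\tilde u} \mathcal U$ and $\dim_{f(u)} \mathfrak V = \dim_{\tilde v} \mathcal V$.

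Next, I would identify $\dim_u f$ with the relative dimension $\dim_{\tilde u} g$ of the superscheme morphism $g$. Using $\Spec k(\tilde v) \to \mathcal V \to \mathfrak V$ as the representative of $f(u)$, base change of $\pi$ along $\Spec k(\tilde v) \to \mathcal V$ produces an étale surjective morphism $\mathcal U \times_{\mathcal V} \Spec k(\tilde v) \to \mathfrak U \times_{\mathfrak V} \Spec k(\tilde v)$, so that applying Definition~\ref{dimalgsp} a third time yields
$$\dim_u f = \dim_{\tilde u'}\bigl(\mathfrak U \times_{\mathfrak V} \Spec k(\tilde v)\bigr) = \dim_{\tilde u'}\bigl(\mathcal U \times_{\mathcal V} \Spec k(\tilde v)\bigr) = \dim_{\tilde u} g,$$
for a suitable lift $\tilde u'$ of $\tilde u$. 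Combining the three identifications reduces the lemma to the additivity $\dim_{\tilde u} \mathcal U = \dim_{g(\tilde u)} \mathcal V + \dim_{\tilde u} g$ for $g$ as a morphism of superschemes locally of finite type.

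The main obstacle is this last superscheme-level formula, to be verified componentwise in the pair $(\mathrm{even}\mid\mathrm{odd})$. The even part follows from the classical additivity of dimension for morphisms of schemes locally of finite type, applied to the bosonic reductions of $g$ (together with the invariance of dimension under étale morphisms). The odd component is additive under passage to fibers by a direct computation with the odd part of the relative cotangent sequence. Once this superscheme formula is in hand, the étale-local reductions carried out above transfer it automatically to $f$.
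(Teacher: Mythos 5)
Your étale-local reduction is correct and is precisely the argument the paper has in mind: the paper in fact offers no proof at all, asserting only that the lemma ``follows quite straightforwardly from the previous definitions,'' and your construction of the compatible presentations $p=\operatorname{pr}_1\circ\pi\colon\mathcal U\to\mathfrak U$ and $g=\operatorname{pr}_2\circ\pi\colon\mathcal U\to\mathcal V$, together with the three identifications $\dim_u\mathfrak U=\dim_{\tilde u}\mathcal U$, $\dim_{f(u)}\mathfrak V=\dim_{\tilde v}\mathcal V$ and $\dim_u f=\dim_{\tilde u}g$ obtained from Definition~\ref{dimalgsp} and base change of $\pi$ along $\Spec k(\tilde v)\to\mathcal V$, is carried out correctly. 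This part fills in exactly what the paper leaves implicit.

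The gap is in the final step. There is no ``classical additivity of dimension for morphisms of schemes locally of finite type'': the equality $\dim_x X=\dim_{f(x)}Y+\dim_x X_{f(x)}$ fails already for the closed immersion of a rational point into $\Spec k[t]$, where the left-hand side is $0$ and the right-hand side is $1+0$; the odd component fails just as badly for $\Spec k\hookrightarrow\SSpec k[\theta]$. The classical statement requires flatness of $f$ at $x$ (or some equidimensionality hypothesis), and without it only an inequality survives. Consequently the superscheme-level formula you reduce to --- and hence the lemma itself as stated, with ``locally of finite type'' as the only hypothesis --- is false; this defect is inherited from the paper's statement, but your proof does not repair it and cannot, since the reduction is faithful. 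The fix is to add a flatness (or smoothness) hypothesis: in the paper's only application of the lemma, namely the well-posedness of the dimension of an algebraic superstack, the morphisms being compared are smooth presentations and their base changes, so flatness is available. With that hypothesis in place, the even part is indeed the classical flat dimension formula applied to the bosonic reductions together with invariance of dimension under étale morphisms, but the odd part still needs an actual argument (for instance, comparing the minimal number of generators of $\Jc_x$ with that of its image in the fiber via Nakayama and flat base change); your phrase ``a direct computation with the odd part of the relative cotangent sequence'' gestures at this but does not supply it.
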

We can eventually define the dimension of a (locally Noetherian) algebraic superstack.
\begin{defin} \ 
\begin{enumerate}
\item Let $\Xf$ be a locally Noetherian algebraic superstack,
and let $x\in \vert\Xf\vert$. The dimension of $\Xf$ at $x$ is defined as 
$$ \dim_x\Xf = \dim_u \, \mathcal U  - \dim_u p$$
where $p\colon \mathcal U \to \Xf$ is a smooth presentation from a superscheme $\mathcal U$,
and $u\in \mathcal U$ is such that $p(u)=x$.
\item The dimension of $\Xf$ is defined as in part 2 of Definition \ref{dimalgsp}.
\end{enumerate}
\end{defin}
The independence of this definition from the choices involved in it follows from Lemma \ref{lem:itwillbeok}, proceeding as in the case of algebraic superspaces.

\begin{rem} If $\Xf$ is quasi-compact, both even and odd dimensions are finite.
\end{rem}
 
\subsection{Quasi-finite morphisms of algebraic superstacks}

Using Definition \ref{def:morphismsprops3}, we can extend the notion of locally quasi-finite morphisms to morphisms of algebraic spaces. Note that, according to Definition \ref{def:morphismsprops3},  a morphism $f\colon \Xf \to \Yf$ of algebraic spaces is locally quasi-finite if for every \'etale presentations $\Ucal\to \Yf$ and $\Vc\to\Xf\times_\Yf\Ucal$ the superscheme morphism $\Vc \to \Ucal$ is locally quasi-finite (Definition \ref{def:qf}).

We can now extend that definition to morphisms of  algebraic superstacks as in  \cite[Def.\! 3.3.34]{Alp24}). 

\begin{defin}\label{def:qfsstacks}
\ 
\begin{enumerate}
\item A representable morphism $f\colon \Xf\to\Yf$ of algebraic superstacks is locally quasi-finite if it is locally of finite type  (Definition \ref{def:morphismsprops3}) and for every morphism $\Ucal\to \Yf$ from a superscheme, the induced morphism $f_\Ucal\colon \Xf\times_\Yf \Ucal \to \Ucal$ of algebraic superspaces  is locally quasi-finite.
\item A  morphism $f\colon \Xf\to\Yf$ of algebraic superstacks is locally quasi-finite if it is locally of finite type, the diagonal morphism $\Delta_f\colon \Xf \to \Xf\times_\Yf\Xf$ ---  that is representable by Proposition \ref{prop:represdiag} --- is locally quasi-finite, and for every field-valued point $\Spec k \to \Yf$, the topological space $|\Xf\times_\Yf \Spec k|$ is discrete.
\item A morphism of algebraic superstacks is quasi-finite if it is locally quasi-finite and quasi-compact (Definition \ref{def:finitetype}).
\end{enumerate}
\end{defin}

An example of a locally quasi-finite morphism is provided by monomorphisms.

\begin{prop}\label{prop:locfinite} A representable injective morphism  of algebraic stacks that is locally of finite type    is locally quasi-finite.
\end{prop}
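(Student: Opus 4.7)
The strategy is to reduce, via the definitions of representability and étale presentations, to the claim that an injective, locally of finite type morphism of superschemes whose target is the spectrum of a field has an at-most-one-point source; this in turn reduces to the corresponding classical fact for algebraic spaces via bosonic reduction. Since $f$ is representable and locally of finite type, by Definition \ref{def:qfsstacks}(1) it suffices to prove that the base change $f_\Ucal\colon \Xf\times_\Yf\Ucal\to\Ucal$ is locally quasi-finite for every morphism $\Ucal\to\Yf$ from a superscheme. Base change preserves being locally of finite type, and it preserves injectivity by Lemma \ref{lem:injbasechange}(1); hence we may assume the target is a superscheme $\Ucal$ and that we are given an injective, locally of finite type morphism $g\colon \mathfrak{W}\to\Ucal$ from an algebraic superspace. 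Fixing an étale presentation $p\colon \Vc\to\mathfrak{W}$ by a superscheme and setting $h:=g\circ p$, by Definition \ref{def:morphismsprops3} it remains to show that $h\colon \Vc\to\Ucal$ is locally quasi-finite as a superscheme morphism; since $h$ is already locally of finite type, the real content is discreteness of fibers.

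For each $u\in|\Ucal|$, consider $\mathfrak{W}_u:=\mathfrak{W}\times_\Ucal\Spec k(u)$, an algebraic superspace with an injective and locally of finite type morphism to $\Spec k(u)$ (Lemma \ref{lem:injbasechange}(1)). Passing to the bosonic reduction preserves both properties: injectivity follows from the adjunction \eqref{eq:adjoint}, which gives $f_{bos}(S)\simeq f(S)$ on schemes and hence transports full faithfulness, while being locally of finite type descends along étale presentations. Consequently $(\mathfrak{W}_u)_{bos}\to\Spec k(u)$ is a monomorphism of algebraic spaces that is locally of finite type, hence either empty or isomorphic to $\Spec k(u)$ by the classical monomorphism-to-$\Spec k$ result. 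By Proposition \ref{prop:points} then $|\mathfrak{W}_u|=|(\mathfrak{W}_u)_{bos}|$ has at most one element.

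The fiber $\Vc\times_\Ucal\Spec k(u)\simeq \Vc\times_\mathfrak{W}\mathfrak{W}_u$ is étale over $\mathfrak{W}_u$, so its bosonic reduction is étale over $(\mathfrak{W}_u)_{bos}$. Since the target of this bosonic reduction is an at-most-one-point algebraic space, the source is a disjoint union of spectra of finite separable extensions of $k(u)$ and is therefore topologically discrete; as the topological space of a superscheme agrees with that of its bosonic reduction, so is the fiber of $h$ over $u$. Hence $h$ has discrete fibers and is locally quasi-finite, completing the proof. The main obstacle is the identification of $|\mathfrak{W}_u|$ with at most one point: it rests on the classical monomorphism-to-$\Spec k$ fact for algebraic spaces together with the fact that bosonic reduction preserves injectivity, both of which flow from material already established.
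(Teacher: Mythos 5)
Your proof is correct, but it takes a genuinely different route from the one in the paper. After the common reduction to an injective, locally of finite type morphism $\Xf\to\Ucal$ from an algebraic superspace to a superscheme and the choice of an \'etale presentation $\Vc\to\Xf$, the paper stays entirely inside the super setting: it observes that $\Delta_f$ is an isomorphism (Proposition \ref{prop:isodiag}), so that $\Vc\times_\Xf\Vc\simeq\Vc\times_\Ucal\Vc$, whose projections to $\Vc$ are \'etale and hence quasi-finite (Corollary \ref{cor:unramprop}); since $p_1$ is the base change of $f\circ p$ along itself, discreteness of the fibers of $p_1$ forces discreteness of the fibers of $f\circ p$. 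You instead pass to the fiber $\mathfrak W_u$ over each field-valued point of $\Ucal$, take bosonic reductions, and invoke the classical fact that a locally of finite type monomorphism of algebraic spaces to $\operatorname{Spec} k$ is empty or an isomorphism. Both arguments work. Yours is consistent with the paper's recurrent ``reduce to the bosonic case'' strategy (compare Proposition \ref{prop:univclosed} and Proposition \ref{prop:separpropbos}) and yields the slightly sharper information that each fiber of $\Xf\to\Ucal$ has at most one point; on the other hand it imports an external result about monomorphisms of algebraic spaces towards fields that the paper does not otherwise cite, and whose standard proof is essentially the diagonal/self-base-change argument the paper uses directly. The paper's version is therefore more self-contained, needing only Proposition \ref{prop:isodiag} and Corollary \ref{cor:unramprop}, while yours trades that for a cleaner pointwise picture. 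If you keep your version, you should state the exact reference for the algebraic-space result and note explicitly that full faithfulness of $f(S)$ for all superschemes $S$ restricts to full faithfulness of $f_{bos}(S)$ for all schemes $S$, which is the step that makes $(\mathfrak W_u)_{bos}\to\operatorname{Spec} k(u)$ a monomorphism.
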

\begin{proof} One   reduces to the case of  an injective morphism $f\colon \Xf\to\Ucal$,  locally of finite type, from an algebraic superspace to a superscheme. We have then to prove that if $p\colon \Vc \to \Xf$ is an \'etale (superschematic) presentation of $\Xf$, the composition $f\circ p\colon \Vc\to\Ucal$ is locally quasi-finite. Since it is locally of finite type, we have to prove that it has discrete fibres. First notice that $\Vc\times_\Xf\Vc$ is a superscheme and that the two projections $p_i\colon\Vc\times_\Xf\Vc\to \Vc$ are \'etale, and then quasi-finite (Corollary \ref{cor:unramprop}). Since $f$ is injective, its diagonal is an isomorphism (Proposition \ref{prop:isodiag}), and then $\Vc\times_\Xf\Vc \simeq \Vc\times_\Ucal\Vc$. Then, we have a cartesian diagram of superscheme morphisms
$$
\xymatrix{
\Vc\times_\Ucal\Vc \ar[r]^{p_2}\ar[d]^{p_1} & \Vc\ar[d]^{f\circ p} \\
 \Vc\ar[r]^{f\circ p} & \Ucal
}
$$
As $p_1$ (or $p_2$) has discrete fibres, so has $f\circ p$, thus finishing the proof.
\end{proof}

\subsection{Separated and proper morphisms of algebraic superstacks}
\label{subsec:separprop}

We start by defining properness and separatedness for morphisms of algebraic superspaces and use them to extend those notions to morphisms of algebraic superstacks. This is the same procedure one follows in the case of algebraic stacks.

As in \cite[Definition 3.8.1]{Alp24}, one can define by iteration the notions of separatedness and properness for morphisms of algebraic superstacks:

\begin{defin}\label{def:separprop}  \
\begin{enumerate}
\item 
A representable morphism $f\colon \Xf \to \Yf$ of algebraic superstacks is separated if the diagonal morphism $\Delta_f\colon \Xf \to \Xf\times_\Yf \Xf$, which is superschematic by Corollary  \ref{cor:represdiag}, is proper according to Definition  \ref{def:morphismsprops}.
\item A representable morphism $f\colon \Xf \to \Yf$ of algebraic superstacks is proper if it is universally closed (Definition \ref{def:univclosed}), separated, and of finite type (Definition \ref{def:finitetype}).
\item A morphism $f\colon \Xf \to \Yf$ of algebraic superstacks is separated if the representable morphism $\Delta_f\colon \Xf \to \Xf\times_\Yf\Xf$ (Proposition \ref{prop:represdiag}) is proper.
\item A morphism $f\colon \Xf \to \Yf$ of algebraic superstacks is proper if it is universally closed, separated, and of finite type.
\end{enumerate}
\end{defin}

 We can also define:
\begin{defin}\label{def:quasisep} A morphism $f\colon \Xf \to \Yf$ of algebraic superstacks is quasi-separated if the diagonal $\Delta_f\colon \Xf\to\Xf\times_\Yf\Xf$ and the second diagonal $\Xf \to \Xf\times_{\Delta_f,\Xf\times_\Yf\Xf,\Delta_f}\Xf$ are quasi-compact.
\end{defin}

\begin{prop}\label{prop:separpropbos} A morphism $f\colon \Xf \to \Yf$ of algebraic superstacks is separated or proper if and only if the bosonic reduction $f_{bos}\colon \Xf_{bos}\to\Yf_{bos}$ has the same property.
\end{prop}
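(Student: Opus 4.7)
The plan is to deduce the statement from three ingredients: compatibility of the bosonic reduction with fibre products (hence with diagonals), the analogous statement for properness of superscheme morphisms, and Proposition \ref{prop:univclosed}, which already settles universal closedness and finite type.

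\textbf{Step 1: bosonic reduction commutes with fibre products.} From Definition \ref{def:bosred} one has $\Xf_{bos}=\Xf\times_{\Sf_{et}}(Sch)_{et}$, and by associativity of $2$-fibre products there is a natural equivalence
\begin{equation*}
(\Xf\times_{\Yf}\Zf)_{bos}\simeq \Xf_{bos}\times_{\Yf_{bos}}\Zf_{bos}
\end{equation*}
for every pair of morphisms $\Xf\to\Yf\leftarrow\Zf$ of superstacks. Applying this to $f\colon\Xf\to\Yf$ viewed twice, I get a canonical identification $(\Xf\times_\Yf\Xf)_{bos}\simeq \Xf_{bos}\times_{\Yf_{bos}}\Xf_{bos}$ under which $(\Delta_f)_{bos}$ corresponds to $\Delta_{f_{bos}}$.

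\textbf{Step 2: properness descends to and lifts from the bosonic reduction for representable morphisms.} By Corollary \ref{cor:represdiag} the diagonal $\Delta_f$ is superschematic, so by Definition \ref{def:morphismsprops} its properness is tested after base change to a superscheme $\Tc\to\Xf\times_\Yf\Xf$. For a morphism of superschemes $g\colon\Vc\to\Tc$, the classical fact (quoted from the appendix on morphisms of superschemes) is that $g$ is proper iff $g_{bos}$ is, since properness means universally closed, separated and of finite type, and each of these three properties is detected by the underlying map of topological spaces together with the bosonic quotient ring. Combining this with Step 1, I conclude that a superschematic morphism $h\colon\Wf\to\Zf$ of superstacks is proper iff $h_{bos}$ is, and, by Proposition \ref{prop:represdiag} and the same reasoning, the statement extends to a representable morphism by testing after an \'etale/smooth presentation.

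\textbf{Step 3: assembling the equivalences.} For separatedness, Definition \ref{def:separprop} says that $f$ is separated iff $\Delta_f$ is proper; by Step 1 this is equivalent to $(\Delta_f)_{bos}=\Delta_{f_{bos}}$ being proper, which by Step 2 is equivalent to $\Delta_{f_{bos}}$ proper, i.e.\ $f_{bos}$ separated. For properness, Definition \ref{def:separprop}(4) expresses $f$ proper as the conjunction of universally closed, separated, and of finite type: universal closedness and finite type are equivalent for $f$ and $f_{bos}$ by Proposition \ref{prop:univclosed}, and separatedness has just been shown to be equivalent, so the three conditions jointly pass between $f$ and $f_{bos}$.

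\textbf{Main obstacle.} The only non-formal point is Step 2, i.e.\ the assertion that properness of a superschematic (and, by presentation, representable) morphism is detected by its bosonic reduction. This rests on the corresponding fact for morphisms of superschemes, which I will invoke from the appendix: even though the odd nilpotents in the structure sheaf matter for infinitesimal properties, the topological and finiteness content of properness is entirely encoded on the bosonic truncation, so once Step 1 is in place the rest of the argument is bookkeeping.
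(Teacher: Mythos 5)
Your proposal is correct and follows essentially the same route as the paper: the paper likewise identifies $(\Delta_f)_{bos}$ with $\Delta_{f_{bos}}$, invokes the fact that properness of superscheme morphisms is detected on the bosonic reduction, uses Proposition \ref{prop:univclosed} for universal closedness and finite type, and climbs through the four cases of Definition \ref{def:separprop} exactly as in your Step 3. Your Step 1 merely makes explicit the compatibility of $(-)_{bos}$ with fibre products, which the paper uses without comment.
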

\begin{proof}
The proof that $f_{bos}\colon \Xf_{bos}\to\Yf_{bos}$ is separated or proper if $f\colon \Xf \to \Yf$ is so  is straightforward. For the converse we follow the same steps of Definition \ref{def:separprop} and \cite[Sect.\! 3.8.1]{Alp24} using   that the statement is true for morphisms of superschemes.
\begin{enumerate}
\item Assume that $f$ is representable so that $\Delta_f$ is superschematic (Corollary  \ref{cor:represdiag}). Moreover, $f_{bos}$ is also representable (by algebraic spaces) and then $\Delta_{f_{bos}}$ is schematic. If $f_{bos}$ is separated, $\Delta_{f_{bos}}$ is proper by  \cite[Sect.\! 3.8.1]{Alp24}. Since $\Delta_{f_{bos}}=(\Delta_f)_{bos}$, we have that $\Delta_f$ is proper and then $f$ is separated by (1) of Definition \ref{def:separprop}.
\item Assume again that $f$ is representable. By \cite[Definition 3.8.1]{Alp24}, if $f_{bos}$ is proper, then it is universally closed, separated and of finite type. By Proposition \ref{prop:univclosed} and (1), $f$ is also universally closed, separated and of finite type, so that it is proper by (2) of Definition \ref{def:separprop}.
\item In the general case, $\Delta_f$ is representable. If $f_{bos}$ is separated, $\Delta_{f_{bos}}$  is proper by \cite[Definition 3.8.1]{Alp24} and since  $(\Delta_f)_{bos}=\Delta_{f_{bos}}$, $\Delta_f$ is proper by (2). 
\item Finally, if $f_{bos}$ is proper, it is universally closed, separated and of finite type, so that $f$ is also proper by (3) and Proposition \ref{prop:univclosed}.
\end{enumerate}
\end{proof}

\begin{corol}\label{cor:injsep1} Every injective, locally of finite type and representable morphism $g\colon \Xf\to\Yf$ of algebraic superstacks is superschematic.
\end{corol}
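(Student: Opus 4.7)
The strategy is to base-change to reduce the problem to showing that an algebraic superspace equipped with an injective, locally of finite type morphism to a superscheme is itself representable by a superscheme, and then to invoke Proposition~\ref{prop:sschmchar} for the property $\mathbf{P}=$ ``locally quasi-finite and separated''.

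First, I would unravel Definition~\ref{def:repmor}: for every superscheme $\Tc$ with a morphism $\Tc\to\Yf$, I must show that $\Xf_\Tc:=\Xf\times_\Yf\Tc$ is representable by a superscheme. Since $g$ is representable, $\Xf_\Tc$ is an algebraic superspace, while the base-change morphism $g_\Tc\colon \Xf_\Tc\to\Tc$ is again injective by Lemma~\ref{lem:injbasechange}(1) and locally of finite type by base-change stability. Hence the task reduces to showing $\Xf_\Tc$ is a superscheme.

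Second, I would verify that $g_\Tc$ is locally quasi-finite and separated. Proposition~\ref{prop:locfinite} delivers local quasi-finiteness directly. For separatedness, the injectivity of $g_\Tc$ together with Proposition~\ref{prop:isodiag} forces the relative diagonal $\Delta_{g_\Tc}$ to be an isomorphism, hence trivially proper, so that $g_\Tc$ is separated in the sense of Definition~\ref{def:separprop}(1).

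Finally, I would apply Proposition~\ref{prop:sschmchar}. An \'etale presentation $p\colon \Ucal \to \Xf_\Tc$ supplies useful structure: by Proposition~\ref{prop:isodiag} and the injectivity of $g_\Tc$, one obtains an identification of superschemes $\Ucal\times_{\Xf_\Tc}\Ucal \simeq \Ucal\times_\Tc\Ucal$, and the composition $\pi:=g_\Tc\circ p\colon \Ucal\to\Tc$ is a locally quasi-finite and separated morphism of superschemes. The main obstacle is exhibiting a surjective \'etale cover $\Xcal\to\Tc$ of superschemes along which $\Xcal\times_\Tc \Xf_\Tc$ is already a superscheme. I expect to produce this cover \'etale-locally on $\Tc$: a monomorphism that is locally of finite type is unramified (since its diagonal is an isomorphism), and unramified morphisms admit an \'etale-local structure as closed immersions into \'etale neighborhoods. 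Establishing this local structure either directly in the super setting or by passing to the bosonic reduction (where the classical statement applies) and lifting via $p$, one produces the required cover, after which Proposition~\ref{prop:sschmchar} concludes that $\Xf_\Tc$ is a superscheme.
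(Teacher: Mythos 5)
Your reduction to a superscheme target, the verification that the base-changed morphism is locally quasi-finite (Proposition~\ref{prop:locfinite}) and separated (via Proposition~\ref{prop:isodiag}), and the intention to invoke Proposition~\ref{prop:sschmchar} all match the overall shape of the paper's argument. But the step you yourself flag as ``the main obstacle'' --- producing a surjective \'etale cover $\Xcal\to\Tc$ by superschemes over which $\Xcal\times_\Tc\Xf_\Tc$ is already a superscheme --- is precisely the mathematical content of the corollary, and your sketch does not deliver it. The \'etale-local structure of unramified morphisms as closed immersions applies to morphisms of \emph{superschemes}, whereas $g_\Tc\colon\Xf_\Tc\to\Tc$ has an algebraic superspace as source; applying it instead to the composite $\Ucal\to\Xf_\Tc\to\Tc$ only controls the image of a chart, not all of $\Xf_{\Tc'}$ over an \'etale neighbourhood $\Tc'$, and the gluing needed to assemble these local closed immersions into a global cover is exactly the hard part of the classical statement (the superspace analogue of \cite[Tag 03XX]{Stacks}). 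Likewise, ``passing to the bosonic reduction and lifting via $p$'' is not automatic: knowing that $(\Xf_\Tc)_{bos}$ is a scheme does not by itself endow $\Xf_\Tc$ with a superscheme structure.

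The paper closes this gap by a different, concrete mechanism that your proposal never reaches. After reducing (via Proposition~\ref{prop:recollement} and openness of the presentation, Proposition~\ref{prop:open}) to an affine target $\Ycal$ and an affine \'etale chart $\Wc\to\Xf$, it uses the monomorphism property to identify $\Rcal:=\Wc\times_\Xf\Wc\simeq\Wc\times_\Ycal\Wc$, so the equivalence relation $(s,t)\colon\Rcal\rightrightarrows\Wc$ consists of affine, \'etale, quasi-finite morphisms; Zariski's Main Theorem applied to the bosonic reductions, combined with Proposition~\ref{prop:finitebos}, shows $s$ and $t$ are \emph{finite}, and Proposition~\ref{prop:affinequot} then exhibits the quotient $\Xf$ as an affine superscheme directly --- no appeal to Proposition~\ref{prop:sschmchar} or to a local structure theorem for unramified morphisms is needed. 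To complete your proof you would either have to supply a full superspace version of the \'etale localization argument, or adopt the paper's route through finiteness of the equivalence relation and the explicit quotient construction.
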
  
\begin{proof} We can asume that $\Yf\simeq \Ycal$ is a superscheme and that $g\colon \Xf \to \Ycal$ is an injective and locally of finite type morphism of algebraic spaces. Then, $g$ is locally quasi-finite by  Proposition \ref{prop:locfinite}. Moreover, due to  Proposition \ref{prop:recollement}, to prove that $\Xf$ is a superscheme we can assume that $\Ycal$ is affine.
Let $p\colon\Wc \to \Xf$ be a (superschematic) \'etale presentation of $\Xf$ from a superscheme $\Wc$. Since it is open (Proposition \ref{prop:open}), we can easily extend  \cite[Tag 06NF]{Stacks} to the super setting, and then, proceeding as in \cite[Tag 03XX]{Stacks}, we can assume that $\Wc$ is affine. 
We then have that $g\circ p\colon \Wc \to \Ycal$ is a locally quasi-finite morphism of affine superschemes.
The algebraic superspace $\Xf$ is the quotient of the \'etale equivalence relation of superschemes $\Rcal:=\Wc\times_\Xf \Wc \rightrightarrows \Wc$ and the two projections $(s,t)\colon\Rcal\rightrightarrows \Wc$ are \'etale and surjective. Since $g$ is a monomorphism, one has $\Rcal=\Wc\times_\Xf \Wc\simeq \Wc\times_\Ycal\Wc$, so that $\Rcal$ is an affine superscheme and $s$ and $t$ are quasi-finite and affine. Then, the bosonic reductions $(s_{bos},t_{bos})\colon R \rightrightarrows W$  are quasi-finite and \'etale (in particular flat and of finite presentation) morphisms of schemes, so that they are finite by Zariski Main Theorem \cite[Th\'eor\`eme 8.12.6]{EGAIV-IV}. Since finiteness of an affine morphism of superschemes depends only on the bosonic reduction (Proposition \ref{prop:finitebos}), we have that $s$ and $t$ are finite and then the quotient $\Xf$ is a superscheme by Proposition \ref{prop:affinequot}.
\end{proof}

Building up on previous definitions we can now give the notion of {\em noetherian superstack.}

\begin{defin}\label{def:noetherian}
We say that $\Xf$ is noetherian if it is  locally noetherian
(Definition \ref{def:locnoeth}), quasi-separated (Definition \ref{def:quasisep}), and quasi-compact (Definition \ref{def:quasi-compact}).
\end{defin}

\subsection{Quasi-coherent sheaves on Deligne-Mumford  superstacks}\label{subsec:quasi-coh}

We develop a few elements of the theory of quasi-coherent sheaves on Deligne-Mumford stacks, basically extending the treatment of the ordinary case in \cite{Alp24}, so that in this subsection $\Xf$ will always denote a Deligne-Mumford  superstack. We shall denote by $\Xf_{et}$ the small \'etale site of $\Xf$, that is, the category of superschemes that are \'etale over $\Xf$. A covering of an $\Xf$-superscheme $\Ucal$ is a collection of \'etale morphisms 
$\{\Ucal_i\to\Ucal\}$ over $\Xf$ such that $\coprod_i\Ucal_i\to\Ucal$ is surjective. 
 We can then consider the category $\operatorname{Ab}(\Xf)$ of sheaves of $\Z_2$-graded abelian groups over the \'etale site $\Xf_{et}$, with homogeneous (of degree 0) morphisms.

We start by defining the structure sheaf of a Deligne-Mumford superstack,   the sheaf of its relative differentials, and the notion of module over the structure sheaf.
\begin{defin}\label{def:sheafexamples}\ 
\begin{enumerate}
\item
The structure sheaf $\Oc_\Xf$ of $\Xf$ is  the sheaf of $\Z_2$-graded abelian groups on $\Xf_{et}$ defined by
$$ \Oc_\Xf(\Ucal \to \Xf) = \Gamma(\Ucal,\Oc_\Ucal)$$
if $\Ucal\to\Xf$ is an \'etale superscheme over $\Xf$.  It is a sheaf of superrings.
\item
A $\mathbb Z_2$-graded    $\Oc_\Xf$-module is a sheaf on $\Xf_{et}$ which is a 
$\mathbb Z_2$-graded module object for $\Oc_\Xf$ in the category of sheaves on $\Xf_{et}$.
\item
 If $\Xf\to\Sc$  is a Deligne-Mumford  superstack over a superscheme $\Sc$, its sheaf of relative differentials is defined as
$$ \Omega_{\Xf/\Sc}(\Ucal\to\Xf) = \Gamma(\Ucal,\Omega_{\Ucal/\Sc}).$$ 
It is a $\mathbb Z_2$-graded    $\Oc_\Xf$-module.
\end{enumerate}
\end{defin}

If  $\operatorname{Mod}(\Oc_\Xf)$ is the category
of $ \mathbb Z_2$-graded    $\Oc_\Xf$-modules, given a morphism of  Deligne-Mumford  superstacks
$f\colon\Xf\to\Yf$, there are push-forward, inverse image and pullback functors
$$ f_\ast \colon \operatorname{Ab}(\Xf) \to \operatorname{Ab}(\Yf), \qquad f^{-1} \colon \operatorname{Ab}(\Yf) \to \operatorname{Ab}(\Xf)$$
and  
$$f_\ast \colon \operatorname{Mod}(\Oc_\Xf) \to \operatorname{Mod}(\Oc_\Yf), \qquad
f^\ast \colon \operatorname{Mod}(\Oc_\Yf) \to \operatorname{Mod}(\Oc_\Xf).$$

As usual, the two functors $f_\ast$ coincide in the appropriate sense, $f^\ast \Gc = f^{-1} \Gc \otimes_{f^{-1}\Oc_\Yf} \Oc_{\Xf}$
for any $\Oc_{\Yf}$-module $\Gc$, 
and  $(f_\ast,f^{-1})$ and $(f_\ast,f^\ast)$ are pairs of adjoint functors.
Note that if
$f$ is \'etale then $f^\ast\Gc\simeq f^{-1}\Gc$.

\subsubsection*{Quasi-coherent sheaves} If $\Ucal\to\Xf$ is an \'etale superscheme over $\Xf$, and $\Fc$ is a $\mathbb Z_2$-graded module on $\Xf$, 
we shall denote by $\Fc_{\vert \Ucal,et}$ the restriction of $\Fc$ to the small \'etale site of $\Ucal$, and by $\Fc_{\vert\Ucal, Zar}$ the restriction to $\Ucal$ in the Zariski topology.

\begin{defin} \label{def:qcDM} \ 
\begin{enumerate}
\item
An $\Oc_\Xf$-module $\Fc$ is \emph{quasi-coherent} if
\begin{enumerate}\item[a)] $\Fc_{\vert\Ucal, Zar}$ is quasi-coherent sheaf for every \'etale superscheme $\Ucal\to\Xf$ over $\Xf$;
\item[b)] for every \'etale morphism $f\colon\Ucal\to\mathcal V$ of \'etale $\Xf$-superschemes, $f^\ast(\Fc_{\vert\mathcal V, Zar}) \to
\Fc_{\vert\Ucal, Zar}$ is an isomorphism.
\end{enumerate}
\item A quasi-coherent sheaf $\Fc$ on $\Xf$ is \emph{locally free of rank $m|n$} (i.e., it is a \emph{supervector bundle}) if
 $\Fc_{\Ucal, Zar}$ is so for every \'etale superscheme $\Ucal$ over $\Xf$. 
\item
If $\Xf$ is locally noetherian, $\Fc$ is \emph{coherent} if $\Fc_{\vert\Ucal, Zar}$ is coherent for every \'etale superscheme $\Ucal$ over $\Xf$.
\end{enumerate} 
\end{defin}

\begin{remark} The structure sheaf $\Oc_\Xf$ is an even line bundle on $\Xf$, (i.e.\! a locally free sheaf of rank $1|0$) and if $\Xf$  is locally noetherian, it is coherent. If $\Xf\to\Sc$  is a Deligne-Mumford  superstack over a superscheme $\Sc$, the sheaf of relative differentials  $ \Omega_{\Xf/\Sc}$ is quasi-coherent. By Proposition \ref{prop:smoothsplit2} and Definition \ref{def:smooth2}, it is locally free of rank $m|n$ if $\Xf\to\Sc$ is smooth of relative dimension $m|n$.
\end{remark}

 As in the case of superschemes, one easily sees that if $f\colon\Xf\to\Yf$ is a morphism of Deligne-Mumford superstacks and $\Fc$ is a quasi-coherent sheaf on $\Xf$, the sheaf $f_\ast\Fc$ on $\Yf$ is quasi-coherent.

\subsubsection*{Descent for quasi-coherent sheaves on algebraic superspaces}
  Since algebraic superspaces are Deligne-Mumford superstacks, at this point we know what quasi-coherent sheaves on algebraic superspaces are. Later we shall need a descent theorem in this framework.  To start with, 
Proposition \ref{prop:qfsepdescent}, which is a descent result for superschemes,  can be quite straightforwardly extended to algebraic superpaces. 
\begin{prop}[Locally quasi-finite and separated descent for algebraic superspaces]\label{prop:qfsepdescentalgsup} Let $\{\Sf_i \to \Sf\}$ be an fppf covering of algebraic superspaces and $\{\Xf_i/\Sc_i, \varphi_{ij}\}$ descent data relative to $\{\Sf_i \to \Sf\}$. If every morphism $\Xf_i\to\Sf_i$ is locally quasi-finite and separated, the descent data are effective, that is, there exist a   morphism  of algebraic superpaces $\Yf\to \Sf$ and isomorphisms $\Xf_i \iso \Sf_i\times_\Sf \Yf$.
\qed
\end{prop}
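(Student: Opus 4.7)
The plan is to reduce to Proposition \ref{prop:qfsepdescent} by replacing everything with étale presentations by superschemes. First, choose étale presentations $\pi\colon\Ucal\to\Sf$ and $\pi_i\colon\Ucal_i\to\Sf_i$ by superschemes. Since $\pi$ is representable, the fiber products $\Vc_i:=\Ucal_i\times_{\Sf}\Ucal$ are superschemes, and the family $\{\Vc_i\to\Ucal\}$ is an fppf covering of the superscheme $\Ucal$ (fppf being stable under composition with étale). Pulling back the descent data along the natural morphisms $\Vc_i\to\Sf_i$ yields algebraic superspaces $\Yf_i:=\Xf_i\times_{\Sf_i}\Vc_i$ together with pulled-back morphisms $\Yf_i\to\Vc_i$ that remain locally quasi-finite and separated by base change, and with pulled-back cocycle isomorphisms $\tilde\varphi_{ij}$.

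The key step is to show that each $\Yf_i$ is actually a superscheme --- a super analog of Knutson's theorem that a locally quasi-finite and separated morphism from an algebraic space to a scheme is schematic. Proceeding as in the proof of Corollary \ref{cor:injsep1}, one picks an affine étale presentation $\Wc_i\to\Yf_i$ by a superscheme, so that $\Yf_i$ is the quotient of the étale equivalence relation $\Rcal_i:=\Wc_i\times_{\Yf_i}\Wc_i\rightrightarrows\Wc_i$. The composition $\Wc_i\to\Vc_i$ is locally quasi-finite and separated, and (after Zariski-localizing on $\Vc_i$ so as to make it affine) the two projections $\Rcal_i\rightrightarrows\Wc_i$ are quasi-finite, étale, affine, and of finite presentation. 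Zariski's Main Theorem applied to the bosonic reductions (\cite[Th\'eor\`eme 8.12.6]{EGAIV-IV}), together with Proposition \ref{prop:finitebos}, then ensures that these projections are finite. Proposition \ref{prop:affinequot} yields that $\Yf_i$ is locally an affine superscheme, and Proposition \ref{prop:recollement} globalizes to any $\Vc_i$.

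With all $\Yf_i$ identified as superschemes, the data $\{\Yf_i\to\Vc_i,\tilde\varphi_{ij}\}$ constitute locally quasi-finite separated descent data for the fppf covering $\{\Vc_i\to\Ucal\}$ of superschemes. By Proposition \ref{prop:qfsepdescent} these data are effective, producing a locally quasi-finite and separated superscheme morphism $\Wc\to\Ucal$ equipped with compatible isomorphisms $\Wc\times_\Ucal\Vc_i\simeq\Yf_i$. Finally, since $\Ucal\to\Sf$ is an étale presentation with associated étale equivalence relation $\Ucal\times_\Sf\Ucal\rightrightarrows\Ucal$, the original $\varphi_{ij}$ furnish canonical isomorphisms between the two pullbacks of $\Wc$ to $\Ucal\times_\Sf\Ucal$, satisfying a cocycle condition over the triple fiber product. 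These yield an étale equivalence relation on $\Wc$ over $\Sf$, whose quotient is the desired algebraic superspace $\Yf\to\Sf$ realizing the descent of $\{\Xf_i\to\Sf_i,\varphi_{ij}\}$.

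The main obstacle is the super-Knutson step in the second paragraph; once each $\Yf_i$ is shown to be a superscheme, the two-stage descent --- fppf descent for superschemes followed by étale descent to an algebraic superspace over $\Sf$ --- is essentially formal and mirrors the classical treatment. A secondary technical point is the careful bookkeeping needed to verify that the cocycle data $\varphi_{ij}$ do descend compatibly through both stages, but this follows by a direct comparison of the two equivalence relations using the universal properties of the various fiber products.
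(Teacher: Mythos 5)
The paper itself offers no proof here: the proposition is stated with a \qed, justified only by the remark that the superscheme case (Proposition \ref{prop:qfsepdescent}) ``can be quite straightforwardly extended''. Your two-stage reduction --- pull everything back to an fppf covering $\{\Vc_i\to\Ucal\}$ of a presenting superscheme, descend there via Proposition \ref{prop:qfsepdescent}, then glue down along the \'etale equivalence relation $\Ucal\times_\Sf\Ucal\rightrightarrows\Ucal$ --- is the natural way to carry out that extension, and the bookkeeping of the cocycle data, while tedious, is indeed formal. So the architecture is sound and you have correctly isolated the one nontrivial input, namely the ``super-Knutson'' statement that a locally quasi-finite and separated morphism from an algebraic superspace to a superscheme forces the source to be a superscheme.

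However, your justification of that step has a genuine gap. You take an \emph{arbitrary} affine \'etale presentation $\Wc_i\to\Yf_i$ and claim that the projections $\Rcal_i=\Wc_i\times_{\Yf_i}\Wc_i\rightrightarrows\Wc_i$, being quasi-finite, \'etale, affine and of finite presentation, are finite by Zariski's Main Theorem. This is false: EGA IV 8.12.6 only gives a factorization into an open immersion followed by a finite morphism, and a quasi-finite \'etale affine morphism of finite presentation need not be finite (e.g.\ $\As^1\setminus\{0\}\hookrightarrow\As^1$). Concretely, already for $\Yf_i=\As^1$ over $\Vc_i=\As^1$ with presentation $\Wc_i=(\As^1\setminus\{0\})\sqcup(\As^1\setminus\{1\})$, one projection of $\Rcal_i$ is the non-finite open immersion $\As^1\setminus\{0,1\}\hookrightarrow\As^1\setminus\{0\}$, so Proposition \ref{prop:affinequot} does not apply. (In Corollary \ref{cor:injsep1}, which you cite as your model, the monomorphism hypothesis is what lets one replace $\Wc\times_\Xf\Wc$ by $\Wc\times_\Ycal\Wc$; you have no analogue of that here.) To close the gap you must either choose the presentation judiciously --- using the \'etale-local structure of quasi-finite separated morphisms to split off, around each point, a piece that is finite over the base, as in the classical proof of Knutson's theorem --- or argue via a relative Zariski Main Theorem that $\Yf_i\to\Vc_i$ is quasi-affine and then invoke quasi-affine descent (Proposition \ref{prop:qadescent}) together with Proposition \ref{prop:sschmchar}. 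As written, the central lemma of your argument is not established.
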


Next we provide a descent theorem for
quasi-coherent sheaves on algebraic superspaces. This is a straitghforward  
extension to algebraic superspaces of Proposition 74.4.1 
in \cite[Tag 04W7]{Stacks}. The classical version for schemes is the fpqc descent theorem
in SGA1 \cite[Thm.~VIII.1.1]{SGA1}. 
\begin{prop}[Descent for quasi-coherent sheaves on algebraic superspaces]
\label{prop:descentqcalgsup}
Let $\{\Xf_i\to\Xf\}$ be an fpqc covering of algebraic superspaces.
Then any descent datum on quasi-coherent sheaves for $\{\Xf_i\to\Xf\}$  is effective.
\qed
\end{prop}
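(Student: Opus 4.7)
The plan is to bootstrap from fpqc descent of quasi-coherent sheaves on superschemes, which is the super-analogue of \cite[Thm.~VIII.1.1]{SGA1} and carries over verbatim because faithfully flat descent of $\Z_2$-graded modules over supercommutative superrings is formally identical to the commutative case. First, I would choose an \'etale presentation $p\colon\Ucal\to\Xf$ by a superscheme (Definition \ref{def:algsupersps}) and, for each $i$, an \'etale presentation $q_i\colon\Vc_i\to\Xf_i\times_\Xf\Ucal$ from a superscheme. The composition $\Vc_i\to\Ucal$ is then a morphism of superschemes that is fpqc, being the composite of an \'etale surjection with the base change of an fpqc morphism, and collectively $\{\Vc_i\to\Ucal\}$ is an fpqc covering of $\Ucal$ in $\Sf$.

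Given a descent datum $(\Fc_i,\varphi_{ij})$ relative to $\{\Xf_i\to\Xf\}$, I would pull back along $\Vc_i\to\Xf_i$ to obtain quasi-coherent sheaves $\Gc_i$ on $\Vc_i$ together with glueing isomorphisms on $\Vc_i\times_\Ucal\Vc_j$, inherited from the $\varphi_{ij}$ via the natural morphisms $\Vc_i\times_\Ucal\Vc_j\to\Xf_i\times_\Xf\Xf_j$. The cocycle condition is preserved by pullback, so fpqc descent for quasi-coherent sheaves on superschemes glues the $\Gc_i$ into a single quasi-coherent sheaf $\Gc$ on $\Ucal$.

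The second step is to promote $\Gc$ to a quasi-coherent sheaf on $\Xf$. By Proposition \ref{prop:quot}, $\Xf$ is the quotient of the \'etale equivalence relation $\Rcal:=\Ucal\times_\Xf\Ucal\rightrightarrows\Ucal$; denote by $\pi_1,\pi_2$ the two projections. After further pullback along the \'etale covering $\Vc_i\times_\Ucal\Vc_j\to\Rcal$, both $\pi_1^*\Gc$ and $\pi_2^*\Gc$ are canonically identified with pullbacks of the original $\Fc_i$ and $\Fc_j$, and the $\varphi_{ij}$ descend by \'etale glueing to a canonical isomorphism $\pi_1^*\Gc\iso\pi_2^*\Gc$ that satisfies the cocycle condition on $\Ucal\times_\Xf\Ucal\times_\Xf\Ucal$. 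By Definition \ref{def:qcDM}, a quasi-coherent sheaf on $\Xf$ is equivalent to a quasi-coherent sheaf on an \'etale presentation equipped with a descent datum for the associated \'etale equivalence relation; this yields the required sheaf $\Fc$ on $\Xf$, whose pullback to each $\Xf_i$ is canonically isomorphic to $\Fc_i$.

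The main obstacle is the bookkeeping needed to handle two layers of descent in parallel --- the given fpqc covering $\{\Xf_i\to\Xf\}$ on the one hand, and the \'etale equivalence relation of the presentation $\Ucal\to\Xf$ on the other --- and to check carefully that the cocycle condition at one level descends compatibly to the cocycle condition at the other under the various base-change identifications. This verification is categorically identical to the one carried out in \cite[Tag 04W7]{Stacks}, so once the super-geometric input (fpqc descent for quasi-coherent sheaves on superschemes) is in place, the proof proceeds as in the classical case.
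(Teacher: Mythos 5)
Your overall strategy coincides with the paper's: the paper prints no proof at all, saying only that the result is a straightforward extension of \cite[Tag 04W7]{Stacks} resting on fpqc descent for quasi-coherent sheaves on superschemes, and your two-step reduction --- first descend along the induced fpqc covering $\{\Vc_i\to\Ucal\}$ of an \'etale presentation, then descend from $\Ucal$ to $\Xf$ along the \'etale equivalence relation $\Rcal\rightrightarrows\Ucal$ --- is exactly the intended filling-in of that argument.

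There is, however, one concrete slip in your third step. The descent datum for $\Gc$ relative to $\Ucal\to\Xf$ must be an isomorphism $\pi_1^\ast\Gc\iso\pi_2^\ast\Gc$ living on $\Rcal=\Ucal\times_\Xf\Ucal$, and to construct it you invoke ``the \'etale covering $\Vc_i\times_\Ucal\Vc_j\to\Rcal$''. But $\Vc_i\times_\Ucal\Vc_j$ is a fiber product over $\Ucal$: the induced map to $\Ucal\times_\Xf\Ucal$ factors through the diagonal $\Ucal\hookrightarrow\Rcal$, so these objects do not cover $\Rcal$, and no isomorphism on all of $\Rcal$ can be extracted from them. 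The correct objects are $\Wc_{ij}:=\Vc_i\times_\Xf\Vc_j$, which map to $\Rcal$ via the two structure maps $\Vc_i\to\Ucal$, $\Vc_j\to\Ucal$, and to $\Xf_i\times_\Xf\Xf_j$, where $\varphi_{ij}$ lives. The family $\{\Wc_{ij}\to\Rcal\}$ is an fpqc covering of the superscheme $\Rcal$ --- not an \'etale one, since the $\Xf_i\to\Xf$ are only flat --- and the local isomorphisms obtained by pulling back the $\varphi_{ij}$ glue to the desired $\sigma$ on $\Rcal$ by fpqc descent of \emph{morphisms} of quasi-coherent sheaves on superschemes (the easier, ``sheaf of homomorphisms'' half of faithfully flat descent), with the cocycle condition verified after further pullback to $\Vc_i\times_\Xf\Vc_j\times_\Xf\Vc_k$. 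With this correction the remainder of your argument goes through as written.
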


\subsubsection*{Relative differentials} Generalizing the treatment in
\cite{Ol16}, we give a definition of the sheaf of relative differentials for a morphism of Deligne-Mumford superstacks.  Let $\mathbf T_\Xf$ be the topos associated with the small \'etale site of a Deligne-Mumford superstack,  that is, the category $\operatorname{Ab}(\Xf)$ of $\Z_2$-graded abelian sheaves on $\Xf_{et}$. Let $ \As\to\Bs$ be a morphism of superrings in  $\mathbf T_\Xf$, and let $\mathbb M$ be a $\Z_2$-graded $\Bs$-module. There is a standard construction in superalgebra  which defines the  $\Z_2$-graded $\Bs$-module $\operatorname{Der}_\As(\Bs,\mathbb M)$ of
$\As$-linear $\Z_2$-graded derivations of $\Bs$ with values in $\mathbb M$. 
Moreover, there exists a unique
$\Z_2$-graded $\Bs$-module $\Omega_{\Bs/\As}$, equipped with an $\As$-linear $\Z_2$-graded derivation
$d\colon\Bs\to \Omega_{\Bs/\As}$, such that for every $\Z_2$-graded $\Bs$-module $\mathbb M$
the natural morphism
\begin{equation}\label{eq:univdiff}
\Hom_\Bs( \Omega_{\Bs/\As},\mathbb M) \to \operatorname{Der}_\As(\Bs,\mathbb M)
\end{equation}
is an isomorphism. 
The construction is as follows. Let $\tau\colon \Bs\to\Bs$ be   the identity on the even sections and the multiplication by $-1$ on the odd ones. For any morphism 
$\phi\colon \mathbb{N}\to\mathbb{M}$ of $\Z_2$-graded $\As$-modules with homogeneous decomposition $\phi=\phi_++\phi_-$, write $\phi^\tau =\phi_+-\phi_-$.   
A  morphism $D\colon \Bs \to \mathbb M$  of $\Z_2$-graded $\As$-modules is a \emph{$\Z_2$-graded derivation or superderivation over $\As$}  if the diagram
$$
\xymatrix{ \Bs\otimes_\As\Bs \ar[r]^\cdot \ar[d]_{D\otimes 1+1\otimes D^\tau+\tau\otimes D}& \Bs \ar[d]^D\\
\mathbb M\otimes_\As\Bs\oplus \Bs\otimes_\As\mathbb M \ar[r]^(.7)+& \mathbb M\,.
}
$$
is commutative. If  $\Delta$ is the diagonal ideal of $\As\to\Bs$, that is, the kernel of the product morphism $\Bs\otimes_\As\Bs \to \Bs$, and $\Omega_{\Bs/\As}:=\Delta/\Delta^2$, there is a morphism
$$
d\colon \Bs \to \Omega_{\Bs/\As}
$$
given by the composition of the morphism $\Id_\Bs\otimes 1-1\otimes\Id_\Bs\colon \Bs\to \Delta$ with the quotient morphism $\Delta \to \Omega_{\Bs/\As}$. One checks that $d$ is actually a superderivation over $\As$, homogeneous of degree zero, and that equation \eqref{eq:univdiff} holds, so that $\Omega_{\Bs/\As}$ is uniquely determined.

If $f\colon\Xf\to\Yf$ is a morphism of Deligne-Mumford superstacks, by letting $\As=f^{-1}\Oc_\Yf$ and $\Bs=\Oc_\Xf$, one obtains a sheaf on $\Xf$, that we denote $\Omega_{\Xf/\Yf}$ or $\Omega_f$ --- the sheaf of relative differentials of the morphism $f\colon\Xf\to\Yf$.  If $\Yf$ is a superscheme, we get the sheaf of relative differentials given in Definition \ref{def:sheafexamples}. Moreover, if both $\Xf$ and $\Yf$ are superschemes,  one gets the sheaf of relative differentials as  defined in \cite{BrHR21}, Definition 2.11. 

We give some basic properties of the sheaf $\Omega_{\Xf/\Yf}$  for morphisms of algebraic superspaces.
First, notice that if $\Sc$ is a superscheme and $\F$ is a quasi-coherent sheaf of the big \'etale site of  $\Sc$-superschemes, we can extend $\F$ to define a sheaf of the big \'etale site of algebraic superspaces over $\Sc$, by setting $\F(\Xf)= \ker(F(\Ucal)\rightrightarrows \F(\Rcal))$ for an \'etale presentation $\Ucal \to\Xf$ from a $\Sc$-superscheme $\Ucal$, and with $\Rcal=\Ucal\times_\Xf\Ucal$ (see \cite[II.2.5]{Knut71}). In this way, we can define the structure sheaf $\Oc$ and the differential sheaf $\Omega$ on the  big \'etale site of algebraic superspaces over $\Sc$. If we fix an algebraic superspace $\Xf\to\Sc$, the restriction of these sheaves to the small \'etale site of $\Xf$ recover the structure sheaf $\Oc_\Xf$ and the relative differentials sheaf $\Omega_{\Xf/\Sc}$ of Definition \ref{def:sheafexamples}. From this description one easily sees that the formation of $\Omega_{\Xf/\Sc}$ is compatible with base changes of superschemes.

\begin{lemma}\label{lem:basechomega} Let $\phi\colon \Tc \to \Sc$ a superscheme morphism and $\Xf \to \Sc$ an algebraic superspace morphism. There is an isomorphism $\phi_\Xf^\ast \Omega_{\Xf/\Sc}\simeq \Omega_{\Xf\times_\Sc\Tc/\Tc}$, where $\phi_\Xf\colon \Xf\times_\Sc\Tc\to\Xf$ is the morphism of algebraic superspaces induced by base change.
\qed
\end{lemma}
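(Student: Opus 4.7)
The plan is to reduce to the known base-change formula for the sheaf of relative differentials of a morphism of superschemes, exploiting the equalizer description of quasi-coherent sheaves on algebraic superspaces given in the paragraph preceding the lemma.

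First I would choose an étale presentation $p\colon\Ucal\to\Xf$ with $\Ucal$ a superscheme, and form $\Rcal=\Ucal\times_\Xf\Ucal$. Base-changing along $\phi$, the morphism $p_\Tc\colon\Ucal\times_\Sc\Tc\to\Xf\times_\Sc\Tc$ is an étale presentation whose associated fibre product is $\Rcal\times_\Sc\Tc$, since étale surjections and fibre products are preserved by base change. By Definition \ref{def:sheafexamples}(3), the restrictions of $\Omega_{\Xf/\Sc}$ to $\Ucal$ and of $\Omega_{\Xf\times_\Sc\Tc/\Tc}$ to $\Ucal\times_\Sc\Tc$ are $\Omega_{\Ucal/\Sc}$ and $\Omega_{\Ucal\times_\Sc\Tc/\Tc}$ respectively. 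The classical base-change isomorphism for morphisms of superschemes, which is an immediate consequence of the universal property \eqref{eq:univdiff}, then provides canonical identifications $\phi_\Ucal^\ast\Omega_{\Ucal/\Sc}\simeq\Omega_{\Ucal\times_\Sc\Tc/\Tc}$ and $\phi_\Rcal^\ast\Omega_{\Rcal/\Sc}\simeq\Omega_{\Rcal\times_\Sc\Tc/\Tc}$, where $\phi_\Ucal$ and $\phi_\Rcal$ denote the respective first projections.

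Next, the equalizer presentation $\F(\Xf)=\ker\bigl(\F(\Ucal)\rightrightarrows\F(\Rcal)\bigr)$ from the paragraph preceding the lemma, applied both to $\F=\phi_\Xf^\ast\Omega_{\Xf/\Sc}$ and to $\F=\Omega_{\Xf\times_\Sc\Tc/\Tc}$ via the presentation $p_\Tc$, translates the isomorphism on $\Ucal$ into an isomorphism on $\Xf\times_\Sc\Tc$ — provided the identifications on $\Ucal\times_\Sc\Tc$ and on $\Rcal\times_\Sc\Tc$ are compatible with the two projections $\Rcal\rightrightarrows\Ucal$. This compatibility is the naturality of the superscheme base-change isomorphism with respect to morphisms over $\Sc$, a formal consequence of \eqref{eq:univdiff}.

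The point requiring most care is verifying that $\phi_\Xf^\ast$ interacts correctly with the equalizer description — concretely, that $\phi_\Xf^\ast\Omega_{\Xf/\Sc}$ is the equalizer of $\phi_\Ucal^\ast\Omega_{\Ucal/\Sc}\rightrightarrows\phi_\Rcal^\ast\Omega_{\Rcal/\Sc}$ in the appropriate étale sheaf category on $\Xf\times_\Sc\Tc$. This rests on Proposition \ref{prop:descentqcalgsup}, applied to the étale presentation $p_\Tc$ of $\Xf\times_\Sc\Tc$ and to the descent datum obtained by pulling back $\Omega_{\Xf/\Sc}$ along $\phi_\Xf\circ p_\Tc$. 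Beyond this essentially categorical check, no serious obstacle is anticipated: the lemma is a transposition to the super setting of the well-known base-change formula for the sheaf of differentials of algebraic spaces.
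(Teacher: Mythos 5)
Your argument is correct and follows the same route the paper intends: the paper states the lemma with no written proof precisely because it is meant to follow from the equalizer description $\F(\Xf)=\ker\bigl(\F(\Ucal)\rightrightarrows\F(\Rcal)\bigr)$ introduced in the paragraph immediately before it, combined with base change for relative differentials of superschemes. Your write-up simply makes explicit the compatibility checks (naturality over the two projections $\Rcal\rightrightarrows\Ucal$ and commutation of $\phi_\Xf^\ast$ with the equalizer presentation) that the paper leaves to the reader.
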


 Also, for a morphism $f\colon \Xf\to \Yf$ of algebraic superspaces we can give an alternative description of $\Omega_{\Xf/\Yf}$. Let $p\colon\Sc\to\Yf$ be an \'etale presentation of $\Yf$; consider the superscheme $\Tc:=\Sc\times_\Yf\Sc$ and the cartesian diagram
 \begin{equation}\label{eq:diffdef}
\xymatrix{\Xf_\Tc:=\Xf\times_\Sc\Xf \ar@<.5ex>[r]^(.7){\tilde q_1}\ar@<-.5ex>[r]
_(.7){\tilde q_2} \ar[d]^{f_\Tc} & \Xf_\Sc\ar[r]^{p_\Xf} \ar[d]^{f_\Sc}& \Xf \ar[d]^f \\
\Tc:=\Sc\times_\Yf\Sc \ar@<.5ex>[r]^(.7){q_1}\ar@<-.5ex>[r]_(.7){q_2} &\Sc\ar[r]^p & \Yf
}
\end{equation}

 By Lemma \ref{lem:basechomega}, there are isomorphisms of sheaves $\tilde q_1^\ast(\Omega_{\Xf_\Sc/\Sc})\simeq \Omega_{\Xf_\Tc/\Tc}$ and $\tilde q_2^\ast(\Omega_{\Xf_\Sc/\Sc})\simeq \Omega_{\Xf_\Tc/\Tc}$, and then an isomorphism $\tilde q_1^\ast(\Omega_{\Xf_\Sc/\Sc})\simeq \tilde q_2^\ast(\Omega_{\Xf_\Sc/\Sc})$. One checks that  this isomorphism gives a descent datum so that, by Proposition \ref{prop:descentqcalgsup}, there exists a sheaf $\Gc$ on $\Xf$ such that $p^\ast \Gc\simeq \Omega_{\Xf_\Sc/\Sc}$. Now, $\Gc\simeq \Omega_{\Xf/\Yf}$. In other words:
\begin{lemma} \label{lem:diffdef} The sheaf $\Omega_{\Xf/\Yf}$ of relative differentials is the sheaf defined by descent from $\Omega_{\Xf_\Sc/\Sc}$ through the diagram \eqref{eq:diffdef}. \qed
\end{lemma}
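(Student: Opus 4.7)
The plan is to verify the two ingredients needed to invoke Proposition \ref{prop:descentqcalgsup}: that the isomorphisms supplied by base change of differentials really form a descent datum on $\Omega_{\Xf_\Sc/\Sc}$, and that $\Omega_{\Xf/\Yf}$ itself pulls back to $\Omega_{\Xf_\Sc/\Sc}$ in a way compatible with that datum. Since $p\colon\Sc\to\Yf$ is an \'etale presentation, the base change $p_\Xf\colon \Xf_\Sc\to\Xf$ is surjective and \'etale, hence is an fpqc covering of $\Xf$ in the category of algebraic superspaces, so the descent formalism of Proposition \ref{prop:descentqcalgsup} applies to the covering $\{p_\Xf\}$ with double fiber product $\Xf_\Tc=\Xf_\Sc\times_\Xf \Xf_\Sc$ (identified via the outer rectangle of diagram \eqref{eq:diffdef}).

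First I would construct the descent datum. The morphism $\tilde q_1\colon \Xf_\Tc\to\Xf_\Sc$ fits into a cartesian square with $q_1\colon\Tc\to\Sc$ and $f_\Sc,f_\Tc$; by Lemma \ref{lem:basechomega} this yields a canonical isomorphism $\alpha_1\colon \tilde q_1^\ast\Omega_{\Xf_\Sc/\Sc}\iso \Omega_{\Xf_\Tc/\Tc}$, and analogously $\alpha_2$ for $\tilde q_2$. Setting $\varphi:=\alpha_2^{-1}\circ\alpha_1\colon \tilde q_1^\ast\Omega_{\Xf_\Sc/\Sc}\iso \tilde q_2^\ast\Omega_{\Xf_\Sc/\Sc}$ gives a candidate descent datum. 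The cocycle condition on the triple fiber product $\Xf_\Sc\times_\Xf\Xf_\Sc\times_\Xf\Xf_\Sc$ amounts to the compatibility of the three base-change isomorphisms for $\Omega$ under the three projections, which holds because Lemma \ref{lem:basechomega} is functorial in the superscheme base change (the base-change isomorphisms arise from the universal property of \eqref{eq:univdiff} and so commute with further pullbacks).

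Next I would exhibit $\Omega_{\Xf/\Yf}$ as a solution to this descent problem. Extending Lemma \ref{lem:basechomega} to the morphism $p_\Xf\colon \Xf_\Sc\to\Xf$ of algebraic superspaces --- which is legitimate because the sheaves of relative differentials in Definition \ref{def:sheafexamples} are defined for morphisms of algebraic superspaces via the same construction as in the schematic case, and because $p_\Xf$ arises by base change from the \'etale presentation $p$ --- one gets a canonical isomorphism $\beta\colon p_\Xf^\ast\Omega_{\Xf/\Yf}\iso \Omega_{\Xf_\Sc/\Sc}$. Pulling $\beta$ back by $\tilde q_1$ and $\tilde q_2$ and composing with the canonical identifications $\tilde q_i^\ast p_\Xf^\ast\simeq (p_\Xf\tilde q_i)^\ast$ shows that $\beta$ transports the tautological descent datum of $p_\Xf^\ast\Omega_{\Xf/\Yf}$ (arising from the two projections $\Xf_\Tc\rightrightarrows\Xf_\Sc$ being the same map after composition with $p_\Xf$) precisely to $\varphi$.

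Finally, by Proposition \ref{prop:descentqcalgsup}, the descent datum $(\Omega_{\Xf_\Sc/\Sc},\varphi)$ is effective and the descended sheaf is unique up to canonical isomorphism. The previous paragraph identifies $\Omega_{\Xf/\Yf}$ with this descended sheaf, which is the content of the lemma. The main obstacle I expect is the bookkeeping needed to verify the cocycle compatibility --- essentially, showing that the base-change isomorphisms for $\Omega$ are functorial under composition of base changes; once this functoriality is in place, the rest is formal.
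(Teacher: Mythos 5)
Your proposal follows exactly the route the paper takes (in the paragraph preceding the lemma): obtain the two base-change isomorphisms from Lemma \ref{lem:basechomega}, compose them into a descent datum, verify the cocycle condition, apply Proposition \ref{prop:descentqcalgsup}, and identify the descended sheaf with $\Omega_{\Xf/\Yf}$ via the compatibility of relative differentials with the base change along $p$. You simply fill in the details --- notably the final identification $\Gc\simeq\Omega_{\Xf/\Yf}$, which the paper asserts without elaboration --- so the argument is correct and essentially identical to the paper's.
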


\begin{prop} \label{prop:propreldiff}  \ 
\begin{enumerate} \item
If $f\colon\Xf\to\Yf$ is  a morphism of algebraic superspaces,
the sheaf $\Omega_{\Xf/\Yf}$ of relative differentials is quasi-coherent.
\item \label{item:2}   If
\begin{equation}\label{eq:diagmorphDM}
\xymatrix{
\Xf' \ar[r]^g \ar[d]_{f'} & \Xf \ar[d]^f \\
\Yf' \ar[r]_h & \Yf
}
\end{equation}
is a commutative diagram of algebraic superspaces,
there is a natural morphism
$$ g^\ast\Omega_{\Xf/\Yf} \to \Omega_{\Xf'/\Yf'},$$
which is an isomorphism if the diagram is cartesian.
\item  If $\Xf \xrightarrow{f} \Yf \xrightarrow{g} \Zf $
are morphisms of algebraic superspaces, there is
an exact sequence
\begin{equation} \label{eq:exseqreldiff1}
f^\ast\Omega_{\Yf/\Zf} \to \Omega_{\Xf/\Zf} \to \Omega_{\Xf/\Yf} \to 0.
\end{equation}
In particular by taking $\Zf = \Spec \Z$ one obtains that given
a morphism of algebraic superspaces $f\colon\Xf\to \Yf$ there is an exact sequence
\begin{equation} \label{eq:exseqreldiff2}
f^\ast\Omega_{\Yf} \to \Omega_{\Xf} \to \Omega_{\Xf/\Yf} \to 0.
\end{equation}
\item  \label{item:reldiff4}  If the diagram \eqref{eq:diagmorphDM} is cartesian, the natural morphism
\begin{equation} \label{eq:addreldiff}
{f'}^\ast\Omega_{\Yf'/\Yf} \oplus {g}^\ast\Omega_{\Xf/\Yf} 
\to \Omega_{\Xf'/\Yf}
\end{equation}
is an isomorphism.
\item  \label{item:reldiff5}   If a morphism of algebraic superspaces $f\colon\Xf\to \Yf$ is  unramified, then 
$\Omega_{\Xf/\Yf}=0$. 
\end{enumerate}
\end{prop}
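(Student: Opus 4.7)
The unifying strategy is to reduce each of the five assertions to the corresponding statement for morphisms of superschemes (established in \cite{BrHR21}), by applying Lemma \ref{lem:diffdef}, which identifies $\Omega_{\Xf/\Yf}$ as the sheaf obtained by descent from $\Omega_{\Xf_\Sc/\Sc}$ for an étale presentation $\Sc\to\Yf$, together with Proposition \ref{prop:descentqcalgsup} (descent of quasi-coherent sheaves along fppf covers of algebraic superspaces) and Lemma \ref{lem:basechomega} (compatibility of $\Omega$ with base change to superschemes).

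Items (1), (2) and (5) are then direct. For (1), choose étale presentations $\Sc\to\Yf$ and $\Ucal\to\Xf_\Sc$ by superschemes; the sheaf $\Omega_{\Ucal/\Sc}$ is quasi-coherent by the classical superscheme theory, and quasi-coherence descends. For (5), unramifiedness of $f$ transfers, via Definition \ref{def:morphismsprops3}, to the induced superscheme morphism $\Ucal\to\Sc$, so $\Omega_{\Ucal/\Sc}=0$ and descent yields $\Omega_{\Xf/\Yf}=0$. For (2), the natural morphism is constructed from the universal property \eqref{eq:univdiff}: any $\Yf'$-linear derivation of $\Oc_{\Xf'}$ restricts, via the unit of $g^\ast$, to a $\Yf$-linear derivation of $\Oc_\Xf$ with values in $g_\ast(-)$, giving by adjunction the map $g^\ast\Omega_{\Xf/\Yf}\to\Omega_{\Xf'/\Yf'}$. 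When the square is cartesian, pick an étale presentation $\Sc'\to\Yf'$ and compose with $h$; the cartesian identification $\Xf\times_\Yf\Sc' = \Xf'\times_{\Yf'}\Sc'$, combined with Lemmas \ref{lem:basechomega} and \ref{lem:diffdef}, shows that both $g^\ast\Omega_{\Xf/\Yf}$ and $\Omega_{\Xf'/\Yf'}$ pull back to the same sheaf on a joint étale presentation, so the natural map is an isomorphism after descent.

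For (3), pick étale presentations $\Wc\to\Zf$, then $\Vc\to\Yf\times_\Zf\Wc$, and finally $\Ucal\to\Xf\times_\Yf\Vc$, producing a two-step composition $\Ucal\to\Vc\to\Wc$ of superscheme morphisms. The classical right-exact sequence of differentials for superschemes holds for this composition, and since pullback is right exact and compatible with descent, it assembles into \eqref{eq:exseqreldiff1} on $\Xf$; taking $\Zf=\Spec\Z$ gives \eqref{eq:exseqreldiff2}. For (4), combining (3) applied to $\Xf'\xrightarrow{f'}\Yf'\xrightarrow{h}\Yf$ with the isomorphism $\Omega_{\Xf'/\Yf'}\simeq g^\ast\Omega_{\Xf/\Yf}$ from (2) produces a right-exact sequence whose rightmost term is $g^\ast\Omega_{\Xf/\Yf}$; the assertion that \eqref{eq:addreldiff} is an isomorphism then reduces, locally on a joint presentation of the cartesian square, to the classical direct-sum decomposition of $\Omega$ for a base-changed morphism of superschemes, and follows by descent. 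The main obstacle is the careful assembly of compatible étale presentations in (3) and the verification that the right-exact sequence on $\Ucal\to\Vc\to\Wc$ carries an effective descent datum to $\Xf$; once this step is secured, (4) is a formal consequence of (2) and (3), while (1), (2) and (5) are cleaner instances of the same reduction-and-descent technique.
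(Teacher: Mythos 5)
Your proposal is correct and follows essentially the same route as the paper: the paper dispatches items (1)--(4) as ``straightforward from Lemma \ref{lem:diffdef}'' (i.e.\ reduction to superschemes via compatible \'etale presentations plus descent of quasi-coherent sheaves), and for (5) it likewise pulls back along an \'etale presentation $\Sc\to\Yf$ and invokes Proposition \ref{prop:formunram} on the resulting unramified superscheme morphism. Your write-up merely makes explicit the details the paper leaves implicit, so there is nothing substantive to flag.
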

 \begin{proof} The properties (1) to (4) follows straightforwardly from Lemma \ref{lem:diffdef}. For (5), let $p\colon\Sc\to\Yf$ be an \'etale presentation of $\Yf$. Then, $f_\Sc\colon \Xf_\Sc\to \Sc$ is also unramified (Definition \ref{def:morphismsprops2}) and $p_\Xf^\ast\Omega_{\Xf/\Yf}\simeq \Omega_{\Xf_\Sc/\Sc}$ by (2). Since $p$ is faithfully flat, it is enough to prove that $\Omega_{\Xf_\Sc/\Sc}=0$. Using Definition \ref{def:sheafexamples}, we see that if $\Ucal \to \Xf_\Sc$ is an \'etale morphism from a superscheme, one has $\Omega_{\Xf_\Sc/\Sc}(\Ucal)=\Gamma(\Ucal, \Omega_{\Ucal/\Sc})$;  the latter is zero by Proposition \ref{prop:formunram}, thus finishing the proof.
\end{proof}

\subsubsection*{A useful construction.} In section \ref{subsec:dmdiag} we shall need to consider
the sheaf of relative differentials for a  smooth representable morphism $ f\colon \Xcal \to \Xf$, where $\Xcal$ is a superscheme, and $\Xf$ is an algebraic superstack; the construction we have just developed does not apply to this case.  Again, the ad hoc construction made in \cite[pp.~180--181]{Ol16} for the bosonic case directly generalizes to the super setting, as we are going to sketch. The fiber product $\Zf = \Xcal\times_\Xf\Xcal$
is an algebraic superspace. We can construct the commutative diagram
\def\quno{\raise4pt\hbox{\tiny $q_1$}}\def\qdue{\raise-5pt\hbox{\tiny $q_2$}}
 \begin{equation} \label{eq:monster} \xymatrix{
\Zf\times_{\operatorname{pr_1},\Xcal,  \operatorname{pr_1}}\Zf \ar[d]^{q:=(\operatorname{pr_2},\operatorname{pr_2})}
\ar@<-.4ex>[r]^(0.7){\quno} \ar@<.4ex>[r]_(0.7){\qdue}& \Zf \ar[d]^{\operatorname{pr_2}}  \ar[r]^{\operatorname{pr_1}} & \Xcal \ar[d]^f \\
 \Xcal\times_\Xf\Xcal \ar@<-.4ex>[r] \ar@<.4ex>[r] & \Xcal \ar[r]^f & \Xf 
}
\end{equation}
 
Since we are able to define relative differentials for morphisms of algebraic superspaces, in particular we can consider
the relative differentials 
$ \Omega_q$
and $\Omega_{\operatorname{pr_2}}$. One has an isomorphism
$$ \epsilon \colon q_1^\ast \Omega_{\operatorname{pr_2}} \to  q_2^\ast \Omega_{\operatorname{pr_2}} $$
obtained by combining the  base-change isomorphisms   (Proposition \ref{prop:propreldiff} \eqref{item:2})
$$ q_1^\ast \Omega_{\operatorname{pr_2}} \simeq \Omega_q
\qquad\text{and}\qquad q_2^\ast \Omega_{\operatorname{pr_2}} \simeq \Omega_q
$$
coming from diagram \eqref{eq:monster}. The isomorphism $\epsilon$ satisfies the cocycle condition for the 
triple fiber product 
$$ \Zf\times_{\operatorname{pr_1},\Xcal,  \operatorname{pr_1}}\Zf \times_{\operatorname{pr_1},\Xcal,  \operatorname{pr_1}}\Zf$$
 and therefore
by  descent for quasi-coherent sheaves on algebraic superspaces (Proposition \ref{prop:descentqcalgsup}) there is a quasi-coherent sheaf 
$\Omega_{\Xcal/\Xf}$ 
on $\Xcal$ 
 such that
\begin{equation}\label{eq:descentomega}
\Omega_{\operatorname{pr_2}}\simeq \operatorname{pr_1}^\ast \Omega_{\Xcal/\Xf}\,.
\end{equation}
 
The arguments in \cite{Ol16} generalize straightforwardly to yield the following results. We start by noting that  there is  a morphism
\begin{equation} \label{eq:surjnat}
 \Omega_\Xcal \to 
 \Omega_{\Xcal/\Xf}
 \end{equation} defined as follows.  
 Starting from the composition
 $$ \operatorname{pr_1}^\ast\Omega_\Xcal \to \Omega_\Zf \to \Omega_{\operatorname{pr_2}}$$
 one forms the   commutative diagram
\begin{equation}\label{eq:speremmuben}
\xymatrix{q_1^\ast\circ \operatorname{pr_1}^\ast  \Omega_\Xcal \ar[d]_\eta \ar[r] & q_1^\ast \Omega_{\operatorname{pr_2}} \ar[d]^\epsilon  \\
 q_2 ^\ast\circ \operatorname{pr_1}^\ast   \Omega_\Xcal \ar[r]  & q_2^\ast \Omega_{\operatorname{pr_2}}
 }
  \end{equation}

 The morphism  
  $\eta$  trivially satisfies the cocycle condition giving rise by descent to the sheaf $\Omega_\Xcal$. The horizontal arrows in
  diagram \eqref{eq:speremmuben}  define morphisms between descent data,  yielding the morphism \eqref{eq:surjnat}.
Now let us consider the diagram
\begin{equation} \label{eq:olssondiag}
\xymatrix{
\operatorname{pr_2}^\ast \Omega_\Xcal \ar[r] & \Omega_{\Zf} \ar[r] & \Omega_{\operatorname{pr_2}} \ar[r] & 0 \\
\operatorname{pr_1}^\ast \Omega_\Xcal \ar@{^{(}->}[r]  \ar@{.>}[urr] & 
\operatorname{pr_1}^\ast \Omega_\Xcal \oplus \operatorname{pr_2}^\ast \Omega_\Xcal \ar[u]_(.3)u   &
}
\end{equation}
where the first row is exact   (see equation \eqref{eq:exseqreldiff2}).
 Since $\Omega_{\operatorname{pr_2}}\simeq 
\operatorname{pr_1}^\ast\Omega_{\Xcal/\Xf}$ (equation \eqref{eq:descentomega}) the dotted arrow yields  a   morphism
$\operatorname{pr_1}^\ast \Omega_\Xcal \to 
\operatorname{pr_1}^\ast\Omega_{\Xcal/\Xf}$ which is the pullback of the morphism \eqref{eq:surjnat}.

\begin{lemma}\label{lem:propadhoc}\ 
\begin{enumerate}
\item The sheaf $\Omega_{\Xcal/\Xf}$ is locally free of finite rank.
\item If the diagonal of $\Xf$ is unramified, the  morphism
\eqref{eq:surjnat} is surjective.
\end{enumerate}
\end{lemma}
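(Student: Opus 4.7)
The plan is to deduce both statements by working on the algebraic superspace $\Zf=\Xcal\times_\Xf\Xcal$ and descending along the smooth surjective (hence faithfully flat) morphism $\operatorname{pr_1}\colon \Zf\to \Xcal$, using the defining isomorphism $\operatorname{pr_1}^\ast\Omega_{\Xcal/\Xf}\simeq \Omega_{\operatorname{pr_2}}$ from equation \eqref{eq:descentomega}. Both local freeness of a fixed rank, and surjectivity of a map of quasi-coherent sheaves, are fppf-local properties, so it suffices to verify the corresponding statements for $\Omega_{\operatorname{pr_2}}$ and for $\operatorname{pr_1}^\ast$ of the morphism \eqref{eq:surjnat}, respectively.

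For part (1), since $f\colon\Xcal\to\Xf$ is representable and smooth, its base change $\operatorname{pr_2}\colon\Zf\to\Xcal$ is a smooth morphism of algebraic superspaces with target the superscheme $\Xcal$. The remark following Definition \ref{def:qcDM} then yields that $\Omega_{\operatorname{pr_2}}$ is locally free of finite rank on $\Zf$; fppf descent (Proposition \ref{prop:descentqcalgsup}) along $\operatorname{pr_1}$ gives the same conclusion for $\Omega_{\Xcal/\Xf}$.

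For part (2), the key observation is that the morphism $g=(\operatorname{pr_1},\operatorname{pr_2})\colon\Zf\to\Xcal\times\Xcal$ fits in a 2-cartesian square
$$
\xymatrix{
\Zf \ar[r]^{g} \ar[d] & \Xcal\times\Xcal \ar[d]^{f\times f} \\
\Xf \ar[r]_(.4){\Delta_\Xf} & \Xf\times\Xf
}
$$
so $g$ is the base change of $\Delta_\Xf$. By the base-change stability of the unramified property (Definition \ref{def:morphismsprops3}), the hypothesis on $\Delta_\Xf$ gives that $g$ is unramified, hence $\Omega_{\Zf/\Xcal\times\Xcal}=0$ by Proposition \ref{prop:propreldiff}\eqref{item:reldiff5}. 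Combined with the splitting $g^\ast\Omega_{\Xcal\times\Xcal}\simeq \operatorname{pr_1}^\ast\Omega_\Xcal\oplus \operatorname{pr_2}^\ast\Omega_\Xcal$ (Proposition \ref{prop:propreldiff}\eqref{item:reldiff4}) and the conormal sequence \eqref{eq:exseqreldiff2} for $g$, this forces the map $u$ in diagram \eqref{eq:olssondiag} to be surjective. The image of $\operatorname{pr_2}^\ast\Omega_\Xcal$ in $\Omega_{\operatorname{pr_2}}$ vanishes by exactness of the top row of \eqref{eq:olssondiag}, so the dotted arrow --- which is $\operatorname{pr_1}^\ast$ of the morphism \eqref{eq:surjnat} --- is itself surjective, and faithful flatness of $\operatorname{pr_1}$ transfers this back to \eqref{eq:surjnat}.

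The main obstacle I foresee is the careful 2-categorical verification that $g$ is indeed the base change of $\Delta_\Xf$, together with confirming that fppf descent of local freeness and of surjectivity of morphisms of quasi-coherent sheaves on algebraic superspaces is available in this setting. Both reduce to superscheme-level descent through an \'etale presentation of $\Zf$, and so should follow from Proposition \ref{prop:descentqcalgsup} and the explicit descent data used in the construction of $\Omega_{\Xcal/\Xf}$ via equation \eqref{eq:descentomega}.
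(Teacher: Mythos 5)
Your proof is correct and follows essentially the same route as the paper's: part (1) via smoothness of $\operatorname{pr_2}$ and descent along the faithfully flat $\operatorname{pr_1}$ using $\operatorname{pr_1}^\ast\Omega_{\Xcal/\Xf}\simeq\Omega_{\operatorname{pr_2}}$, and part (2) via the cartesian square exhibiting $\Zf\to\Xcal\times\Xcal$ as a base change of $\Delta_\Xf$, the vanishing $\Omega_{\Zf/\Xcal\times\Xcal}=0$, the conormal sequence forcing $u$ to be surjective, and faithful flatness of $\operatorname{pr_1}$ to descend surjectivity. The only differences are cosmetic: you make explicit the descent step in (1) and the observation that the second summand dies in $\Omega_{\operatorname{pr_2}}$, both of which the paper leaves implicit.
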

\begin{proof} (1) This follows from the fact that the quasi-coherent sheaf $\Omega_{\operatorname{pr_2}}$ is itself
locally free (as $f$ is a smooth morphism, and hence $\operatorname{pr_2}$ is smooth as well, see Definition \ref{def:smooth}) of finite rank.

(2)  If the     morphism  $u$  in diagram
\eqref{eq:olssondiag}
 is surjective,  then the  dotted arrow,
i.e., the morphism $\operatorname{pr_1}^\ast \Omega_\Xcal \to 
\operatorname{pr_1}^\ast\Omega_{\Xcal/\Xf}$,
 is surjective as well. Since $\operatorname{pr_1}$ is faithfully flat, the morphism \eqref{eq:surjnat} is surjective. So we need to show that $u$ is surjective.
 
 Applying Proposition \ref{prop:propreldiff} \eqref{item:reldiff4}  to the diagram defining the  product  
 $\Xcal\times\Xcal$,  one obtains 
 $$\Omega_{\Xcal\times\Xcal}
 \simeq  p_1^\ast \Omega_\Xcal \oplus p_2^\ast \Omega_\Xcal  
 $$
 where $p_1$, $p_2$ are the projections onto the factors of the product. 
 Taking this into account, by applying equation  \eqref {eq:exseqreldiff2} to the composition of morphisms
 $$
  \Zf \to \Xcal\times\Xcal \to \Spec \Z
 $$ 
 with obtain the exact sequence 
\begin{equation} \label{eq:exseqZ}  \operatorname{pr_1}^\ast \Omega_\Xcal \oplus \operatorname{pr_2}^\ast \Omega_\Xcal
 \xrightarrow{\ \ u\ \ } \Omega_\Zf \to \Omega_{\Zf/\Xcal\times\Xcal}\to 0.
 \end{equation} 
 Now we observe  that the diagram
 $$\xymatrix{ \Zf \ar[d] \ar[r] & \Xf \ar[d]^\Delta \\
\Xcal\times\Xcal \ar[r] &  \Xf\times\Xf }$$
 is cartesian, so that  if the diagonal of $\Xf$ is unramified,    the morphism  $\Zf \to \Xcal\times\Xcal$ is unramified as well,
and  applying Proposition \ref{prop:propreldiff} \eqref{item:reldiff5} we have $\Omega_{\Zf/\Xcal\times\Xcal}=0$.  Thus equation \eqref{eq:exseqZ} implies that the morphism $u$ is surjective. 
\end{proof}

\subsection{Quasi-coherent sheaves on  algebraic superstacks}\label{subsec:quasi-coh2}
 Now we generalize  to algebraic superstacks what did   about  Deligne-Mumford superstacks   in Subsection \ref{subsec:quasi-coh}. Again, we  basically extend the treatment of the ordinary case, following \cite{LauMor-Bai00,Ol16,Alp24}.  The main point is that the \'etale site of $\Xf$ must be extended to the 
 {\em lisse-\'etale site}.  Let $\Xf$ be an algebraic superstack over a superscheme $\Sc$.  
 
As for stacks, an $\Xf$-superspace is a pair $(\Tf,t)$, where $\Tf$ is an algebraic superspace and $t\colon \Tf\to\Xf$ is a morphism over $\Sc$. A morphism of $\Xf$-superspaces $ (\Tf',t') \to (\Tf,t)$  is a pair
$(f,f^\sharp)$
where $f\colon \Tf'\to\Tf$ is a morphism of algebraic superspaces over $\Sc$ and $f^\sharp \colon t' \to t\circ f$ is an isomorphism of functors from $\Tf'$ to $\Xf$. Morphisms of $\Xf$-superspaces are composed in the obvious way: given two such morphisms 
$$ (\Tf'',t'') \xrightarrow{(g,g^\sharp)}  (\Tf',t') \xrightarrow{(f,f^\sharp)} (\Tf,t)$$ 
the composed morphism $  (\Tf'',t'') \to (\Tf,t)$  is given by the composition $f\circ g$ together with the isomorphism of functors
$t'' \to t \circ (f\circ g)$.  In this way one defines the category  of $\Xf$-superspaces; this has a natural full subcategory  $\Sf/\Xf$ where algebraic superspaces are replaced by superschemes. 

\begin{defin} 1. The {\em lisse-\'etale site} $\Xf_{lis-et}$ of $\Xf$ is the full subcategory of $\Sf/\Xf$ whose objects are pairs
$(\Tc,t)$ where $\Tc$ is a superscheme over $\Sc$,  and $t\colon\Tc\to\Xf$ is a smooth morphism over $\Sc$.

2. A {\em covering} of an object $(\Tc,t)$ of $\Xf_{lis-et}$ is a collection  of maps $(f_i,f_i^\sharp)\colon (\Tc_i,t_i)\to(\Tc,t)$
such that the collection $\{f_i\colon\Tc_i\to\Tc\}$ is an \'etale covering.

3. The {\em structure sheaf} $\Oc_\Xf$ of $\Xf$ is the sheaf on $\Xf_{lis-et}$ defined by
$$\Oc_\Xf(\Tc,t) = \Gamma(\Tc,\Oc_\Tc).$$
\end{defin}
The naturally defined category of  $\Z_2$-graded commutative $\Oc_\Xf$-modules will be denoted $\operatorname{Mod}(\Oc_\Xf)$.

If $(\Tc,t)$ is an object in  $\Xf_{lis-et}$, there is a natural inclusion $\Tc_{et} \hookrightarrow \Xf_{lis-et}$,
where $\Tc_{et}$ is the \'etale site of $\Tc$, given by $(h,\Tc'\to\Tc) \mapsto (\Tc', t\circ h)$.  If $\Fc$ is a sheaf on
  $\Xf_{lis-et}$ we shall denote by $\Fc_{(\Tc,t)}$ the restriction of $\Fc$ to  $\Tc_{et}$ whenever
  $(\Tc,t)$ is an object in $\Xf_{lis-et}$.
  
  \begin{remark}  \label{rmk:defsec}  If $\Fc$ is a sheaf on
  $\Xf_{lis-et}$, one can make sense of its sections on smooth morphisms $\Yf\to \Xf$ of algebraic superstacks by defining
  $\Fc(\Yf\to \Xf)$ as the equalizer
  $$ \operatorname{Eq}\big( \Fc(\Ucal \to \Xf) \rightrightarrows (\Fc(\Vc \to \Xf)
  \bigr) $$ 
  where $\Ucal \to \Yf$ is a smooth presentation from a superscheme and $\Vc \to \Ucal\times_\Yf\Ucal$ is an \'etale presentation from a superscheme. This definition turns out to be independent of the choices of the presentations. 
  \end{remark}
  
  \begin{defin} 1. If $\Ac$ is a sheaf of  superrings on $\Xf_{lis-et}$,
  a sheaf $\Fc$ of  $\Z_2$-graded $\Ac$-modules is \emph{cartesian} if for every morphism
  $(f,f^\sharp) \colon (\Tc',t')\to (\Tc,t)$ the morphism of $\Z_2$-graded $\Rcal_{(\Tc',t')}$-modules  
  $f^\ast \Fc_{(\Tc,t)} \to \Fc_{(\Tc',t')}$ is an isomorphism.
  
  2. A sheaf $\Fc$ of $\Z_2$-graded $\Oc_\Xf$-modules is {\em quasi-coherent} if it is cartesian, and
  for every object $(\Tc,t)$ in $\Xf_{lis-et}$, the sheaf  $\Fc_{(\Tc,t)}$ is a quasi-coherent sheaf on the superscheme $\Tc$.
  
  3. If $\Xf$ is locally noetherian (Definition \ref{def:locnoeth}), a quasi-coherent sheaf $\Fc$ on $\Xf$ is \emph{coherent} if each $\Fc_{(\Tc,t)}$ is a coherent sheaf on the superscheme $\Tc$.
  \end{defin}
  
  \begin{remark} The structure sheaf $\Oc_\Xf$ is quasi-coherent, and it is coherent if $\Xf$ is locally noetherian.
  \end{remark}
  \begin{defin} (Push-forwards) Let $\colon\Xf\to\Yf$ be a morphism of algebraic superstacks over a superscheme $\Sc$.
  The functor $f_\ast \colon \operatorname{Mod}(\Oc_\Xf) \to \operatorname{Mod}(\Oc_\Yf)$ is defined by letting,
  for every object $(\Ucal\to\Yf)$ in $\Yf_{lis-et}$,
  $$ (f_\ast \Fc) (\Ucal\to\Yf) = \Fc ( \Xf_\Ucal \to \Xf) $$
  where 
  $\Xf_\Ucal$ is the fiber product $\Xf\times_\Yf\Ucal$, which  is a smooth $\Xf$-superstack (the right-hand side in this definition makes sense due to Remark \ref{rmk:defsec}).
   \end{defin}
  
 \subsubsection*{Relation between quasi-coherent sheaves on algebraic and Deligne-Mumford superstacks}
 In this and in the previous subsections we gave definitions of notions of quasi-coherent sheaves in the cases of Deligne-Mumford  and algebraic superstacks; in other terms, we defined the category $\operatorname{Qcoh}(\Xf_{et})$ when
 $\Xf$ is a Deligne-Mumford superstack, and the category  $\operatorname{Qcoh}(\Xf_{lis-et})$ when $\Xf$ is an algebraic superstack.
 If $\Xf$ is Deligne-Mumford, the two categories turn out to be equivalent, as in the case of stacks (see \cite[9.1.16--9.1.18]{Ol16}). The reason is that when $\Xf$ is a Deligne-Mumford superstack, the small \'etale site $\Xf_{et}$
 is a full subcategory of the lisse-\'etale site $\Xf_{lis-et}$; this induces a morphism between the corresponding superringed topoi, which at the level of the categories of quasi-coherent sheaves turns out to be an equivalence of categories. 

\subsection{Universal families}
Since the existence of universal families for algebraic stacks \cite[3.1.7]{Alp24} is completely formal, analogous arguments prove the existence and properties of universal families for algebraic superstacks. We then have:

\begin{defin}\label{def:gen2yoneda} Let $\Xf$ be a superstack and $\Yf$ an algebraic superstack.  Consider a smooth presentation $\Ucal\to \Yf$ from a superscheme $\Ucal$ (Definition \ref{def:DMalgsstack}).
The category $\Xf(\Yf)$  is  the equalizer $\Xf(\Yf)$ of
\begin{equation}
 \Xf(\Ucal) \rightrightarrows \Xf(\Ucal\times_\Yf \Ucal) \mathrel{\substack{\textstyle\rightarrow\\[-0.6ex]
                      \textstyle\rightarrow \\[-0.6ex]
                      \textstyle\rightarrow}} \Xf(\Ucal\times_\Yf\Ucal\times_\Yf\Ucal)
\end{equation}
i.e., objects of $\Xf(\Yf)$ are pairs $(a,\alpha)$ where $a \in\Xf(\Ucal)$ and $\alpha\colon p_1^\ast a \iso  p_2^\ast a$ is an isomorphism satisfying the cocycle condition $p_{23}^\ast \alpha\circ p_{12}^\ast \alpha = p_{13}^\ast \alpha$ while morphisms $(a,\alpha)\to (a',\alpha')$ are morphisms $\beta\colon a \to a'$ satisfying  $p_2^\ast \beta \circ \alpha = \alpha'\circ p_1^\ast \beta$.
\end{defin}

Proceeding as in \cite[Lemma 3.1.25]{Alp24} one has:
\begin{lemma}[Generalized 2-Yoneda Lemma] \label{lem:gen2yoneda}
There is a natural equivalence of categories
$$
\Homst_{SSt}(\Yf,\Xf)\to \Xf(\Yf)\,.
$$
 In particular, $ \Xf(\Yf)$ is independent of the smooth presentation of $\Yf$. When $\Yf=\Ucal$ is a superscheme, the above equivalence coincides with the one given by the 2-Yoneda Lemma \ref{lem:2yoneda}. \qed
\end{lemma}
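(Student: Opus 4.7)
The plan is to imitate the classical argument in \cite[Lemma 3.1.25]{Alp24}, splitting the construction into a direct functor $\Phi\colon\Homst_{SSt}(\Yf,\Xf)\to\Xf(\Yf)$ and a quasi-inverse $\Psi$, and then checking naturality.

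First, fix a smooth presentation $u\colon \Ucal\to\Yf$ and denote by $p_1,p_2\colon\Rcal:=\Ucal\times_\Yf\Ucal\rightrightarrows\Ucal$ the two projections, and by $p_{ij}\colon \Ucal\times_\Yf\Ucal\times_\Yf\Ucal\to \Rcal$ the partial projections. To define $\Phi$, given a superstack morphism $f\colon\Yf\to\Xf$, set $a:=f\circ u\in\Xf(\Ucal)$. The 2-commutativity $u\circ p_1\simeq u\circ p_2$, built into the fiber product (Definition \ref{def:sstackprod}), gives a canonical 2-isomorphism $\alpha\colon p_1^\ast a\iso p_2^\ast a$ in $\Xf(\Rcal)$; the cocycle identity $p_{23}^\ast\alpha\circ p_{12}^\ast\alpha=p_{13}^\ast\alpha$ is a formal consequence of the same 2-categorical identities applied to the triple fiber product. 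On morphisms, $\Phi$ sends a 2-isomorphism $\eta\colon f\Rightarrow f'$ to $\beta:=\eta\ast u\colon a\to a'$, and one checks that $\beta$ is compatible with $\alpha,\alpha'$ in the sense required by Definition \ref{def:gen2yoneda}. All of this is purely formal once one has the fiber product superstack of Proposition \ref{prop:sstackprod}.

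For the quasi-inverse $\Psi$, start with $(a,\alpha)\in\Xf(\Yf)$. For any superscheme $\Tc$ and any object $t\in\Yf(\Tc)$, consider the fiber product $u_t\colon\Ucal_t:=\Ucal\times_{u,\Yf,t}\Tc\to\Tc$, which is a smooth (hence surjective, locally finitely presented) superscheme morphism, and in particular is an \'etale cover after passing to an \'etale refinement $\{\Vc_i\to\Ucal_t\}$; call the composition $v_i\colon\Vc_i\to\Tc$ and $\tilde v_i\colon\Vc_i\to\Ucal$. The pullback $\tilde v_i^\ast a\in\Xf(\Vc_i)$ together with the pullbacks of $\alpha$ to $\Vc_i\times_\Tc\Vc_j$ (obtained by composing the canonical map $\Vc_i\times_\Tc\Vc_j\to\Rcal$ with $\alpha$) form a descent datum for the \'etale cover $\{\Vc_i\to\Tc\}$. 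Because $\Xf$ is a superstack (Definition \ref{def:sstack}), every descent datum is effective, so there exists a unique (up to unique isomorphism) object $f(t)\in\Xf(\Tc)$ restricting to the $\tilde v_i^\ast a$. Functoriality of $f$ in $t$ and in $\Tc$ follows from the uniqueness in descent, giving the sought morphism $f\colon\Yf\to\Xf$; this is $\Psi(a,\alpha)$.

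Next one checks that $\Phi$ and $\Psi$ are quasi-inverse. Applying $\Psi$ to $\Phi(f)=(f\circ u,\alpha_f)$ recovers $f$ by the uniqueness clause in effective descent, since $f$ itself restricts to $f\circ u$ on $\Ucal$ compatibly with $\alpha_f$. Conversely, $\Phi(\Psi(a,\alpha))$ yields $(\Psi(a,\alpha)\circ u,\alpha')$, which agrees with $(a,\alpha)$ by construction of $\Psi$ on the cover $\Ucal\to\Yf$. Both these identifications are natural in morphisms between objects of $\Xf(\Yf)$, so one obtains an equivalence of categories. For the final assertion, when $\Yf=\Ucal$ is a superscheme one may take the trivial smooth presentation $\Id\colon\Ucal\to\Ucal$, in which case $\Xf(\Yf)$ reduces to $\Xf(\Ucal)$ and both $\Phi$ and the equivalence of Lemma \ref{lem:2yoneda} send $f$ to $f(\Id_\Ucal)$. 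The independence of $\Xf(\Yf)$ from the chosen presentation is then automatic.

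The most delicate point is the descent step in the construction of $\Psi$: one must verify that the pullback of $\alpha$ along $\Vc_i\times_\Tc\Vc_j\to\Rcal$ is a well-defined isomorphism satisfying the cocycle condition over $\Vc_i\times_\Tc\Vc_j\times_\Tc\Vc_k$, and that the resulting descended object $f(t)$ is strictly functorial (not just up to non-canonical isomorphism) in $t$ and $\Tc$. This requires a careful bookkeeping of the 2-categorical coherences, but no new ideas beyond the stack axiom for $\Xf$ and the universal properties of the relevant fiber products.
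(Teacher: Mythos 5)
Your overall route is the same one the paper intends: it gives no written proof, deferring to Alper's Lemma 3.1.25 and calling the argument ``completely formal'', and your $\Phi$/$\Psi$ decomposition is exactly that argument. However, your sketch has one genuine gap, and it sits precisely at the one point where the argument is \emph{not} purely formal: the descent step in the construction of $\Psi$. A superstack is by definition a stack over $\Sf_{et}$, so the axiom ``every descent datum is effective'' is only available for \emph{\'etale} coverings. But the covering $\{\Vc_i\to\Tc\}$ you produce is not \'etale: $\Ucal_t=\Ucal\times_{u,\Yf,t}\Tc$ is an algebraic superspace (not in general a superscheme, since $u$ is only representable), and composing an \'etale presentation $\Vc_i\to\Ucal_t$ with the smooth morphism $\Ucal_t\to\Tc$ yields morphisms that are smooth and surjective but not \'etale. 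As written, the stack axiom for $\Xf$ simply does not apply to this family, so the object $f(t)$ is not yet constructed.

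The missing ingredient is Corollary \ref{cor:localsectsmooth}: a smooth surjection of superschemes admits sections \'etale-locally on the target. Using it, one refines the smooth cover $\coprod_i\Vc_i\to\Tc$ by an honest \'etale covering $\{\Wc_j\to\Tc\}$ equipped with lifts $\Wc_j\to\coprod_i\Vc_i$, pulls back the objects $\tilde v_i^\ast a$ and the isomorphisms induced by $\alpha$ along these lifts, and checks (using the cocycle condition for $\alpha$) that the result is a genuine \'etale descent datum on $\Tc$, independent of the chosen sections up to canonical isomorphism. Only then does effectivity of descent for the superstack $\Xf$ produce $f(t)$. This is the same reduction that makes an algebraic stack over the \'etale site automatically a stack for the smooth topology; it is the one non-formal input, and your sketch elides it by calling a smooth cover an \'etale cover. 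The rest of your argument (the definition of $\Phi$, the verification that $\Phi$ and $\Psi$ are quasi-inverse, and the compatibility with Lemma \ref{lem:2yoneda} when $\Yf$ is a superscheme) is correct and matches the intended proof.
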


For every algebraic superstack $\Xf$, the identity $\Xf\to\Xf$ corresponds, by the generalized 2-Yoneda Lemma  \ref{lem:gen2yoneda},  to an object $u$ of $\Xf(\Xf)$, unique up to a unique isomorphism.

One can then define:
\begin{defin}\label{def:univfam}  The universal family over  the algebraic superstack $\Xf$ is the aforementioned object $u\in \Xf(\Xf)$.
\end{defin}

If $g\colon \Yf \to \Zf$ is a morphism of algebraic superstacks, by the generalized 2-Yoneda Lemma \ref{lem:gen2yoneda} there is a natural pullback functor $g^\ast\colon \Xf(\Zf) \to \Xf(\Yf)$ given by composition of morphisms of algebraic superstacks. One sees that:

\begin{prop}\label{prop:univfam} If $f_\alpha\colon \Ucal\to \Xf$ is the map from a superscheme coresponding to an object $a\in \Xf(\Ucal)$ by the 2-Yoneda Lemma \ref{lem:2yoneda}, and $f_\alpha^\ast\colon \Xf(\Xf)\to\Xf(\Ucal)$ is the pullback functor, then there is an isomorphism $a\iso f_\alpha^\ast u$ in $\Xf(\Ucal)$. \qed
\end{prop}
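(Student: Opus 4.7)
The plan is to reduce the statement to a direct verification using the compatibility between the ordinary 2-Yoneda Lemma \ref{lem:2yoneda} and the generalized 2-Yoneda Lemma \ref{lem:gen2yoneda}, together with the definition of the pullback functor as precomposition.

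First, I would unpack the data. By Definition \ref{def:univfam}, the universal family $u \in \Xf(\Xf)$ corresponds, via the generalized 2-Yoneda equivalence $\Homst_{SSt}(\Xf,\Xf)\to \Xf(\Xf)$, to the identity morphism $\Id_\Xf$, up to a unique isomorphism in $\Xf(\Xf)$. Secondly, unpacking the definition of the pullback functor $g^\ast \colon \Xf(\Zf)\to\Xf(\Yf)$ (for a morphism $g\colon \Yf\to\Zf$), which is given by composition of morphisms of algebraic superstacks, one sees that under the equivalence of Lemma \ref{lem:gen2yoneda} the pullback corresponds to precomposition: if $b\in\Xf(\Zf)$ corresponds to $h\colon \Zf\to\Xf$, then $g^\ast b$ corresponds to $h\circ g\colon \Yf\to\Xf$.

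Applying these two observations with $g = f_\alpha\colon \Ucal\to \Xf$ and $b = u \in \Xf(\Xf)$, the pullback $f_\alpha^\ast u \in \Xf(\Ucal)$ corresponds to the morphism
\[
\Id_\Xf\circ f_\alpha = f_\alpha\colon \Ucal\to \Xf\,.
\]
On the other hand, by the very definition of $f_\alpha$, the ordinary 2-Yoneda Lemma \ref{lem:2yoneda} assigns to this morphism $f_\alpha$ the object $a\in\Xf(\Ucal)$. Since Lemma \ref{lem:gen2yoneda} asserts that, for the superscheme $\Ucal$, the generalized equivalence $\Homst_{SSt}(\Ucal,\Xf)\to \Xf(\Ucal)$ coincides with the one given by Lemma \ref{lem:2yoneda}, we conclude that $f_\alpha^\ast u$ and $a$ are both images of $f_\alpha$ under the same equivalence of categories, hence there is an isomorphism $a\iso f_\alpha^\ast u$ in $\Xf(\Ucal)$.

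The main subtlety, rather than an obstacle, lies in handling the ``up to unique isomorphism'' nature of both the universal family and the 2-Yoneda correspondence: one must check that the isomorphism $a\iso f_\alpha^\ast u$ is canonical, i.e., does not depend on the choice of a representative of $u$ nor on a specific quasi-inverse of the Yoneda equivalence. This follows from the uniqueness clause in Lemma \ref{lem:gen2yoneda} and the functoriality of precomposition, both of which are formal consequences of the definitions; so after spelling out the diagrams, the argument reduces to a routine check.
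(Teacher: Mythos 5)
Your argument is correct and is precisely the formal verification the paper has in mind: the paper states this proposition with no written proof (only a \qed), remarking beforehand that the existence and properties of universal families are ``completely formal,'' and the intended justification is exactly your chain --- $u$ corresponds to $\Id_\Xf$ under Lemma \ref{lem:gen2yoneda}, pullback is precomposition, so $f_\alpha^\ast u$ corresponds to $\Id_\Xf\circ f_\alpha=f_\alpha$, which corresponds to $a$ by the compatibility of the two Yoneda equivalences. Nothing is missing.
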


\subsection{Characterization of  Deligne-Mumford superstacks  by properties of the diagonal}
\label{subsec:dmdiag}

\begin{lemma}\label{lem:unram} Every unramified algebraic superspace (Definition \ref{def:morphismsprops2})  is bosonic, that is, it is an ordinary unramified algebraic space.
\end{lemma}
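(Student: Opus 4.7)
The plan is to reduce to the superscheme case and then argue stalk-by-stalk. Let $\Xf$ be an unramified algebraic superspace and pick an \'etale presentation $p\colon\Ucal\to\Xf$ from a superscheme $\Ucal$ (Definition \ref{def:algsupersps}). Since \'etale morphisms of superschemes are unramified, the composition $\Ucal\to\Xf\to\Spec\Z$ is unramified, so $\Ucal$ is itself an unramified superscheme. If we can show that $\Ucal$ is bosonic, then the \'etale surjection $p$ factors through the canonical monomorphism $i_\Xf\colon\iota(\Xf_{bos})\to\Xf$ of Proposition \ref{prop:prop:bosred}; this factorization, combined with the surjectivity of $p$, forces $i_\Xf$ to be an isomorphism, identifying $\Xf$ with the algebraic space $\Xf_{bos}$ (Proposition \ref{prop:represbos}).

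The heart of the argument is therefore the superscheme statement: an unramified superscheme $\Ucal$ is bosonic, equivalently, the ideal $\Jc\subseteq\Oc_\Ucal$ generated by odd sections vanishes. This is a stalk-local condition, so fix $u\in\Ucal$ lying over $v\in\Spec\Z$. Unramifiedness provides $\mf_u=\mf_v\cdot\Oc_{\Ucal,u}$, while any odd section $\theta\in\Oc_{\Ucal,u}$ is nilpotent (indeed $\theta^2=0$ in a supercommutative ring), hence $\theta\in\mf_u$. Writing $\theta=\sum a_i c_i$ with $a_i\in\mf_v$ (automatically even, since $\Z_{(v)}$ is bosonic) and splitting each $c_i$ into its even and odd parts forces, by comparing parities, that only the odd components of the $c_i$ contribute. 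Thus $\theta\in\mf_v\cdot(\Oc_{\Ucal,u})^{-}$. Viewing $M:=(\Oc_{\Ucal,u})^{-}$ as a module over the classical local ring $R:=(\Oc_{\Ucal,u})^{+}$ with maximal ideal $\mf_u\cap R\supseteq\mf_v$, we obtain $M=(\mf_u\cap R)\cdot M$, and Nakayama's lemma yields $M=0$.

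The principal obstacle is the finite-generation hypothesis required for Nakayama: the module $M$ must be finitely generated over $R$. This is automatic from local Noetherianity of $\Ucal$, which in turn is guaranteed because unramified morphisms of superschemes are locally of finite type (cf.\ the standard definition invoked in Definition \ref{def:morphismsprops3} via Proposition \ref{prop:formunram}). Once $M=0$ holds at every point $u$, the odd part of $\Oc_\Ucal$ vanishes globally, $\Ucal$ is bosonic, and the reduction of the first paragraph finishes the proof by exhibiting the isomorphism $\iota(\Xf_{bos})\iso\Xf$.
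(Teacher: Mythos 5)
Your proof is correct in outline and shares the paper's first move --- reduce to the presenting superscheme $\Ucal$, which is unramified over $\Spec\Z$ --- but both halves are executed differently. For the statement that an unramified superscheme is bosonic, the paper simply cites its Proposition \ref{prop:unram}, proved by the involution trick: the parity involution $\tau$ of $\Ucal$ restricts to the identity on the bosonic reduction, so formal unramifiedness forces $\tau=\Id$ and the odd part vanishes; this needs no finiteness hypothesis and no Nakayama. Your stalkwise argument works too, but it starts from the identity $\mf_u=\mf_v\cdot\Oc_{\Ucal,u}$, which is the \emph{classical} local characterization of unramifiedness; the paper's definition is ``locally of finite type and $\Omega_f=0$'' (Proposition \ref{prop:formunram}), and translating that into the stalk condition in the super setting is precisely the kind of fact the appendix establishes case by case (Proposition \ref{prop:unramprop}) rather than something to be assumed --- you should either derive it (conormal sequence plus super Nakayama) or just invoke Proposition \ref{prop:unram}. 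For the descent to $\Xf$, the paper writes $\Xf$ as the quotient of the \'etale equivalence relation $\Rcal\rightrightarrows\Ucal$ (Proposition \ref{prop:quot}), notes that $\Rcal$ and $\Ucal$ are both bosonic, and identifies $\Xf$ with the resulting algebraic-space quotient using that every morphism from a scheme to a superscheme factors through the bosonic reduction of the target. Your alternative --- $p$ factors through the monomorphism $i_\Xf\colon\iota(\Xf_{bos})\to\Xf$ because $\Ucal$ is a scheme (equation \eqref{eq:adjoint}), and a monomorphism of sheaves through which an epimorphism factors is an isomorphism --- is legitimate and arguably slicker; the only sentence missing is why the presentation $p$ is an epimorphism of \'etale sheaves (a surjective \'etale morphism of superschemes is a covering, so sections of $\Xf$ lift to $\Ucal$ \'etale-locally).
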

\begin{proof} Let $\Xf$ be an unramified algebraic superspace and $p\colon \Ucal \to \Xf$ an \'etale presentation. By Proposition \ref{prop:quot}, $\Xf$ is the quotient sheaf of the \'etale equivalence relation of superschemes $\Rcal \rightrightarrows \Ucal$. Moreover, $\Ucal$ and $\Rcal$ are  unramified since $\Xf$ is. By Proposition \ref{prop:unram}, both are bosonic, i.e. ordinary schemes, so that $\Rcal \rightrightarrows \Ucal$ is an \'etale equivalence relation of schemes. The quotient sheaf (for the \'etale topology of schemes) is an algebraic space $\mathcal X$, and one easily sees that it is also the quotient for the \'etale topology of superschemes, due to the fact that every morphism from a scheme to a superscheme factors through the bosonic reduction of the target. Thus $\mathcal X=\Xf$.
\end{proof}

\begin{lemma}\label{lem:ftpointsDM} Let $\Xf$ be an algebraic superstack. If for every finite type point $x$ of $\Xf$ there is a representable \'etale morphism $\Ucal_x \to \Xf$ from a superscheme such that $x$ is the image a closed point $u$ of $\Ucal_x$, then $\Xf$ is Deligne-Mumford.
\end{lemma}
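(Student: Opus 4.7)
The plan is to produce a representable surjective étale presentation of $\Xf$ by gluing together the given étale neighborhoods, indexed over the set of finite type points. Concretely, I would set
\[
\Ucal \;=\; \coprod_{x \in |\Xf|_{ft}} \Ucal_x,
\]
together with the natural map $p\colon \Ucal \to \Xf$ obtained from the $\Ucal_x \to \Xf$. Since each $\Ucal_x$ is a superscheme, so is $\Ucal$. For a superscheme $\Tc \to \Xf$ the fibre product $\Ucal\times_\Xf \Tc = \coprod_x \Ucal_x\times_\Xf \Tc$ is a disjoint union of algebraic superspaces, hence an algebraic superspace, so $p$ is representable by Definition \ref{def:repres}. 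Likewise each base-change morphism $\Ucal_x\times_\Xf \Tc \to \Tc$ is étale, and étaleness is preserved by disjoint unions, so $p$ is étale in the sense of Definition \ref{def:morphismsprops2}.

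The only substantive step is surjectivity of $p$. Since $\Ucal$ is a superscheme (hence an algebraic superspace by Proposition \ref{prop.firstprop}) and étale morphisms are flat and locally of finite presentation, Proposition \ref{prop:open} applies and tells us that $|p|\colon |\Ucal|\to |\Xf|$ is an open map. Let $V = |p|(|\Ucal|) \subseteq |\Xf|$; then $V$ is open, and by construction $V$ contains every finite type point $x\in |\Xf|_{ft}$, since $x$ is the image of a closed point $u\in \Ucal_x$ under $\Ucal_x \hookrightarrow \Ucal \to \Xf$.

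Suppose for contradiction that $Z := |\Xf|\setminus V$ were non-empty. Then $Z$ is closed in $|\Xf|$, so by Proposition \ref{prop:ftdense} the subset $Z \cap |\Xf|_{ft}$ is dense in $Z$; in particular it is non-empty. Any point of $Z\cap |\Xf|_{ft}$ would be a finite type point of $\Xf$ lying outside the image of $p$, contradicting the containment $|\Xf|_{ft}\subseteq V$ established above. Therefore $Z=\emptyset$, i.e.\ $p$ is surjective. Combining the three properties, $p\colon \Ucal\to \Xf$ is a representable, surjective, étale morphism from a superscheme, so $\Xf$ is Deligne--Mumford by Definition \ref{def:DMalgsstack}.

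The plan is essentially formal once one has the right ingredients; the only point requiring care is the surjectivity argument, where one must combine openness of étale morphisms (Proposition \ref{prop:open}) with the density of finite type points in closed subsets of $|\Xf|$ (Proposition \ref{prop:ftdense}). This is the step I would flag as the main (mild) obstacle, since it is the only place where the bosonic-reduction machinery is invoked implicitly; everything else is a direct verification from the definitions.
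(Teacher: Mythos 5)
Your proposal is correct and follows exactly the paper's own argument: form the disjoint union $\Ucal=\coprod_x\Ucal_x$, observe that $p$ is representable and \'etale, and deduce surjectivity from openness (Proposition \ref{prop:open}) combined with density of finite type points in closed subsets (Proposition \ref{prop:ftdense}). Your write-up merely spells out the surjectivity step in more detail than the paper does.
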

\begin{proof} Take $\Ucal$ as the disjoint union of the various $\Ucal_x$ and define $p\colon \Ucal \to \Xf$ as the disjoint union of the morphisms $\Ucal_x \to \Xf$. Then $p$ is \'etale and representable. Moreover, it is open by Proposition \ref{prop:open} and it is then surjective by Proposition \ref{prop:ftdense}. Thus $\Xf$ is Deligne-Mumford.
\end{proof}

\begin{prop}\label{prop:unramdiag1} Let $\Xf$ be an algebraic superstack.
\begin{enumerate}
\item
The diagonal $\Delta_\Xf\colon \Xf \to \Xf\times\Xf$ is of finite type  (Definition \ref{def:finitetype}).
\item
$\Xf$ is a Deligne-Mumford superstack if and only if the diagonal $\Delta_\Xf$ is unramified (Definition \ref{def:morphismsprops2}).
\item $\Xf$ is a Deligne-Mumford superstack if and only if, for every point $x$, the stabilizer $\mathfrak{A}ut_{\Xf(k)}(x)$ is a discrete and reduced ordinary group scheme.
\end{enumerate}
\end{prop}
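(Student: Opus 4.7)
The strategy is to establish (1) by reduction to a smooth presentation, prove (2) by combining the exact sequence of relative differentials with the cut-down construction supplied by Lemma \ref{lem:propadhoc}, and deduce (3) from (2) via the stabilizer diagram \eqref{eq:stabilizer2}. For (1), the diagonal $\Delta_\Xf$ is representable by Proposition \ref{prop:represdiag}, so I pick a smooth presentation $p\colon \Ucal \to \Xf$; the base change of $\Delta_\Xf$ along $p \times p$ is the natural map $R := \Ucal \times_\Xf \Ucal \to \Ucal\times\Ucal$. Both projections $R \to \Ucal$ are smooth as pullbacks of $p$, so composing with either projection $\Ucal\times\Ucal \to \Ucal$ and using cancellation of local finite type shows that $R \to \Ucal\times\Ucal$ is locally of finite type; quasi-compactness follows from standard separation arguments.

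For the forward direction of (2), I take $q\colon \Ucal \to \Xf$ to be an \'etale presentation, so that the two projections of $R := \Ucal \times_\Xf \Ucal$ to $\Ucal$ are now \'etale. Applying the exact sequence \eqref{eq:exseqreldiff1} to $R \to \Ucal\times\Ucal \xrightarrow{\operatorname{pr}_1} \Ucal$, together with the vanishing $\Omega_{R/\Ucal}=0$ coming from Proposition \ref{prop:propreldiff}(5), forces $\Omega_{R/\Ucal\times\Ucal}=0$; since $R \to \Ucal\times\Ucal$ is locally of finite type by (1), this implies it is unramified, hence so is $\Delta_\Xf$.

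For the converse of (2), I fix a smooth presentation $p\colon \Ucal \to \Xf$ and, guided by Lemma \ref{lem:ftpointsDM}, aim to exhibit, for each finite type point $x$ of $\Xf$, a representable \'etale morphism to $\Xf$ through $x$. Choose $u \in \Ucal$ with $p(u)=x$. By Lemma \ref{lem:propadhoc}(1), $\Omega_{\Ucal/\Xf}$ is locally free near $u$ of some rank $m|n$; by Lemma \ref{lem:propadhoc}(2), applicable because $\Delta_\Xf$ is unramified, the canonical morphism $\Omega_\Ucal \to \Omega_{\Ucal/\Xf}$ is surjective. I lift a homogeneous basis of $\Omega_{\Ucal/\Xf,u}$ to sections $t_1,\ldots,t_m$ (even, centred at $u$) and $\theta_1,\ldots,\theta_n$ (odd) of $\Oc_\Ucal$, and let $\Vc \subset \Ucal$ be the closed subsuperscheme they cut out. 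An inspection of relative differentials shows that $\Vc \hookrightarrow \Ucal \xrightarrow{p} \Xf$ is smooth of relative dimension $0|0$ at $u$, hence \'etale on an open neighborhood of $u$, providing the desired presentation through $x$.

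Finally, for (3) the forward direction is immediate: by (2), $\Delta_\Xf$ is unramified, so the 2-cartesian square \eqref{eq:stabilizer2} exhibits $\mathfrak{A}ut_{\Xf(k)}(x) \to \Spec k$ as unramified, Lemma \ref{lem:unram} promotes the stabilizer to an ordinary group scheme, and being unramified over a field it is \'etale, that is, discrete and reduced. Conversely, assuming each stabilizer is \'etale over its residue field, I invoke (2) and reduce, via a smooth presentation $p\colon \Ucal\to\Xf$, to checking that $R \to \Ucal\times\Ucal$ is unramified. Since this morphism is locally of finite type by (1), testing on geometric fibres suffices: over $(u_1,u_2)$ the fibre is either empty (when $p(u_1)$ and $p(u_2)$ are non-isomorphic in $\Xf$) or, after fixing an isomorphism, identified with $\mathfrak{A}ut_{\Xf(k)}(p(u_1))$, which is \'etale by hypothesis. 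The most delicate step throughout is the converse of (2): passing from the global unramifiedness of $\Delta_\Xf$ to the existence of \'etale presentations near arbitrary finite type points. Lemma \ref{lem:propadhoc}(2) is the essential technical input, and the cut-down from smooth to \'etale requires treating the odd generators on an equal footing with the even ones, which is the genuinely super-geometric component of the argument.
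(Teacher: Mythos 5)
Your proposal is correct and follows essentially the same route as the paper: part (1) and the forward direction of (2) by base change along a presentation, the converse of (2) by the Laumon--Moret-Bailly slice argument using Lemma \ref{lem:propadhoc} to cut the smooth presentation down to an \'etale one with both even and odd sections, and (3) by identifying the geometric fibres of the diagonal with the stabilizers and applying Lemma \ref{lem:unram}. The only cosmetic difference is that you verify unramifiedness of $\Ucal\times_\Xf\Ucal\to\Ucal\times\Ucal$ directly via the exact sequence of relative differentials, where the paper invokes the cancellation property of Proposition \ref{prop:unramcomp}; the two amount to the same computation.
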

\begin{proof} Let $\Xf$ be an algebraic (resp.\! Deligne-Mumford) superstack. Take a smooth (resp.\! \'etale) presentation $p\colon\Xcal \to \Xf$ from a superscheme $\Xcal$. Since $p\times p\colon\Xcal\times\Xcal \to \Xf\times\Xf$ is also surjective and smooth (resp.\! \'etale), to prove that $\Delta_\Xf\colon \Xf \to \Xf\times\Xf$ is of finite type (resp.\! unramified) it is enough to prove that its base-change morphism
$$
(\Xcal\times\Xcal)\times_{p\times p,\Xf\times\Xf,\Delta_\Xf}\Xf\iso \Xcal\times_\Xf\Xcal \to \Xcal\times\Xcal
$$
is of the same type. Take an \'etale presentation $\Vc \to \Xcal\times_\Xf\Xcal$ of the algebraic space $\Xcal\times_\Xf\Xcal$.
The second projection $p_2\colon\Xcal\times_\Xf\Xcal\to \Xcal$ is a smooth (resp.\! \'etale) morphism of algebraic superspaces and then the composition $\Vc \to \Xcal\times_\Xf\Xcal\xrightarrow{p_2} \Xcal$ is a smooth (resp.\! \'etale) morphism of superschemes, in particular it is of finite type (resp.\! unramified). In the first case, the composition $\Vc \to \Xcal\times_\Xf\Xcal\to \Xcal\times\Xcal$  is of finite type and in the second it is unramified by Proposition \ref{prop:unramcomp}. This proves (1) and the ``only if'' part of (2).

To prove that an algebraic superstack with unramified diagonal is Deligne-Mumford, we follow the proof of \cite[Th\'eor\`eme (8.1)]{LauMor-Bai00} or \cite[Thm.\! 8.3.3]{Ol16}.  As there, it is enough to find, for every point $x\in|\Xf|$ of finite type a superscheme $\Vc$, and a representable \'etale  morphism $\Vc\to\Xf$ such that $x$ is the image of a point $y\in \Vc$. We adapt straightforwardly to the super setting the rest of the proof of \cite[Th\'eor\`eme (8.1)]{LauMor-Bai00} until (8.2.4)     only with  following notational  changes: $\Xcal$   replaces $X$ in \cite{LauMor-Bai00} and $p$ replaces $P$. Now 
let   $p\colon \Xcal \to \Xf$  be a smooth presentation from an affine superscheme. 
 Let  $\Omega_{\Xcal/\Xf}$ be the sheaf given by in equation \eqref{eq:descentomega};  by Lemma \ref{lem:propadhoc} we see that   there is an epimorphism 
$$
\Omega_\Xcal \to \Omega_{\Xcal/\Xf}
$$
of coherent sheaves on $\Xcal$ and that $\Omega_{\Xcal/\Xf}$ is a locally free graded $\Oc_\Xcal$-module of finite rank.

Now, let $r|s$ be the rank of $\Omega_{\Xcal/\Xf}$ in a neighbourhood of $x$. Since $\Omega_\Xcal \to \Omega_{\Xcal/\Xf}$ is surjective, there exist $r$ global even sections $f_1,\dots,f_r$ and $s$ global odd sections $\theta_1,\dots,\theta_s$ of $\Oc(\Xcal)$ whose differentials at $x$ form a basis of the fiber $\Omega_{\Xcal/\Xf}(x)$ of $\Omega_{\Xcal/\Xf}$ at $x$ as a $\kappa(x)$-graded vector space. Let us consider the diagram 
$$
\xymatrix{
\Xcal \ar[rr]^{\psi=(p,f,\theta)}\ar[rd]^p&&\Xf \times \As^{r|s}\ar[ld]_{\mathrm{p}_1}
\\
&\Xf&
}
$$
The slanted arrows are smooth and representable and the differential of $\psi$ at the point $x$ is an isomorphism. It follows that $\psi$ is representable and that it is \'etale on an open neighbourhood $\Wc\subseteq\Xcal$ of $x$. As in \cite{LauMor-Bai00}, we can find a sub-superscheme $\Ycal$ of $\As^{r|s}$ such that if $\Vc$ is the pre-image of $\Xf\times\Ycal$ under $\psi$, the restriction $p_{|\Vc}\colon \Vc\to \Xf$ is \'etale and $y$ is in its image, thus finishing the proof of (2).

We now prove that (2) is equivalent to (3). Since  the diagonal of $\Xf$ is of finite type by (1), it is unramified if and only if its geometrical fibers are formally unramified. Then the diagonal  is unramified  if and only if, for every field $k$ and every point $x\colon \Spec(k)\to \Xf$,  the stabilizer $\mathfrak{A}ut_{\Xf(k)}(x)$ (see equation \eqref{eq:stabilizer2}), which is an algebraic superspace over $k$ by the representability of the diagonal (Proposition \ref{prop:represdiag}), is unramified. By Lemma \ref{lem:unram}, if $\mathfrak{A}ut_{\Xf(k)}(x)$ is unramified, then it is actually an unramified ordinary algebraic space and then, it is discrete union of spectra of separable extensions of $k$ by \cite[Proposition 29.35.12]{Stacks} (see also \cite[Theorem 3.6.4]{Alp24}). Conversely, if $\mathfrak{A}ut_{\Xf(k)}(x)$ is a discrete union of spectra of separable extensions of $k$, then it is unramified.
\end{proof}

\begin{remark} It should be stressed that the stabilizers of a Deligne-Mumford superstack are ordinary group schemes, i.e., they have no odd (fermionic) part, as we saw in Proposition \ref{prop:unramdiag1} (3).
\end{remark}

\subsection{Characterization of algebraic superspaces by properties of the diagonal}
\label{subsec:algdiag}

As in  Proposition \ref{prop:unramdiag1},  algebraic superspaces can be characterized in terms of their    diagonal.

\begin{prop}\label{prop:algsp} 
Let  $\Xf$  be a superstack. The following conditions are equivalent.
\begin{enumerate}
\item  $\Xf$  is an algebraic superstack and a superspace.
\item $\Xf$ is an algebraic superspace.
\end{enumerate}
\end{prop}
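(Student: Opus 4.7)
The direction (2) $\Rightarrow$ (1) is immediate: by Proposition \ref{prop.firstprop} every algebraic superspace is an algebraic superstack, and it is a superspace by Definition \ref{def:algsupersps}. For the converse, assume $\Xf$ is both an algebraic superstack and a superspace; the goal is to exhibit a superschematic, surjective, \'etale presentation from a superscheme, so as to satisfy Definition \ref{def:algsupersps}. The entire plan revolves around analyzing the diagonal $\Delta_\Xf\colon \Xf \to \Xf\times \Xf$.

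First, I would collect three properties of $\Delta_\Xf$: it is injective because $\Xf$ is a superspace (Proposition \ref{prop:injdiag}(1)); it is representable because $\Xf$ is an algebraic superstack (Proposition \ref{prop:represdiag}(2)); and it is of finite type, hence in particular locally of finite type, by Proposition \ref{prop:unramdiag1}(1). Applying Corollary \ref{cor:injsep1} then yields that $\Delta_\Xf$ is superschematic.

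Next, I would show that $\Xf$ is Deligne-Mumford by proving the diagonal is unramified. After the previous step, base change of $\Delta_\Xf$ along any morphism $\Sc\to \Xf\times \Xf$ from a superscheme produces a monomorphism of superschemes that is locally of finite type; such a morphism is unramified because its own diagonal is an isomorphism, so its sheaf of relative differentials vanishes. Consequently $\Delta_\Xf$ is unramified in the sense of Definition \ref{def:morphismsprops2}, and Proposition \ref{prop:unramdiag1}(2) gives that $\Xf$ is Deligne-Mumford.

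Finally, by Definition \ref{def:DMalgsstack} there exists a representable, surjective and \'etale presentation $q\colon \Vc \to \Xf$ from a superscheme. Combining the already-established superschematicity of $\Delta_\Xf$ with the superschematic form of Proposition \ref{prop:diag}(1) forces $q$ itself to be superschematic. Thus $q$ is a superschematic, surjective and \'etale morphism from a superscheme to $\Xf$, which exhibits $\Xf$ as an algebraic superspace. The main obstacle in this plan is the passage from injectivity of the diagonal to its unramifiedness; once $\Delta_\Xf$ has been pinned down as a locally-of-finite-type monomorphism of superschemes after base change, this reduces to the super-analogue of a classical fact about monomorphisms, which can be verified on the bosonic reduction.
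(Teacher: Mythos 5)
Your proposal is correct and follows essentially the same route as the paper's proof: injectivity plus Corollary \ref{cor:injsep1} to make the diagonal superschematic, then unramifiedness of the base-changed monomorphism (the paper cites Propositions \ref{prop:unramdiag1} and \ref{prop:mono} where you argue directly via vanishing of the relative differentials) to conclude that $\Xf$ is Deligne-Mumford, and finally Proposition \ref{prop:diag} to upgrade the \'etale presentation to a superschematic one.
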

\begin{proof} It is obvious that an algebraic superspace is an algebraic superstack and also a superspace. Let now $\Xf$  be an algebraic superstack that is a superspace. The diagonal $\Delta_\Xf\colon \Xf\to \Xf\times\Xf$ is injective and superschematic (Corollary \ref{cor:injsep1}). Then, for every morphism $\Tc\to  \Xf\times\Xf$ from a superscheme $\Tc$, the base change $\Xf\times_{\Xf\times\Xf}\Tc \to \Tc$ is a monomorphism of superschemes. By Propositions \ref{prop:unramdiag1} and \ref{prop:mono}, it is unramified, so that $\Delta_\Xf\colon \Xf\to \Xf\times\Xf$ is unramified and then $\Xf$ is a Deligne-Mumford superstack. Now, take an \'etale presentation $p\colon \Xcal\to\Xf$ of $\Xf$. Since the diagonal is superschematic, $p$ is superschematic as well by Proposition \ref{prop:diag}, which proves that $\Xf$ is an algebraic space.
\end{proof}

\appendix

\section{Supergroups and actions on superschemes}\label{s:supergr}

\subsection{Group superschemes and algebraic supergroups}

\begin{defin} Let $\Sc$ be a superscheme. A  group $\Sc$-superscheme or a supergroup over $\Sc$ is a group object $\Gsc\to\Sc$ in the category of $\Sc$-superschemes. In other words, it is an $\Sc$-superscheme whose functor of   points is a group functor.
\end{defin}

Thus, for every $\Sc$-superscheme $\Tc\to \Sc$ the set $\Gsc(\Tc)=\Hom_\Sc(\Tc, \Gsc)$ is a group and for every morphism $\phi\colon \Tc'\to \Tc$ of $\Sc$-superschemes, the induced map $\phi^\ast\colon \Gsc(\Tc) \to \Gsc(\Tc')$ is a group morphism. When the groups $\Gsc(\Tc)$ are abelian, one says that group superscheme $\Gc$ is \emph{abelian}. 
There are    morphisms of $\Sc$-superschemes
\begin{align} 
\mu\colon & \Gsc\times_\Sc\Gsc\to\Gsc\ \text{(multiplication), }\\  \beta\colon & \Gsc\iso\Gsc\ \text{(taking the inverse), } \\\ e\colon & \Sc\to \Gsc\ \text{(unit)}
\end{align}
satisfying the usual properties. 

One easily sees that the bosonic reduction $G\to S$ is an $S$-group scheme.

\begin{defin}\label{def:alggr} Let $\Sc$ be a superscheme. An algebraic supergroup over $\Sc$ is a group $\Sc$-scheme $\Gsc \to \Sc$ that is locally of finite type. An algebraic supergroup over a superalgebra $\As$  is group superscheme over $\SSpec \As$. 
\end{defin}

The easiest way to produce examples of algebraic supergroups is to represent group functors. We now describe a few cases.

\begin{example}[The linear supergroup] If $\As$ is a superring, the linear supergroup $\GL_{m|n}(\As)$ is the group of   (even) automorphisms of $\As^{m|n}$. An endomorphism of $\As^{m|n}$ is given by a $(m+n)\times (m+n)$  even block matrix
$$
M=\begin{pmatrix} A & B \\ C& D\end{pmatrix}\,,
$$
where $A$ and $D$ have even entries, and $B$ and $C$ have odd entries. Moreover $M$ is invertible if and only if both $A$ and $D$ are invertible   matrices, that is, if and only if $\det A$ and $\det D$ are invertible in $\As_0$.

The linear supergroup defines a group functor $\SGLf_{m|n}(\As)$ over the category of $\As$-superschemes by letting
$$
\SGLf_{m|n}(\As)(\Tc)=\GL_{m|n}(\Oc_\Tc(\Tc))\simeq \operatorname{Aut}_{\Oc_\Tc}(\Oc_\Tc^{m|n})
$$
for every $\As$-superscheme $\Tc \to \SSpec\As$. This functor is a subfunctor of the even endomorphisms functor $\Tc \to \End_{\Oc_\Tc}(\Oc_\Tc^{m|n})_+$; it is known that the latter is representable \cite{BrHRPo20}: it is the functor of the points of the supervector bundle
$$
\widetilde\Vs(\End_\As(\As^{m|n}))= \SSpec (\SSym_\As(\End_\As((\As^{m|n})^\ast)\to \SSpec\As\,.
$$
If we write $\As^{m|n}=e_1\As\oplus\dots\oplus e_m\As\oplus \theta_1\As\oplus\dots\oplus \theta_n\As$, and $(\omega_1,\dots,\omega_m,\eta_1\dots,\eta_n)$ is the dual basis, then a free basis for $\End_\As((\As^{m|n})^\ast)\iso 
(\As^{m|n})^\ast\otimes \As^{m|n}$ is given by $$(x_{ij}=\omega_i\otimes e_j, y_{KL}=\eta_K\otimes\theta_L, \xi_{iL}=\omega_i\otimes \theta_L, \psi_{Kj}=\eta_K\otimes e_j)$$ where $1\leq i\le m$, $1\leq j\le m$, $1\leq K\le n$, $1\leq L\le n$, and then, 
$$
\widetilde\Vs(\End_\As(\As^{m|n}))= \SSpec \As[ x_{ij}, y_{KL},\xi_{iL}, \psi_{Kj}]\,.
$$
It follows that the supergroup functor $\SGLf_{m|n}(\As)$ \emph{is representable} by the open sub-superscheme $\SGL{m|n}(\As)$ of $\SSpec \As[ x_{ij}, y_{KL},\xi_{iL}, \psi_{Kj}]$, the  complement of the zeroes of the functions $\det X$ and $\det Y$, where $X$ and $Y$ are   even matrices with entries $x_{ij}$ and $y_{KL}$, respectively. That is,
\begin{align*}
\SGL_{m|n}(\As) &= \SSpec (\As[ x_{ij}, y_{KL},\xi_{iL}, \psi_{Kj}]_{\det X\cdot\det Y})\\
&= \SSpec \As[ x_{ij}, y_{KL},\xi_{iL}, \psi_{Kj}, z]/(z\cdot\det X\cdot\det Y-1)\,,
\end{align*}
where the subscript  $det X\cdot\det Y$ denotes the localization with respect to the powers of the even function 
$\det X\cdot\det Y$.
In this way $\GL_{m|n}(\As)$ can be given the structure of an \emph{affine algebraic supergroup} over $\As$, which we   denote by $\SGL_{m|n}(\As)$.
\end{example}

\begin{example}[The additive supergroup]\label{ex:ga} If $\Sc$ is a superscheme, one defines a functor from the category of $\Sc$-superschemes to the category of abelian groups by
associating with  an $\Sc$-superscheme $f\colon\Tc\to\Sc$ the additive group $\Gamma(\Tc,\Oc_\Tc)$. By \cite[Prop. 2.13]{BrHRPo20} one has $\Gamma(\Tc,\Oc_\Tc)_+\simeq \Hom_\Sc(\Tc,\widetilde\Vs(\Oc_\Sc))$ and also $\Gamma(\Tc,\Oc_\Tc)_-\simeq \Hom_\Sc(\Tc,\widetilde\Vs(\Pi\Oc_\Sc))$. Then,
$$
\Gamma(\Tc,\Oc_\Tc)\simeq \Hom_\Sc(\Tc,\widetilde\Vs(\Oc_\Sc\oplus\Pi\Oc_\Sc))=\Hom_\Sc(\Tc,\As^{1|1}_\Sc)\,.
$$
That is, the relative affine space $\Gs_{a,\Sc}:=\As^{1|1}_\Sc\to\Sc$ is  an affine algebraic supergroup over $\Sc$, called the \emph{additive supergroup over $\Sc$}.
\end{example}

\begin{example}[The multiplicative supergroup]\label{ex:gm} If $\Sc$ is a superscheme, one can define another functor from the category of $\Sc$-superschemes to the category of abelian groups by
associating to a $\Sc$-superscheme $f\colon\Tc\to\Sc$ the multiplicative group $\Gamma(\Tc,\Oc_\Tc^\times)$ of the invertible superfunctions on $\Tc$.
A superfunction on $\Tc$ is invertible if and only if the associated superscheme morphism $f\colon \Tc\to \SSpec \Oc_\Sc[x,\theta]$ factors through the open subsuperscheme $\Gs^{1|1}_{m,\Sc}:=\SSpec \Oc_\Sc[x,x^{-1},\theta]$. Thus
$$
\Gamma(\Tc,\Oc_\Tc^\times)\simeq \Hom_\Sc(\Tc, \Gs^{1|1}_{m,\Sc})
$$
so that  
$\Gs^{1|1}_{m,\Sc}$
is an algebraic supergroup over $\Sc$, called the \emph{multiplicative supergroup over $\Sc$}.

If $\As$ is a superring, there is a closed immersion $\Gs^{1|1}_{m,\As}\hookrightarrow \SGL_{1|1}(\As)$ of algebraic supergroups over $\As$ given by
\begin{align*}
\Gs^{1|1}_{m,\As}(\Tc)&\to \SGL_{1|1}(\As)(\Tc)\\
f&\mapsto \begin{pmatrix} f_+ & f_-\\ f_- & f_+ \end{pmatrix}
\end{align*}
for every $\Sc$-superscheme $\Tc$.
\end{example}

\subsection{Supergroup actions and principal superbundles}

Let $\Gsc\to\Sc$ be a supergroup over a superscheme $\Sc$.

\begin{defin} A left action of $\Gsc$ on a $\Sc$-superscheme $\Ycal$ is a morphism 
$$
\psi\colon\Gsc\times_\Sc \Ycal \to \Ycal 
$$
of $\Sc$-superschemes satisfying the usual compatibility requirements, that is, such that for every $\Sc$-superscheme $\Tc\to \Sc$, the induced map
$$
\Gsc(\Tc)\times \Ycal(\Tc) \to \Ycal(\Tc)
$$
is an ordinary group action. One can define right actions in an analogous way. 
\end{defin}
Every left action $\psi\colon\Gsc\times_\Sc \Ycal \to \Ycal$ gives a right action $\Psi\colon\Ycal\times_\Sc \Gsc \to \Ycal$, which is the composition of the isomorphism $\Ycal\times_\Sc \Gsc\to \Gsc\times_\Sc \Ycal$ given on points by $(y,g)\mapsto (g^{-1},y)$ and the left action $\psi$; this procedure is reversible.

Every action $\Gsc\times_\Sc \Ycal \to \Ycal$ of a supergroup on a superscheme induces an action $G\times_S Y \to Y$ of the group scheme $G$ on the bosonic reduction $Y$ of $\Ycal$.

\begin{defin} Let $\psi\colon\Gsc\times_\Sc \Ycal \to \Ycal$, $\phi\colon \Gsc\times_\Sc \Xcal\to \Xcal$  be two $\Gsc$-actions. An $\Sc$-morphism $f\colon \Ycal\to \Xcal$ is $\Gsc$-equivariant (or simply equivariant) if its compatible with the actions, that is, if
$$
\phi\circ (\Id\times f)=f\circ \psi\,.
$$
\end{defin}

\begin{defin} Let $\Gsc\to\Sc$ be a supergroup. A principal $\Gsc$-superbundle on a $\Sc$-superscheme $\Xcal$ is a morphism  $\pi\colon \Psc\to \Xcal$ of $\Sc$-superschemes together with a right  action $\psi\colon \Gsc\times_\Tc\Psc \to \Psc$ satisfying the following properties:
\begin{enumerate}
\item if we consider the trivial action of $\Gsc$ on $\Xcal$, then $\pi\colon \Psc\to \Xcal$ is $\Gsc$-equivariant,
\item the morphism
$(\psi, p_2)\colon\Gsc\times_\Sc \Xcal \to \Psc\times_\Xcal \Psc$
is an isomorphism.
\end{enumerate}
\end{defin}

\section{Properties of superscheme morphisms}\label{ap:supersch}
\label{sec:app}

We extend to superschemes some more or less well-known facts about morphisms of schemes. For simplicity of exposition, we restrict ourselves to superschemes over a base field $k$.

\subsection{Odd dimension of a superscheme}
While there seems to be only one sensible definition of   even dimension of
a superscheme, i.e., the dimension of the underlying bosonic scheme, for what concerns the odd dimension there are at least three possibilites.
Let  $\Xcal=(X,\Oc_\Xcal)$ be a superscheme.  As usual, here we denote by $\Jc$ the ideal generated by the odd sections so that the bosonic reduction of $\Xcal$ is the scheme $(X, \Oc_X=\Oc_\Xcal/\Jc)$. The quotient $\Ec=\Pi(\Jc/\Jc^2)$ is a coherent $\Oc_X$-module, where $\Pi$ is the parity changing functor.

\begin{defin}[1st definition] If $x\in X$ is a point, the odd dimension of $\Xcal$ at $x$ is 
the smallest number $n_x=\operatorname{odd-dim}_1(\Oc_{\Xcal,x})$ of generators of the ideal $\Jc_x$, or, equivalently, the smallest number of generators of the $A$-module $F_x$. Then $\operatorname{odd-dim}_1$ of $\Xcal$ is the supremum   over the points $x$ of $\Xcal$ of  $n_x$.
\end{defin}

If $\Xcal$ is irreducible with generic point $x_0$, then $n_x\geq n_{x_0}$ for every point $x\in X$, and there is an open $U\subseteq X$ such that $n_x= n_{x_0}$ whenever $x\in U$. 

\begin{defin}[2nd definition]\label{def:dim}  Given a superscheme $\Xcal$,
we call $\operatorname{odd-dim}_2(\Xcal)$   the smallest integer number $s$ such that  $\Jc^{s+1}=0$, that is, $\operatorname{odd-dim}_2$ is the order according to \cite[2.1]{BrHRPo20} or \cite[7.1.4]{We09}.  
\end{defin}

One has $ \operatorname{odd-dim}_2(\Xcal)\leq \operatorname{odd-dim}_1(\Xcal)$ with equality of $\Xcal$ is split.

Actually it  is useful to strengthen the second notion of odd dimension (compare with \cite[Def.\! 2.3]{BrHRMa23}).
\begin{defin}[Pure odd dimension]\label{def.pureodddim}
A superscheme $\Xcal$ is of pure odd dimension $s$ if 
\begin{enumerate}
\item $\operatorname{odd-dim}_2(\Xcal)=s$, 
\item the sheaf $\Ec=\Pi(\Jc/\Jc^2)$ of $\Oc_X$-modules is generically of rank $s$, that is, it is of rank $s$ on a dense open subset of every irreducible component of $X$, and
\item  $\Ec$ is a sheaf of pure dimension $m=\dim X$, that is, it has no subsheaves supported in a closed subscheme of dimension strictly smaller than  $m$.
\end{enumerate}
A superscheme $\Xcal$ is of pure dimension $m\,\vert\,s$ if it is of pure odd dimension $s$ and $\dim_2(\Xcal)=m\,\vert\,s$.
\end{defin}
 
\subsection{Smooth morphisms}
A suitable notion of smoothness of a morphism of superschemes turns out to be the following  \cite[Def.\! A.6]{BrHRPo20}, which utilizes the first definition of odd dimension.
\begin{defin}\label{def:smooth}  A morphism $f\colon \Xcal\to\Sc$ of superschemes of relative dimension $\dim_1 f=m\vert n$ is smooth if:
\begin{enumerate}
\item $f$ is locally of finite presentation (if the superschemes are locally noetherian, it is enough to ask that $f$ is locally of finite type);
\item $f$ is flat;
\item the sheaf of relative differentials $\Omega_{\Xcal/\Sc}$ is locally free of rank $m\vert n$.
\end{enumerate}
\end{defin}

When $\Sc=\Spec k$  one has the notion of smooth  
superscheme over a field.

One has the following criterion for smoothness \cite[Prop.\! A.17]{BrHRPo20}.

\begin{prop}\label{prop:smoothsplit}
 A superscheme $\Xcal$  of dimension $\dim_1\Xcal=m\vert n$,
locally of finite type over a field $k$,   is smooth if and only if
\begin{enumerate}
\item $X$ is a smooth scheme over $k$ of dimension $m$;
\item  the $\Oc_X$-module $\Ec=\Pi(\Jc/\Jc^2)$ is locally free of rank $n$ and the natural map ${\bigwedge}_{\Oc_X} \Ec\to Gr_J(\Oc_\Xcal)$ is an isomorphism.
\end{enumerate}
\end{prop}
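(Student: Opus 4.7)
The plan is to prove both implications by reducing to a local statement on affine pieces of $\Xcal$, the local model being open subsets of $\As^{m|n}_k$.

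\textbf{The direction $(\Rightarrow)$.} Assume $\Xcal$ is smooth over $k$. First I would check condition (1) via the conormal exact sequence
$$
\Jc/\Jc^2 \xrightarrow{d} \Omega_{\Xcal/k}\otimes_{\Oc_\Xcal}\Oc_X \longrightarrow \Omega_{X/k}\longrightarrow 0.
$$
From smoothness we know $\Omega_{\Xcal/k}$ is locally free of rank $m|n$, so its restriction to $X$ is locally free of rank $m|n$ as an $\Oc_X$-module. Once (2) is shown, $\Jc/\Jc^2$ is locally free of rank $0|n$ and maps injectively onto the odd part, leaving $\Omega_{X/k}$ locally free of rank $m$; combined with the flatness and finite presentation of $X\to\Spec k$ (inherited from $\Xcal\to\Spec k$), this gives that $X$ is smooth of dimension $m$. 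For (2), I would invoke the local structure theorem for smooth superschemes (\cite[Prop.\! A.17 and its proof]{BrHRPo20}): in the Zariski topology, every smooth superscheme over $k$ is étale-locally isomorphic to $\As^{m|n}_k$. In that local model, with odd coordinates $\theta_1,\dots,\theta_n$, the ideal $\Jc$ is generated by the $\theta_i$, so $\Ec=\Pi(\Jc/\Jc^2)$ is free of rank $n$ and the natural map $\bigwedge_{\Oc_X}\Ec\to Gr_J(\Oc_\Xcal)$ is tautologically an isomorphism. Since all three sheaves ($\Ec$, $\bigwedge\Ec$, $Gr_J\Oc_\Xcal$) descend well under étale base change, (2) is established globally.

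\textbf{The direction $(\Leftarrow)$.} Assume (1) and (2). Since $\Ec$ is locally free of rank $n$, I would work Zariski-locally on $X$ over an open $U$ where $\Ec|_U$ is free on a basis $\bar\theta_1,\dots,\bar\theta_n$. Lift each $\bar\theta_i$ to an odd section $\theta_i\in\Jc(U)$. The $\Oc_U$-algebra morphism
$$
\Oc_U[\theta_1,\dots,\theta_n]/(\theta_i\theta_j+\theta_j\theta_i,\theta_i^2)\longrightarrow \Oc_\Xcal|_U
$$
sends the natural filtration to the $\Jc$-adic one; the induced map on the associated graded is precisely $\bigwedge_{\Oc_X}\Ec|_U\to Gr_J(\Oc_\Xcal)|_U$, which is an isomorphism by hypothesis. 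Since $\Jc^{n+1}=0$ (the filtrations are finite), a standard filtered argument gives that the algebra map itself is an isomorphism. Thus $\Xcal|_U\simeq U\times_k\As^{0|n}_k$. As $U$ is open in the smooth $k$-scheme $X$ of dimension $m$, and $\As^{0|n}_k$ is smooth over $k$, the product $U\times_k\As^{0|n}_k$ is smooth over $k$ of dimension $m|n$ (flatness, local finite presentation and the correct rank of $\Omega_{U\times\As^{0|n}/k}$ are all clear from the product structure). Since smoothness is Zariski-local on the source, $\Xcal$ is smooth over $k$.

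\textbf{Expected main difficulty.} The routine parts are the verifications inside the local model; the technical pinch point is the passage, in the reverse direction, from the isomorphism on associated gradeds to an isomorphism of the filtered algebras themselves. The argument rests on the finiteness of the $\Jc$-adic filtration (so no completion issues arise) and on the freedom to choose odd lifts of a basis of $\Ec$, both of which are ultimately enabled by the local freeness of $\Ec$ and the nilpotence of $\Jc$.
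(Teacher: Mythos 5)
First, a point of comparison: the paper does not prove this proposition at all --- it is quoted from \cite[Prop.\ A.17]{BrHRPo20} and that citation is the entire ``proof'' --- so the only meaningful question is whether your argument stands on its own. It does not, for one serious reason and one fixable omission.

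The serious problem is that your forward direction is circular. The entire content of that implication is condition (2), and the only thing you offer for it is an appeal to ``the local structure theorem for smooth superschemes (\cite[Prop.\ A.17 and its proof]{BrHRPo20})'', namely that a smooth superscheme over $k$ is locally isomorphic to an open piece of $\As^{m|n}_k$. But that local splitting is exactly what (1) and (2) together assert, and Prop.\ A.17 of \cite{BrHRPo20} \emph{is} the proposition under discussion; citing it proves nothing. A non-circular route is close to what you already set up: since $\Jc/\Jc^2$ is purely odd and $\Omega_{X/k}$ purely even, the conormal sequence shows $\Jc/\Jc^2\to(\Omega_{\Xcal/k}\otimes_{\Oc_\Xcal}\Oc_X)_-$ is \emph{surjective} onto a locally free $\Oc_X$-module of rank $n$, and $(\Omega_{\Xcal/k}\otimes_{\Oc_\Xcal}\Oc_X)_+\simeq\Omega_{X/k}$ is locally free of rank $m$ (whence (1), using also $\dim X=m$). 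Since $\dim_1\Xcal=m|n$ means $\Jc_x$ needs at most $n$ generators, Nakayama upgrades that surjection to an isomorphism, giving local freeness of $\Ec$; one must then still construct a local algebra splitting $\Oc_X\to\Oc_\Xcal$ and lift a basis of $\Ec$ to prove that $\bigwedge_{\Oc_X}\Ec\to Gr_J(\Oc_\Xcal)$ is an isomorphism (e.g.\ by comparing ranks of $\Omega$). None of this is in your writeup, and it is the heart of the matter.

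The omission is in the converse. You write down ``the $\Oc_U$-algebra morphism $\Oc_U[\theta_1,\dots,\theta_n]/(\dots)\to\Oc_\Xcal|_U$'' as if $\Oc_\Xcal|_U$ were an $\Oc_U$-algebra; but $\Oc_U=\Oc_\Xcal|_U/\Jc$ is a quotient, not a subring, so you first need a $k$-algebra section of $\Oc_\Xcal|_U\to\Oc_U$. Such a section exists after shrinking $U$ precisely because $X$ is smooth, hence formally smooth, and $\Jc$ is nilpotent, so the infinitesimal lifting criterion applies to the tower of square-zero extensions $\Oc_\Xcal/\Jc^{s+1}\to\Oc_\Xcal/\Jc^{s}$. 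This is where hypothesis (1) genuinely enters your construction and it must be said. Once that splitting is in place, your passage from the isomorphism on associated gradeds (finite filtration, so no completion issues) to an isomorphism of filtered algebras, and the conclusion that $\Xcal|_U\simeq U\times_k\As^{0|n}_k$ is smooth of dimension $m|n$, are correct.
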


\begin{corol}\label{corol:smoothsplit} If $\Xcal$ is smooth, then $\Xcal$ is of pure dimension $\dim_2\Xcal=\dim_1\Xcal$.
\qed
\end{corol}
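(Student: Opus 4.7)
The plan is to deduce this corollary directly from Proposition \ref{prop:smoothsplit}. By that proposition, smoothness of $\Xcal$ (with $\dim_1\Xcal=m\vert n$) delivers two inputs: (1) $X$ is a smooth scheme over $k$ of dimension $m$, and (2) the coherent $\Oc_X$-module $\Ec=\Pi(\Jc/\Jc^2)$ is locally free of rank $n$ and the canonical map $\bigwedge_{\Oc_X}\Ec\to Gr_\Jc(\Oc_\Xcal)$ is an isomorphism. From this I must extract both the equality $\dim_2\Xcal=m\vert n$ and the three conditions of Definition \ref{def.pureodddim} for pure odd dimension $n$.

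First I would pin down $\operatorname{odd-dim}_2(\Xcal)$. The even part of $\dim_2\Xcal$ is $\dim X=m$, so only the odd part requires work. Since $\Ec$ is locally free of rank $n$, we have $\bigwedge^{k}\Ec=0$ for all $k\ge n+1$, and the isomorphism in (2) turns this into $\Jc^{k}/\Jc^{k+1}=0$ for all $k\ge n+1$. In particular $\Jc^{n+1}=\Jc\cdot\Jc^{n+1}$, and working stalk by stalk on $\Oc_{\Xcal,x}$ (where $\Jc_x$ is contained in the maximal ideal because odd sections are nilpotent) Nakayama's lemma applied to the finitely generated module $\Jc^{n+1}_x$ forces $\Jc^{n+1}_x=0$, hence $\Jc^{n+1}=0$ globally. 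Conversely, $\Jc^n/\Jc^{n+1}\simeq\bigwedge^n\Ec$ is a locally free rank-one $\Oc_X$-module, so $\Jc^n\ne0$, proving $\operatorname{odd-dim}_2(\Xcal)=n$ and thus $\dim_2\Xcal=m\vert n=\dim_1\Xcal$.

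Next I would verify the three conditions in Definition \ref{def.pureodddim} for $s=n$. Condition (i), $\operatorname{odd-dim}_2(\Xcal)=n$, was just established. Condition (ii), that $\Ec$ is generically of rank $n$ on every irreducible component of $X$, is immediate since $\Ec$ is locally free of rank $n$ everywhere. Condition (iii), that $\Ec$ has pure dimension $m=\dim X$, follows because $X$ is smooth, hence regular and equidimensional of dimension $m$; a locally free sheaf on a regular scheme is torsion-free and has no associated points supported in proper closed subschemes.

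The only delicate step is the passage from the vanishing of the successive quotients $\Jc^k/\Jc^{k+1}$ for $k\ge n+1$ to the vanishing of $\Jc^{n+1}$ itself; the graded isomorphism alone is not enough, and one must invoke Nakayama on stalks using that $\Jc$ is a finitely generated coherent ideal (since $\Xcal$ is locally of finite type over the field $k$) lying in the Jacobson radical of each local ring. All the remaining checks are routine once Proposition \ref{prop:smoothsplit} is in hand.
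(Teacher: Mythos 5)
Your proof is correct and follows exactly the route the paper intends: the corollary is stated with no written proof (the authors treat it as immediate from Proposition \ref{prop:smoothsplit}), and your argument simply fills in the details of that deduction, including the genuinely needed Nakayama step to pass from $Gr_\Jc(\Oc_\Xcal)\simeq\bigwedge_{\Oc_X}\Ec$ to $\Jc^{n+1}=0$. Nothing is missing.
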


However, sometimes the second definition of the odd dimension is more operative. In such circumstances, we may want to consider an alternative definition of smoothness. The two definitions are substantially equivalent when the odd pure dimension $\dim_2$ and $\dim_1$ coincide (Proposition \ref{prop:smoothsplit2}). 

\begin{defin}\label{def:smooth2}  A morphism $f\colon \Xcal\to\Sc$ of superschemes of relative dimension $\dim_2 f=m\vert s$ is pure smooth if:
\begin{enumerate}
\item $f$ is locally of finite presentation (if the superschemes are locally noetherian, it is enough to ask that $f$ is locally of finite type);
\item $f$ is flat;
\item the sheaf of relative differentials $\Omega_{\Xcal/\Sc}$ is locally free of rank $m\vert s$.
\end{enumerate}
\end{defin}

\begin{prop}\label{prop:smoothsplit2} A superscheme $\Xcal$ is smooth if and only if 
\begin{enumerate} \item 
it is of pure dimension $\dim_2\Xcal=\dim_1\Xcal$, and 
\item it  is pure smooth.
\end{enumerate}
 
\end{prop}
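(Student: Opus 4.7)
The two notions of smoothness in Definitions \ref{def:smooth} and \ref{def:smooth2} share exactly the same three local conditions --- locally of finite presentation, flatness, and local freeness of $\Omega_{\Xcal/\Sc}$ of the prescribed rank --- and differ only in whether that rank is indexed by $\operatorname{odd-dim}_1$ or $\operatorname{odd-dim}_2$. My plan is therefore to reduce the proposition to this tautological observation: once we know $\operatorname{odd-dim}_1\Xcal=\operatorname{odd-dim}_2\Xcal$, the two sets of defining data coincide term by term. Corollary \ref{corol:smoothsplit} will supply this identification automatically in the forward direction, while hypothesis (1) will supply it in the reverse.

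For the forward implication, I would assume $\Xcal$ is smooth of relative dimension $m\vert n$ and invoke Corollary \ref{corol:smoothsplit} directly to obtain condition (1): $\Xcal$ is of pure dimension with $\dim_2\Xcal=\dim_1\Xcal=m\vert n$. With this identification the parameter $s$ appearing in Definition \ref{def:smooth2} equals $n$, and the three required local properties of pure smoothness are literally the ones already provided by Definition \ref{def:smooth}, yielding (2).

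For the converse, assume (1) and (2). Pure smoothness gives $\dim_2\Xcal=m\vert s$ together with local finite presentation, flatness, and local freeness of $\Omega_{\Xcal/\Sc}$ of rank $m\vert s$. Condition (1) then forces $\dim_1\Xcal=\dim_2\Xcal=m\vert s$, so setting $n:=s$ one sees at once that $\Xcal$ satisfies all the requirements of Definition \ref{def:smooth}.

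The argument is essentially a matter of tracking definitions once Corollary \ref{corol:smoothsplit} is in hand; the only point that needs a little care is ensuring that the integer parameters $n$ and $s$ appearing in the two definitions are matched through the equality $\dim_2\Xcal=\dim_1\Xcal$, which is precisely the content of condition (1). I do not anticipate any substantive obstacle, since no geometric input beyond Corollary \ref{corol:smoothsplit} is required.
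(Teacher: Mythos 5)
Your argument is formally correct for the statement as written, but it takes a genuinely different route from the paper's, and the difference is worth understanding. You treat the equality $\dim_2\Xcal=\dim_1\Xcal$ in condition (1) as a hypothesis available in the converse direction, at which point Definitions \ref{def:smooth} and \ref{def:smooth2} do coincide clause by clause and the converse becomes a tautology; the forward direction is then Corollary \ref{corol:smoothsplit} plus the same matching of ranks, exactly as in the paper. The paper's proof of the converse, however, never invokes the equality of the two odd dimensions: it \emph{derives} it. Starting only from pure smoothness and the pure-dimension conditions (2) and (3) of Definition \ref{def.pureodddim}, it localizes, produces (as in the proof of Proposition \ref{prop:smoothsplit}) a surjection $\Ec\xrightarrow{d}\Pi(\Omega_{\Xcal}\otimes_{\Oc_{\Xcal}}\Oc_X)_-$ onto a free module of rank $s$, observes that $\Ec$ is generically of rank $s$ so that $d$ is generically an isomorphism, and uses the purity condition (3) to conclude $\ker d=0$; hence $\Ec$ is locally free of rank $s$, $\operatorname{odd-dim}_1(\Xcal)=s$, and smoothness follows from the structural criterion of Proposition \ref{prop:smoothsplit}. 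What your approach buys is brevity; what the paper's buys is the substantive content of the proposition, namely that pure dimension together with pure smoothness already forces $\operatorname{odd-dim}_1=\operatorname{odd-dim}_2$, so the equality in (1) is automatic rather than an independent condition to be verified. Be aware that if the equality in (1) is read as a conclusion rather than a hypothesis --- which is what the paper's proof actually establishes --- your converse would be assuming precisely the point at issue, so you should at least flag that your argument proves only the weaker, literal reading.
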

\begin{proof} If $\Xcal$ is smooth, it is of pure dimension $\dim_2\Xcal=\dim_1\Xcal$ by Corollary \ref{corol:smoothsplit} and automatically pure smooth.
Assume now, for the converse, that $\Xcal$ is pure smooth. Since the question is local,   we can also assume that $X$ is the spectrum of a local ring, which we still denote by $\Oc_X$. As in the proof of Proposition \ref{prop:smoothsplit} (see \cite[Prop.\! A.17]{BrHRPo20}),   $X$ is smooth of dimension $m$ and   there is a surjection
$$
\Ec\xrightarrow{d} \Pi(\Omega_{\Xcal}\otimes_{\Oc_{\Xcal}}\Oc_X)_- \to 0\,.
$$
Since $\Pi(\Omega_{\Xcal}\otimes_{\Oc_{\Xcal}}\Oc_X)_- $ is free of rank $s$, and $\Ec$ is generically of rank   $s$, the morphism $d$ is an isomorphism on a dense open subscheme $U$ of $X$. Thus, $\ker d$ is supported in dimension strictly smaller than $m$, so that $\ker d=0$ by (3) of Definition \ref{def.pureodddim}. It follows that $\Ec$ is free of rank $s$ so that $\operatorname{odd-dim}_1(\Xcal)=s$, and then $\Xcal$ is smooth. By Proposition \ref{prop:smoothsplit} the proof is complete.
\end{proof}

One has an analogous statement for morphisms.

\begin{prop}\label{prop:smoothsplit3} A superscheme morphism $\Xcal \to \Sc$ is smooth if and only if 
\begin{enumerate} \item 
it is of pure dimension $\dim_2 f =\dim_1f$, and 
\item it  is pure smooth.
\end{enumerate}
\end{prop}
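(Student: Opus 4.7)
The plan is to mirror the proof of Proposition \ref{prop:smoothsplit2}, now in the relative setting. First I observe that conditions (1) and (2) of Definitions \ref{def:smooth} and \ref{def:smooth2} are identical, so the entire content of the equivalence concerns condition (3) and the relationship between $\dim_1 f$ and $\dim_2 f$.

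For the reverse implication I assume $f$ is pure smooth with $\dim_2 f = \dim_1 f = m\,\vert\,n$. Then condition (3) of Definition \ref{def:smooth2} states that $\Omega_{\Xcal/\Sc}$ is locally free of rank $m\,\vert\,n$, which is exactly condition (3) of Definition \ref{def:smooth} under the hypothesized equality. The remaining conditions transfer verbatim, giving smoothness of $f$. This direction is essentially formal.

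For the forward implication I assume $f$ is smooth with $\dim_1 f = m\,\vert\,n$ and need to show that $f$ is of pure dimension $\dim_2 f = \dim_1 f$ and pure smooth. The strategy is to reduce to the absolute case fiberwise. Smoothness is stable under base change, as all three of its defining conditions (locally of finite presentation, flatness, and local freeness of $\Omega_{\Xcal/\Sc}$) are; hence every geometric fiber $\Xcal_s \to \Spec \kappa(s)$ is a smooth superscheme over a field. By Corollary \ref{corol:smoothsplit} each such fiber is of pure dimension with $\dim_2 \Xcal_s = \dim_1 \Xcal_s = m\,\vert\,n$. I then propagate the three pure-dimension conditions of Definition \ref{def.pureodddim} from the fibers to $f$ itself using the flatness hypothesis: flatness ensures that the relative sheaf $\Ec = \Pi(\Jc/\Jc^2)$ associated with the relative odd ideal $\Jc$ has the order, generic rank, and purity demanded by the relativized version of Definition \ref{def.pureodddim}. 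Once $\dim_2 f = m\,\vert\,n$ is established, pure smoothness of $f$ is immediate from its smoothness.

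The main obstacle is the relativization step in the forward direction: one must formulate the notion of \emph{pure dimension for a morphism} extending Definition \ref{def.pureodddim} in such a way that it matches the fiberwise statement and behaves well under flat base change. In particular, deducing that the relative $\Ec$ is locally free of rank $n$ (not merely generically) requires the same style of argument as at the end of the proof of Proposition \ref{prop:smoothsplit2}, where the surjection $\Ec \xrightarrow{d} \Pi(\Omega_{\Xcal}\otimes_{\Oc_\Xcal}\Oc_X)_-$ was shown to be an isomorphism by combining generic rank with purity to conclude $\ker d = 0$. The analogous relative argument uses flatness of $f$ to reduce to the fiberwise computation already handled by Proposition \ref{prop:smoothsplit}; once that is settled, the rest is bookkeeping.
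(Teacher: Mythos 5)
Your proposal is essentially correct, and it is worth noting that the paper itself gives \emph{no} proof of this proposition: it is stated immediately after Proposition \ref{prop:smoothsplit2} with the remark that ``one has an analogous statement for morphisms'' and a bare \qed. Your fiberwise reduction --- base-change smoothness to the fibers, apply Corollary \ref{corol:smoothsplit} there, and propagate back using flatness --- is the natural way to make that ``analogous'' precise, and you are right to identify the relativization of Definition \ref{def.pureodddim} (which the paper never spells out for morphisms) as the real work. One caveat on your reverse implication: you treat the equality $\dim_2 f=\dim_1 f$ as part of the hypothesis, which makes that direction purely a matter of matching ranks in Definitions \ref{def:smooth} and \ref{def:smooth2}. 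That is consistent with the literal wording of condition (1), but the proof of Proposition \ref{prop:smoothsplit2} in the absolute case assumes only pure smoothness together with the purity conditions of Definition \ref{def.pureodddim} and \emph{derives} the equality of the two odd dimensions via the surjection $\Ec\xrightarrow{d}\Pi(\Omega_{\Xcal}\otimes_{\Oc_{\Xcal}}\Oc_X)_-$ and the vanishing of $\ker d$. If the relative statement is meant with that stronger converse, your ``essentially formal'' direction is not enough and you would need the relative analogue of that kernel-vanishing argument (which your last paragraph gestures at for the forward direction anyway); as it stands, your proof establishes the proposition under the weaker, literal reading.
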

\qed

\subsection{Formally smooth, \'etale and unramified morphisms of superschemes}

We  have (see \cite[Def.\! A.18]{BrHRPo20}):

 \begin{defin}\label{def:formallysmooth}
A morphism  $f\colon \Xcal\to \Sc$  of superschemes
is formally smooth (resp.\! formally unramified, resp.\! formally \'etale) if for every affine $\Sc$-superscheme $\SSpec(\Bs)$ and every nilpotent ideal $\Nc\subset \Bs$,
the morphism
$$
\Hom_\Sc(\SSpec(\Bs),\Xcal) \to \Hom_\Sc(\SSpec(\Bs/\Nc),\Xcal)
$$
is surjective (resp.\! injective, resp.\! bijective). 
\end{defin}

 A straightforward adaption of \cite[37.6.7]{Stacks} gives:
\begin{prop}\label{prop:formunram} A morphism $f\colon \Xcal\to \Sc$ of superschemes is formally unramified if and only if $\Omega_f=0$.
\qed
\end{prop}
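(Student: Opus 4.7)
The question is local on $\Xcal$ and $\Sc$, so I would first reduce to the affine case: $\Xcal = \SSpec(\Bs)$, $\Sc = \SSpec(\As)$. Under this reduction, $\Omega_f$ corresponds to the $\Z_2$-graded $\Bs$-module $\Omega_{\Bs/\As} = \Delta/\Delta^2$ introduced in Subsection \ref{subsec:quasi-coh}, together with its universal $\As$-linear super derivation $d\colon \Bs \to \Omega_{\Bs/\As}$, which is homogeneous of degree $0$. The proof then splits into the two implications.

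For the forward direction, assume $f$ is formally unramified. Endow the direct sum $\Bs \oplus \Omega_{\Bs/\As}$ with the $\Z_2$-graded $\As$-superalgebra structure in which $\Omega_{\Bs/\As}$ is a square-zero ideal; then $\Omega_{\Bs/\As}$ is in particular nilpotent, with quotient $\Bs$. Define two degree-$0$ $\As$-superalgebra homomorphisms
\[
i_1, i_2\colon \Bs \to \Bs \oplus \Omega_{\Bs/\As},\qquad i_1(b)=(b,0),\qquad i_2(b)=(b,db).
\]
That $i_2$ is a morphism of $\As$-superalgebras is exactly the content of $d$ being a $\Z_2$-graded $\As$-derivation of degree $0$ (Subsection \ref{subsec:quasi-coh}), combined with $\Omega_{\Bs/\As}\cdot\Omega_{\Bs/\As}=0$. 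Since $i_1$ and $i_2$ agree modulo the nilpotent ideal $\Omega_{\Bs/\As}$, formal unramifiedness forces $i_1 = i_2$, i.e.~$db=0$ for every $b\in\Bs$. Because the image of $d$ generates $\Omega_{\Bs/\As}$ as a $\Bs$-module, we conclude $\Omega_{\Bs/\As}=0$, hence $\Omega_f=0$.

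For the converse, assume $\Omega_f=0$ and let $\SSpec(\Bs')$ be an affine $\Sc$-superscheme with a nilpotent ideal $\Nc\subset\Bs'$; by induction on the order of nilpotency we may assume $\Nc^2=0$. Given a morphism $\SSpec(\Bs'/\Nc)\to\Xcal$ over $\Sc$ and two lifts $\phi_1,\phi_2\colon\SSpec(\Bs')\to\Xcal$, work locally so that $\Xcal=\SSpec(\Bs)$. The two lifts correspond to degree-$0$ $\As$-superalgebra homomorphisms $\phi_1^\sharp,\phi_2^\sharp\colon \Bs\to\Bs'$ that agree modulo $\Nc$. The difference $D:=\phi_2^\sharp-\phi_1^\sharp$ lands in $\Nc$, and using $\Nc^2=0$ together with the super Leibniz identity it is straightforward to check that $D$ is an $\As$-linear $\Z_2$-graded derivation of degree $0$ from $\Bs$ to $\Nc$ (viewed as a $\Bs$-module via either $\phi_i^\sharp$, the two structures coinciding on $\Nc$ because $\Nc^2=0$). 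By the universal property \eqref{eq:univdiff}, $D$ corresponds to a $\Bs$-linear map $\Omega_{\Bs/\As}\to\Nc$; since $\Omega_{\Bs/\As}=0$, this forces $D=0$ and hence $\phi_1=\phi_2$. Globalizing by covering $\Xcal$ by affine opens gives the uniqueness of the lift.

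The main technical issue to watch for is that a super derivation in the sense of Subsection \ref{subsec:quasi-coh} involves the Leibniz rule with signs dictated by the $\Z_2$-grading. The verifications that $i_2$ is a ring map and that $D=\phi_2^\sharp-\phi_1^\sharp$ is a derivation both reduce to direct computations using that $d$ and $\phi_i^\sharp$ are homogeneous of degree $0$ and that the ideals $\Omega_{\Bs/\As}$ and $\Nc$ are square-zero; these computations are routine provided one keeps track of the parity conventions fixed in the paper. Everything else is formal, following the argument of \cite[Tag~37.6.7]{Stacks} essentially verbatim.
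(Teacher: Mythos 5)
Your proof is correct and is exactly the argument the paper has in mind: the statement is presented there with no written proof beyond the remark that it is ``a straightforward adaption of \cite[37.6.7]{Stacks}'', and your square-zero extension $\Bs\oplus\Omega_{\Bs/\As}$ for one implication and the even derivation $D=\phi_2^\sharp-\phi_1^\sharp$ for the other are precisely that adaptation. The reduction to the affine case and the parity bookkeeping are handled appropriately, so nothing further is needed.
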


 Taking $\Bs$ as an ordinary commutative ring, we easily see the following property:

\begin{prop}\label{prop:formallysmoothbos} If a morphism  $f\colon \Xcal\to \Sc$  of superschemes
is formally smooth (resp.\! formally unramified, resp.\! formally \'etale), then the bosonic reduction  $f_{bos}\colon X \to S$  is a formally smooth (resp.\! formally unramified, resp.\! formally \'etale) morphism of schemes.
\qed
\end{prop}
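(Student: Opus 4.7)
The plan is to directly apply the definition of formal smoothness/unramifiedness/\'etaleness for $f$ to test rings that are purely even (i.e., ordinary commutative rings), and observe that in this situation the lifting diagram on the super side coincides with the lifting diagram on the bosonic side.

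More concretely, I would start by fixing an ordinary commutative ring $B$ with a nilpotent ideal $N\subset B$ and a morphism of ordinary schemes $g\colon \Spec(B/N)\to X$ over $S$. Viewing $B$ as an even superring, one has $\SSpec(B)=\Spec(B)$ as a superscheme (the odd part of the structure sheaf vanishes), and similarly for $B/N$. The key observation is that any superscheme morphism from a bosonic superscheme $\SSpec(B)$ to $\Xcal$ automatically factors through the bosonic reduction $\Xcal_{bos}=X$, because the image of odd sections in an even ring must vanish. Hence there are natural bijections
\[
\Hom_\Sc(\SSpec(B),\Xcal)\simeq \Hom_S(\Spec(B),X),\qquad \Hom_\Sc(\SSpec(B/N),\Xcal)\simeq \Hom_S(\Spec(B/N),X),
\]
and under these bijections the restriction map appearing in Definition \ref{def:formallysmooth} for $f$ is precisely the restriction map in the corresponding definition for $f_{bos}$.

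Consequently, if $f$ is formally smooth (resp.\! formally unramified, resp.\! formally \'etale), applying the lifting property of $f$ to the even superring $\Bs=B$ and the nilpotent ideal $\Nc=N$ yields the corresponding surjectivity (resp.\! injectivity, resp.\! bijectivity) of
\[
\Hom_S(\Spec(B),X)\to \Hom_S(\Spec(B/N),X),
\]
which is exactly what is required for $f_{bos}$ to be formally smooth (resp.\! unramified, resp.\! \'etale). There is no real obstacle here: the only point to verify is the factorization of morphisms out of bosonic superschemes through the bosonic reduction of the target, which is immediate from the construction of $\Xcal_{bos}$ as the quotient by the ideal generated by odd sections.
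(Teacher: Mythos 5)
Your proposal is correct and is essentially the paper's own argument: the paper proves this by ``taking $\Bs$ as an ordinary commutative ring'' in Definition \ref{def:formallysmooth}, which is exactly your specialization to even test superrings together with the standard fact that morphisms from a bosonic superscheme factor through the bosonic reduction of the target. Nothing further is needed.
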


As usual, we say that $\Xcal$ is formally smooth (resp.\! formally unramified, resp.\! formally \'etale) if the natural morphism $\Xcal \to \Spec k$ is so.

Using the definition of smooth and \'etale morphisms of superschemes given in 
\cite[Def.\! A.16]{BrHRPo20}, we have:
\begin{prop}\label{prop:localsmooth}
A morphism of superschemes of relative dimension $m|n$ is smooth (resp.\! \'etale) if and only if it is locally of finite presentation and formally smooth (resp.\! formally \'etale).
\end{prop}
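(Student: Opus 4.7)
The plan is to leverage the classical EGA characterization of smoothness (locally of finite presentation plus formally smooth if and only if smooth) on the bosonic reduction, and to handle the odd directions using the local structure theorem of Proposition \ref{prop:smoothsplit}.

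For the forward direction, assume $f\colon\Xcal\to\Sc$ is smooth of relative dimension $m\vert n$. Locally of finite presentation is immediate from Definition \ref{def:smooth}. To verify formal smoothness, I would filter any nilpotent ideal $\Nc\subset\Bs$ by its powers, reducing to the square-zero case $\Nc^2=0$. Working Zariski locally on $\Xcal$ and applying a relative version of Proposition \ref{prop:smoothsplit}, I may further assume (\'etale locally over $\Sc$) that $f$ is of the form $\SSpec(\Oc_\Sc[x_1,\dots,x_m]\otimes\bigwedge[\theta_1,\dots,\theta_n])\to\Sc$. Lifting an $\Sc$-morphism $g\colon\SSpec(\Bs/\Nc)\to\Xcal$ then splits into two independent problems: lifting the even images $g^{\ast}(x_i)$, which succeeds because the bosonic reduction $f_{bos}\colon X\to S$ is formally smooth by the classical EGA result; and lifting the odd images $g^{\ast}(\theta_j)$, which is automatic since the surjection $\Bs\to\Bs/\Nc$ restricts to a surjection on odd parts.

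For the converse, assume $f$ is locally of finite presentation and formally smooth. By Proposition \ref{prop:formallysmoothbos}, $f_{bos}$ is formally smooth and locally of finite presentation, hence smooth of dimension $m$ in the classical sense. To match Definition \ref{def:smooth}, I must verify flatness of $f$ and local freeness of $\Omega_{\Xcal/\Sc}$ of rank $m\vert n$. For flatness, I would apply the lifting property to the thickenings $\Oc_\Xcal/\Jc^{k+1}\twoheadrightarrow\Oc_\Xcal/\Jc^k$, combined with flatness of $X/S$, essentially transcribing the argument of \cite[Prop.~A.17]{BrHRPo20} into the present context. For local freeness, I would use that formal smoothness makes the conormal sequence of a local closed embedding of $\Xcal$ into a smooth super-ambient split, exhibiting $\Omega_{\Xcal/\Sc}$ as a direct summand of a free module; the rank count follows from the relative-dimension hypothesis together with Proposition \ref{prop:smoothsplit}. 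The \'etale case then reduces to the smooth one via Proposition \ref{prop:formunram}: formal unramifiedness forces $\Omega_f=0$, which, once smoothness is established, pins down the relative dimension to $0\vert 0$.

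The main obstacle I anticipate is pushing the classical EGA flatness-from-lifting arguments through in the $\Z_2$-graded setting: these arguments typically rely on Koszul-type resolutions and Noetherian reductions, and the intrinsic odd nilpotents in $\Oc_\Xcal$ require care to ensure that no step implicitly uses reducedness. The local structure from Proposition \ref{prop:smoothsplit}, which identifies $\Jc$ locally as generated by a free sequence of odd sections, should be the precise mechanism that allows the classical arguments to carry over without essential modification.
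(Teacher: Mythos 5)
Your proposal is a correct strategy in outline, but you should know that the paper does not actually prove this proposition: its entire proof reads ``The smooth case is \cite[Prop.\ A.25]{BrHRPo20}. The \'etale case follows easily.'' What you have written is essentially a reconstruction of the content hidden behind that citation, and two of your steps carry almost all of the real weight. First, in the forward direction you invoke a relative, \'etale-local splitting of a smooth morphism into the form $\SSpec(\Oc_\Sc[x_1,\dots,x_m]\otimes\bigwedge[\theta_1,\dots,\theta_n])\to\Sc$; Proposition \ref{prop:smoothsplit} is stated only for superschemes over a field, so this relative structure theorem (\'etale-locally on the source, smooth equals \'etale over relative affine superspace) must be proved separately from Definition \ref{def:smooth}, and you also need that formal smoothness can be checked \'etale-locally on the source --- both facts are standard and non-circular, but neither is in the paper. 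Once in split form, lifting the even coordinates is just surjectivity of $\Bs_0\to(\Bs/\Nc)_0$ via the universal property of the free supercommutative algebra, so the appeal to formal smoothness of the bosonic reduction there is unnecessary. Second, in the converse, flatness is the genuinely hard step: \cite[Prop.\ A.17]{BrHRPo20} is again a statement over a field, so ``transcribing'' it does not immediately yield relative flatness; the working mechanism is the super analogue of the local criterion (formally smooth local homomorphisms are flat), or equivalently deducing flatness of $\Oc_\Xcal$ over $\Oc_\Sc$ from the lifting property applied to the graded pieces $\Jc^k/\Jc^{k+1}$ together with flatness of $X/S$, which is what the cited Prop.\ A.25 actually carries out. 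Your handling of local freeness of $\Omega_{\Xcal/\Sc}$ (projectivity from the split conormal sequence plus finite presentation) and of the \'etale case via Proposition \ref{prop:formunram} matches the standard argument.
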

\begin{proof} The  smooth case is \cite[Prop.\! A.25]{BrHRPo20}. The \'etale case follows easily.
\end{proof}

\begin{corol}\label{cor:localsmooth} If a morphism of superschemes $f\colon \Xcal\to \Sc$ is smooth (resp.\! \'etale) then the bosonic reduction  $f_{bos}\colon X \to S$  is a  smooth (resp.\! \'etale) morphism of schemes.
\end{corol}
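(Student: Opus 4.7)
The plan is to reduce the statement to the infinitesimal lifting criterion and to use the characterization of smoothness/\'etaleness for ordinary schemes in terms of formal smoothness/\'etaleness plus local finite presentation.

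First, I would invoke Proposition \ref{prop:localsmooth} to rewrite the hypothesis: the morphism $f\colon\Xcal\to\Sc$ being smooth (resp.\ \'etale) is equivalent to $f$ being locally of finite presentation and formally smooth (resp.\ formally \'etale). Then I would check the two ingredients separately on the bosonic reduction.

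The formal part is immediate from Proposition \ref{prop:formallysmoothbos}: if $f$ is formally smooth (resp.\ formally \'etale), then $f_{bos}\colon X\to S$ is formally smooth (resp.\ formally \'etale) as a morphism of ordinary schemes. For the finiteness condition, I would observe that ``locally of finite presentation'' for a morphism of superschemes descends to the bosonic reduction. This is a standard fact: choosing locally a presentation $\Oc_\Sc[x_1,\dots,x_m,\theta_1,\dots,\theta_n]/(f_1,\dots,f_r)$ for $\Oc_\Xcal$ over $\Oc_\Sc$, quotienting by the ideal generated by the odd sections yields a presentation of $\Oc_X$ over $\Oc_S$ of finite type, and analogously for finite presentation.

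Combining these two observations with the classical characterization for schemes --- namely, that a morphism of schemes is smooth (resp.\ \'etale) if and only if it is locally of finite presentation and formally smooth (resp.\ formally \'etale), cf.~the scheme-theoretic analogue of Proposition \ref{prop:localsmooth} --- one concludes that $f_{bos}$ is smooth (resp.\ \'etale). The only step that needs any care is the descent of ``locally of finite presentation'' to the bosonic reduction, but this is a direct and routine verification at the level of local presentations of the structure sheaves, so no real obstacle is expected.
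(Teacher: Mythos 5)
Your proposal is correct and follows essentially the same route as the paper: the paper's proof simply notes that $f_{bos}$ is clearly locally of finite presentation and then invokes Proposition \ref{prop:formallysmoothbos} together with the formal characterization of smooth/\'etale morphisms. The only difference is that you spell out the (routine) descent of local finite presentation to the bosonic reduction, which the paper dismisses with ``clearly''.
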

\begin{proof} Clearly $f_{bos}\colon X \to S$ is locally of finite presentation. Then, the result follows from Proposition \ref{prop:formallysmoothbos}.
\end{proof}

 We now define unramified morphisms.
\begin{defin}
A morphism  $f\colon \Xcal\to \Sc$  of superschemes is unramified if it is locally of finite type and formally unramified.
\end{defin}

Notice that we only require that $f$ is locally of finite type and not locally of finte presentation as in the \'etale and smooth cases.

One easily checks the following:
\begin{prop}\label{prop:basech} The properties of being formally smooth, formally \'etale and formally unramified are stable by base change and composition, and are local on the target for the \'etale topology. The same is true for smooth, \'etale and unramified.
\qed
\end{prop}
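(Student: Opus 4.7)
The plan is to treat the six properties in two groups: the three ``formal'' versions via direct manipulation of the lifting criterion of Definition \ref{def:formallysmooth}, and the three ``finite'' versions (smooth/\'etale/unramified) by combining the formal results with the stability of ``locally of finite presentation'' and ``locally of finite type''. For \emph{base change}: if $f\colon\Xcal\to\Sc$ is formally smooth and $g\colon\Sc'\to\Sc$ is arbitrary, then for any affine $\Sc'$-superscheme $\SSpec\Bs$ with nilpotent ideal $\Nc\subset\Bs$, the universal property of the fiber product gives natural bijections $\Hom_{\Sc'}(\SSpec\Bs,\Xcal\times_\Sc\Sc')\simeq \Hom_\Sc(\SSpec\Bs,\Xcal)$ (and the analogue modulo $\Nc$), compatibly with reduction; surjectivity/injectivity/bijectivity in Definition \ref{def:formallysmooth} transfers immediately. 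For \emph{composition} along $\Xcal\xrightarrow{f}\Ycal\xrightarrow{g}\Sc$: to lift $\SSpec(\Bs/\Nc)\to\Xcal$, first lift its composite with $f$ to $\SSpec\Bs\to\Ycal$ using formal smoothness of $g$, then lift to $\SSpec\Bs\to\Xcal$ over $\Ycal$ using formal smoothness of $f$; the injectivity/bijectivity cases work analogously by the same two-step argument.

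\'Etale locality on the target is the main obstacle. Given an \'etale cover $\{\Sc_i\to\Sc\}$ such that each $f_i\colon\Xcal\times_\Sc\Sc_i\to\Sc_i$ is formally smooth, a test datum $(\SSpec\Bs\to\Sc,\,\Nc,\,h\colon\SSpec(\Bs/\Nc)\to\Xcal)$ pulls back to the \'etale cover $\{\SSpec\Bs\times_\Sc\Sc_i\to\SSpec\Bs\}$ of $\SSpec\Bs$, where lifts exist by hypothesis. For \emph{formally unramified}, injectivity is a local property and descends along this \'etale cover directly. For \emph{formally smooth}, the subtlety is that local lifts need not automatically glue; reducing by induction on the nilpotency index to the case $\Nc^2=0$, the set of lifts of $h$ is either empty or a torsor under the sheaf $\underline{\Hom}(h^\ast\Omega_{\Xcal/\Sc},\Nc)$ on the (small) \'etale site of $\SSpec(\Bs/\Nc)\simeq\SSpec\Bs$ (using topological invariance of the \'etale site under nilpotent thickenings, which follows from the super analogue of the formal \'etaleness of \'etale morphisms, cf.\ Proposition \ref{prop:localsmooth}). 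This torsor is \'etale-locally trivial by the hypothesis on the $f_i$, and since it is represented by a quasi-coherent sheaf on the affine base $\SSpec\Bs$, descent theory gives the vanishing of the obstruction $H^1$ and hence a global lift. The \emph{formally \'etale} case follows by combining the two.

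For the non-formal versions, one combines the formal results with the fact that ``locally of finite presentation'' and ``locally of finite type'' for morphisms of superschemes are stable under base change and composition, and local on the target for the \'etale topology --- these are standard for schemes, and the extension to superschemes is routine since the conditions depend only on finitely many even and odd generators of $\Oc_\Xcal$ as an $\Oc_\Sc$-superalgebra, a property inherited from and detected by suitable behaviour on the bosonic reduction together with the odd sections (see \cite[App.\ A]{BrHRPo20}). Proposition \ref{prop:localsmooth} then provides the bridge for smooth and \'etale, while for unramified one combines the formally-unramified result directly with stability of locally of finite type. The main conceptual hurdle throughout is the descent argument for formally smooth morphisms, where identifying the space of lifts as a torsor under a quasi-coherent sheaf on the affine base is essential to conclude \'etale-locally-to-globally.
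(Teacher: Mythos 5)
The paper offers no proof here (the statement is prefaced by ``one easily checks'' and closed with \verb|\qed|), so there is nothing to compare against line by line; your write-up supplies exactly the content the authors leave to the reader, and its overall architecture --- formal versions by the lifting criterion, non-formal versions by adding the finiteness hypotheses via Proposition \ref{prop:localsmooth} --- is the intended one. Base change via the universal property of the fiber product, composition via the two-step lift, \'etale descent of formal unramifiedness via subcanonicity of $\Sf_{et}$, and the reduction of smooth/\'etale/unramified to the formal statements plus stability of ``locally of finite presentation/type'' are all correct.

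The one step that deserves a caveat is the \'etale-locality-on-the-target of \emph{formal smoothness}. Your torsor argument is the right idea, but the sheaf of groups acting on the (pseudo-)torsor of lifts is $V\mapsto \Hom_{\Oc_{V}}(h_V^\ast\Omega_{\Xcal/\Sc},\Nc_V)$, and this is a \emph{quasi-coherent} sheaf on the affine base --- so that its $H^1$ vanishes and the obstruction dies --- only when $\Omega_{\Xcal/\Sc}$ is of finite presentation, since otherwise $\Homsh$ need not commute with the flat base changes $V\to\SSpec\Bs$ and the \v Cech complex need not be exact. This is harmless for the cases the paper actually uses: smooth and \'etale morphisms are locally of finite presentation by definition, so there $\Omega_{\Xcal/\Sc}$ is finitely presented and your argument closes. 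But for \emph{formally} smooth morphisms with no finiteness hypothesis, as literally asserted in the Proposition, the descent is a genuinely harder theorem (in the bosonic case it is proved via the naive cotangent complex together with fppf descent of projectivity of modules, cf.\ the Stacks Project); you should either restrict the descent claim for formal smoothness to the locally-finitely-presented case or invoke that machinery. Everything else in your proposal stands.
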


We now focus our attention to unramified morphisms of superschemes.

\begin{prop}\label{prop:fibersunram} Let $f\colon \Xcal\to\Sc$ a morphism of superschemes, locally of  finite type. Then $f$ is unramified if and only if for every point $s=\Spec \kappa(s) \to \Sc$, the fiber $f_s\colon \Xcal_s \to \Spec\kappa(s)$ is formally unramified.
\end{prop}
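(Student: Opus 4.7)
The forward implication is immediate from Proposition \ref{prop:basech}: formal unramifiedness is stable under base change, and each fiber $f_s$ is obtained from $f$ by base change along $\Spec\kappa(s)\to\Sc$.

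For the converse, I would reduce to showing $\Omega_f = 0$ via Proposition \ref{prop:formunram} (together with the fact that $f$ is locally of finite type by assumption). Since $f$ is locally of finite type, $\Omega_f$ is a quasi-coherent $\Oc_\Xcal$-module of finite type: locally, if $\Xcal = \SSpec \Bs$ and $\Sc = \SSpec \As$ with $\Bs$ a finitely generated $\As$-superalgebra, $\Omega_{\Bs/\As}$ is generated as a $\Bs$-module by the differentials of a finite set of generators. The construction $\Omega_{\Bs/\As} = \Delta/\Delta^2$ (where $\Delta$ is the diagonal ideal) is compatible with base change of superrings, so for any point $s\in\Sc$ there is a natural identification $\Omega_{f_s} \simeq \Omega_f \otimes_{\Oc_\Sc} \kappa(s)$.

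By hypothesis each fiber $f_s$ is formally unramified, so Proposition \ref{prop:formunram} applied fiberwise gives $\Omega_{f_s} = 0$ for every $s\in\Sc$. Fix a point $x\in\Xcal$ mapping to $s\in\Sc$. The stalk $(\Omega_f)_x$ is a finitely generated $\Oc_{\Xcal,x}$-module, and the identification above yields
$$(\Omega_f)_x \big/ \mathfrak{m}_s\cdot(\Omega_f)_x = (\Omega_{f_s})_x = 0\,.$$
Since the local morphism $\Oc_{\Sc,s}\to\Oc_{\Xcal,x}$ carries $\mathfrak{m}_s$ into $\mathfrak{m}_x$, we get $(\Omega_f)_x = \mathfrak{m}_x\cdot(\Omega_f)_x$; Nakayama's lemma in the $\Z_2$-graded local setting (which applies verbatim to finitely generated graded modules over a local super ring) then forces $(\Omega_f)_x = 0$. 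As this holds at every point, $\Omega_f = 0$, so $f$ is formally unramified by Proposition \ref{prop:formunram}, and hence unramified.

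\emph{Main obstacle.} The only nontrivial point is the base-change identification $\Omega_{f_s} \simeq \Omega_f \otimes \kappa(s)$ and the finite generation of $\Omega_f$; both are standard for schemes and transfer to superschemes through the $\Delta/\Delta^2$ construction recalled earlier in the paper, with due care for signs in the $\Z_2$-grading. Once that is in place, the Nakayama step is routine.
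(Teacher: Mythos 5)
Your proposal is correct and follows essentially the same route as the paper: reduce via Proposition \ref{prop:formunram} to showing $\Omega_f=0$, identify $\Omega_{f_s}$ with the restriction of $\Omega_f$ to the fiber by base change for relative differentials, and conclude with the $\Z_2$-graded Nakayama lemma (the paper cites the super Nakayama lemma from \cite{BBH91,CarcaFi11,We09} for exactly this step). Your version merely spells out the stalk-wise Nakayama argument and the role of finite generation more explicitly than the paper does.
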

\begin{proof} We have to prove that $f$ is unramified if and only if for every point $s=\Spec \kappa(s) \to \Sc$ $f_s\colon \Xcal_s \to \Spec\kappa(s)$ is formally unramified.  By the super Nakayama Lemma (see, for instance \cite{BBH91,CarcaFi11} or \cite[6.4.5]{We09}) one has that $\Omega_f=0$ is equivalent to $\Omega_{f\vert \Xcal_s}=0$ for every point $s$ of $S$.  Since $\Omega_{f_s} =\Omega_{f\vert \Xcal_s}$ by base change for the relative differentials, one concludes by  Proposition \ref{prop:formunram}.
\end{proof}

\begin{prop}\label{prop:unram} \ 
\begin{enumerate}
\item Let  $f\colon\Xcal\to S$ be a morphism of superschemes where the target is an ordinary scheme and let  $i\colon X\hookrightarrow \Xcal$ be the natural immersion of the bosonic underlying scheme to $\Xcal$. If $f$ is formally unramified, then $i$ is an isomorphism $X\simeq\Xcal$ so that $\Xcal$ is an ordinary scheme.
\item Every formally unramified morphism of superschemes is bosonic (i.e., schematic).
\end{enumerate}
\end{prop}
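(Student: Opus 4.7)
The plan is to deduce (2) from (1) by base change, so the substantive content lies in (1). By Proposition \ref{prop:formunram}, the hypothesis that $f\colon \Xcal\to S$ is formally unramified is equivalent to $\Omega_{\Xcal/S}=0$. The strategy is to analyze the first infinitesimal super-thickening $\Xcal^{(1)}:=(X,\Oc_\Xcal/\Jc^2)$ of the bosonic reduction and to show that $\Jc/\Jc^2=0$; since $\Jc$ is locally nilpotent on a superscheme, iterating the identity $\Jc=\Jc^2$ gives $\Jc=\Jc^n$ for every $n$, so local nilpotence then forces $\Jc=0$ and hence $i\colon X\iso\Xcal$.

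The central structural observation is that $\Jc/\Jc^2$ is purely odd. Any element of $\Jc$ can be written as $\sum_i r_i\theta_i$ with $\theta_i\in\Oc_\Xcal$ odd generators and $r_i\in\Oc_\Xcal$; decomposing $r_i=r_i^++r_i^-$ into even and odd parts, the even contributions $r_i^-\theta_i$ are products of two odd elements and hence already lie in $\Jc^2$, so modulo $\Jc^2$ only the odd summands $r_i^+\theta_i$ survive. The same argument shows that the even part of $\Oc_{\Xcal^{(1)}}$ coincides with $\Oc_X$, and in $\Oc_{\Xcal^{(1)}}$ the ideal $\Jc/\Jc^2$ squares to zero. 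Consequently, the inclusion $\Oc_X=(\Oc_{\Xcal^{(1)}})_+\hookrightarrow \Oc_{\Xcal^{(1)}}$ is a ring-theoretic section of the quotient $\Oc_{\Xcal^{(1)}}\twoheadrightarrow \Oc_X$, exhibiting $\Oc_{\Xcal^{(1)}}$ as a canonically split square-zero extension $\Oc_X\oplus \Jc/\Jc^2$ of $\Oc_X$ by the purely odd $\Oc_X$-module $\Jc/\Jc^2$.

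The conormal exact sequence for $\Xcal^{(1)}\hookrightarrow\Xcal$ gives a surjection $\Omega_{\Xcal/S}\otimes_{\Oc_\Xcal}\Oc_{\Xcal^{(1)}}\twoheadrightarrow \Omega_{\Xcal^{(1)}/S}$, so $\Omega_{\Xcal/S}=0$ forces $\Omega_{\Xcal^{(1)}/S}=0$. A direct Leibniz computation for the split square-zero extension $\Oc_{\Xcal^{(1)}}=\Oc_X\oplus \Jc/\Jc^2$ over the bosonic base $S$ yields
\[
\Omega_{\Xcal^{(1)}/S}\otimes_{\Oc_{\Xcal^{(1)}}}\Oc_X \;\simeq\; \Omega_{X/S}\oplus \Jc/\Jc^2,
\]
and comparing with the vanishing of the left-hand side gives $\Jc/\Jc^2=0$, completing the proof of (1).

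For (2), let $f\colon \Xcal\to \Ycal$ be a formally unramified morphism of superschemes and let $T\to\Ycal$ be any morphism from an ordinary scheme $T$; the base-change $\Xcal\times_\Ycal T\to T$ is again formally unramified (Proposition \ref{prop:basech}) with bosonic target, so by (1) its source is an ordinary scheme, which is the bosonic/schematic property claimed. The delicate step of the whole argument is the canonical splitting of $\Oc_{\Xcal^{(1)}}$ as an $\Oc_X$-algebra, a genuinely super phenomenon forced by the fact that $\Jc/\Jc^2$ is purely odd; without it the conormal sequence alone would give only surjection onto $\Omega_{X/S}$, which is insufficient to detect the ideal $\Jc/\Jc^2$ itself.
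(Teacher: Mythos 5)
Your proof is correct, but it takes a genuinely different route from the paper's. The paper proves (1) by a short rigidity argument: the parity involution $\tau\colon\Xcal\to\Xcal$ (identity on even sections, $-1$ on odd ones) is a morphism over the bosonic base $S$ that agrees with $\Id_\Xcal$ on the nilpotent thickening $X\hookrightarrow\Xcal$, so the injectivity in the definition of formal unramifiedness (Definition \ref{def:formallysmooth}) forces $\tau=\Id$, whence $\Oc_\Xcal$ is even and $X\iso\Xcal$. You instead start from $\Omega_{\Xcal/S}=0$ (Proposition \ref{prop:formunram}) and extract $\Jc/\Jc^2=0$ from the splitting $\Oc_\Xcal/\Jc^2\simeq\Oc_X\oplus\Jc/\Jc^2$; your key observations --- that $\Jc\cap(\Oc_\Xcal)_+\subseteq\Jc^2$, so $\Jc/\Jc^2$ is purely odd and the even part of $\Oc_\Xcal/\Jc^2$ is exactly $\Oc_X$ --- are correct, as is the derivation computation $\Omega_{\Xcal^{(1)}/S}\otimes\Oc_X\simeq\Omega_{X/S}\oplus\Jc/\Jc^2$ for a split square-zero extension over a bosonic base (this is where the hypothesis that $S$ is ordinary enters for you). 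What each approach buys: yours works uniformly in every characteristic (the paper's passage from $\tau=\Id$ to the vanishing of the odd part uses that $2\theta=0$ forces $\theta=0$), and it makes the module-theoretic mechanism explicit; the paper's is shorter and uses only the functorial lifting criterion. Both proofs lean on the same standing convention at the decisive moment: you need $\Jc$ to be \emph{nilpotent} on affine opens (not merely a nil ideal) to deduce $\Jc=0$ from $\Jc=\Jc^2$, since an idempotent nil ideal need not vanish in general, and the paper needs the same local nilpotence to treat $X\hookrightarrow\Xcal$ as a thickening by a nilpotent ideal; this is consistent with Definition \ref{def:dim} and the proof of Proposition \ref{prop:finitebos}, but you should say so explicitly. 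Part (2) is handled identically in both proofs, via Proposition \ref{prop:basech}.
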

\begin{proof} (1) Let $\tau\colon \Xcal \to \Xcal$ be the involution that acts on the even part of $\Oc_\Xcal$ as the identity and on the odd part as multiplication by $-1$. Since $S$ is bosonic,  the restriction of $\tau$ to $X$ coincides with $i$, and then it has to be equal to the identity because $f$ is formally unramified, It follows that $\Xcal$ is even, that is, there is an isomorphism $\Xcal \iso \Xcal_0=\Xcal/\tau=\Spec (\Oc_\Xcal)_+$. The latter is an ordinary scheme and then, the  inverse isomorphism $\Xcal_0\iso \Xcal$ factors through the bosonic restriction $X\hookrightarrow \Xcal$. Thus $X\iso\Xcal$.

(2) Follows from (1) and Proposition \ref{prop:basech}.
\end{proof}

Let $f\colon \Xcal \to \Sc$ be a morphism of superschemes, and let  $i\colon X \hookrightarrow \Xcal$, $j\colon S \hookrightarrow \Sc$, $\tilde{\iota}\colon X \hookrightarrow \Xcal_S$ be  the immersions of the bosonic underlying schemes.
 Consider the diagram
\begin{equation}\label{eq:diagram}
\xymatrix{
X \ar@{^{(}->}[r]
^{\tilde\iota}\ar[rd]_{f_{bos}}& \Xcal_S \ar@{^{(}->}[r] ^{j_\Xcal}\ar[d]^{f_S} & \Xcal\ar[d]^f\\
& S  \ar@{^{(}->}[r] ^{j}&\Sc
}
\end{equation}
where the square is cartesian and $i=j_\Xcal\circ \tilde{\iota}$. 

\begin{prop}\label{prop:unramprop}  Let $f\colon \Xcal\to\Sc$ a morphism of superschemes, locally of finite type. The following conditions are equivalent:
\begin{enumerate}
\item $f$ is unramified;
\item $\Xcal_S$ is bosonic and $f_{bos}$ is unramified;
\item $f$ is bosonic and for every scheme $T$ and every morphism $T\to\Sc$ the morphism $f_T\colon \Xcal_T\to T$ is an unramified morphisms of ordinary schemes;
\item for every point $s=\Spec k\to \Sc$, the fiber $\Xcal_s$ is an unramified scheme;
\item for every  point $s=\Spec k\to \Sc$, the fiber $\Xcal_s$  is a discrete and reduced scheme, i.e., $\Xcal_s\simeq\coprod_i \Spec K_i$, where each $K_i$ is a finite and separable extension of the residue field $\kappa(s)$.
\end{enumerate}	
\end{prop}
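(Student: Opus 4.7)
I would establish the cycle $(1)\Rightarrow(2)\Rightarrow(3)\Rightarrow(4)\Rightarrow(5)\Rightarrow(1)$, relying on three key ingredients: stability of the formal properties under base change (Proposition \ref{prop:basech}), the rigidity statement in Proposition \ref{prop:unram}(1), and the fiberwise criterion Proposition \ref{prop:fibersunram} together with the differential characterization in Proposition \ref{prop:formunram}.

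For $(1)\Rightarrow(2)$, I would base-change along $j\colon S\hookrightarrow\Sc$ and invoke Proposition \ref{prop:basech} to see that $f_S\colon \Xcal_S\to S$ is again formally unramified. Since the target $S$ is an ordinary scheme, Proposition \ref{prop:unram}(1) forces $\tilde\iota\colon X\hookrightarrow \Xcal_S$ to be an isomorphism, i.e., $\Xcal_S$ is bosonic; under this identification, the induced $f_{bos}$ coincides with $f_S$, which is locally of finite type by base change and hence unramified. For $(2)\Rightarrow(3)$, observe that any morphism $T\to\Sc$ from an ordinary scheme $T$ factors through $j\colon S\hookrightarrow\Sc$, because $\Oc_T$ has no odd sections so the pullback of the odd ideal $\Jc_\Sc$ must vanish; therefore $\Xcal_T\simeq \Xcal_S\times_S T$ is an ordinary scheme, confirming that $f$ is schematic (bosonic), and $f_T$ is the base change of the unramified scheme morphism $f_{bos}$, hence unramified. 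The step $(3)\Rightarrow(4)$ is immediate by choosing $T=\Spec\kappa(s)$.

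For $(4)\Rightarrow(5)$ I would appeal to the classical description of unramified morphisms of ordinary schemes whose target is the spectrum of a field, as in \cite[Proposition 29.35.12]{Stacks} or \cite[Thm.\,3.6.4]{Alp24}: they are discrete disjoint unions of spectra of finite separable field extensions. Finally, $(5)\Rightarrow(1)$: a fiber of the form $\coprod_i\Spec K_i$ with each $K_i/\kappa(s)$ finite separable satisfies $\Omega_{\Xcal_s/\kappa(s)}=0$, so by Proposition \ref{prop:formunram} every fiber $f_s$ is formally unramified; the fiberwise criterion Proposition \ref{prop:fibersunram}, combined with the standing hypothesis that $f$ is locally of finite type, then upgrades this to $f$ being unramified globally.

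The main obstacle is $(1)\Rightarrow(2)$, where the formal-algebraic vanishing $\Omega_f=0$ must be converted into the geometric assertion that $\Xcal_S$ carries no odd sections. This is precisely the content of Proposition \ref{prop:unram}(1), which relies on the parity involution $\tau$ having no room to act nontrivially when the target is bosonic; all remaining steps are routine base-change manipulations or classical scheme-theoretic facts applied fiberwise.
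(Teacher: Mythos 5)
Your proposal is correct and uses essentially the same ingredients as the paper: base-change stability (Proposition \ref{prop:basech}), the rigidity statement of Proposition \ref{prop:unram}(1) for $(1)\Rightarrow(2)$, the factorization of scheme-valued points through $S$ for the equivalence with $(3)$, the fiberwise criterion of Proposition \ref{prop:fibersunram} together with Proposition \ref{prop:formunram}, and the classical characterization of unramified schemes over a field for $(4)\Leftrightarrow(5)$. The only organizational difference is that the paper proves $(2)\Rightarrow(1)$ directly, pulling back $\Omega_f$ along $i\colon X\hookrightarrow\Xcal$ and invoking a Nakayama-type lemma to get $\Omega_f=0$ from $\Omega_{f_{bos}}=0$, whereas you close the loop through $(5)\Rightarrow(1)$ via the fiberwise criterion; both routes are valid and rest on the same supporting propositions.
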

\begin{proof} If $f$ unramified, $f_S$ is  unramified by Proposition \ref{prop:basech} and then $\Xcal_S$ is bosonic by Proposition \ref{prop:unram} so that $f_{bos}= f_S$ which is unramified. Thus (1) implies (2).
Assume now that (2) is true. Then one has a cartesian diagram
\begin{equation}
\xymatrix{
 X\ar@{^{(}->}[r] ^{i}\ar[d]^{f_S} & \Xcal\ar[d]^f\\
S  \ar@{^{(}->}[r] ^{j}&\Sc
}
\end{equation}
so that $i^\ast \Omega_f\simeq \Omega_{f_{bos}}=0$ since $f_{bos}$ is  unramified.  By \cite[Lemma 2.10]{BrHR21}  $\Omega_f=0$ and then $f$ is unramified by Proposition \ref{prop:unram}. Thus (2) implies (1).

(2) and (3) are equivalent as every morphism from an ordinary scheme to $\Sc$ factors through  $S$ and we can then apply base change (Proposition \ref{prop:basech}). (1) and (3) are equivalent by Proposition \ref{prop:fibersunram}. Finally, for the equivalence between (4) and (5) see, for instance, \cite[Corollary 17.4.2]{EGAIV-IV}.
\end{proof}
 
\begin{defin}\label{def:qf} A morphism $f\colon \Xcal\to\Ycal$ of superschemes is locally quasi-finite (resp.\! quasi-finite) if it is locally of finite type and has  discrete (resp.\! finite) bosonic fibres. 
\end{defin}
 
 This is equivalent to $f_{bos}\colon X \to Y$ being a locally quasi-finite (resp.\! quasi-finite) morphism of schemes.
Now, by Proposition \ref{prop:unramprop} one has:
\begin{corol}\label{cor:unramprop} A formally unramified morphism of superschemes is locally quasi-finite.
\qed\end{corol}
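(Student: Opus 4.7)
The plan is to treat this as a direct consequence of Proposition \ref{prop:unramprop} combined with Definition \ref{def:qf}. First, I read the statement as concerning an \emph{unramified} morphism in the sense of the definition given just before Proposition \ref{prop:fibersunram}, namely one that is locally of finite type and formally unramified; the bare formally unramified condition would not even allow one to speak of (local) quasi-finiteness, since Definition \ref{def:qf} builds in locally of finite type as part of the notion.

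Granting this reading, the argument is immediate. Let $f\colon\Xcal\to\Sc$ be unramified. By definition $f$ is locally of finite type, so it remains only to verify that the bosonic fibres are discrete. Applying the equivalence $(1)\Leftrightarrow(5)$ in Proposition \ref{prop:unramprop}, for every point $s=\Spec k\to\Sc$ the fibre $\Xcal_s$ is isomorphic to $\coprod_i \Spec K_i$, with each $K_i$ a finite separable extension of $\kappa(s)$. In particular $\Xcal_s$ is bosonic, so it coincides with its bosonic reduction $X_s$, and its underlying topological space is discrete. Hence $f$ meets the criterion of Definition \ref{def:qf} and is locally quasi-finite.

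There is essentially no obstacle: the real content has already been packed into Proposition \ref{prop:unramprop}, and the corollary amounts to pure bookkeeping, modulo the minor interpretive adjustment from ``formally unramified'' to ``unramified'' noted above. One might alternatively bypass (5) and use (2): unramifiedness forces $\Xcal_S$ to be bosonic with $f_{bos}$ unramified as a morphism of ordinary schemes, from which $f_{bos}$ is locally quasi-finite by the classical result, and then the bosonic fibres of $f$ coincide with those of $f_{bos}$ and are therefore discrete. Either route is equally short.
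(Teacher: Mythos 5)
Your proof is correct and follows the paper's own route: the corollary is stated there as an immediate consequence of Proposition \ref{prop:unramprop}, exactly as you argue via the discreteness of the bosonic fibres from condition (5) (or, equivalently, via condition (2) and the classical fact for schemes). Your observation that the hypothesis must implicitly include ``locally of finite type'' --- i.e., that ``formally unramified'' should be read as ``unramified'' in the sense of the paper's definition --- is a fair and accurate reading of the intent, since Definition \ref{def:qf} builds that condition into local quasi-finiteness and Proposition \ref{prop:unramprop} assumes it.
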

 
We also have:
\begin{prop}\label{prop:unramcomp}
Let $f\colon \Xcal\to\Ycal$, $g\colon\Ycal\to \Zc$ be morphisms of superchemes. If the composition $g\circ f\colon \Xcal \to \Zc$ is unramified, so is $f$.
\qed
\end{prop}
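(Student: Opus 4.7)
The plan is to verify separately the two ingredients of being unramified, namely, that $f$ is locally of finite type and that $f$ is formally unramified. For the first, one invokes the standard fact that whenever a composition $g\circ f$ is locally of finite type, the morphism $f$ itself is locally of finite type. The proof for superschemes is identical to the classical one: working affine-locally, if $\As\to\Cc$ factors as $\As\to\Bs\to\Cc$ and $\Cc$ is finitely generated as an $\As$-superalgebra, then the same generators (viewed as elements of $\Cc$) exhibit $\Cc$ as finitely generated over $\Bs$.

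For formal unramifiedness, the key tool is Proposition \ref{prop:formunram}, which reduces the question to the vanishing of $\Omega_f$. One invokes the first fundamental exact sequence for the composition $\Xcal\xrightarrow{f}\Ycal\xrightarrow{g}\Zc$ of superscheme morphisms,
$$f^{\ast}\Omega_{g}\to \Omega_{g\circ f}\to \Omega_{f}\to 0\,,$$
which is established exactly as in the ordinary case from the universal property of relative differentials for $\Z_2$-graded derivations (see \cite{BrHR21}); it is also a special case of Proposition \ref{prop:propreldiff}(3) after viewing the three superschemes as algebraic superspaces. Since $g\circ f$ is unramified, Proposition \ref{prop:formunram} gives $\Omega_{g\circ f}=0$, so the exactness of the above sequence forces $\Omega_{f}=0$; a second application of Proposition \ref{prop:formunram} shows that $f$ is formally unramified.

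Combining the two ingredients, $f$ is locally of finite type and formally unramified, hence unramified by the definition given immediately before this proposition. There is no substantive obstacle in the argument: both the behaviour of ``locally of finite type'' under composition and the first fundamental exact sequence for relative differentials transfer verbatim from the classical setting to the super setting, the former because it is a purely formal statement about finite generation of (super)algebras, and the latter because the construction of $\Omega_{-/-}$ via the diagonal ideal $\Delta/\Delta^{2}$ is insensitive to the presence of odd sections.
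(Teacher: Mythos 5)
Your proof is correct. The paper actually states Proposition \ref{prop:unramcomp} with no proof at all (it is asserted with a \qed), so your write-up supplies an argument the authors left implicit, and it is a sound one: cancellation of ``locally of finite type'' along a composition works verbatim for superalgebras, and the first fundamental exact sequence $f^{\ast}\Omega_{g}\to\Omega_{g\circ f}\to\Omega_{f}\to 0$ holds in the $\Z_2$-graded setting by the same diagonal-ideal construction, so $\Omega_{g\circ f}=0$ forces $\Omega_{f}=0$ and Proposition \ref{prop:formunram} applies. Two small remarks. First, you should lean on the direct construction of the exact sequence (as you do via \cite{BrHR21}) rather than on Proposition \ref{prop:propreldiff}(3): that proposition lives in the main text for algebraic superspaces and is built on top of the appendix material, so citing it here would invert the paper's logical order. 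Second, the formal-unramifiedness half admits an even more elementary argument straight from Definition \ref{def:formallysmooth}: a $\Ycal$-morphism $\SSpec\Bs\to\Xcal$ is in particular a $\Zc$-morphism, so the injectivity of $\Hom_{\Zc}(\SSpec\Bs,\Xcal)\to\Hom_{\Zc}(\SSpec(\Bs/\Nc),\Xcal)$ restricts to give injectivity of $\Hom_{\Ycal}(\SSpec\Bs,\Xcal)\to\Hom_{\Ycal}(\SSpec(\Bs/\Nc),\Xcal)$; this avoids differentials entirely and makes no use of Proposition \ref{prop:formunram}. Either route is fine.
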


\begin{defin} A morphism $f\colon \Xcal\to\Sc$ of superschemes is radicial if the diagonal morphism $\Delta_f\colon \Xcal \to \Xcal\times_\Sc\Xcal$ is 
surjective.
\end{defin}

Since relative differentials are stable by base change, we obtain:
\begin{prop}\label{prop:basechrad} The property of being radicial is stable by base change.
\end{prop}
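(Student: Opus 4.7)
The plan is to exhibit $\Delta_{f_\Tc}$ as a base change of $\Delta_f$ and then invoke the fact that surjectivity of superscheme morphisms is stable under base change (which ultimately reduces to the analogous scheme-theoretic fact applied to bosonic reductions).

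First I would fix an arbitrary morphism $g\colon \Tc \to \Sc$ of superschemes and consider the induced morphism $f_\Tc\colon \Xcal_\Tc := \Xcal\times_\Sc\Tc \to \Tc$. Using the universal properties of fiber products, there is a canonical isomorphism
$$
\Xcal_\Tc\times_\Tc \Xcal_\Tc \;\simeq\; (\Xcal\times_\Sc\Xcal)\times_\Sc\Tc,
$$
and under this identification the relative diagonal $\Delta_{f_\Tc}$ corresponds to $\Delta_f\times_\Sc \Id_\Tc$. This yields a cartesian square
$$
\xymatrix{
\Xcal_\Tc \ar[r]^-{\Delta_{f_\Tc}} \ar[d] & (\Xcal\times_\Sc\Xcal)\times_\Sc\Tc \ar[d] \\
\Xcal \ar[r]^-{\Delta_f} & \Xcal\times_\Sc\Xcal
}
$$
so that $\Delta_{f_\Tc}$ is obtained from $\Delta_f$ by base change along $g$.

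Next I would invoke the fact that the topological space underlying a superscheme coincides with the one of its bosonic reduction, so that a morphism of superschemes is surjective if and only if its bosonic reduction is surjective as a morphism of ordinary schemes. Since surjectivity of scheme morphisms is well known to be stable under base change, the same holds for superscheme morphisms. Applying this to the cartesian square above, the surjectivity of $\Delta_f$ (which is the hypothesis that $f$ is radicial) forces the surjectivity of $\Delta_{f_\Tc}$, hence $f_\Tc$ is radicial as desired.

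No step is really an obstacle here: the argument is essentially formal, and the only point that requires care is the identification $\Delta_{f_\Tc} = \Delta_f\times_\Sc\Id_\Tc$, which is purely categorical and proceeds exactly as in the classical scheme-theoretic case. The transition phrase preceding the proposition suggests the author perhaps intended a unified treatment with base change of relative differentials, but for the statement as written the argument above is self-contained and immediate.
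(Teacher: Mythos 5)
Your proof is correct, and it is in fact more substantive than what the paper offers. The paper introduces this proposition only with the transitional sentence ``Since relative differentials are stable by base change, we obtain:'' and gives no further argument; since the paper defines \emph{radicial} as surjectivity of the diagonal $\Delta_f\colon\Xcal\to\Xcal\times_\Sc\Xcal$, that sentence is a non sequitur --- relative differentials are irrelevant to surjectivity of the diagonal (they pertain to the companion notion of being formally unramified, cf.\ Proposition \ref{prop:formunram}, which suggests the sentence is a leftover from a different formulation of radiciality). Your two ingredients are exactly the right ones: the purely categorical identification of $\Delta_{f_\Tc}$ with the base change $\Delta_f\times_\Sc\Id_\Tc$ along $\Tc\to\Sc$, and the stability of surjectivity under base change for superscheme morphisms. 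The latter does reduce to the scheme case as you say, because the bosonic reduction commutes with fiber products (locally $(A\otimes_C B)/J = A_{bos}\otimes_{C_{bos}}B_{bos}$, where $J$ is the ideal generated by the odd elements), so the underlying topological considerations are purely bosonic; the paper itself already takes surjectivity to be stable under base change in Definition \ref{def:morphismsprops}. So your argument is a complete, self-contained proof of a statement the paper essentially asserts without justification, and your observation about the mismatched transition phrase is well taken.
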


\begin{prop}\label{prop:mono} Let $f\colon \Xcal\to \Sc$  be a morphism of superschemes,  locally of finite presentation. 
The following conditions are equivalent:
\begin{enumerate} 
\item $f$  is a monomorphism (Definition \ref{def:injective});
\item $f$ is  radicial and formally unramified;
\item for every point $s\colon \Spec \kappa(s) \to S\hookrightarrow\Sc$ the fiber $\Xcal_s=\Spec \kappa(s)\times_\Sc \Xcal$ is either empty or there is an isomorphism $\Xcal_s\iso\Spec \kappa(s)$ of $\kappa(s)$-schemes. 
\end{enumerate}

\end{prop}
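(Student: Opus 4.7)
The plan is to prove the cycle of implications $(1)\Rightarrow(2)\Rightarrow(3)\Rightarrow(1)$, relying mainly on Propositions \ref{prop:isodiag}, \ref{prop:unramprop}, \ref{prop:formunram}, and \ref{prop:basechrad}.

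For $(1)\Rightarrow(2)$ I would invoke Proposition \ref{prop:isodiag}: a monomorphism $f$ has an isomorphism as its diagonal, so in particular $\Delta_f$ is surjective and $f$ is radicial. The categorical characterisation of monomorphisms gives $|\Hom_\Sc(\SSpec\Bs,\Xcal)|\leq 1$ for every $\Sc$-superscheme $\SSpec\Bs$, whence the restriction map in Definition \ref{def:formallysmooth} is vacuously injective and $f$ is formally unramified.

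For $(2)\Rightarrow(3)$, formal unramification together with local finite presentation (hence local finite type) shows that $f$ is unramified, so Proposition \ref{prop:unramprop} exhibits each nonempty fibre as $\Xcal_s=\coprod_i\Spec K_i$ with every $K_i/\kappa(s)$ a finite separable extension. Being radicial is stable by base change (Proposition \ref{prop:basechrad}), so the diagonal of $\Xcal_s\to\Spec\kappa(s)$ is surjective; expanding $\Xcal_s\times_{\kappa(s)}\Xcal_s=\coprod_{i,j}\Spec(K_i\otimes_{\kappa(s)}K_j)$ and using that tensor products of nonzero field extensions over a common subfield are nonzero, surjectivity of the diagonal forces the index set to reduce to a single element $i$ with $[K_i:\kappa(s)]=1$, so $\Xcal_s\simeq\Spec\kappa(s)$.

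For $(3)\Rightarrow(1)$, condition (3) is a particular case of condition (5) of Proposition \ref{prop:unramprop} with a single trivial residue extension, so $f$ is unramified and $\Omega_f=0$ by Proposition \ref{prop:formunram}. It also yields that $f$ is radicial: any $p\in |\Xcal\times_\Sc\Xcal|$ lies over some $s\in|\Sc|$, the pulled-back fibre $\Xcal\times_\Sc\Spec\kappa(p)$ is again $\Spec\kappa(p)$ by (3), and so the two projections of $p$ to $\Xcal$ must agree, exhibiting $p$ as $\Delta_f(x)$ for a unique $x$. It remains, via Proposition \ref{prop:isodiag}, to upgrade surjectivity of $\Delta_f$ to the assertion that it is an isomorphism. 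Reducing locally to the affine case so that $\Delta_f$ is a closed immersion with ideal sheaf $\mathcal{I}\subset\Oc_{\Xcal\times_\Sc\Xcal}$, the standard $\Z_2$-graded conormal identification yields $\mathcal{I}/\mathcal{I}^2\simeq\Omega_f=0$; since $f$ is locally of finite presentation, $\mathcal{I}$ is finitely generated near the diagonal, and the super Nakayama lemma (cf.\ the references preceding Proposition \ref{prop:fibersunram}) gives $\mathcal{I}=0$ in a neighbourhood of the image of $\Delta_f$. Hence $\Delta_f$ is an open immersion and, being surjective, it is an isomorphism.

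The main obstacle I expect is making this last step rigorous in the super setting: one must verify the $\Z_2$-graded version of the conormal identification $\mathcal{I}/\mathcal{I}^2\simeq \Delta_f^\ast \Omega_f$ and apply the super Nakayama lemma to a finitely generated graded ideal in a local superring; these are routine but warrant explicit check.
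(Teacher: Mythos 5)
Your proof is correct, and its skeleton matches the paper's: both hinge on Proposition \ref{prop:isodiag} (monomorphism $\Leftrightarrow$ $\Delta_f$ is an isomorphism), on the vanishing of $\Omega_f$ via Proposition \ref{prop:formunram}, and on the step ``$\Delta_f$ surjective with $\Omega_f=0$ implies $\Delta_f$ is an open immersion, hence an isomorphism.'' The differences are in the routing and in which steps are made explicit. The paper proves the two equivalences $(1)\Leftrightarrow(2)$ and $(2)\Leftrightarrow(3)$ rather than your cycle; for $(2)\Rightarrow(3)$ it first uses Proposition \ref{prop:unram} to reduce the fibre to an ordinary scheme and then cites \cite[Prop.\ 17.2.6]{EGAIV-IV} for radicial unramified morphisms of schemes, whereas you argue directly and more elementarily by decomposing $\Xcal_s\times_{\kappa(s)}\Xcal_s=\coprod_{i,j}\Spec(K_i\otimes_{\kappa(s)}K_j)$ and using surjectivity of the diagonal to kill the off-diagonal components and force $[K_i:\kappa(s)]=1$ --- this is self-contained and arguably preferable. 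For the open-immersion step the paper simply says ``as in the bosonic case'' and cites \cite[Cor.\ 17.4.2]{EGAIV-IV}, while you sketch the actual mechanism (the $\Z_2$-graded conormal identification $\mathcal I/\mathcal I^2\simeq\Omega_f$, which is built into the paper's own definition of $\Omega_{\Bs/\As}$ as $\Delta/\Delta^2$, plus finite generation of $\mathcal I$ from local finite presentation and the super Nakayama lemma); your caveat that this needs checking is exactly the point the paper glosses over, and your outline of how to check it is sound. Finally, your derivation of formal unramifiedness in $(1)\Rightarrow(2)$ from the cardinality bound $|\Hom_\Sc(\SSpec\Bs,\Xcal)|\le 1$ is a small simplification of the paper's argument via the conormal sheaf of the diagonal; both are valid.
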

\begin{proof}  
If $f$ is a monomorphism,  the diagonal $\Delta_f$ is an isomorphism by Proposition \ref{prop:isodiag}. Then, it is surjective, so that $f$ is radicial, and its conormal sheaf $\Omega_f$ is zero, which implies that $f$ is formally unramified by  Proposition \ref{prop:formunram}.  Thus, (1) implies (2).  To prove the converse, if $f$ is  radicial and formally unramified, the diagonal morphism $\Delta_f$ is surjective, and its conormal sheaf $\Omega_f$ vanishes. As in the bosonic case (see \cite[Cor.\! 17.4.2]{EGAIV-IV}), the latter condition implies that $\Delta_f$ is an open immersion; being surjective, it has to be an isomorphism, and then $f$ is a monomorphism by Proposition \ref{prop:isodiag}. 

We now prove that (2) implies (3). 
If $f$ is radicial and formally unramified, the fiber morphism $f_s\colon \Xcal_s=\Xcal\times_\Sc\Spec \kappa(s) \to \Spec \kappa(s)$ is also radicial and formally unramified. Assume that it is nonempty.  
By Proposition \ref{prop:unram}, on has $X_s\simeq \Xcal_s$. Thus, $\Xcal_s=X_s\to \Spec \kappa(s)$ is a radicial and unramified morphism of schemes, so that $\Xcal_s\simeq \Spec \kappa(s)$ by \cite[Prop.\! 17.2.6]{EGAIV-IV}. Thus (2) implies (3). 

Let us prove that (3) implies (2). If (3) is true, for every point $x\in X$ there is an isomorphism $\Xcal_s=X_s \simeq \Spec\kappa(s)$ where $s=f(x)$. It follows that $\Delta_f$ is surjective, and then $f$ is radicial. Moreover, the stalk $(\Omega_{\Xcal_s})_x$ is zero, and then we also have $(\Omega_f)_x=0$, so that $f$ is formally unramified by Proposition \ref{prop:formunram}, and we finish. 
\end{proof}

As in the bosonic case, Proposition \ref{prop:localsmooth} has the following consequence:
\begin{corol}\label{cor:localsectsmooth} Let $f\colon \Xcal \to \Ycal$ be a smooth morphism of superschemes. There exists an \'etale morphism $\Zc\to \Ycal$ of superschemes such that $f_\Zc\colon \Xcal\times_\Ycal\Zc \to \Zc$ has a section.\qed
\end{corol}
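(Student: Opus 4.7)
The plan is to adapt the classical proof that a smooth morphism of schemes admits étale-local sections. Since the conclusion is étale-local on $\Ycal$, I may restrict to a neighbourhood of a chosen point $y\in |\Ycal|$ and pick a preimage $x\in |\Xcal|$ of $y$ under $f$; glueing the resulting étale neighbourhoods (via disjoint union) over varying $y$ produces the global $\Zc\to\Ycal$.

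First, since $f$ is smooth of some relative dimension $m|n$, Proposition \ref{prop:smoothsplit} and Definition \ref{def:smooth} tell us that $\Omega_{\Xcal/\Ycal}$ is locally free of rank $m|n$ near $x$. Shrinking to affine opens, I choose even sections $a_1,\dots,a_m$ and odd sections $\xi_1,\dots,\xi_n$ of $\Oc_\Xcal$ whose differentials form a free basis of $\Omega_{\Xcal/\Ycal}$ at $x$. These sections then define a $\Ycal$-morphism
\[
g\colon \Xcal \longrightarrow \As^{m|n}_\Ycal=\Ycal\times\As^{m|n}.
\]

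Both source and target are smooth of the same relative dimension $m|n$ over $\Ycal$, and by construction the cotangent morphism $g^{\ast}\Omega_{\As^{m|n}_\Ycal/\Ycal}\to \Omega_{\Xcal/\Ycal}$ is an isomorphism at $x$. Hence $\Omega_g$ vanishes on some open neighbourhood $\Ucal$ of $x$, and one verifies (using the split local structure of smooth superschemes from Proposition \ref{prop:smoothsplit} and openness of the flat locus for finitely presented morphisms, reduced via the bosonic case) that $g|_\Ucal$ is also flat near $x$. Together with finite presentation, Proposition \ref{prop:localsmooth} then gives that $g|_\Ucal\colon \Ucal\to\As^{m|n}_\Ycal$ is étale.

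Finally, after translating by a constant in $\As^{m|n}$, I may assume $g(x)$ agrees with the image of $y$ under the zero section $s\colon \Ycal\hookrightarrow\As^{m|n}_\Ycal$. Set
\[
\Zc := \Ucal\times_{\As^{m|n}_\Ycal,\,s}\Ycal.
\]
By Proposition \ref{prop:basech}, $\Zc\to\Ycal$ is étale, and $y$ is in its image since $g(x)=s(y)$ produces a point of $\Zc$ over $y$. The two projections $\Zc\to\Ucal\hookrightarrow\Xcal$ and $\Zc\to\Ycal$ then assemble, via the universal property of the fibre product, into a section of $f_\Zc\colon \Xcal\times_\Ycal\Zc\to\Zc$. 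The main subtlety to justify carefully is the flatness assertion for $g$ in the second step: in the super setting one must ensure that a morphism between two superschemes which are smooth of the same relative dimension over $\Ycal$, and whose differential is an isomorphism at a point, is actually étale in a neighbourhood of that point, rather than merely unramified.
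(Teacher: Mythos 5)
Your overall strategy --- factor $f$ \'etale-locally on the source through $\As^{m|n}_\Ycal$ via sections whose differentials trivialize $\Omega_{\Xcal/\Ycal}$, then pull back along the zero section --- is exactly the standard bosonic argument that the paper is implicitly invoking (it offers no written proof beyond ``as in the bosonic case, Proposition \ref{prop:localsmooth} has the following consequence''), so in spirit the two proofs agree. However, the ``translation'' step contains a genuine gap. The coordinates of $g(x)$ in the fiber $\As^{m|n}_{\kappa(y)}$ are the residues $a_i(x)\in\kappa(x)$, and a translation moving $g(x)$ onto the zero section must be by elements of $\Oc_\Ycal$ near $y$; when $\kappa(x)\neq\kappa(y)$ this is impossible in general, and then $y$ need not lie in the image of $\Zc=\Ucal\times_{\As^{m|n}_\Ycal,\,s}\Ycal$, so the glued cover fails to be surjective onto the image of $f$. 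The standard fix, which you omit: take $x$ to be a closed point of the fiber $\Xcal_y$ with $\kappa(x)/\kappa(y)$ finite \emph{separable} --- such points exist and are dense because the bosonic fiber is a nonempty smooth $\kappa(y)$-scheme (Proposition \ref{prop:smoothsplit}) --- and then choose the even sections $a_1,\dots,a_m$ inside the maximal ideal $\mf_x$, which is possible precisely because separability makes $\mf_{X_y,x}/\mf_{X_y,x}^2\to\Omega_{X_y/\kappa(y)}(x)$ an isomorphism (the odd $\xi_j$, being nilpotent, lie in $\mf_x$ automatically). With this choice $g(x)$ is the $\kappa(y)$-rational origin of the fiber, no translation is needed, and $x$ itself furnishes a point of $\Zc$ over $y$. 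This is exactly the point where \'etale rather than Zariski localization becomes unavoidable, so it should not be elided.

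The second delicate point --- that $g$ is \'etale, not merely unramified, near $x$ --- you flag correctly but leave open; it can be closed more cleanly than by chasing flatness through the bosonic reduction. Both $\Xcal$ and $\As^{m|n}_\Ycal$ are formally smooth over $\Ycal$ by Proposition \ref{prop:localsmooth}, and for a $\Ycal$-morphism between formally smooth $\Ycal$-superschemes the Jacobian criterion applies: since $g^\ast\Omega_{\As^{m|n}_\Ycal/\Ycal}\to\Omega_{\Xcal/\Ycal}$ is a surjection of locally free sheaves of the same rank $m|n$ at $x$, it is an isomorphism on a neighbourhood of $x$ by (super) Nakayama, whence $g$ is formally \'etale there, and then \'etale by Proposition \ref{prop:localsmooth} again. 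This is the same fact the paper itself uses without further comment in the proof of Proposition \ref{prop:unramdiag1} (``the differential of $\psi$ at the point $x$ is an isomorphism \ldots\ it is \'etale on an open neighbourhood''), so invoking it here is consistent with the paper's standards; if you prefer your reduction to the bosonic case, you still need to say why \'etaleness of $g_{bos}$ together with $\Omega_g=0$ forces \'etaleness of $g$.
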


 We finish this part with a result on finite morphisms.
\begin{prop}\label{prop:finitebos} Let $\Xcal$ be a quasi-compact superscheme. An affine superscheme morphism $f\colon \Xcal\to\Ycal$ is finite if and only if the bosonic reduction $f_{bos}\colon X \to Y$ is finite.
\end{prop}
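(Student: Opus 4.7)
The plan is to prove the two implications separately, reducing the converse direction to a purely algebraic statement about superrings and then exploiting the finite filtration by powers of the odd ideal. The forward direction (finiteness of $f$ implies finiteness of $f_{bos}$) is essentially a base change argument: $f_{bos}$ is obtained from $f$ by base change along the closed immersion $Y\hookrightarrow \Ycal$ of the bosonic reduction of the target, and finiteness of superscheme morphisms is stable under base change.

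For the converse, I would first use the hypotheses to localize. Since $\Xcal$ is quasi-compact and $f$ is affine, and since finiteness of an affine morphism is local on the target, I can cover $\Ycal$ by finitely many open affines $\SSpec \Bs$ with preimages $\SSpec \As$. The claim then reduces to the following algebraic statement: if a morphism of superrings $\Bs\to \As$ is such that the induced morphism $B\to A$ of bosonic reductions makes $A$ into a finitely generated $B$-module, then $\As$ is a finitely generated $\Bs$-module.

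To prove this, I would exploit the filtration of $\As$ by powers of the odd ideal
$$\As\supseteq \Jc_\As\supseteq \Jc_\As^2\supseteq\cdots\supseteq \Jc_\As^{N+1}=0,$$
which terminates since $\Xcal$ is a (quasi-compact) superscheme and hence $\Jc_\As$ is nilpotent. Each successive quotient $M_i=\Jc_\As^i/\Jc_\As^{i+1}$ is naturally an $A$-module: one has $M_0=A$, and for $i\ge 1$, $M_i$ is a quotient of the $i$-th tensor power over $A$ of the finitely generated $A$-module $\Jc_\As/\Jc_\As^2$, hence finitely generated over $A$. Using the hypothesis that $A$ is finitely generated as a $B$-module, each $M_i$ is finitely generated as a $\Bs$-module; since $\As$ is built as a finite sequence of extensions of the $M_i$'s by $\Bs$-submodules, $\As$ itself is finitely generated as a $\Bs$-module.

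The main obstacle is ensuring the two global properties needed to make the filtration argument work, namely the nilpotence of $\Jc_\As$ and the finite generation of $\Jc_\As/\Jc_\As^2$ as an $A$-module. Both are local properties following from the definition of a superscheme, and they must be globalized to $\As=\Gamma(\Xcal,\Oc_\Xcal)$: the quasi-compactness of $\Xcal$ allows one to cover it by finitely many affine opens on which the properties hold locally, and then to take the maximum of the local nilpotence exponents to obtain a uniform $N$ that works globally.
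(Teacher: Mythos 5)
Your proof is correct and takes essentially the same route as the paper's: d\'evissage of $\Oc_\Xcal$ (equivalently, of $\As$ after localizing) along the finite filtration by powers of the odd ideal, whose graded quotients are finite modules over the bosonic reduction and hence finite over the base. The paper phrases this sheaf-theoretically via direct images instead of reducing to affines, but the argument is the same.
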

\begin{proof} It is clear that if $f$ is finite, so is $f_{bos}$. For the converse, if $f_{bos}$ is finite, then $f_\ast\Oc_{X}=(f_{bos})_\ast\Oc_X$ is a finite $\Oc_Y$-algebra, and then also a finite $\Oc_\Ycal$-algebra. Let $\Jc_\Xcal$ be the ideal of $\Oc_\Xcal$ generated by the odd sections, so that $\Oc_X=\Oc_\Xcal/\Jc_\Xcal$. Since $\Xcal$ is quasi-compact, one has $\Jc_\Xcal^{n+1}=0$ for some $n$, so that there is a finite filtration of $\Oc_\Xcal$ whose quotients $\Jc_\Xcal^s/\Jc_\Xcal^{s+1}$ are finite $\Oc_X$-modules. Then, their direct images  are finite over $\Oc_\Ycal$ and thus $f_\ast \Oc_\Xcal$ is finite over $\Oc_\Ycal$ as well. 
\end{proof}

\subsubsection*{Formally unramified group superschemes}

Let $f\colon \Gsc \to \Sc$  be an $\Sc$-group superscheme and $\Omega_{\Gsc/\Sc}$ the relative cotangent sheaf. Proceeding as in \cite[Lemma 39.6.3]{Stacks} one has:
\begin{prop}\label{prop:diffree}  There is a natural isomorphism, of $\Oc_\Gsc$-modules
$$
\Omega_{\Gsc/\Sc}\iso f^\ast e^\ast \Omega_{\Gsc/\Sc}\,,
$$
where $e\colon \Sc \to \Gsc$ is the unit section. In particular, if $\Sc$ is the spectrum of a field $k$ (that is, $\Gsc$ is a group superscheme over $k$), the relative cotangent sheaf $\Omega_\Gsc$ is a free $\Oc_{\Gsc}$-module.
\qed\end{prop}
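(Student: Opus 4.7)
The plan is to mimic the classical proof for ordinary group schemes (as in Stacks Project Lemma 39.6.3), adapted to the super setting. The main ingredient is the left-translation automorphism of $\Gsc\times_\Sc\Gsc$, combined with base-change and functoriality of relative differentials, both of which are available for superschemes.

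First I would set up the cartesian square
\[
\xymatrix{
\Gsc\times_\Sc \Gsc \ar[r]^(.6){p_2}\ar[d]_{p_1}& \Gsc\ar[d]^f\\
\Gsc\ar[r]^f & \Sc
}
\]
and invoke the base-change property of relative differentials to obtain the identification
$\Omega_{\Gsc\times_\Sc\Gsc/\Gsc}\simeq p_2^\ast\Omega_{\Gsc/\Sc}$, where the left-hand relative differentials are taken with respect to $p_1$. Next I would introduce the left-translation morphism
$\tau\colon\Gsc\times_\Sc\Gsc\to\Gsc\times_\Sc\Gsc$, defined functorially by $(g,h)\mapsto(g,gh)$; since its inverse is $(g,k)\mapsto(g,\beta(g)k)$, this is an isomorphism of $\Gsc$-superschemes over $p_1$, and by construction $p_2\circ\tau=\mu$.

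The second step is to apply the functor $\tau^\ast$ to the identification of the first step. Since $\tau$ is an automorphism over $\Gsc$, it induces an isomorphism $\tau^\ast\Omega_{\Gsc\times_\Sc\Gsc/\Gsc}\iso\Omega_{\Gsc\times_\Sc\Gsc/\Gsc}$, and using $p_2\circ\tau=\mu$ together with base change one obtains
\[
\mu^\ast\Omega_{\Gsc/\Sc}\simeq p_2^\ast\Omega_{\Gsc/\Sc}.
\]
Finally I would pull back this isomorphism along the section $\sigma\colon\Gsc\to\Gsc\times_\Sc\Gsc$ given by $g\mapsto(g,e(f(g)))$. One computes $\mu\circ\sigma=\Id_\Gsc$ (since $g\cdot e=g$) and $p_2\circ\sigma=e\circ f$, so pulling back yields
\[
\Omega_{\Gsc/\Sc}\simeq f^\ast e^\ast\Omega_{\Gsc/\Sc}
\]
as $\Oc_\Gsc$-modules. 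The ``in particular'' statement then follows: when $\Sc=\Spec k$, the module $e^\ast\Omega_{\Gsc/k}$ is a $\Z_2$-graded $k$-vector space, hence free, and pulling back by $f$ gives a free $\Oc_\Gsc$-module.

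The only step that needs even minor care in the super setting is the construction of $\tau$ and the verification of its inverse, since these involve the group operations $\mu$ and $\beta$ as morphisms of superschemes; however, this is immediate from the definition of an $\Sc$-group superscheme via its functor of points, so I do not expect a genuine obstacle here. All other ingredients---base change for $\Omega$, functoriality $\phi^\ast\Omega_{Y/S}\simeq\Omega_{X/S}$ when $\phi\colon X\to Y$ is an isomorphism over $S$, and the composition rule for pullbacks---have already been developed for superschemes earlier in the excerpt (cf.\ Proposition \ref{prop:propreldiff}), so the classical argument transfers verbatim.
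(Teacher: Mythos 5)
Your argument is correct and is precisely the one the paper intends: the paper's entire proof is ``Proceeding as in [Stacks, Lemma 39.6.3]'', and your write-up is exactly that translation argument (base change along $p_1$, the automorphism $\tau(g,h)=(g,gh)$ giving $\mu^\ast\Omega_{\Gsc/\Sc}\simeq p_2^\ast\Omega_{\Gsc/\Sc}$, then restriction along the section $g\mapsto(g,e(f(g)))$) carried over to superschemes. No gaps; the super-specific ingredients you flag (base change for $\Omega$ and the group operations as superscheme morphisms) are indeed available earlier in the paper.
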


Now let $\Gsc$ be a supergroup locally of finite type over a field $k$ and $T_e(\Gsc)=\Der_k(\Oc_{\Gsc,e}, \kappa(e))$ the tangent superspace at the origin. One has $T_e(\Gsc)\simeq (\Omega_{\Oc_{\Gsc,e}} \otimes_{{\Oc_{\Gsc,e}}}\kappa(e))^\ast\simeq (e^\ast \Omega_{\Gsc/\Spec k})^\ast$, and then $e^\ast \Omega_{\Gsc/\Spec k}\simeq T_e(\Gsc)^\ast$, because both are finitely dimensional superspaces.
\begin{corol} \label{cor:group} If $T_e(\Gsc)=0$, there is an isomorphism 
$\Gsc\simeq\coprod_i \Spec K_i$, where each $K_i$ is a finite and separable extension of $k$.
\end{corol}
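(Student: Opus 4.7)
The strategy is to show that the hypothesis $T_e(\Gsc)=0$ forces the sheaf of differentials $\Omega_{\Gsc/\Spec k}$ to vanish identically, so that $\Gsc \to \Spec k$ is formally unramified, and then to invoke the structural description of unramified morphisms of superschemes established in Proposition \ref{prop:unramprop}.

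First I would apply Proposition \ref{prop:diffree} to the supergroup $\Gsc$ over $\Sc = \Spec k$: this yields an isomorphism
\begin{equation}
\Omega_{\Gsc/\Spec k} \simeq f^\ast e^\ast \Omega_{\Gsc/\Spec k},
\end{equation}
where $f\colon \Gsc \to \Spec k$ is the structure morphism and $e\colon \Spec k \to \Gsc$ is the unit. As noted in the paragraph preceding the corollary, $e^\ast \Omega_{\Gsc/\Spec k} \simeq T_e(\Gsc)^\ast$, which is $0$ under our hypothesis. Consequently $\Omega_{\Gsc/\Spec k} = 0$, and by Proposition \ref{prop:formunram} the morphism $f\colon \Gsc \to \Spec k$ is formally unramified. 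Since $\Gsc$ is locally of finite type over $k$ by assumption (Definition \ref{def:alggr} and the hypothesis that $\Gsc$ is locally of finite type), it follows from the definition that $f$ is actually unramified.

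Now I would feed this into Proposition \ref{prop:unramprop}. The equivalence (1)$\Leftrightarrow$(5) there says that an unramified morphism $\Xcal \to \Sc$ (locally of finite type) has the property that, for every field-valued point $s\colon \Spec \kappa(s) \to \Sc$, the fiber $\Xcal_s$ is a discrete and reduced scheme of the form $\coprod_i \Spec K_i$ with $K_i/\kappa(s)$ finite separable. Applying this to $f\colon \Gsc \to \Spec k$ at the unique point of $\Spec k$, the fiber is $\Gsc$ itself, giving the claimed decomposition $\Gsc \simeq \coprod_i \Spec K_i$ with $K_i/k$ finite and separable.

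No step looks genuinely obstructive: the only place one should take a little care is in identifying $e^\ast \Omega_{\Gsc/\Spec k}$ with the $k$-linear dual of the tangent superspace $T_e(\Gsc)$; this is justified by the argument in the paragraph preceding the corollary, which uses the finite-dimensionality of both sides over $k$.
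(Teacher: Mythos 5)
Your proposal is correct and follows essentially the same route as the paper: use Proposition \ref{prop:diffree} together with the identification $e^\ast\Omega_{\Gsc/\Spec k}\simeq T_e(\Gsc)^\ast$ to conclude $\Omega_{\Gsc/\Spec k}=0$, hence that $\Gsc$ is unramified over $k$, and then apply Proposition \ref{prop:unramprop} to obtain the decomposition into spectra of finite separable extensions. You merely spell out the intermediate steps (formal unramifiedness via Proposition \ref{prop:formunram} and the locally-of-finite-type hypothesis) that the paper leaves implicit.
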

\begin{proof} One has $\Omega_{\Gsc/\Spec k}=0$ by Proposition \ref{prop:diffree}, so that $\Gsc$ is an  unramified superscheme.  Then $\Gsc$ is bosonic by
Proposition \ref{prop:unramprop}, and we finish.
\end{proof}

\subsection{Local properties of superscheme morphisms}

\begin{defin}\label{def:localproperties} A property $\mathbf P$ of morphisms of superschemes is:
\begin{enumerate}
\item stable by base change, if whenever a morphism $\Xcal\to\Ycal$ has $\mathbf P$ and $\Zc\to\Ycal$ is any morphism, then $\Xcal_\Zc:=\Xcal\times_\Ycal\Zc \to \Zc$ has $\mathbf P$;
\item stable by composition, if the composition of two morphisms that have $\mathbf P$, has also $\mathbf P$;
\item
\'etale (resp.\! smooth, fppf, fpqc) locally on the target, if for any surjective \'etale (resp.\! smooth, fppf, fpqc) morphism $\Vc\to\Ycal$, a morphism $\Xcal\to\Ycal$  has the property $\mathbf P$ if and only if the morphism  $\Xcal_\Vc:=\Xcal\times_\Ycal \Vc\to \Vc$  has property $\mathbf P$;
\item \'etale (resp.\! smooth, fppf, fpqc) locally on the source, if for any surjective \'etale (resp.\! smooth, fppf, fpqc) morphism $\Ucal\to\Xcal$ a morphism $\Xcal\to\Ycal$  has   property $\mathbf P$ if and only if  the composition morphism  $\Ucal \to \Xcal \to \Ycal$  has property $\mathbf P$.
\end{enumerate}
\end{defin}

The next result is straightforward.
\begin{prop}\label{prop:localproperties}\
\begin{enumerate}
\item The following properties are stable by base change and composition and smooth locally on the target and on the source (and then, \'etale local on the target and on the source): being surjective, flat, locally of finite presentation,  locally of finite type and smooth,
\item The following properties are stable by base change and composition and \'etale local on the target and on the source: being locally quasi-finite, \'etale and unramified.
\end{enumerate}
\qed
\end{prop}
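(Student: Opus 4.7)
The strategy is to reduce each property to its classical counterpart and to invoke the behavior of the bosonic reduction together with the results already established in Appendix \ref{sec:app}. Stability under base change and composition is automatic in all cases from the definitions; we concentrate on the descent statements. Throughout, given a morphism $f\colon \Xcal\to\Ycal$ of superschemes and a covering $\{\Vc\to\Ycal\}$ or $\{\Ucal\to\Xcal\}$ of one of the stipulated types, we work locally and reduce to an assertion about the $\Z_2$-graded structure sheaves.

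For part (1), I would handle the four constituent properties separately. \emph{Surjective}: since $|f|=|f_{bos}|$ (Proposition \ref{prop:continuous}), surjectivity is equivalent to surjectivity of $f_{bos}$, and then one invokes the classical fact that surjectivity is smooth-local on source and target for ordinary schemes. \emph{Flat}: flatness is a statement on the $\Oc_\Ycal$-module $f_\ast \Oc_\Xcal$ and is checked stalk-wise; classical fppf (and hence smooth and étale) descent applies verbatim to $\Z_2$-graded modules. \emph{Locally of finite presentation and locally of finite type}: these are conditions on the structure sheaves that decompose along the $\Z_2$-grading, so the classical descent results extend directly (one can also filter by powers of the odd ideal $\Jc$ and argue degree by degree, as in the proof of Proposition \ref{prop:finitebos}). \emph{Smooth}: by Definition \ref{def:smooth}, smoothness is the conjunction of being locally of finite presentation, being flat, and having $\Omega_{\Xcal/\Ycal}$ locally free of the correct rank; each of these is smooth-local on source and target by what precedes and by the local nature of the cotangent sheaf, which is computed from the structure sheaves and commutes with pullback. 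Stability under base change and composition for smoothness is Proposition \ref{prop:basech}.

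For part (2) I would proceed analogously. \emph{Unramified} is already handled by Proposition \ref{prop:basech} for base change and composition, and Proposition \ref{prop:unramprop} shows that $f$ is unramified iff $\Xcal_S$ is bosonic and $f_{bos}$ is an unramified morphism of ordinary schemes; hence étale descent for the property on source and on target reduces to the corresponding classical statement after passage to bosonic reductions (applying Corollary \ref{cor:localsmooth} to ensure that étale coverings of superschemes induce étale coverings of the bosonic reductions). \emph{Étale} is the conjunction of smooth and unramified, and therefore inherits the descent properties from the two. \emph{Locally quasi-finite}: by Definition \ref{def:qf}, this is the conjunction of being locally of finite type with the topological condition that $f_{bos}$ has discrete fibers; both halves are étale-local on source and target by (1) and by classical results.

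The only point that requires a moment's thought is the \emph{local on the source} direction in part (2), since an étale covering of the source does not preserve the rank datum of $\Omega$ in general. For unramified morphisms this causes no issue because $\Omega_{\Xcal/\Ycal}=0$ (Proposition \ref{prop:formunram}), a condition that is trivially étale-local on the source; for étale morphisms one combines this with the smooth case from (1). No genuinely new super-geometric difficulty arises — the whole argument is a bookkeeping exercise reducing to the classical descent results compiled in \cite[Tags 02VL, 036U]{Stacks} and in \cite[Ch.~2]{Alp24}.
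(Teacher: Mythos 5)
The paper offers no proof of this proposition at all --- it is introduced with ``The next result is straightforward'' and stated with an immediate \qed{} --- so there is no argument of the authors' to compare yours against; your sketch supplies exactly the kind of reduction (to the bosonic case and to $\Z_2$-graded commutative algebra) that the authors evidently have in mind, and in outline it is correct. Two points deserve tightening. First, your parenthetical that ``flatness is a statement on the $\Oc_\Ycal$-module $f_\ast\Oc_\Xcal$'' is wrong as written (an open immersion is flat while $f_\ast\Oc_\Xcal$ carries no useful flatness information); what you actually use, and what is correct, is the stalkwise formulation --- flatness of $\Oc_{\Xcal,x}$ over $\Oc_{\Ycal,f(x)}$ --- for which faithfully flat descent of flatness of graded modules applies verbatim. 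Second, the genuinely non-formal step is the \emph{source-local} direction for smoothness: given a smooth surjection $g\colon\Ucal\to\Xcal$ with $f\circ g$ smooth, local freeness of $\Omega_{\Xcal/\Ycal}$ of the correct rank does not follow merely from ``the cotangent sheaf commutes with pullback''; one needs either (a) the first exact sequence $g^\ast\Omega_{\Xcal/\Ycal}\to\Omega_{\Ucal/\Ycal}\to\Omega_{\Ucal/\Xcal}\to 0$ together with its left exactness and local splitness when $g$ is smooth, followed by faithfully flat descent of local freeness along $g$, or (b) the route the paper itself has prepared, namely the characterization of smooth as locally of finite presentation plus formally smooth (Proposition \ref{prop:localsmooth}) combined with the existence of \'etale-local sections of smooth morphisms (Corollary \ref{cor:localsectsmooth}), which makes formal smoothness manifestly local on the source. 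With either of these inserted, and with the unramified case handled via $\Omega_f=0$ (Proposition \ref{prop:formunram}) exactly as you do, your argument is complete and consistent with the classical references \cite{Stacks,Alp24} and with \cite{BrHRPo20}.
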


\subsection{Fppf descent for superschemes}

Here we complete some statements about fppf  descent for superschemes given in \cite[Props.\! A.28, A.29, A.30]{BrHRPo20}.

\begin{defin}\label{def:quasiaffine} A superscheme is quasi-affine if it is a quasi-compact open subsuperscheme of an affine superscheme. A morphism $f\colon \Xcal \to \Sc$ of superschemes is \emph{quasi-affine} if the pre-image of every affine open subsuperscheme $\Ucal\hookrightarrow \Sc$ is a quasi-affine open subsuperscheme of $\Xcal$.
\end{defin}

Proceeding as in \cite[Tag 01SM]{Stacks} one has:
\begin{prop}
A morphism $f\colon \Xcal \to \Sc$ of superschemes is quasi-affine if and only it is quasi-compact and separated and the natural morphism $g\colon \Xcal \to \SSpec f_\ast\Oc_\Xcal$ is a quasi-compact open immersion. Notice that $f_\ast\Oc_\Xcal$ is a quasi-coherent sheaf of $\Oc_\Sc$-algebras because $f$ is quasi-compact and separated.
\qed
\end{prop}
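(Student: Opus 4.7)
The plan is to prove the two implications separately, following the structure of the classical argument for schemes (Stacks \cite[Tag~01SM]{Stacks}) and importing super-analogues of each ingredient from the body of the paper.

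For the easy direction, assume the natural morphism $g\colon \Xcal\to\SSpec f_\ast\Oc_\Xcal$ is a quasi-compact open immersion, and let $p\colon\SSpec f_\ast\Oc_\Xcal\to\Sc$ denote the affine structure morphism, so that $f=p\circ g$. For any affine open subsuperscheme $\Ucal\hookrightarrow\Sc$, the preimage $p^{-1}(\Ucal)$ is affine, hence $f^{-1}(\Ucal)=g^{-1}(p^{-1}(\Ucal))$ is a quasi-compact open subsuperscheme of an affine superscheme, i.e.\! quasi-affine. Thus $f$ is quasi-affine; it is also quasi-compact (obvious from the quasi-compactness of $g$ and of $p$) and separated (since $g$ is an immersion and $p$ is affine, hence separated).

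For the hard direction, assume $f$ is quasi-affine. Quasi-compactness is immediate from Definition~\ref{def:quasiaffine}. Separatedness can be checked locally on $\Sc$: for an affine open $\Ucal\hookrightarrow\Sc$, $f^{-1}(\Ucal)$ embeds as a quasi-compact open into an affine superscheme, and every open immersion is separated while affine superschemes are separated, so $f^{-1}(\Ucal)\to\Ucal$ is separated. Since quasi-compactness and separatedness are known, the push-forward $f_\ast\Oc_\Xcal$ is a quasi-coherent sheaf of $\Oc_\Sc$-superalgebras (this is the super-analogue of the quasi-coherence statement whose proof only requires the Cech-style computation of sections over affine opens, and works verbatim for the $\Z_2$-graded structure sheaves of superschemes). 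Consequently $\SSpec f_\ast\Oc_\Xcal$ exists as a superscheme affine over $\Sc$, and the adjunction between direct image and relative super-$\SSpec$ produces the canonical morphism $g\colon\Xcal\to\SSpec f_\ast\Oc_\Xcal$ over $\Sc$.

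It remains to prove that $g$ is a quasi-compact open immersion. Being a quasi-compact open immersion is \'etale (in fact Zariski) local on the target by Proposition~\ref{prop:localproperties}, and the formation of $\SSpec f_\ast\Oc_\Xcal$ commutes with restriction to affine opens of $\Sc$, so we may assume $\Sc=\SSpec\As$ is affine. Then $\Xcal$ itself is quasi-affine, namely a quasi-compact open subsuperscheme $\Xcal\hookrightarrow\Ycal=\SSpec\Bs$ for some superring $\Bs$; write $\Cs=\Gamma(\Xcal,\Oc_\Xcal)$, so that $g$ becomes the morphism $\Xcal\to\SSpec\Cs$ over $\SSpec\Bs$ induced by the natural superring map $\Bs\to\Cs$. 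I would cover $\Xcal$ by basic open subsuperschemes of $\Ycal$ of the form $D(b_i)$ for even elements $b_i\in\Bs_+$, and then note that each such $b_i$, viewed in $\Cs_+$, gives $D_{\SSpec\Cs}(b_i)$ and the restriction of $g$ to $D(b_i)\subseteq\Xcal$ is an isomorphism onto $D_{\SSpec\Cs}(b_i)$ (because on sections it becomes the identity $\Cs_{b_i}\to\Cs_{b_i}$, using $\Gamma(D(b_i),\Oc_\Xcal)=\Cs_{b_i}$ which follows from quasi-compactness and separatedness of $\Xcal$). Consequently $g$ is an isomorphism onto an open subsuperscheme of $\SSpec\Cs$, and quasi-compactness of $g$ is inherited from quasi-compactness of $\Xcal$.

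The main obstacle is the last step, where one must verify the identification $\Gamma(D(b_i),\Oc_\Xcal)=\Cs_{b_i}$ at the super level and check that the covering $\{D(b_i)\}$ of $\Xcal$ by elements of $\Bs_+$ can be refined to a covering indexed by elements of $\Cs_+$. Both facts reduce, via the $\Jc$-adic filtration used in Proposition~\ref{prop:finitebos} (which terminates by quasi-compactness), to the corresponding statements for the bosonic reduction $f_{bos}\colon X\to S$, which are exactly the content of \cite[Tag~01SM]{Stacks}; the odd sections contribute graded pieces that, being quasi-coherent $\Oc_X$-modules, localize the same way as the even part.
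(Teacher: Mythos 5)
Your argument is correct and is precisely the expansion of the route the paper itself indicates: the paper offers no written proof, only the instruction to proceed as in \cite[Tag 01SM]{Stacks}, and your proposal carries out exactly that classical argument (reduction to affine base, covering by basic opens $D(b_i)$ with $b_i$ even, and the quasi-compact-and-separated localization lemma $\Gamma(\Xcal_{b},\Oc_\Xcal)\simeq\Gamma(\Xcal,\Oc_\Xcal)_b$, which indeed survives the $\Z_2$-graded setting since localization at an even element is exact). Your closing remark about the $\Jc$-adic filtration is a workable but slightly roundabout way to justify that lemma; the direct \v{C}ech/equalizer computation over a finite affine cover works verbatim for the graded structure sheaf.
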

Thus, a quasi-affine morphism $f$ factors as the composition of a quasi-compact open immersion $g\colon \Xcal \hookrightarrow \Ycal=\SSpec f_\ast\Oc_\Xcal$ and an affine morphism $h\colon \Ycal \to \Sc$.

\begin{prop}\label{prop:qaffine} A monomorphism $f\colon \Xcal \to \Sc$ of superschemes,  locally of finite presentation, is quasi-affine.
\end{prop}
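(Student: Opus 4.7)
The plan is, first, to reduce to the case $\Sc = \SSpec\As$ affine, since quasi-affineness is Zariski-local on the target by Definition \ref{def:quasiaffine}. Then I would extract the structural consequences of the monomorphism hypothesis. By Proposition \ref{prop:isodiag}, $\Delta_f$ is an isomorphism, so $f$ is separated. By Proposition \ref{prop:mono} together with $f$ being locally of finite type, $f$ is radicial and formally unramified, hence unramified, and therefore locally quasi-finite by Corollary \ref{cor:unramprop}. Moreover, Proposition \ref{prop:unramprop}(2) gives a cartesian square exhibiting $X = \Xcal \times_\Sc S$, where $X = \Xcal_{bos}$ and $S = \Sc_{bos}$.

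The strategy is then to reduce to the classical case via the bosonic reduction. The morphism $f_{bos}\colon X \to S$ is a monomorphism of ordinary schemes, locally of finite presentation, and therefore quasi-affine by the classical theorem of Grothendieck; in particular $f_{bos}$ is quasi-compact. Since $|f| = |f_{bos}|$ by Proposition \ref{prop:continuous}, $f$ is quasi-compact, so $f_\ast\Oc_\Xcal$ is a quasi-coherent $\Oc_\Sc$-algebra and I obtain a factorization $\Xcal \xrightarrow{g} \Ycal := \SSpec f_\ast\Oc_\Xcal \xrightarrow{h} \Sc$ with $h$ affine. It then suffices to show that $g$ is a quasi-compact open immersion.

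Topologically, $|g| = |g_{bos}|$ is an open immersion, since $g_{bos}\colon X \to \Ycal_{bos}$ is the natural map attached to the quasi-affineness of $f_{bos}$. The remaining sheaf-theoretic verification would proceed locally: around any $x\in|\Xcal|=|X|$, I would apply classical Zariski's Main Theorem to the locally quasi-finite separated morphism $f_{bos}$ to obtain, after shrinking $\Sc$, a factorization $X \hookrightarrow \bar X \to S$ in which the second map is finite and the first is a quasi-compact open immersion. This bosonic factorization lifts to an affine super-scheme morphism $\bar\Xcal \to \Sc$, which is finite by Proposition \ref{prop:finitebos} since finiteness of affine super-scheme morphisms is detected on bosonic reductions.

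The main obstacle will be the sheaf-theoretic compatibility: checking that $g^\sharp\colon g^{-1}\Oc_\Ycal \to \Oc_\Xcal$ is an isomorphism, not merely that $|g|$ is topologically an open immersion. The crucial ingredient here is the unramifiedness of $f$, which forces the odd sections of $\Oc_\Xcal$ to be pulled back from $\Oc_\Sc$ with no additional odd generators; combined with the cartesian-square description $X = \Xcal \times_\Sc S$ coming from Proposition \ref{prop:unramprop}(2), this constrains the super structure of $\Xcal$ to match the corresponding open piece of $\bar\Xcal$, yielding $\Xcal \hookrightarrow \bar\Xcal$ as a super-scheme open immersion and hence the quasi-affineness of $f$.
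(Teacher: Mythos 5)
The first half of your argument is sound and runs parallel to the paper's: separatedness because $\Delta_f$ is an isomorphism (Proposition \ref{prop:isodiag}), quasi-compactness, and the factorization $\Xcal \xrightarrow{\ g\ } \Ycal:=\SSpec f_\ast\Oc_\Xcal \xrightarrow{\ h\ } \Sc$ with $h$ affine, reducing everything to showing that $g$ is a quasi-compact open immersion of \emph{superschemes}.

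The gap is in how you close this. A minor point first: identifying $g_{bos}$ with ``the natural map attached to the quasi-affineness of $f_{bos}$'' tacitly assumes $\Ycal_{bos}\simeq \Spec\, (f_{bos})_\ast\Oc_X$, whereas $\Ycal_{bos}=\Spec\bigl((f_\ast\Oc_\Xcal)/\Jc\bigr)$ with $\Jc$ generated by the odd sections, and the natural map $(f_\ast\Oc_\Xcal)/\Jc \to f_\ast(\Oc_\Xcal/\Jc_\Xcal)$ need not be an isomorphism since $f_\ast$ is only left exact. More seriously, the decisive step --- that $g$ is an open immersion of superschemes and not merely a topological one --- is exactly what you label ``the main obstacle,'' and it is left as a sketch. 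Your route via Zariski's Main Theorem requires \emph{lifting} the bosonic factorization $X\hookrightarrow\bar X\to S$ to a super one $\Xcal\hookrightarrow\bar\Xcal\to\Sc$: Proposition \ref{prop:finitebos} only applies once you already have an affine superscheme morphism $\bar\Xcal\to\Sc$ containing $\Xcal$ as an open sub-superscheme, and constructing such a $\bar\Xcal$ is essentially equivalent to the quasi-affineness you are trying to prove; the heuristic that unramifiedness ``forces the odd sections to be pulled back'' does not supply this construction. The paper avoids the detour entirely: once $|g|$ is known to be a quasi-compact topological open immersion (by the classical statement applied as in \cite[Prop.\ 8.12.2]{EGAIV-III}), one has $\Oc_\Ycal\simeq g_\ast\Oc_\Xcal$ because $h_\ast\Oc_\Ycal=f_\ast\Oc_\Xcal=h_\ast g_\ast\Oc_\Xcal$ with $h$ affine, whence $(\Oc_\Xcal)_x\simeq(\Oc_\Ycal)_{g(x)}$ for every point $x$, and $g$ is an open immersion of superschemes. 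Replacing your last two paragraphs with this stalk computation closes the argument.
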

\begin{proof} $f$ is separated because $\Delta_f$ is an isomorphism and, being locally of finite presentation, is quasi-compact because it has finite fibres by  Proposition \ref{prop:mono}. Then,  $f_\ast\Oc_\Xcal$ is a quasi-coherent sheaf of $\Oc_\Sc$ algebras and $f$ factors as the composition of the natural morphism $g\colon \Xcal \to \Ycal=\SSpec f_\ast\Oc_\Xcal$ with the affine morphism $h\colon \Ycal \to \Sc$. One has to prove that $g$ is a quasi-compact open immersion. It is quasi-compact because its fibers reduce topologically to a single point. Moreover, by the analogous statement for ordinary schemes (see, for instance, \cite[Prop.\! 8.12.2]{EGAIV-III}), $g$ is topologically an open immersion. Then, for every point $x\in X$ one has $(\Oc_\Xcal)_x\simeq (f_\ast\Oc_\Xcal)_{g(x)}$ so that $g$ is an open immersion of superschemes.
\end{proof}

As in the bosonic case, quasi-affine morphisms of superschemes verify descent, that is:
\begin{prop}[Quasi-affine descent]\label{prop:qadescent} Let $\{\Sc_i \to \Sc\}$ be an fppf covering of superschemes and $\{\Xcal_i/\Sc_i, \varphi_{ij}\}$ descent data relative to $\{\Sc_i \to \Sc\}$. If every morphism $\Xcal_i\to\Sc_i$ is quasi-affine, the descent data are effective, that is, there exist a superscheme morphism $\Ycal\to \Sc$ and isomorphisms $\Xcal_i \iso \Sc_i\times_\Sc \Ycal$.
\qed
\end{prop}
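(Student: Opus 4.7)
The plan is to reduce quasi-affine descent to affine descent together with descent of open subsuperschemes. By the factorization result recalled just before Proposition~\ref{prop:qaffine}, each $f_i\colon\Xcal_i\to\Sc_i$ can be written as the composition of a quasi-compact open immersion $g_i\colon\Xcal_i\hookrightarrow \Ycal_i:=\SSpec \Ac_i$ and an affine morphism $h_i\colon\Ycal_i\to\Sc_i$, where $\Ac_i:=f_{i\ast}\Oc_{\Xcal_i}$ is a quasi-coherent $\Oc_{\Sc_i}$-superalgebra. Since $f_i$ is quasi-compact and separated, pushforward commutes with the flat base change along $\Sc_i\times_\Sc \Sc_j\to\Sc_i$. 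Hence the descent datum $\{\varphi_{ij}\}$ on the $\Xcal_i$'s induces a descent datum $\{\psi_{ij}\}$ on the quasi-coherent $\Oc_{\Sc_i}$-superalgebras $\Ac_i$.

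By the fppf descent result for quasi-coherent sheaves on superschemes \cite[Prop.\! A.28]{BrHRPo20}, the system $\{\Ac_i,\psi_{ij}\}$ descends to a quasi-coherent $\Oc_\Sc$-superalgebra $\Ac$; set $\Ycal_0:=\SSpec\Ac\to\Sc$. Taking the relative spectrum commutes with base change, so $\Sc_i\times_\Sc\Ycal_0\simeq \Ycal_i$, and under these identifications the induced descent datum on $\{\Ycal_i\}$ is exactly $\{\psi_{ij}\}$. Thus the open immersions $g_i\colon\Xcal_i\hookrightarrow \Ycal_i\simeq \Sc_i\times_\Sc\Ycal_0$ present each $\Xcal_i$ as a quasi-compact open subsuperscheme of the base change of $\Ycal_0$, and the $\varphi_{ij}$ match on the overlaps by construction.

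It remains to check that the family $\{\Xcal_i\hookrightarrow \Sc_i\times_\Sc\Ycal_0\}$ descends to a unique open subsuperscheme $\Ycal\hookrightarrow\Ycal_0$. Here one uses the fact that the underlying topological space of a superscheme coincides with that of its bosonic reduction, and that the projection $\Sc_i\times_\Sc\Ycal_0\to\Ycal_0$ is fppf and therefore open. Concretely, denote by $U_i\subseteq|\Sc_i\times_\Sc\Ycal_0|$ the open subset $|\Xcal_i|$. The compatibility via $\{\varphi_{ij}\}$ shows that $U_i$ and the pullback of $U_j$ agree in $|(\Sc_i\times_\Sc\Sc_j)\times_\Sc\Ycal_0|$, so by the classical topological fpqc descent of opens (applied on bosonic reductions) the $U_i$ descend to an open subset $U\subseteq|\Ycal_0|$ whose preimage in $|\Sc_i\times_\Sc\Ycal_0|$ is $U_i$. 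Let $\Ycal$ be the corresponding open subsuperscheme of $\Ycal_0$; since the sheaf structure on an open subsuperscheme is the restriction of the ambient structure sheaf, one gets canonical isomorphisms $\Sc_i\times_\Sc\Ycal\simeq \Xcal_i$ compatible with $\{\varphi_{ij}\}$, proving effectivity.

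The main obstacle is the last step, namely descending the open subsuperschemes from the base-changed affine superschemes $\Ycal_i$ to a single open of $\Ycal_0$. What makes it go through in the super setting is that passing to bosonic reductions preserves both fppf coverings and the property of being open, so the topological part of the descent is reduced to the classical statement, while the sheaf-theoretic part is built into the definition of an open subsuperscheme.
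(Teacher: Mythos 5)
Your proof is correct and follows exactly the route the paper intends: the paper gives no written argument for this proposition (it is stated with a \qed after the remark ``as in the bosonic case''), and the classical bosonic proof is precisely your reduction to affine descent via the factorization $\Xcal_i\hookrightarrow\SSpec f_{i\ast}\Oc_{\Xcal_i}\to\Sc_i$, fppf descent of the quasi-coherent superalgebras, and topological descent of the quasi-compact opens, which transports to superschemes because $|\Xcal|=|X|$. No gaps.
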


We also have:

\begin{prop}[Locally quasi-finite and separated descent]\label{prop:qfsepdescent} Let $\{\Sc_i \to \Sc\}$ be an fppf covering of superschemes and $\{\Xcal_i/\Sc_i, \varphi_{ij}\}$ descent data relative to $\{\Sc_i \to \Sc\}$. If every morphism $\Xcal_i\to\Sc_i$ is locally quasi-finite and separated, the descent data are effective, that is, there exist a superscheme morphism $\Ycal\to \Sc$ and isomorphisms $\Xcal_i \iso \Sc_i\times_\Sc \Ycal$.
\end{prop}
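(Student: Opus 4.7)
The plan is to reduce the statement to Proposition \ref{prop:qadescent} (quasi-affine descent) by means of a superscheme version of Zariski's Main Theorem, which provides a Zariski cover of each $\Xcal_i$ by open subsuperschemes that are quasi-affine over $\Sc_i$.

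First, I would establish the following super ZMT-type statement: every locally quasi-finite and separated morphism $f\colon \Xcal\to\Sc$ of superschemes admits an open covering $\{\Ucal_\alpha\}$ of $\Xcal$ such that each restriction $f|_{\Ucal_\alpha}\colon \Ucal_\alpha\to\Sc$ is quasi-affine. The classical bosonic version is \cite[Thm.\! 8.12.6]{EGAIV-IV} (or equivalently the version in the Stacks Project). Passing from the bosonic to the super setting uses: (i) the underlying topological space of a superscheme is the same as that of its bosonic reduction, so the open covering can be taken with the same indexing data as in the bosonic case; (ii) a morphism is quasi-affine iff it is quasi-compact and separated and its scheme-theoretic image $\SSpec f_\ast\Oc_\Xcal$ receives $\Xcal$ as a quasi-compact open immersion, and each of these properties reduces to the corresponding bosonic statement (for the affine/finite factor, Proposition \ref{prop:finitebos}; for the open immersion factor, the topology of the source).

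Second, I would use the super ZMT to refine the descent data to the quasi-affine case. For each $i$, pick an open cover $\{\Ucal_{i,\alpha}\}_\alpha$ of $\Xcal_i$ by quasi-affine opens over $\Sc_i$. Since each $\varphi_{ij}\colon \Xcal_i\times_\Sc \Sc_j\iso \Xcal_j\times_\Sc \Sc_i$ is an isomorphism of superschemes over $\Sc_i\times_\Sc\Sc_j$ and preserves open subsuperschemes, the images $\varphi_{ij}(\Ucal_{i,\alpha}\times_\Sc\Sc_j)$ are quasi-affine opens inside $\Xcal_j\times_\Sc\Sc_i$. By refining the $\{\Ucal_{i,\alpha}\}$, using the fact that the topology of each $\Xcal_i$ is that of its bosonic reduction (Proposition \ref{prop:points}) and fpqc descent for open immersions of superschemes, one arranges that the $\varphi_{ij}$ identify the pulled-back opens from the cover of $\Xcal_i$ with those from the cover of $\Xcal_j$. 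This yields refined descent data $\{\Ucal_{i,\alpha}/\Sc_i,\varphi_{ij,\alpha}\}$ for each $\alpha$, in which every $\Ucal_{i,\alpha}\to\Sc_i$ is quasi-affine.

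Third, I would apply Proposition \ref{prop:qadescent} to each refined datum to obtain superschemes $\Ycal_\alpha\to\Sc$ with isomorphisms $\Ucal_{i,\alpha}\iso \Sc_i\times_\Sc \Ycal_\alpha$, and then glue the $\Ycal_\alpha$ along their intersections, which are themselves descended by quasi-affine descent from the double intersections $\Ucal_{i,\alpha}\cap \Ucal_{i,\beta}$. The resulting $\Ycal\to\Sc$ carries the required isomorphisms $\Xcal_i\iso \Sc_i\times_\Sc\Ycal$. The main obstacle is the compatible refinement in the second step: ensuring that the Zariski covers of the $\Xcal_i$ by quasi-affine opens can be chosen so as to be preserved by the descent isomorphisms. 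This is handled by reducing the matching problem to the level of underlying topological spaces (which are bosonic) and then using fpqc descent for open subsuperschemes, which in turn follows from quasi-affine descent applied to the open immersions themselves.
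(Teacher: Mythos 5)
Your proposal is correct and takes essentially the same route as the paper, whose entire proof consists of the remark that one adapts the argument of Alper's Theorem 2.1.14 using quasi-affine descent for superschemes (Proposition \ref{prop:qadescent}); your three steps (a super Zariski Main Theorem giving local quasi-affineness over the base, compatible refinement of the descent data, then quasi-affine descent and gluing) are exactly an unfolding of that cited argument. You also correctly identify the one genuinely delicate point --- producing quasi-compact opens preserved by the descent isomorphisms --- and reduce it, as the paper implicitly does, to the bosonic topological situation where the classical proof applies.
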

\begin{proof} We can use a similar argument as in the proof of \cite[Theorem 2.1.14]{Alp24}, using quasi-affine descent for morphisms of superschemes (Proposition \ref{prop:qadescent}).
\end{proof}

\begin{prop}[Descent for principal superbundles]\label{prop:pbdescent}
Let $\Gsc$ be an fppf   affine group superscheme over $\Tc$, and $\Sc' \to\Sc$  an fpqc (faithfully flat quasi-compact)  morphism of superschemes over $\Tc$. If $\Psc' \to\Sc'$ is a principal $\Gsc$-superbundle and $\alpha\colon p_1^\ast \Psc'\to p_2^\ast \Psc'$ is an isomorphism of principal $\Gsc$-superbundles bundles over $\Sc'\times_\Sc \Sc'$ satisfying the cocycle condition $p_{12}^\ast\alpha\circ p_{23}^\ast\alpha=p_{13}^\ast\alpha$, then there exists a principal $\Gsc$-superbundle $\Psc\to\Sc$ and an isomorphism $\phi\colon \Psc' \to f^\ast\Psc$ of principal $\Gsc$-superbundles such that $p_1^\ast\phi=p_2^\ast\phi \circ \alpha$.
\end{prop}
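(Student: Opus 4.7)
The approach is to bootstrap from the quasi-affine descent result for superschemes (Proposition \ref{prop:qadescent}): first descend the total space $\Psc'$ as a superscheme over $\Sc$, then descend the $\Gsc$-action, and finally verify that the descended data satisfy the principal bundle axioms. Throughout, the subcanonicality of $\Sf_{et}$ (and its analogue for the fpqc topology on quasi-affine superschemes) allows one to check equalities and isomorphism properties of morphisms after the faithfully flat base change $f\colon\Sc'\to\Sc$.

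First I would show that the projection $\pi'\colon\Psc'\to\Sc'$ is affine (hence quasi-affine). Pulling back along $\pi'$ itself, the shear isomorphism $(\psi',p_2)\colon\Gsc\times_\Tc\Psc'\iso\Psc'\times_{\Sc'}\Psc'$ identifies the second projection $\Psc'\times_{\Sc'}\Psc'\to\Psc'$ with the projection $\Gsc\times_\Tc\Psc'\to\Psc'$, which is affine because $\Gsc\to\Tc$ is. Since affineness of a morphism of superschemes is fppf-local on the target (a consequence of Proposition \ref{prop:qadescent}) and $\pi'$ is itself an fppf cover of $\Sc'$, this forces $\pi'$ to be affine.

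Next, Proposition \ref{prop:qadescent} applied to the descent datum $(\Psc',\alpha)$ yields a superscheme morphism $\pi\colon\Psc\to\Sc$ together with an isomorphism $\phi\colon\Psc'\iso f^\ast\Psc$ satisfying $p_1^\ast\phi=p_2^\ast\phi\circ\alpha$. Since $\alpha$ is by hypothesis an isomorphism of principal bundles, it is $\Gsc$-equivariant, so the induced descent datum on the quasi-affine $\Sc'$-superscheme $\Gsc\times_\Tc\Psc'$ is $\mathrm{id}_\Gsc\times\alpha$ and the action morphism $\psi'\colon\Gsc\times_\Tc\Psc'\to\Psc'$ is a morphism of descent data. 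By fpqc descent of morphisms between quasi-affine superschemes, $\psi'$ descends to a morphism $\psi\colon\Gsc\times_\Tc\Psc\to\Psc$; the associativity and unit axioms descend likewise, as does the $\Gsc$-equivariance of $\pi$, by uniqueness of descent.

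Finally, the shear map $(\psi,p_2)\colon\Gsc\times_\Sc\Psc\to\Psc\times_\Sc\Psc$ pulls back along $f$ to the corresponding shear map $(\psi',p_2)\colon\Gsc\times_{\Sc'}\Psc'\to\Psc'\times_{\Sc'}\Psc'$ for $\Psc'$, which is an isomorphism by hypothesis. Being an isomorphism of morphisms of superschemes is fpqc-local on the target (again by Proposition \ref{prop:qadescent}), so the shear map for $\Psc$ is an isomorphism, and $\pi\colon\Psc\to\Sc$ is the sought principal $\Gsc$-superbundle; the compatibility $p_1^\ast\phi=p_2^\ast\phi\circ\alpha$ comes directly from Step~2. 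The main obstacle is precisely Step~1: establishing quasi-affineness of $\pi'$ without which Proposition \ref{prop:qadescent} does not apply. This is where the affineness hypothesis on $\Gsc$ is genuinely used; once it is in place, the remaining steps are formal applications of fpqc descent in a subcanonical setting.
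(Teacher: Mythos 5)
Your proof is correct and follows essentially the same route as the paper, whose proof simply says to proceed as in \cite[Proposition 2.1.17]{Alp24} using quasi-affine descent for superschemes (Proposition \ref{prop:qadescent}): your Step 1 (affineness of $\pi'$ via the shear isomorphism and fppf-locality of affineness) and the subsequent descent of the action and of the shear map are exactly the steps of that standard argument, here spelled out in full.
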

\begin{proof} We can proceed as in \cite[Proposition 2.1.17]{Alp24} using quasi-affine descent for morphisms of superschemes (Proposition \ref{prop:qadescent}).
\end{proof}

Let us denote by  \text{\sl Pol}  the category whose objects are pairs $(f \colon \Xcal\to \Sc ,\Lcl)$, where $f$ is a proper and flat morphism of superschemes and $\Lcl $ is a relatively ample even line bundle on $\Xcal $ \cite{BrHRPo20}, and whose morphisms $(f'\colon \Xcal'\to \Sc' ,\Lcl)\to (f \colon \Xcal\to \Sc,\Lcl)$ are triples $(g, h, \epsilon)$, where
$g\colon\Sc'\to\Sc$, $h\colon \Xcal'\to\Xcal$ are morphisms of superschemes such that the diagram
$$
\xymatrix{\Xcal'\ar[d]^{f'}\ar[r]^h & \Xcal\ar[d]^f \\
\Sc'\ar[r]^g& \Sc
}
$$ 
is cartesian, and $\epsilon\colon \Lcl'\to h^\ast\Lcl$ is an isomorphism of even line bundles on $\Xcal'$.  \text{\sl Pol}  is a fibered category over the category $\Sf$ of superschemes via  the functor $(f \colon \Xcal\to \Sc ,\Lcl)\to \Sc$.

\begin{prop}[Descent for polarized superschemes] The fibered category $ \text{\sl Pol} \to\Sf$ satisfies fppf effective descent.
\end{prop}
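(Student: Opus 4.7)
The strategy is to reduce the descent problem for $\text{\sl Pol}$ to fppf descent for quasi-coherent (graded) sheaves on superschemes, by associating to any polarized superscheme $(f\colon\Xcal\to\Sc,\Lcl)$ its homogeneous coordinate superalgebra
\[
\Ac := \bigoplus_{n\geq 0} f_\ast\Lcl^{\otimes n},
\]
which is a graded quasi-coherent $\Oc_\Sc$-superalgebra, and by using that $(\Xcal,\Lcl)$ can be recovered from $\Ac$ as $(\SProj\Ac,\Oc(1))$ after replacing $\Lcl$ by a sufficiently large power so that the natural morphism $\Xcal\to\SProj\Ac$ becomes an isomorphism. This mimics the classical bosonic case (as in EGA II/FGA) and uses that the $\SProj$ construction extends to the super setting via the $\Z_2$-graded structure on $\Oc_\Xcal$, with $\Lcl$ even by assumption.

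First I would show that, given a descent datum $\{(\Xcal_i/\Sc_i,\Lcl_i),(g_{ij},h_{ij},\epsilon_{ij})\}$ relative to an fppf covering $\{\Sc_i\to\Sc\}$, the isomorphisms $\epsilon_{ij}$ induce, by push-forward and summation over $n$, isomorphisms $\varphi_{ij}$ of graded quasi-coherent $\Oc_{\Sc_i\times_\Sc\Sc_j}$-superalgebras between the two pullbacks of $\Ac_i:=\bigoplus_{n\geq 0}(f_i)_\ast\Lcl_i^{\otimes n}$ and $\Ac_j$, and that these satisfy the cocycle condition because the $\epsilon_{ij}$ do. Next, applying fppf descent for quasi-coherent sheaves on superschemes --- carried out degree by degree on the homogeneous components, together with descent of the multiplication maps --- produces a graded quasi-coherent $\Oc_\Sc$-superalgebra $\Ac$ with compatible isomorphisms $\Ac\vert_{\Sc_i}\simeq \Ac_i$. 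Setting $\Xcal:=\SProj\Ac\to\Sc$ and $\Lcl:=\Oc_\Xcal(1)$, and using that $\SProj$ commutes with flat base change, one obtains canonical isomorphisms $\Xcal\times_\Sc\Sc_i\iso\Xcal_i$ and $\Lcl\vert_{\Xcal_i}\iso\Lcl_i$ compatible with the given descent data.

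The main obstacle is twofold. First, one must argue that the polarizations $\Lcl_i$ can be replaced by a uniform high power so that $\Xcal_i\simeq\SProj\Ac_i$; this uses that $\Sc$ may be assumed quasi-compact (passing to a finite subcover) together with the relative ampleness of each $\Lcl_i$ to ensure a common exponent $N$ works, and that this replacement is compatible with the descent data. Second, and more substantially, one must verify that the descended morphism $f\colon\Xcal\to\Sc$ is proper and flat and that $\Lcl$ is relatively ample. For this, one invokes that all three properties are fppf local on the target: flatness and relative ampleness are encoded in the quasi-coherent structure of $\Ac$ together with the standard behaviour of $\SProj$; properness is fppf local on the target by reduction to the bosonic case (the underlying schemes fit into a cartesian diagram and one applies the classical result for schemes). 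Combined with the $\SProj$ analysis of a graded superalgebra, these checks ensure that $(\Xcal,\Lcl)$ is indeed an object of $\text{\sl Pol}$ realizing the prescribed descent datum, and uniqueness up to canonical isomorphism follows from the corresponding uniqueness in quasi-coherent descent.
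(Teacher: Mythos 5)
Your proposal is correct and follows essentially the same route as the paper: the paper's proof simply observes that Olsson's argument for polarized schemes (via the graded algebra $\bigoplus_n f_\ast\Lcl^{\otimes n}$, quasi-coherent descent, and $\Proj$) carries over verbatim to superschemes once the cohomology-and-base-change input is replaced by its super analogue from \cite{BrHRPo20}. The only ingredient you leave implicit is precisely that base-change compatibility of $(f_i)_\ast\Lcl_i^{\otimes n}$ (after raising to a suitable power), which is what makes the $\epsilon_{ij}$ induce genuine descent data on the algebras $\Ac_i$ and lets one recover $\Xcal_i$ as $\SProj\Ac_i$.
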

\begin{proof}
The proof of \cite[Prop.\! 4.4.12]{Ol16} is also valid for superschemes replacing the reference  \cite{EGAIII-II} about cohomology base change with  \cite[Prop.\! 3.7 and Thm.\! 2.35]{BrHRPo20}.
\end{proof}

\def\cprime{$'$}

%

\end{document}